 \tikzset{
  on each segment/.style={
    decorate,
    decoration={
      show path construction,
      moveto code={},
      lineto code={
        \path [#1]
        (\tikzinputsegmentfirst) -- (\tikzinputsegmentlast);
      },
      curveto code={
        \path [#1] (\tikzinputsegmentfirst)
        .. controls
        (\tikzinputsegmentsupporta) and (\tikzinputsegmentsupportb)
        ..
        (\tikzinputsegmentlast);
      },
      closepath code={
        \path [#1]
        (\tikzinputsegmentfirst) -- (\tikzinputsegmentlast);
      },
    },
  },
  mid arrow/.style={postaction={decorate,decoration={
        markings,
        mark=at position .5 with {\arrow[#1]{stealth}}
      }}},
}
\newcommand{\midarrow}{\tikz \draw[-triangle 90] (0,0) -- +(.1,0);}
\numberwithin{figure}{section}
\numberwithin{equation}{section}
\newcommand*\circled[1]{\raisebox{.5pt}{\textcircled{\raisebox{-.9pt} {#1}}}}
\def\BLUE{blue} 
\def\RED{red} 
\def\hua{\mathcal}
\def\kong{\mathbb}
\def\<{\langle}
\def\>{\rangle}
\def\NN{\mathbb{N}}
\def\ZZ{\mathbb{Z}}
\def\QQ{\mathbb{Q}}
\def\CC{\mathbb{C}}
\def\Mat{\mathbf{Mat}}
\def\thick{\operatorname{thick}}
\def\SURF{\mathbf{S}}
\def\dac{\mathbf{A}}
\newcommand{\pd}{\operatorname{proj. dim.}}
\newcommand{\Pic}{Figure~}
\newcommand{\proj}{\mathsf{proj}}
\newcommand{\add}{\mathsf{add}}
\newcommand{\per}{\mathsf{per}}
\newcommand{\modcat}{\mathsf{mod}}
\newcommand{\Hom}{\mathsf{Hom}}
\newcommand{\Ext}{\mathsf{Ext}}
\newcommand{\kk}{\mathds{k}}
\newcommand{\A}{\overrightarrow{\mathbb{A}}}
\newcommand{\Surf}{\mathcal{S}}
\newcommand{\Marked}{\mathcal{M}}
\newcommand{\Y}{\mathcal{Y}}
\newcommand{\Ribbon}{\mathcal{R}}
\newcommand{\F}{\lambda}
\newcommand{\ta}{\widetilde{a}}
\newcommand{\tb}{\widetilde{b}}
\newcommand{\tc}{\widetilde{c}}
\newcommand{\txi}{\widetilde{\xi}}
\newcommand{\tsigma}{\widetilde{\sigma}}
\newcommand{\ttau}{\widetilde{\tau}}
\newcommand{\teta}{\widetilde{\eta}}
\newcommand{\tgamma}{\widetilde{\gamma}}
\newcommand{\Int}{\mathrm{Int}}
\newcommand{\ii}{\mathfrak{i}}
\newcommand{\pSurf}{\partial\mathcal{S}}
\newcommand{\innerSurf}{\mathcal{S}\backslash\partial\mathcal{S}}
\newcommand{\gbullet}{{\color{ForestGreen}\bullet}}
\newcommand{\rbullet}{{\color{red}\circ}}
\newcommand{\Dgreen}{\Delta_{{\color{ForestGreen}\bullet}}}
\newcommand{\tDgreen}{\tilde{\Delta}_{{\color{ForestGreen}\bullet}}}
\newcommand{\Dred}{\Delta_{{\color{red}\circ}}}
\newcommand{\tDred}{\tilde{\Delta}_{{\color{red}\circ}}}
\newcommand{\PP}{\mathcal{P}}
\newcommand{\KK}{\mathcal{K}}
\newcommand{\AS}{\mathrm{AS}}
\newcommand{\SSS}{\mathfrak{S}}
\newcommand{\ccc}{\mathfrak{c}}
\newcommand{\CCC}{\mathfrak{C}}
\newcommand{\longline}{-\!\!\!-\!\!\!-\!\!\!-\!\!\!-}
\newcommand{\ind}{\mathsf{ind}}
\newcommand{\FASgreen}{\mathrm{FAS}_{{\color{ForestGreen}\bullet}}}
\newcommand{\FASred}{\mathrm{FAS}_{{\color{red}\circ}}}
\newcommand{\FFASgreen}{\mathrm{FFAS}_{{\color{ForestGreen}\bullet}}}
\newcommand{\FFASred}{\mathrm{FFAS}_{{\color{red}\circ}}}
\newcommand{\npFASgreen}{\mathrm{npFAS}_{{\color{ForestGreen}\bullet}}}
\newcommand{\npFFASgreen}{\mathrm{npFFAS}_{{\color{ForestGreen}\bullet}}}
\newcommand{\OC}{\mathrm{OC}}
\newcommand{\OCM}{\mathrm{OC}_{\mathcal{M}}}
\newcommand{\GOC}{\widetilde{\mathrm{OC}}}
\newcommand{\GOCM}{\widetilde{\mathrm{A}}_{\mathcal{M}}}
\newcommand{\SGOCM}{\widetilde{\mathrm{SA}}_{\mathcal{M}}}
\newcommand{\nonselfGOCM}{\widetilde{\mathrm{OC}}{ }^{\pmb{\times}}_{\mathcal{M}}}
\newcommand{\nonselfOCM}{\mathrm{OC}^{\pmb{\times}}_{\mathcal{M}}}
\newcommand{\bfC}{($\mathbf{C}$)}
\newcommand{\bfI}{($\mathbf{I}$)}
\newcommand{\bfII}{($\mathbf{II}$)}
\newcommand{\bfIII}{($\mathbf{III}$)}
\newcommand{\bfsquare}{\pmb{\square}}
\def\wc{\widetilde{c}}
\def\grad{\lambda}
\def\wa{\widetilde{a}}
\def\SOA{\operatorname{SOA}}
\def\ws{\widetilde{\sigma}}
\def\wt{\widetilde{\tau}}
\def\walpha{\widetilde{\alpha}}
\def\alg{\Lambda}
\newcommand{\num}[2]{l_{#1}^{#2}}
\newcommand{\snum}[2]{l_{#1}^{}}
\newcommand{\ip}[2]{z_{#1}^{#2}}
\newcommand{\sip}[2]{z_{#1}^{}}
\newcommand{\dii}[2]{d_{#1}^{#2}}
\newcommand{\sdii}[2]{d_{#1}^{}}
\begin{document}

\newcommand{\nc}{\newcommand}


\allowdisplaybreaks[4]

\newtheorem{theorem}{Theorem}[section]
\newtheorem{lemma}[theorem]{Lemma}
\newtheorem{corollary}[theorem]{Corollary}
\newtheorem{proposition}[theorem]{Proposition}
\newtheorem{simp}[theorem]{Lemma/Simplification}
\theoremstyle{definition}
\newtheorem{definition}[theorem]{Definition}
\newtheorem{construction}[theorem]{Construction}
\newtheorem{question}[theorem]{Question}
\newtheorem{remark}[theorem]{Remark}
\newtheorem{example}[theorem]{Example}
\newtheorem{notation}[theorem]{Notation}
\newtheorem{statement}[theorem]{Statement}
\newtheorem*{question1}{Rank Question for presilting complexes}
\newtheorem*{question2}{Complement Question for presilting complexes}

\usetikzlibrary{arrows}

\numberwithin{equation}{section}

\renewcommand{\thefootnote}{\fnsymbol{footnote}}
\setcounter{footnote}{0}

\title{A negative answer to Complement Question for presilting complexes}

\date{\today}

\author{Yu-Zhe Liu}
\address{School of Mathematics and statistics,
    Guizhou University, 550025,
    Guiyang, China}
\email{yzliu3@163.com}

\author{Yu Zhou}
\address{Yau Mathematical Sciences Center,
	Tsinghua University, 100084,
	Beijing, China}
\email{yuzhoumath@gmail.com}

\thanks{The work was supported by National Natural Science Foundation of China (Grants Nos. 11961007, 12171207 and 12271279).}

\keywords{complement question, silting complex, derived category, marked surface}

\maketitle

\begin{abstract}
In this paper using a geometric model we show that there is a presilting complex over a finite dimensional algebra, which is not a direct summand of a silting complex.
\end{abstract}

\section*{\bf Introduction}

Tilting theory plays a central role in the representation theory of algebras. Let $\kk$ be an algebraically closed field and $\alg$ a finite dimensional $\kk$-algebra. Denote by $\modcat \alg$ the category of finitely generated right $\alg$-modules. A $\alg$-module $T\in \modcat\alg$ is called {\it (generalized) tilting} \cite{Miy1986} if
\begin{itemize}
  \item[(T1)] its projective dimension $\pd T<\infty$,
  \item[(T2)] $\Ext_\alg^{i}(T,T)=0$ for all $i>0$, and
  \item[(T3)] there exists an exact sequence
  $$0 \to \alg \to T_0 \to \cdots \to T_n \to 0$$
  with $T_i\in \add T$, $0\le i\le n$, where $\add T$ is the full subcategory of $\modcat \alg$ whose objects are all finite direct sums of direct summands of $T$.
\end{itemize}

A tilting $\alg$-module $T$ is called \emph{classical tilting} \cite{APR1979, BB1979, HR1982} if $\pd T \leq 1$. Bongartz \cite{Bong1982} showed that for classical tilting modules, the condition (T3) can be replaced by
\begin{itemize}
    \item[(T3')] $|T|=|\alg|$, where $|X|$ denotes the number of non-isomorphic indecomposable direct summands of $X\in\modcat \alg$.
\end{itemize}
The ideal which Bongartz used is to show that any $\alg$-module $T$ with $\pd T \leq 1$ and satisfying condition (T2) is a direct summand of a classical tilting module. However, this is not true in the general case, see \cite{RiSch1989} for a counter-example. The question whether (T3) can be replaced by (T3') is open in the general case.

Let $D^b(\modcat \alg)$ be the bounded derived category of $\modcat \alg$ and $\thick \alg$ the thick subcategory of $D^b(\modcat \alg)$ containing $\alg$. 
The category $\thick \alg$ is equivalent to the bounded homotopy category $K^b(\proj \alg)$ of finitely generated projective $\alg$-modules. A complex $T^\bullet\in D^b(\modcat \alg)$ is called \emph{tilting} \cite{Ri1989} if
\begin{itemize}
  \item[(TC1)] $T^\bullet\in\thick \alg$,
  \item[(TC2)] $\Hom_{D^b(\modcat \alg)}(T^\bullet, T^\bullet[i])=0$ for all $i\ne 0$, and
  \item[(TC3)] $\thick T^\bullet=\thick \alg$.
\end{itemize}
Note that each tilting $\alg$-module, when regarded as a stalk complex, is a tilting complex in $D^b(\modcat \alg)$.
There is a counter-example given in \cite{Ri1989} showing that not every complex in $D^b(\modcat \alg)$ satisfying (TC1) and (TC2) is a direct summand of a tilting complex. It was pointed out in \cite{LVY2014} that the counter-example in \cite{RiSch1989} mentioned above is indeed also a counter-example in this case. The question whether condition (TC3) can be replaced with (TC3') $|T^\bullet|=|\alg|$ is open in general.

Silting complexes were introduced by Keller and Vossieck in \cite{KV1988} as a generalization of tilting complexes, and were recently found to have rich interplay with cluster theory, torsion theory, simple-minded systems and Bridgeland stability conditions, see e.g. \cite{AI2012,AIR2014,KY2014,QW2018}.
A complex $S^\bullet\in D^b(\modcat \alg)$ is called \emph{silting} if
\begin{itemize}
  \item[(S1)] $S^\bullet\in\thick \alg$,
  \item[(S2)] $\Hom_{D^b(\modcat \alg)}(S^\bullet, S^\bullet[i])=0$ for all $i>0$, and
  \item[(S3)] $\thick S^\bullet=\thick \alg$.
\end{itemize}
By definition, each tilting complex is silting but the converse is not true in general. A complex $S^\bullet\in D^b(\modcat\alg)$ is called \emph{presilting} if $S^\bullet$ satisfies (S1) and (S2). Similarly as in the tilting case, we have the following two questions.
\begin{question2}\label{quest. silt. comp. complete}
Is any presilting complex in $D^b(\modcat \alg)$ is a direct summand of a silting complex?
\end{question2}

\begin{question1}\label{quest. silt. comp.}
Is a presilting $S^\bullet\in D^b(\modcat\alg)$ that satisfies $|S^\bullet|=|\alg|$ always silting?
\end{question1}

In the case that $S^\bullet$ of 2-term or $\alg$ piecewise hereditary, the answer to these two questions are positive, see \cite{Wei2013, AIR2014, BY2013, LL2019, XY2020, DF2022}. It was pointed out in \cite{Wei2020} that the counter-examples in \cite{RiSch1989} and \cite{Ri1989} mentioned above are not counter-examples in the silting case. In this paper, we give a negative answer to Complement Question for presilting complexes. Let $\alg = \kk Q/I$, where
\[Q = \xymatrix{1 \ar@/^0.5pc/[r]^{x_1} \ar@/_0.5pc/[r]_{y_1} & 2 \ar@/^0.5pc/[r]^{x_2} \ar@/_0.5pc/[r]_{y_2} & 3 },\ \text{and}\
I = \langle x_1x_2, y_1y_2 \rangle. \]

\begin{theorem}\label{main}
There is a presilting complex in $D^b(\modcat \alg)$ which is not a direct summand of any silting complex.
\end{theorem}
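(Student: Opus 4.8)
\medskip
\noindent\textbf{Proof strategy.} The plan is to work entirely inside the geometric model of $\thick\alg \simeq K^b(\proj\alg)$. First I would check that $\alg=\kk Q/I$ is a gentle algebra — the quiver $Q$ above has at most two arrows into and out of every vertex, and the relations $x_1x_2,\, y_1y_2$ together with the surviving paths $x_1y_2,\, y_1x_2$ satisfy the gentleness conditions — and then set up its surface model: a graded marked surface $(\Surf,\Marked)$ equipped with a line field $\eta$, together with the full formal arc system $\Delta$ dual to $P_1,P_2,P_3$, following the earlier sections and the surface model for gentle algebras. Under this dictionary, indecomposable objects of $\thick\alg$ correspond to graded arcs on $(\Surf,\Marked)$ (closed curves give band objects, which have self-extensions and so are never presilting and can be ignored), morphisms in every degree are given by graded intersection numbers, condition (S1) is automatic, and condition (S2) — that $\Hom_{D^b(\modcat\alg)}(S^\bullet,S^\bullet[i])=0$ for all $i>0$ — translates into the requirement that the graded arc(s) have no self-intersection, nor mutual intersection, of positive degree. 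Crucially, basic silting complexes correspond exactly to the (green) full formal arc systems, i.e. the maximal collections of $|\alg|=3$ pairwise compatible graded arcs, and basic presilting complexes to the sub-collections thereof. Hence Theorem~\ref{main} is equivalent to the purely geometric assertion that $(\Surf,\Marked,\eta)$ admits a partial formal arc system that is contained in no full one.

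To produce such a configuration I would identify $(\Surf,\Marked)$ explicitly for this $\alg$ from the data $(Q,I)$, and then exhibit a presilting complex $S^\bullet$ with $1\le |S^\bullet|<3$ indecomposable summands, corresponding to a small configuration $\Gamma$ of graded arcs, verifying by a finite check that $\Gamma$ has no positive-degree self-intersection so that $S^\bullet$ is genuinely presilting. The point is to arrange $\Gamma$ so that, after cutting $\Surf$ along its underlying arcs and transporting $\eta$ to the cut surface, the remaining room is ``grading-rigid'': any arc that is disjoint from $\Gamma$ as an unoriented curve is forced, by the requirement of having no positive-degree graded intersection with the chosen graded lift of $\Gamma$, to carry a grading whose winding numbers prevent it from lying in any $3$-arc full formal arc system containing $\Gamma$. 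In other words, $\Gamma$ can only be enlarged by curves that inevitably re-create a positive-degree intersection with $\Gamma$, or else every formal arc system extending $\Gamma$ has strictly fewer than $3$ arcs; in either case $\Gamma$, and hence $S^\bullet$, is not a direct summand of any silting complex.

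The main obstacle is precisely this last step: the exhaustive, grading-sensitive case analysis that must rule out \emph{all} completions of $\Gamma$ at once, rather than one candidate curve at a time. The delicate bookkeeping is the behaviour of the line field after cutting along $\Gamma$ — a completing arc may be topologically disjoint from $\Gamma$ and yet every admissible grading on it produces a graded intersection of positive degree with the fixed graded lift of $\Gamma$ — and this must be established uniformly over all isotopy classes of candidate curves. Pinning down the grading and winding-number conventions so that ``presilting'' corresponds on the nose to the intersection condition, and so that the failure of completion is intrinsic rather than an artifact of a poor choice of representatives (and sanity-checking the resulting obstruction against the known homological constraints on completability), is where essentially all the work lies. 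Once this geometric obstruction is in place, Theorem~\ref{main} follows at once.
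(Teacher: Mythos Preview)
Your high-level framework matches the paper's: use the surface model for the gentle algebra $\alg$, translate ``presilting'' to ``no positive-degree graded intersections,'' translate ``silting'' to ``full formal arc system,'' and exhibit a graded arc that cannot be completed. But two points deserve comment.

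First, a slip: you write that ``basic presilting complexes [correspond] to the sub-collections thereof,'' i.e.\ to sub-collections of full formal arc systems. Taken literally this is exactly what the theorem denies; presilting complexes correspond to collections of graded arcs with no positive-degree intersections among themselves, full stop. You recover the correct statement in the next sentence, but the phrasing is misleading.

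Second, and more substantively, everything that actually makes the proof work is deferred in your outline, and your proposed mechanism differs from the paper's. The paper identifies the surface concretely as a once-punctured torus (one boundary component, two $\Marked$-points $p,q$), fixes the specific graded arc $\tgamma$ from $p$ to $q$ corresponding to the module $\kk\xrightarrow{1,0}\kk\xrightarrow{0,1}\kk$, and then proves something stronger than you propose: there is \emph{no} second indecomposable summand at all, so $X(\tgamma)$ cannot even be extended to a $2$-term presilting. This requires (i) a classification of indecomposable presiltings as exactly the simple graded arcs with $\sigma(0)\neq\sigma(1)$ --- ruling out not only bands but also self-intersecting string arcs and arcs with coinciding endpoints, which your outline does not address --- and (ii) for every such $\ws\not\simeq\tgamma$, showing that the oriented intersection at the shared \emph{boundary} points $p$ or $q$ already has positive index. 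The obstruction lives at the endpoints, not in the interior, so your picture of ``cutting along $\Gamma$ and analysing topologically disjoint completing arcs'' points in the wrong direction: the candidate $\ws$ always meets $\tgamma$ at $p$ and $q$, and the whole game is to compute $\ii_p(\tgamma,\ws)$ and $\ii_q(\ws,\tgamma)$. The paper does this via a combinatorial reduction (discarding ``circles'' around $\partial\Surf$, then passing to a ``simplest sequence'' of crossings with a fixed dual arc system $\{\wa_1,\wa_2,\wa_3\}$) followed by a finite case analysis on that sequence. Your cutting/silting-reduction idea is closer to the remark in the paper linking the problem to $\operatorname{per}(\Gamma_e)$ having no indecomposable presilting, which is a legitimate alternative route, but it still requires an equally delicate analysis on the reduced surface; it does not sidestep the case-check.
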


The main ingredient in the proof is the geometric model of derived categories of gentle algebras introduced in \cite{HKK2017,OPS2018}.

We remark that the global dimension of $\alg$ in this example is two, and the presilting complex which we construct is of 3-term (see Remark~\ref{rmk:app}). We also remark that the answer to Rank Question for presilting complexes in this example is positive, because $\alg$ is a gentle algebra and hence one can apply \cite[Proposition~5.7]{APS2023} to it.

\subsection*{Acknowledgement}

We would like to thank Wen Chang, Xiao-Wu Chen, Changjian Fu, Martin Kalck and Zhengfang Wang for their helpful discussions.


\section{A geometric model of the algebra}

Let $\SURF=(\Surf,\Y,\Marked,\lambda)$ be a marked surface, where
\begin{itemize}
    \item $\Surf$ is a torus whose boundary $\partial\Surf$ has exactly one component,
    \item $\Y=\{r,s\}\subset\partial\Surf$ and $\Marked=\{p,q\}\subset\partial\Surf$ such that $r,p,s,q$ are in the anticlockwise order around the boundary, and
    \item $\grad$ is a section of the projectivized tangent bundle $\mathbb{P}T(\Surf)$ of $\Surf$ as shown in Figure~\ref{exp. presilting}.
\end{itemize}

\begin{figure}[htpb]
\begin{center}
\definecolor{ffqqqq}{rgb}{1,0,0}
\definecolor{qqwuqq}{rgb}{0,0,1}
\begin{tikzpicture}
\filldraw[black!20] (0,-0.8) circle (0.44cm);
\draw [line width=1.2pt][rotate around={0:(0,0)}] (0,0) ellipse (2.83cm and 2cm);
\draw [line width=1.2pt][shift={(-0.01,2.12)}] plot[domain=4.23:5.21,variable=\t]({1*2.12*cos(\t r)+0*2.12*sin(\t r)},{0*2.12*cos(\t r)+1*2.12*sin(\t r)});
\draw [line width=1.2pt][shift={(0.01,-0.91)}] plot[domain=0.98:2.17,variable=\t]({1*1.24*cos(\t r)+0*1.24*sin(\t r)},{0*1.24*cos(\t r)+1*1.24*sin(\t r)});
\draw [line width=1.2pt] (0,-0.8) circle (0.44cm);
\begin{scriptsize}
\fill [color=qqwuqq] (-0.38,-0.58) circle (2.5pt);
\draw [qqwuqq] (-0.38,-0.58) node[left]{$p$};
\fill [color=qqwuqq] (0.38,-1.02) circle (2.5pt);
\draw [qqwuqq] (0.38,-1.02) node[right]{$q$};
\filldraw [color=white, draw=ffqqqq, line width=0.5pt] (0.38,-0.58) circle (2.5pt);
\draw [ffqqqq] (0.38,-0.58) node[right]{$r$};
\filldraw [color=white, draw=ffqqqq, line width=0.5pt] (-0.38,-1.02) circle (2.5pt);
\draw [ffqqqq] (-0.38,-1.02) node[left]{$s$};
\end{scriptsize}
\end{tikzpicture}
\ \ \ \
\begin{tikzpicture} [scale=0.8]
\draw [pink] (-2.5, 0.0)-- (2.5, 0.0);
\draw [pink] (-2.5, 0.3)-- (2.5, 0.3);
\draw [pink] (-2.5, 0.6)-- (2.5, 0.6);
\draw [pink] (-2.5, 0.9)-- (2.5, 0.9);
\draw [pink] (-2.5, 1.2)-- (2.5, 1.2);
\draw [pink] (-2.5, 1.5)-- (2.5, 1.5);
\draw [pink] (-2.5, 1.8)-- (2.5, 1.8);
\draw [pink] (-2.5, 2.1)-- (2.5, 2.1);
\draw [pink] (-2.5, 2.4)-- (2.5, 2.4);
\draw [pink] (-2.5,-0.3)-- (2.5,-0.3);
\draw [pink] (-2.5,-0.6)-- (2.5,-0.6);
\draw [pink] (-2.5,-0.9)-- (2.5,-0.9);
\draw [pink] (-2.5,-1.2)-- (2.5,-1.2);
\draw [pink] (-2.5,-1.5)-- (2.5,-1.5);
\draw [pink] (-2.5,-1.8)-- (2.5,-1.8);
\draw [pink] (-2.5,-2.1)-- (2.5,-2.1);
\draw [pink] (-2.5,-2.4)-- (2.5,-2.4);
\draw [pink] (2.8,1) node{$\F$};
\filldraw [black!20] (-2, 2) circle (0.5cm);
\filldraw [black!20] ( 2, 2) circle (0.5cm);
\filldraw [black!20] ( 2,-2) circle (0.5cm);
\filldraw [black!20] (-2,-2) circle (0.5cm);
\begin{scriptsize}
\draw [color=qqwuqq] (-1.8, 1.8) node{$q$}; \draw [color=qqwuqq] ( 1.8,-1.8) node{$p$};
\draw [color=ffqqqq] (-1.8,-1.8) node{$r$}; \draw [color=ffqqqq] ( 1.8, 1.8) node{$s$};
\end{scriptsize}
\draw [line width=1pt] (-2, 2) circle (0.5cm);
\draw [line width=1pt] ( 2, 2) circle (0.5cm);
\draw [line width=1pt] ( 2,-2) circle (0.5cm);
\draw [line width=1pt] (-2,-2) circle (0.5cm);
\draw [dash pattern=on 2pt off 2pt] (-2, 2)-- ( 2, 2);
\draw [dash pattern=on 2pt off 2pt] ( 2, 2)-- ( 2,-2);
\draw [dash pattern=on 2pt off 2pt] ( 2,-2)-- (-2,-2);
\draw [dash pattern=on 2pt off 2pt] (-2, 2)-- (-2,-2);
\begin{scriptsize}
\fill [color=qqwuqq] (-1.65, 1.65) circle (2.5pt);
\fill [color=qqwuqq] ( 1.65,-1.65) circle (2.5pt);
\filldraw [color=white, draw=ffqqqq, line width=0.5pt] (-1.65, 2.35) circle (2.5pt); \draw [color=ffqqqq] (-1.65, 2.35) node[above]{$r$};
\fill [color=qqwuqq] (-2.35, 2.35) circle (2.5pt); \draw [color=qqwuqq] (-2.35, 2.35) node[left]{$p$};
\filldraw [color=white, draw=ffqqqq, line width=0.5pt] (-1.65,-1.65) circle (2.5pt);
\filldraw [color=white, draw=ffqqqq, line width=0.5pt] (-2.35,-2.35) circle (2.5pt); \draw [color=ffqqqq] (-2.35,-2.35) node[left]{$s$};
\fill [color=qqwuqq] (-2.35,-1.65) circle (2.5pt); \draw [color=qqwuqq] (-2.35,-1.65) node[left]{$p$};
\fill [color=qqwuqq] (-1.65,-2.35) circle (2.5pt); \draw [color=qqwuqq] (-1.65,-2.35) node[right]{$q$};
\filldraw [color=white, draw=ffqqqq, line width=0.5pt] (-2.35, 1.65) circle (2.5pt); \draw [color=ffqqqq] (-2.35, 1.65) node[left]{$s$};
\fill [color=qqwuqq] ( 1.65, 2.35) circle (2.5pt); \draw [color=qqwuqq] ( 1.65, 2.35) node[left]{$p$};
\filldraw [color=white, draw=ffqqqq, line width=0.5pt] ( 1.65, 1.65) circle (2.5pt);
\filldraw [color=white, draw=ffqqqq, line width=0.5pt] ( 2.35, 2.35) circle (2.5pt); \draw [color=ffqqqq] ( 2.35, 2.35) node[right]{$r$};
\fill [color=qqwuqq] ( 2.35, 1.65) circle (2.5pt); \draw [color=qqwuqq] ( 2.35, 1.65) node[right]{$q$};
\filldraw [color=white, draw=ffqqqq, line width=0.5pt] ( 2.35,-1.65) circle (2.5pt); \draw [color=ffqqqq] ( 2.35,-1.65) node[right]{$r$};
\fill [color=qqwuqq] ( 2.35,-2.35) circle (2.5pt); \draw [color=qqwuqq] ( 2.35,-2.35) node[right]{$q$};
\filldraw [color=white, draw=ffqqqq, line width=0.5pt] ( 1.65,-2.35) circle (2.5pt); \draw [color=ffqqqq] ( 1.65,-2.35) node[below]{$s$};
\end{scriptsize}
\end{tikzpicture}
\caption{A marked torus whose boundary has exactly one component}
\label{exp. presilting}
\end{center}
\end{figure}
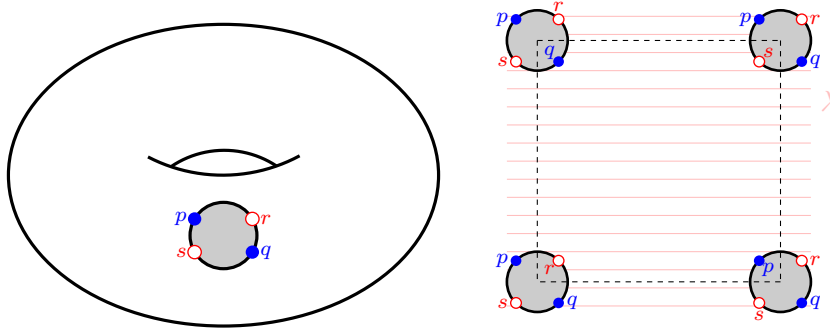

An arc on $\SURF$ is an immersion $c:[0,1]\to \Surf$. The opposite direction $\underleftarrow{c}$ of $c$ is an arc given by $\underleftarrow{c}(t)=c(1-t)$, $t\in[0,1]$. We always consider arcs up to direction and homotopy.

A grading $\wc$ on an arc $c$ is given by a homotopy class of paths in $\mathbb{P}T_{c(t)}(\Surf)$ from $\grad(c(t))$ to $\dot{c}(t)$, varying continuously with $t\in [0,1]$. The pair $(c,\wc)$ (or $\wc$ for short) is called a graded arc. The shift $\wc[d]$ of $\wc$ by an integer $d\in\mathbb{Z}$ is the graded arc whose underlying arc is the same as $\wc$ and whose grading is the composition of $\wc(t):\grad(c(t))\to\dot{c}(t)$ and the path from $\dot{c}(t)$ to itself given by clockwise rotation by $d\pi$.

Let $\wc_1,\wc_2$ be two graded arcs on $\SURF$ in a minimal position. An intersection $\ip{}{}$ between $\wc_1$ and $\wc_2$ is called an oriented intersection from $\wc_1$ to $\wc_2$ if there is a small arc in $\Surf\setminus\partial\Surf$ around $\ip{}{}$ from a point in $\wc_1$ to a point in $\wc_2$ clockwise, see Figure~\ref{fig:int}.
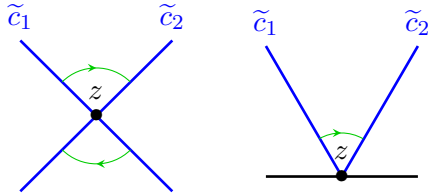
\begin{figure}[htpb]
\begin{center}
\definecolor{ffqqqq}{rgb}{1,0,0}
\definecolor{qqwuqq}{rgb}{0,0,1}
\definecolor{intcorlor}{rgb}{0,0.8,0}
\begin{tikzpicture}
\draw [qqwuqq][line width=1pt] (-1, 1) node[above]{$\tc_1$} -- ( 1,-1);
\draw [qqwuqq][line width=1pt] ( 1, 1) node[above]{$\tc_2$} -- (-1,-1);
\draw [black] (0,0) node{$\bullet$};
\draw [black] (0,0) node[above]{$\ip{}{}$};
\draw [intcorlor][postaction={on each segment={mid arrow=intcorlor}}]
      (-0.45, 0.45) arc(135:45:0.45*1.41);
\draw [intcorlor][postaction={on each segment={mid arrow=intcorlor}}]
      ( 0.45,-0.45) arc(-45:-135:0.45*1.41);
\end{tikzpicture}
\ \ \ \
\begin{tikzpicture}
\draw [black][line width=1pt] (-1,0) -- (1,0); 
\draw [qqwuqq][line width=1pt] (0,0) -- (-1,1.73) node[above]{$\tc_1$};
\draw [qqwuqq][line width=1pt] (0,0) -- ( 1,1.73) node[above]{$\tc_2$};
\draw [black] (0,0) node{$\bullet$};
\draw [black] (0,0) node[above]{$\ip{}{}$};
\draw [intcorlor][postaction={on each segment={mid arrow=intcorlor}}]
      (-0.3, 0.5) arc(120:60:0.58);
\end{tikzpicture}
\caption{Oriented intersections from $\tc_1$ to $\tc_2$}
\label{fig:int}
\end{center}
\end{figure}
For any oriented intersection $\ip{}{}=c_1(t_1)=c_2(t_2)$ from $\wc_1$ to $\wc_2$, the intersection index of $\wc_1$ and $\wc_2$ at $\ip{}{}$ is defined to be
\[\ii_{\ip{}{}}(\wc_1,\wc_2)=\wc_1(t_1)\cdot\kappa\cdot\wc_2^{-1}(t_2)\ \in\pi_1(\mathbb{P}T_{\ip{}{}}(\Surf))\cong \ZZ,\]
where $\kappa$ is (the homotopy class of) the path in $\mathbb{P}T_{\ip{}{}}(\Surf)$ from $\dot{c}_1(t_1)$ to $\dot{c}_2(t_2)$
given by clockwise rotation by an angle smaller than $\pi$. We have (cf. \cite[(2.4)]{HKK2017})
\begin{equation}\label{eq:hkk}
\ii_{\ip{}{}}(\wc_1,\wc_2)+\ii_{\ip{}{}}(\wc_2,\wc_1)=1.
\end{equation}
Let $\Int^d(\wc_1,\wc_2)$ be the number of oriented intersections from $\wc_1$ to $\wc_2$ of index $d$.


From now on, we take a collection $\dac=\{\wa_1,\wa_2,\wa_3\}$ of graded arcs whose endpoints are in $\Y$ as shown in Figure~\ref{exp. grading}. The marked surface is divided by the arcs in $\dac$ into two quadrilaterals. When taking a graded arc whose endpoints are in $\Marked$, we always assume that it is in a minimal position with the arcs in $\dac$.

\begin{figure}[htpb]
\begin{center}
\definecolor{ffqqqq}{rgb}{1,0,0}
\definecolor{qqwuqq}{rgb}{0,0,1}
\begin{tikzpicture} [scale=0.8]
\filldraw [black!20] (-2, 2) circle (0.5cm);
\filldraw [black!20] ( 2, 2) circle (0.5cm);
\filldraw [black!20] ( 2,-2) circle (0.5cm);
\filldraw [black!20] (-2,-2) circle (0.5cm);
\begin{scriptsize}
\draw [color=qqwuqq] (-1.8, 1.8) node{$q$}; \draw [color=qqwuqq] ( 1.8,-1.8) node{$p$};
\draw [color=ffqqqq] (-1.8,-1.8) node{$r$}; \draw [color=ffqqqq] ( 1.8, 1.8) node{$s$};
\end{scriptsize}
\draw [line width=1pt] (-2, 2) circle (0.5cm);
\draw [line width=1pt] ( 2, 2) circle (0.5cm);
\draw [line width=1pt] ( 2,-2) circle (0.5cm);
\draw [line width=1pt] (-2,-2) circle (0.5cm);
\draw [dash pattern=on 2pt off 2pt] (-2, 2)-- ( 2, 2);
\draw [dash pattern=on 2pt off 2pt] ( 2, 2)-- ( 2,-2);
\draw [dash pattern=on 2pt off 2pt] ( 2,-2)-- (-2,-2);
\draw [dash pattern=on 2pt off 2pt] (-2, 2)-- (-2,-2);
\draw [line width=1pt,color=ffqqqq] (-2.35,1.65)-- (-1.65,-1.65);
\draw [line width=1pt,color=ffqqqq] (-1.65,2.35)-- (1.65,1.65);
\draw [line width=1pt,color=ffqqqq] (1.65,1.65)-- (2.35,-1.65);
\draw [line width=1pt,color=ffqqqq] (1.65,-2.35)-- (-1.65,-1.65);
\draw [line width=1pt,color=ffqqqq] (-1.65,-1.65)-- (1.65,1.65);
\begin{scriptsize}
\fill [color=qqwuqq] (-1.65, 1.65) circle (2.5pt);
\fill [color=qqwuqq] ( 1.65,-1.65) circle (2.5pt);
\filldraw [color=white, draw=ffqqqq, line width=0.5pt] (-1.65, 2.35) circle (2.5pt); \draw [color=ffqqqq] (-1.65, 2.35) node[above]{$r$};
\fill [color=qqwuqq] (-2.35, 2.35) circle (2.5pt); \draw [color=qqwuqq] (-2.35, 2.35) node[left]{$p$};
\filldraw [color=white, draw=ffqqqq, line width=0.5pt] (-1.65,-1.65) circle (2.5pt);
\filldraw [color=white, draw=ffqqqq, line width=0.5pt] (-2.35,-2.35) circle (2.5pt); \draw [color=ffqqqq] (-2.35,-2.35) node[left]{$s$};
\fill [color=qqwuqq] (-2.35,-1.65) circle (2.5pt); \draw [color=qqwuqq] (-2.35,-1.65) node[left]{$p$};
\fill [color=qqwuqq] (-1.65,-2.35) circle (2.5pt); \draw [color=qqwuqq] (-1.65,-2.35) node[right]{$q$};
\filldraw [color=white, draw=ffqqqq, line width=0.5pt] (-2.35, 1.65) circle (2.5pt); \draw [color=ffqqqq] (-2.35, 1.65) node[left]{$s$};
\fill [color=qqwuqq] ( 1.65, 2.35) circle (2.5pt); \draw [color=qqwuqq] ( 1.65, 2.35) node[left]{$p$};
\filldraw [color=white, draw=ffqqqq, line width=0.5pt] ( 1.65, 1.65) circle (2.5pt);
\filldraw [color=white, draw=ffqqqq, line width=0.5pt] ( 2.35, 2.35) circle (2.5pt); \draw [color=ffqqqq] ( 2.35, 2.35) node[right]{$r$};
\fill [color=qqwuqq] ( 2.35, 1.65) circle (2.5pt); \draw [color=qqwuqq] ( 2.35, 1.65) node[right]{$q$};
\filldraw [color=white, draw=ffqqqq, line width=0.5pt] ( 2.35,-1.65) circle (2.5pt); \draw [color=ffqqqq] ( 2.35,-1.65) node[right]{$r$};
\fill [color=qqwuqq] ( 2.35,-2.35) circle (2.5pt); \draw [color=qqwuqq] ( 2.35,-2.35) node[right]{$q$};
\filldraw [color=white, draw=ffqqqq, line width=0.5pt] ( 1.65,-2.35) circle (2.5pt); \draw [color=ffqqqq] ( 1.65,-2.35) node[below]{$s$};
\end{scriptsize}
\draw (2.1,0) node[right]{\color{red}$a_1$};
\draw (-2.1,0) node[left]{\color{red}$a_1$};
\draw (0,0) node[right]{\color{red}$a_2$};
\draw (0,2.1) node[right]{\color{red}$a_3$};
\draw (0,-2.1) node[left]{\color{red}$a_3$};
\end{tikzpicture}
\ \
\begin{tikzpicture}
\draw [pink] (-2.5, 0.0)-- (2.5, 0.0);
\draw [pink] (-2.5, 0.4)-- (2.5, 0.4);
\draw [pink] (-2.5, 0.8)-- (2.5, 0.8);
\draw [pink] (-2.5, 1.2)-- (2.5, 1.2);
\draw [pink] (-2.5, 1.6)-- (2.5, 1.6);
\draw [pink] (-2.5, 2.0)-- (2.5, 2.0);
\filldraw [black!20] (0,0) circle (1cm);
\draw [line width=1.2pt] (0,0) circle (1cm);
\draw [color=ffqqqq][line width=1.2pt] (0.71,0.71)-- (  0,2.5); \draw (  0,2.5) node[right,red]{$a_1$};
\draw [color=ffqqqq][line width=1.2pt] (0.71,0.71)-- (2.5,  0); \draw (2.5,  0) node[right,red]{$a_3$};
\draw [color=ffqqqq][line width=1.2pt] (0.71,0.71)-- (2.5,2.5); \draw (2.5,2.5) node[right,red]{$a_2$};
\begin{scriptsize}
\filldraw [color=white, draw=ffqqqq, line width=0.5pt] (0.71,0.71) circle (2.5pt); \draw [color=ffqqqq] (0.71,0.71) node[left]{$r$};
\fill [color=qqwuqq] (-0.71,0.71) circle (2.5pt); \draw [color=qqwuqq] (-0.71,0.71) node[left]{$p$};
\filldraw [color=white, draw=ffqqqq, line width=0.5pt] (-0.71,-0.71) circle (2.5pt); \draw [color=ffqqqq] (-0.71,-0.71) node[left]{$s$};
\fill [color=qqwuqq] (0.71,-0.71) circle (2.5pt); \draw [color=qqwuqq] (0.71,-0.71) node[right]{$q$};
\end{scriptsize}
\draw [red][->] (0.36-0.25,1.60) arc (180:115:0.25);
\draw [red] (0.11, 1.7) node[left]{$\ta_1$};
\draw [red][->] (1.59+0.25,1.59) arc (  0: 45:0.25);
\draw [red] (1.84, 1.7) node[right]{$\ta_2$};
\draw [red][->] (1.50+0.25,0.40) arc (  0:155:0.25);
\draw [red] (1.5, 0.65) node[above]{$\ta_3$};
\end{tikzpicture}
\caption{A collection $\dac=\{\wa_1,\wa_2,\wa_3\}$ of graded arcs}
\label{exp. grading}
\end{center}
\end{figure}

Let $\tgamma$ be a graded arc whose endpoints are in $\Marked$ as shown in \Pic\ref{fig:tgamma}.

\begin{figure}[htbp]
\begin{center}
\definecolor{ffqqqq}{rgb}{1,0,0}
\definecolor{qqwuqq}{rgb}{0,0,1}
\begin{tikzpicture} [scale=0.8]
\filldraw [black!20] (-2, 2) circle (0.5cm);
\filldraw [black!20] ( 2, 2) circle (0.5cm);
\filldraw [black!20] ( 2,-2) circle (0.5cm);
\filldraw [black!20] (-2,-2) circle (0.5cm);
\begin{scriptsize}
\draw [color=qqwuqq] (-1.8, 1.8) node{$q$}; \draw [color=qqwuqq] ( 1.8,-1.8) node{$p$};
\draw [color=ffqqqq] (-1.8,-1.8) node{$r$}; \draw [color=ffqqqq] ( 1.8, 1.8) node{$s$};
\end{scriptsize}
\draw [line width=1pt] (-2, 2) circle (0.5cm);
\draw [line width=1pt] ( 2, 2) circle (0.5cm);
\draw [line width=1pt] ( 2,-2) circle (0.5cm);
\draw [line width=1pt] (-2,-2) circle (0.5cm);
\draw [dash pattern=on 2pt off 2pt] (-2, 2)-- ( 2, 2);
\draw [dash pattern=on 2pt off 2pt] ( 2, 2)-- ( 2,-2);
\draw [dash pattern=on 2pt off 2pt] ( 2,-2)-- (-2,-2);
\draw [dash pattern=on 2pt off 2pt] (-2, 2)-- (-2,-2);
\draw [line width=1pt,color=ffqqqq] (-2.35,1.65)-- (-1.65,-1.65);
\draw [line width=1pt,color=ffqqqq] (-1.65,2.35)-- (1.65,1.65);
\draw [line width=1pt,color=ffqqqq] (1.65,1.65)-- (2.35,-1.65);
\draw [line width=1pt,color=ffqqqq] (1.65,-2.35)-- (-1.65,-1.65);
\draw [line width=1pt,color=ffqqqq] (-1.65,-1.65)-- (1.65,1.65);
\begin{scriptsize}
\fill [color=qqwuqq] (-1.65, 1.65) circle (2.5pt);
\fill [color=qqwuqq] ( 1.65,-1.65) circle (2.5pt);
\filldraw [color=white, draw=ffqqqq, line width=0.5pt] (-1.65, 2.35) circle (2.5pt); \draw [color=ffqqqq] (-1.65, 2.35) node[above]{$r$};
\fill [color=qqwuqq] (-2.35, 2.35) circle (2.5pt); \draw [color=qqwuqq] (-2.35, 2.35) node[left]{$p$};
\filldraw [color=white, draw=ffqqqq, line width=0.5pt] (-1.65,-1.65) circle (2.5pt);
\filldraw [color=white, draw=ffqqqq, line width=0.5pt] (-2.35,-2.35) circle (2.5pt); \draw [color=ffqqqq] (-2.35,-2.35) node[left]{$s$};
\fill [color=qqwuqq] (-2.35,-1.65) circle (2.5pt); \draw [color=qqwuqq] (-2.35,-1.65) node[left]{$p$};
\fill [color=qqwuqq] (-1.65,-2.35) circle (2.5pt); \draw [color=qqwuqq] (-1.65,-2.35) node[right]{$q$};
\filldraw [color=white, draw=ffqqqq, line width=0.5pt] (-2.35, 1.65) circle (2.5pt); \draw [color=ffqqqq] (-2.35, 1.65) node[left]{$s$};
\fill [color=qqwuqq] ( 1.65, 2.35) circle (2.5pt); \draw [color=qqwuqq] ( 1.65, 2.35) node[left]{$p$};
\filldraw [color=white, draw=ffqqqq, line width=0.5pt] ( 1.65, 1.65) circle (2.5pt);
\filldraw [color=white, draw=ffqqqq, line width=0.5pt] ( 2.35, 2.35) circle (2.5pt); \draw [color=ffqqqq] ( 2.35, 2.35) node[right]{$r$};
\fill [color=qqwuqq] ( 2.35, 1.65) circle (2.5pt); \draw [color=qqwuqq] ( 2.35, 1.65) node[right]{$q$};
\filldraw [color=white, draw=ffqqqq, line width=0.5pt] ( 2.35,-1.65) circle (2.5pt); \draw [color=ffqqqq] ( 2.35,-1.65) node[right]{$r$};
\fill [color=qqwuqq] ( 2.35,-2.35) circle (2.5pt); \draw [color=qqwuqq] ( 2.35,-2.35) node[right]{$q$};
\filldraw [color=white, draw=ffqqqq, line width=0.5pt] ( 1.65,-2.35) circle (2.5pt); \draw [color=ffqqqq] ( 1.65,-2.35) node[below]{$s$};
\end{scriptsize}
\draw [blue] ( 1.2, 1) node[right]{$\gamma$};
\draw [blue] (-2.8, 1) node[right]{$\gamma$};
\draw [blue] ( 1.2,-3) node[right]{$\gamma$};
\draw [blue] (-2.8,-3) node[right]{$\gamma$};
\draw (2.1,0) node[right]{\color{red}$a_1$};
\draw (-2.1,0) node[left]{\color{red}$a_1$};
\draw (0,0) node[right]{\color{red}$a_2$};
\draw (0,2.1) node[right]{\color{red}$a_3$};
\draw (0,-2.1) node[left]{\color{red}$a_3$};
\draw [line width=1pt,color=blue] (1.65,2.35) to[out=-135,in=135] (1.35, 1.35) to[out=-45,in=-135] (2.35,1.65);
\draw [line width=1pt,color=blue] (1.65,2.35-3.98) to[out=-135,in=135] (1.35, 1.35-3.98) to[out=-45,in=-135] (2.35,1.65-3.98);
\draw [line width=1pt,color=blue] (1.65-3.98,2.35-3.98) to[out=-135,in=135] (1.35-3.98, 1.35-3.98) to[out=-45,in=-135] (2.35-3.98,1.65-3.98);
\draw [line width=1pt,color=blue] (1.65-3.98,2.35) to[out=-135,in=135] (1.35-3.98, 1.35) to[out=-45,in=-135] (2.35-3.98,1.65);
\end{tikzpicture}
\ \ \ \ \ \ \ \
\begin{tikzpicture}
\draw [pink] (-2.5, 0.0)-- (2.5, 0.0);
\draw [pink] (-2.5,-0.4)-- (2.5,-0.4);
\draw [pink] (-2.5,-0.8)-- (2.5,-0.8);
\draw [pink] (-2.5,-1.2)-- (2.5,-1.2);
\draw [pink] (-2.5,-1.6)-- (2.5,-1.6);
\draw [pink] (-2.5,-2.0)-- (2.5,-2.0);
\filldraw [black!20] (0,0) circle (1cm);
\draw [line width=1.2pt] (0,0) circle (1cm);
\draw [color=ffqqqq][line width=1.2pt] (-0.71,-0.71)-- (-2.5,0) node[below]{$a_3$};
\draw [color=ffqqqq][line width=1.2pt] (-0.71,-0.71)-- (0,-2.5) node[right]{$a_1$};
\draw [color=ffqqqq][line width=1.2pt] (-0.71,-0.71)-- (-2.5,-2.5) node[right]{$a_2$};
\begin{scriptsize}
\filldraw [color=white, draw=ffqqqq, line width=0.5pt] (0.71,0.71) circle (2.5pt); \draw [color=ffqqqq] (0.71,0.71) node[right]{$r$};
\fill [color=qqwuqq] (-0.71,0.71) circle (2.5pt); \draw [color=qqwuqq] (-0.71,0.71) node[left]{$p$};
\filldraw [color=white, draw=ffqqqq, line width=0.5pt] (-0.71,-0.71) circle (2.5pt); \draw [color=ffqqqq] (-0.71,-0.71) node[right]{$s$};
\fill [color=qqwuqq] (0.71,-0.71) circle (2.5pt); \draw [color=qqwuqq] (0.71,-0.71) node[right]{$q$};
\end{scriptsize}
\draw [line width=1pt,color=blue] (-0.71, 0.71) to[out=-135,in=135] (-1.50,-1.50) to[out=-45,in=-135] ( 0.71,-0.71);
\draw [blue][->] (0.18+0.25,-1.2) arc (0:  42:0.25); \fill (0.18,-1.2) circle (1pt);
\draw [blue] (0.18+0.25,-1.1) node[right]{$\tgamma$};
\draw [blue][->] (-1.7+0.25,-1.2) arc (0: -55:0.25); \fill (-1.7,-1.2) circle (1pt);
\draw [blue] (-1.7+0.25,-1.3) node[above]{$\tgamma$};
\draw [blue][->] (-1.35+0.25,0.0) arc (0:-123:0.25); \fill (-1.35,0.0) circle (1pt);
\draw [blue] (-1.35+0.25,-0.1) node[below]{$\tgamma$};
\end{tikzpicture}
\caption{A graded arc $\tgamma$}
\label{fig:tgamma}
\end{center}
\end{figure}

\begin{example} \label{exp:gamma}

Let $\ip{i}{}$ be the oriented intersection from $\tgamma$ to $\wa_i$, $1\le i\le 3$, respectively.
Then we have $\ii_{\ip{1}{}}(\tilde{\gamma}, \tilde{a}_1)=0$, $\ii_{\ip{2}{}}(\tilde{\gamma}, \tilde{a}_2) = 1$ and $\ii_{\ip{3}{}}(\tilde{\gamma}, \tilde{a}_3) = 2$, see Figure~\ref{fig:intind}.

\begin{figure}[htpb]
\centering
\definecolor{uuuuuu}{rgb}{0.27,0.27,0.27}
\definecolor{qqqqff}{rgb}{0,0,1}
\definecolor{ffqqqq}{rgb}{1,0,0}
\begin{tikzpicture}
\draw [color=pink] (-2,0)-- (2,0);
\draw [line width=1.2pt][color=red] (-0.5,1.0)-- (0.75,-1.5);
\draw [line width=1.2pt][color=blue] (-1.5,-0.82)-- (1.5,0.82);
\fill (0,0) circle (1.5pt);
\draw (0.1,0.3) node {$\ip{1}{}$};
\draw [blue][->] (0.+1.0,0.) arc (0:28:1.0); \draw [blue] (0.+1.0,0.) node[below]{$\tgamma$};
\draw [black][->] (1.10, 0.61) arc (30:-61:1.3); \draw [black] (0.+1.2,0.2) node[right]{$\kappa$};
\draw [red][->] (0.72,-1.43) arc (-61:0:1.6); \draw [red] (1.12,-1.12) node[right] {$\tilde{a}_1^{-1}$};
\end{tikzpicture}
\ \
\begin{tikzpicture}
\draw [color=pink] (-2,0)-- (2,0);
\draw [line width=1.2pt][color=red]  (-1.45,-1.45)-- (1.0, 1.0);
\draw [line width=1.2pt][color=blue] (-1.0, 1.0)-- (1.45,-1.45);
\fill (0,0) circle (1.5pt);
\draw (0,0.3) node {$\ip{2}{}$};
\draw [blue][->]  (0.+1.0,0.) arc (0:-45:1.0); \draw [blue] (0.+1.0,-0.25) node[right]{$\tgamma$};
\draw [black][->](0.71,-0.71) arc (-45:-135:1.0); \draw [black] (0.,-1.) node[below]{$\kappa$};
\draw [red][->] (-0.71,-0.71) arc (-135:-180:1.0); \draw [red] (0.-1.0,-0.25) node[left] {$\tilde{a}_2^{-1}$};
\end{tikzpicture}
\ \
\begin{tikzpicture}
\draw [color=pink] (-2,0)-- (2,0);
\draw [line width=1.2pt][color=red]  (-1.0, 0.5)-- (1.0,-0.5);
\draw [line width=1.2pt][color=blue] (1.0, 1.0)-- (-1.45,-1.45);
\fill (0,0) circle (1.5pt);
\draw (0,0.3) node {$\ip{3}{}$};
\draw [blue][->]  (0.+1.0,0.) arc (0:-135:1.0); \draw [blue] (0.+1.0,-0.2) node[right]{$\tgamma$};
\draw [black][->](-0.71,-0.71) arc (-135:-205:1.0); \draw [black] (0.-1.0,-0.2) node[left]{$\kappa$};
\draw [red][->] (-0.89,0.41) arc (-206:-360:1.0); \draw [red] (0,1.0) node[above]{$\tilde{a}_3^{-1}$};
\end{tikzpicture}
    \caption{Intersection indices}
    \label{fig:intind}
\end{figure}
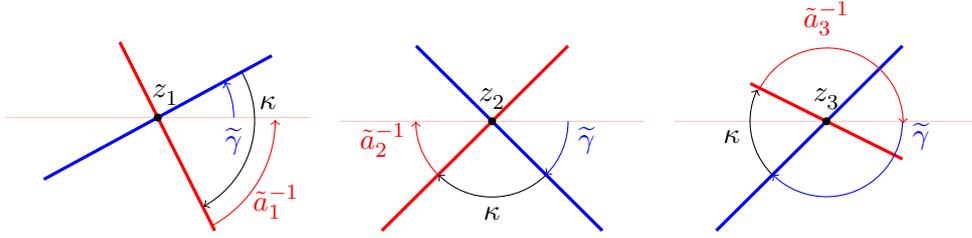

\end{example}

By \cite[Theorems~2.12 and 3.3]{OPS2018}, the marked surface $\SURF$ can be used to describe the category $D^b(\modcat \alg)$ in the following sense.

\begin{theorem}\label{thm:ops}
There is an injective map $X$ from the set $\GOCM(\SURF)$ of graded arcs on $\SURF$ whose endpoints are in $\Marked$ to the set of isoclasses of indecomposable complexes in $D^b(\modcat \alg)$, such that the following hold.
\begin{enumerate}
    \item The isoclasses of any indecomposable presilting complex in $D^b(\modcat \alg)$ belongs to the image of $X$.
    \item For any $\wc\in\GOCM(\SURF)$ and $d\in\mathbb{Z}$, we have $X(\wc[d])=X(\wc)[d]$.
    \item For any two $\wc_1,\wc_2\in\GOCM(\SURF)$ and $d\in\mathbb{Z}$, we have
    \[\dim_\kk \Hom_{D^b(\modcat \alg)} (X(\wc_1), X(\wc_2)[d]) = \Int^d (\wc_1, \wc_2).\]
\end{enumerate}
\end{theorem}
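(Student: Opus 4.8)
The plan is to read the statement as the specialization to our algebra $\alg$ of the geometric model of Opper--Plamondon--Schroll \cite{OPS2018}, and I would organize the argument into four steps: (a) verify that $\alg$ is gentle; (b) identify the graded marked surface $\SURF$ together with the dissection $\dac$ with the surface-with-dissection that \cite{OPS2018} attach to $\alg$; (c) transport their equivalence to produce the map $X$ together with properties (2) and (3); and (d) deduce property (1) from the finiteness of the global dimension together with the exclusion of band complexes.

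For step (a) I would check the defining conditions of a gentle algebra directly on $Q$ and $I$: every vertex carries at most two incoming and at most two outgoing arrows, the ideal $I$ is generated by the length-two paths $x_1x_2$ and $y_1y_2$, and at vertex $2$ the two relations $x_1x_2,y_1y_2\in I$ together with the two nonzero compositions $x_1y_2,y_1x_2\notin I$ realize exactly the branching pattern required of a gentle presentation (for each arrow ending at $2$ there is precisely one continuation in $I$ and precisely one not in $I$, and dually). Hence \cite{OPS2018} applies. For step (b) I would run the ribbon-graph construction of \cite{OPS2018} on $(Q,I)$: the three indecomposable projectives $P_1,P_2,P_3$ produce the three dissection arcs $a_1,a_2,a_3$ with endpoints in $\Y$, the combinatorics of the arrows and relations glue the resulting pieces along their boundaries into the two quadrilaterals visible in Figure~\ref{exp. grading}, and an Euler-characteristic count confirms that the underlying surface is the torus with one boundary component of Figure~\ref{exp. presilting}. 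The line field $\grad$ is then fixed to be the one encoding the zero grading on $\dac$, so that $X$ sends each $a_i$ to $P_i$ concentrated in degree $0$.

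Step (c) is a direct transport. By \cite[Theorem~2.12]{OPS2018} the homotopy classes of graded arcs with endpoints in $\Marked$ are in bijection with the isoclasses of indecomposable \emph{string} complexes, which furnishes an injective map $X$ onto that subfamily; property (2) is the compatibility of this bijection with the shift autoequivalence, which on the geometric side is precisely the degree shift $\wc\mapsto\wc[d]$ built into the definition of a graded arc. Property (3), namely $\dim_\kk \Hom_{D^b(\modcat \alg)} (X(\wc_1), X(\wc_2)[d]) = \Int^d (\wc_1, \wc_2)$, is exactly \cite[Theorem~3.3]{OPS2018} read off for two arcs with endpoints at the $\Marked$-points, where only the interior oriented intersections of index $d$ contribute, matching the definition of $\Int^d$ fixed above.

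It remains to establish (1), the only point genuinely specific to presilting. First, since $\alg$ has global dimension $2$ one has $\thick\alg=K^b(\proj\alg)=D^b(\modcat\alg)$, so every indecomposable complex already lies in $\thick\alg$ and condition (S1) is automatic. By the string/band classification of indecomposables underlying \cite{OPS2018}, every indecomposable object of $D^b(\modcat \alg)$ is either a string complex (an arc) or a band complex (a graded closed curve decorated with an indecomposable local system). A band complex $B$ is $\tau$-periodic, so Auslander--Reiten duality yields $\Hom_{D^b(\modcat \alg)}(B,B[1])\neq 0$; hence it violates (S2) and is never presilting. Consequently every indecomposable presilting complex is a string complex and therefore lies in the image of $X$, which is property (1). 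The step I expect to be the main obstacle is (b): faithfully matching the output of the OPS construction with the explicitly drawn $\SURF$, and in particular checking that the chosen line field $\grad$ reproduces the homological grading, since once this dictionary is pinned down the remaining assertions follow as transports of \cite[Theorems~2.12 and 3.3]{OPS2018}.
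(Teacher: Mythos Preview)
Your approach matches the paper's: Theorem~\ref{thm:ops} is not proved in the paper but simply stated as a specialization of \cite[Theorems~2.12 and 3.3]{OPS2018}, with a one-line remark afterward that the map becomes bijective once closed curves with local systems are added. Your steps (a)--(c) are exactly the unpacking of that citation, and are fine.

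The only point deserving a caution is your argument for (1). The conclusion that band complexes are never presilting is correct, but the justification ``a band complex $B$ is $\tau$-periodic, so Auslander--Reiten duality yields $\Hom(B,B[1])\neq 0$'' is a bit loose: in the derived category of a gentle algebra the Serre functor (hence $\tau$) acts on band objects in general only up to shift, so you would get $\Hom(B,B[m])\neq 0$ for some $m$ that a priori need not equal $1$ and need not be positive without further argument. A safer route is to invoke the intersection formula of \cite{OPS2018} directly for closed curves (a graded closed curve always carries an oriented self-intersection of positive index, essentially by \eqref{eq:hkk}), or to cite the known fact from the string/band literature that band complexes over gentle algebras have nonzero self-extensions in positive degree. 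Either of these closes the gap with no change to your overall strategy.
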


More generally, in \cite{OPS2018}, the above theorem holds for any gentle algebra, and the map $X$ is indeed a bijection if more curves with local system are added into the domain.

\begin{example}\label{ex:presil}
    Since $\Int^d (\tgamma, \tgamma)=0$ for any $d\neq 0$, by Theorem~\ref{thm:ops}, $X(\tgamma)$ is presilting.
\end{example}

By \cite[Corollary~2.28]{AI2012}, any silting complex in $D^b(\modcat \alg)$ has three non-isomorphic indecomposable summands. Hence the following proposition, which is proved at the end of the paper, implies Theorem~\ref{main}.

\begin{proposition}\label{prop:main}
Let $X(\tgamma)$ be the indecomposable presilting object in Example~\ref{ex:presil}. Then there is no indecomposable complex $S^\bullet$ in $D^b(\modcat \alg)$ such that $S^\bullet\ncong X(\tgamma)$ and $X(\tgamma)\oplus S^\bullet$ is presilting.
\end{proposition}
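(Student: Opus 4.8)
The plan is to push everything through the geometric model of Theorem~\ref{thm:ops} and then reduce to a finite, explicit inspection of arcs on $\SURF$, the key technical tool being the reciprocity identity \eqref{eq:hkk}. Assume for contradiction that $S^\bullet$ is indecomposable, $S^\bullet\not\cong X(\tgamma)$, and $X(\tgamma)\oplus S^\bullet$ is presilting. A direct summand of a presilting complex is presilting, so $S^\bullet$ is indecomposable presilting, and Theorem~\ref{thm:ops}(1) gives $S^\bullet\cong X(\tc)$ for a graded arc $\tc\in\GOCM(\SURF)$ with $\tc$ not homotopic to $\tgamma$ (the map $X$ being injective). By Theorem~\ref{thm:ops}(2)--(3), the complex $X(\tgamma)\oplus X(\tc)$ is presilting exactly when $\Int^d(\tc,\tc)=0$ and $\Int^d(\tgamma,\tc)=\Int^d(\tc,\tgamma)=0$ for all $d>0$; I must show this forces $\tc=\tgamma$.

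\emph{No crossings are allowed.} Fix a minimal position of $c$ and $\gamma$. If $c$ has a self-intersection $z$, then applying \eqref{eq:hkk} to the two local branches of $c$ at $z$ shows the two oriented self-intersections there have indices adding to $1$, so one of them is $\ge1$, giving $\Int^{d}(\tc,\tc)\ne0$ for some $d\ge 1$ --- impossible. Hence $c$ is simple. Likewise, an interior intersection $z$ of $c$ and $\gamma$ would give $\ii_z(\tc,\tgamma)+\ii_z(\tgamma,\tc)=1$ by \eqref{eq:hkk}, hence one of $\Int^{d}(\tgamma,\tc)$, $\Int^{d}(\tc,\tgamma)$ nonzero for some $d\ge1$ --- impossible. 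So $c$ is a simple arc with endpoints in $\Marked$ meeting $\gamma$, if at all, only at shared endpoints.

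\emph{Classifying the candidates (the main obstacle).} Now cut $\SURF$ along $\gamma$. Reading off Figure~\ref{fig:tgamma} --- or arguing that $\gamma$ is a non-separating arc between two marked points on the single boundary circle, so that the cut surface is connected, orientable, with $\chi=0$ --- one sees $\Surf\setminus\gamma$ is an annulus whose two boundary circles are the two arcs of $\partial\Surf$ cut off by $p$ and $q$ (one through $s$, one through $r$), each completed by a copy of $\gamma$, and each carrying a copy of $p$ and a copy of $q$. A simple arc in an annulus with boundary endpoints is boundary-parallel or transversal to the core; enumerating these, discarding the ones that do not reglue to an admissible graded arc disjoint from $\gamma$ in minimal position, one is left (up to homotopy of the underlying arc) with $c\simeq\gamma$ together with a short explicit list of arcs that, with $\gamma$, cobound a disk whose boundary meets $\partial\Surf$ in an arc containing a marked point --- typically the boundary-parallel arcs $\eta_s,\eta_r$ from $p$ to $q$ separating off $s$, resp.\ $r$. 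Assembling this list correctly and ruling out everything else is the technical heart of the argument.

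\emph{Disposing of the candidates.} If $c\simeq\gamma$, then $\tc=\tgamma[d]$ with $d\ne 0$. By Example~\ref{ex:presil}, $\Int^j(\tgamma,\tgamma)=0$ for $j\ne0$, while $\Int^0(\tgamma,\tgamma)\ge1$ (it records $\mathrm{id}_{X(\tgamma)}$). Using $X(\tgamma[d])=X(\tgamma)[d]$ and Theorem~\ref{thm:ops}(3), $\Int^i(\tgamma,\tgamma[d])=\Int^{i+d}(\tgamma,\tgamma)$ and $\Int^i(\tgamma[d],\tgamma)=\Int^{i-d}(\tgamma,\tgamma)$; requiring these to vanish for all $i>0$ forces $d\ge0$ and $d\le0$, i.e.\ $d=0$ --- a contradiction. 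For each remaining arc $c$, the oriented intersections between $\tc$ and $\tgamma$ occur only at the common endpoint(s), so there are at most two of them; computing their indices from the section $\F$ as in Example~\ref{exp:gamma} and using the constraint imposed by the marked point that $c$ and $\gamma$ enclose, one checks these indices cannot all be made $\le0$ by any shift of $\tc$ (for instance, for $\eta_s,\eta_r$ the two indices $\alpha$ at $p$ and $\beta$ at $q$ satisfy $\alpha+\beta\ge1$, while passing to $\tc[d]$ changes them to $\alpha+d$ and $\beta-d$). Hence for every grading some $\Int^d(\tgamma,\tc)$ or $\Int^d(\tc,\tgamma)$ with $d>0$ is nonzero --- the last contradiction. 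This proves Proposition~\ref{prop:main}, and together with \cite[Corollary~2.28]{AI2012} it yields Theorem~\ref{main}.
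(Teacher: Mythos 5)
Your overall strategy is the same as the paper's: transport everything to arcs via Theorem~\ref{thm:ops}, kill interior (self-)intersections with \eqref{eq:hkk}, and then show that any surviving candidate must have a positive intersection index with $\tgamma$ at a shared endpoint. The reduction steps and the disposal of $\tc=\tgamma[d]$ are fine. But the two places you wave at are precisely where the paper has to work, and as written your argument does not close.

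First, \eqref{eq:hkk} does not eliminate arcs with $c(0)=c(1)$. That identity pairs the two oriented intersections at an \emph{interior} crossing; at a marked point where two ends of $c$ meet, only one of the two rotation directions is an oriented intersection (second picture of Figure~\ref{fig:int}), so there is a single self-intersection index and no reciprocity forcing it to be positive. Such loops at $p$ (or $q$) exist, are simple, and can be made disjoint from the interior of $\gamma$, so they survive your ``no crossings'' step and also your final step, which only discusses arcs from $p$ to $q$. Ruling them out is the entire content of Proposition~\ref{prop:presilt.ind.obj.} and the lemmas feeding into it; you cannot get it for free.

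Second, the ``short explicit list'' does not exist. Cutting along $\gamma$ does give an annulus, but the essential simple arcs joining its two boundary circles form infinitely many isotopy classes, indexed by winding about the core, and every one of them reglues to a simple arc from $p$ to $q$ meeting $\gamma$ only at the endpoints and in minimal position. So the candidates you must dispose of are infinite in number (before even varying the grading), and the inequality $\alpha+\beta\ge 1$ for the two endpoint indices --- which is indeed the right grading-invariant to aim for, since a shift sends $(\alpha,\beta)$ to $(\alpha+d,\beta-d)$ --- is exactly what has to be \emph{proved} for all of them. The paper does this by first showing (Lemma~\ref{lem:circle}) that unwinding a ``circle'' while reshifting the grading preserves both endpoint indices, which reduces to finitely many combinatorial types, and then running the Case A/B/C analysis with simplest sequences in the proof of Proposition~\ref{prop:main}. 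You explicitly label this step the technical heart and then assert its conclusion; with the classification as stated (finitely many boundary-parallel arcs $\eta_s,\eta_r$ plus $\gamma$) the claim is simply false, and the proof is incomplete.
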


The proof of the above proposition consists of two steps: firstly we show that for any graded arc with self-intersection, it has a self-intersection with positive intersection index (see Proposition~\ref{prop:presilt.ind.obj.}); secondly we show that for any graded arc $\tsigma$ without self-intersection, either $\tsigma$ is homotopic to $\tgamma$, or there is an intersection between $\tsigma$ and $\tgamma$ with positive intersection index. These, together with the formula in Theorem~\ref{thm:ops}~(3), show the proposition.


\begin{remark}\label{rmk:app}
    Denote by $P(i)$ the indecomposable projective $\alg$-module corresponding to vertex $i$, $i=1,2,3$, respectively. By the construction of the map $X$ in \cite{OPS2018}, the complex $X(\tgamma)$ associated to $\tgamma$ is
    \[\cdots \to 0 \to P(3) \xrightarrow{f_{y_2}} P(2) \xrightarrow{f_{y_1}} P(1)\to 0\to \cdots,\]
    where $P(1)$ is on the degree 0 position, and $f_\alpha$ denotes the morphism between indecomposable projectives induced by arrow $\alpha$. This complex is isomorphic to the stalk complex formed by the representation $M = \xymatrix{\kk \ar@/^0.5pc/[r]^{1} \ar@/_0.5pc/[r]_{0} & \kk \ar@/^0.5pc/[r]^{0} \ar@/_0.5pc/[r]_{1} & \kk }$ in $\modcat \alg$. So $M$ satisfies conditions (T1) and (T2) in the introduction. Hence by Proposition~\ref{prop:main}, there is neither a $\alg$-module nor a complex $N\in D^b(\modcat \alg)$ such that $M\oplus N$ is tilting. Thus, our example is also a counter-example for Complement Question for tilting modules/complexes,
    which was first discovered by Kalck, see \cite[Proposition 1.4]{Kal2016}.
\end{remark}

\begin{remark}
    Take the dual graded arcs $\tgamma_1$ and $\tgamma_3$ in $\GOCM(\SURF)$ of $\walpha_1$ and $\walpha_3$ respectively with respect to $\dac$, that is, $\gamma_1$ (resp. $\gamma_3$) crosses only $a_1$ (resp. $a_3$) in $\dac$ and the intersection index is $0$, see Figure~\ref{fig:red}. Then $\tgamma,\tgamma_1,\tgamma_3$ form a full formal arc system in the sense of \cite{HKK2017}. So $D^b(\alg)$ is triangle equivalent to the perfect derived category $\operatorname{per}(\Gamma)$ of $\Gamma=\kk Q'/I'$, where
    \[Q' = \xymatrix{2' \ar@/^0.5pc/[rr]^{\alpha_1} \ar@/_0.5pc/[rr]_{\alpha_2} && 3'\ar[dl]^{\alpha_3}\\ & 1'\ar[ul]^{\alpha_4}},\ |\alpha_1|=|\alpha_2|=|\alpha_4|=0,\ |\alpha_3|=2,\ \text{and}\
    I' = \langle \alpha_3\alpha_4,\alpha_1\alpha_3,\alpha_4\alpha_2 \rangle.\]
    Here, the vertices $1',2',3'$ correspond to $\tgamma,\tgamma_1,\tgamma_3$, respectively. So under the triangle equivalence, $X(\tgamma)$ becomes $P(1')$. By \cite[Theorem~F]{CJS2022}, the silting reduction $\operatorname{per}(\Gamma)/\operatorname{thick}(P(1'))$ of $\operatorname{per}(\Gamma)$ with respect to the presilting complex $X(\tgamma)$ is triangle equivalent to the perfect derived category $\operatorname{per}(\Gamma_e)$, where $e$ stands for the idempotent of $\Gamma$ corresponding to vertex $1'$ and $\Gamma_e$ is the graded algebra $\kk Q''/I''$, where
    $$Q''=\xymatrix{3' \ar@/^1.5pc/[rr]^{\alpha_1} \ar@/_1pc/[rr]_{\alpha_2} && 2'\ar[ll]_{\beta}},\ |\alpha_1|=|\alpha_2|=0,\ |\beta|=1\ \text{and}\ I''=\langle\alpha_1\beta,\beta\alpha_2\rangle.$$
    Then by \cite[Theorem~2.37]{AI2012}, our Proposition~\ref{prop:main} is equivalent to that there is no indecomposable presilting object in $\operatorname{per}(\Gamma_e)$, which was claimed to be true at the end of \cite{CJS2022}, and after we completed and submitted this paper, was proven in \cite{JSW2023}.

\begin{figure}
\centering
\definecolor{ffqqqq}{rgb}{1,0,0}
\definecolor{qqwuqq}{rgb}{0,0,1}
\begin{tikzpicture} [scale=0.8]
\filldraw [black!20] (-2, 2) circle (0.5cm);
\filldraw [black!20] ( 2, 2) circle (0.5cm);
\filldraw [black!20] ( 2,-2) circle (0.5cm);
\filldraw [black!20] (-2,-2) circle (0.5cm);
\begin{scriptsize}
\draw [color=qqwuqq] (-1.8, 1.8) node{$q$}; \draw [color=qqwuqq] ( 1.8,-1.8) node{$p$};
\draw [color=ffqqqq] (-1.8,-1.8) node{$r$}; \draw [color=ffqqqq] ( 1.8, 1.8) node{$s$};
\end{scriptsize}
\draw [line width=1pt] (-2, 2) circle (0.5cm);
\draw [line width=1pt] ( 2, 2) circle (0.5cm);
\draw [line width=1pt] ( 2,-2) circle (0.5cm);
\draw [line width=1pt] (-2,-2) circle (0.5cm);
\draw [dash pattern=on 2pt off 2pt] (-2, 2)-- ( 2, 2);
\draw [dash pattern=on 2pt off 2pt] ( 2, 2)-- ( 2,-2);
\draw [dash pattern=on 2pt off 2pt] ( 2,-2)-- (-2,-2);
\draw [dash pattern=on 2pt off 2pt] (-2, 2)-- (-2,-2);
\draw [line width=1pt,color=ffqqqq] (-2.35,1.65)-- (-1.65,-1.65);
\draw [line width=1pt,color=ffqqqq] (-1.65,2.35)-- (1.65,1.65);
\draw [line width=1pt,color=ffqqqq] (1.65,1.65)-- (2.35,-1.65);
\draw [line width=1pt,color=ffqqqq] (1.65,-2.35)-- (-1.65,-1.65);
\draw [line width=1pt,color=ffqqqq] (-1.65,-1.65)-- (1.65,1.65);
\begin{scriptsize}
\fill [color=qqwuqq] (-1.65, 1.65) circle (2.5pt);
\fill [color=qqwuqq] ( 1.65,-1.65) circle (2.5pt);
\filldraw [color=white, draw=ffqqqq, line width=0.5pt] (-1.65, 2.35) circle (2.5pt); \draw [color=ffqqqq] (-1.65, 2.35) node[above]{$r$};
\fill [color=qqwuqq] (-2.35, 2.35) circle (2.5pt); \draw [color=qqwuqq] (-2.35, 2.35) node[left]{$p$};
\filldraw [color=white, draw=ffqqqq, line width=0.5pt] (-1.65,-1.65) circle (2.5pt);
\filldraw [color=white, draw=ffqqqq, line width=0.5pt] (-2.35,-2.35) circle (2.5pt); \draw [color=ffqqqq] (-2.35,-2.35) node[left]{$s$};
\fill [color=qqwuqq] (-2.35,-1.65) circle (2.5pt); \draw [color=qqwuqq] (-2.35,-1.65) node[left]{$p$};
\fill [color=qqwuqq] (-1.65,-2.35) circle (2.5pt); \draw [color=qqwuqq] (-1.65,-2.45) node[right]{$q$};
\filldraw [color=white, draw=ffqqqq, line width=0.5pt] (-2.35, 1.65) circle (2.5pt); \draw [color=ffqqqq] (-2.35, 1.65) node[left]{$s$};
\fill [color=qqwuqq] ( 1.65, 2.35) circle (2.5pt); \draw [color=qqwuqq] ( 1.65, 2.45) node[left]{$p$};
\filldraw [color=white, draw=ffqqqq, line width=0.5pt] ( 1.65, 1.65) circle (2.5pt);
\filldraw [color=white, draw=ffqqqq, line width=0.5pt] ( 2.35, 2.35) circle (2.5pt); \draw [color=ffqqqq] ( 2.35, 2.35) node[right]{$r$};
\fill [color=qqwuqq] ( 2.35, 1.65) circle (2.5pt); \draw [color=qqwuqq] ( 2.35, 1.65) node[right]{$q$};
\filldraw [color=white, draw=ffqqqq, line width=0.5pt] ( 2.35,-1.65) circle (2.5pt); \draw [color=ffqqqq] ( 2.35,-1.65) node[right]{$r$};
\fill [color=qqwuqq] ( 2.35,-2.35) circle (2.5pt); \draw [color=qqwuqq] ( 2.35,-2.35) node[right]{$q$};
\filldraw [color=white, draw=ffqqqq, line width=0.5pt] ( 1.65,-2.35) circle (2.5pt); \draw [color=ffqqqq] ( 1.65,-2.35) node[below]{$s$};
\end{scriptsize}
\draw [blue] ( 1.2, 1) node[right]{$\gamma$};
\draw [blue] (-2.8, 1) node[right]{$\gamma$};
\draw [blue] ( 1.2,-3) node[right]{$\gamma$};
\draw [blue] (-2.8,-3) node[right]{$\gamma$};
\draw ( 2.0,-0.6) node[right]{\color{red}$a_1$};
\draw (-2.0, 0.6) node[left]{\color{red}$a_1$};
\draw ( 0.0, 0.0) node[right]{\color{red}$a_2$};
\draw (-0.6, 2.0) node[above]{\color{red}$a_3$};
\draw ( 0.6,-2.0) node[below]{\color{red}$a_3$};
\draw [line width=1pt,color=blue] (1.65,2.35) to[out=-135,in=135] (1.35, 1.35) to[out=-45,in=-135] (2.35,1.65);
\draw [line width=1pt,color=blue] (1.65,2.35-3.98) to[out=-135,in=135] (1.35, 1.35-3.98) to[out=-45,in=-135] (2.35,1.65-3.98);
\draw [line width=1pt,color=blue] (1.65-3.98,2.35-3.98) to[out=-135,in=135] (1.35-3.98, 1.35-3.98) to[out=-45,in=-135] (2.35-3.98,1.65-3.98);
\draw [line width=1pt,color=blue] (1.65-3.98,2.35) to[out=-135,in=135] (1.35-3.98, 1.35) to[out=-45,in=-135] (2.35-3.98,1.65);
\draw [line width=1pt,color=blue] (-1.65, 1.65) -- (-2.35,-1.65);
\draw (-2.05,-0.6) node[left]{\color{blue}$\gamma_1$};
\draw [line width=1pt,color=blue] ( 2.35, 1.65) -- ( 1.65,-1.65);
\draw (2.05, 0.6) node[right]{\color{blue}$\gamma_1$};
\draw [line width=1pt,color=blue] (-1.65, 1.65) -- ( 1.65, 2.35);
\draw (0.6, 2.05) node[above]{\color{blue}$\gamma_3$};
\draw [line width=1pt,color=blue] (-1.65,-2.35) -- ( 1.65,-1.65);
\draw (-0.6,-2.05) node[below]{\color{blue}$\gamma_3$};
\end{tikzpicture}
\caption{A full formal arc system containing $\tgamma$}
\label{fig:red}
\end{figure}
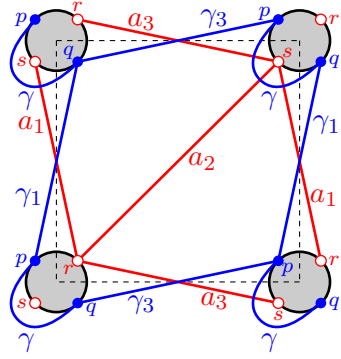
\end{remark}

\section{Simplifications}

An arc on $\SURF$ is called simple if it has no self-intersections except for its endpoints. Let $\SGOCM(\SURF)$ be the subset of $\GOCM(\SURF)$ consisting of the simple graded arcs on $\SURF$ whose endpoints are in $\Marked$.

\begin{notation}\label{not:1}
For any $\ws\in\SGOCM(\SURF)$, throughout the paper, we use the following notations. See \Pic\ref{fig:arc sgments}.
\begin{itemize}
    \item Denote by $\num{i}{\tsigma},1\leq i\leq n(\tsigma)$, the sequence of numbers in $\{1,2,3\}$ such that $\wa_{\num{i}{\tsigma}},1\leq i\leq n(\tsigma)$, are the graded arcs in $\dac=\{\wa_1,\wa_2,\wa_3\}$ that $\tsigma$ crosses in order, at the points $\ip{i}{\ws},1\leq i\leq n(\tsigma)$, with intersection indices $\dii{i}{\ws},1\leq i\leq n(\tsigma)$, respectively.
    \item For $i=0$ or $n(\ws)+1$, denote $\ip{0}{\ws}=\sigma(0)$ and $\ip{n(\ws)+1}{\ws}=\sigma(1)$.
    \item For $0\leq i<j\leq n(\ws)+1$, denote $\ws_{i, j}$ the segment of $\ws$ between $\ip{i}{\ws}$ and $\ip{j}{\ws}$.
\end{itemize}
When there is no confusion arising, we shall drop the subscript $\ws$ from the notations $\num{i}{\tsigma}$, $\ip{i}{\ws}$ and $\dii{i}{\ws}$.


\end{notation}

\begin{figure}[htpb]
\begin{center}
\definecolor{ffqqqq}{rgb}{1,0,0}
\definecolor{qqwuqq}{rgb}{0,0,1}
\definecolor{gradingcolor}{rgb}{0,0.5,0}
\begin{tikzpicture}[scale=1.3]
\draw[red][line width=1pt] (-3,-1) -- (-3, 1);
\draw[red] (-3,-0.8) node[right]{$\ta_{\num{1}{\ws}}$};
\filldraw[red] (-3,-1) circle (0.1cm);
\filldraw[white] (-3,-1) circle (0.07cm);
\filldraw[red] (-3, 1) circle (0.1cm);
\filldraw[white] (-3, 1) circle (0.07cm);
\draw[red][line width=1pt] (-1,-1) -- (-1, 1);
\draw[red] (-1,-0.8) node[right]{$\ta_{\num{2}{\ws}}$};
\filldraw[red] (-1,-1) circle (0.1cm);
\filldraw[white] (-1,-1) circle (0.07cm);
\filldraw[red] (-1, 1) circle (0.1cm);
\filldraw[white] (-1, 1) circle (0.07cm);
\draw[red][line width=1pt] ( 3,-1) -- ( 3, 1);
\draw[red] ( 3,-0.8) node[right]{$\ta_{\num{n(\ws)}{\ws}}$};
\filldraw[red] ( 3,-1) circle (0.1cm);
\filldraw[white] ( 3,-1) circle (0.07cm);
\filldraw[red] ( 3, 1) circle (0.1cm);
\filldraw[white] ( 3, 1) circle (0.07cm);
\draw[blue][line width=1pt] (-5,0) -- (5,0);
\filldraw[qqwuqq] (-5,0) circle (0.1cm);
\filldraw[qqwuqq] ( 5,0) circle (0.1cm);
\draw[blue][->][line width=1pt] (-1,0) -- (0,0);
\draw[blue] (-4,0) node[below]{$\tsigma_{0,1}$};
\draw[blue] (-2,0) node[below]{$\tsigma_{1,2}$};
\draw[blue] (1,0) node[below]{$\cdots$};
\draw[blue] (4,-0.3) node[right]{$\tsigma_{n(\tsigma),n(\tsigma)+1}$};
\draw[blue] (-5,0.2) node[right]{$\ip{0}{\ws}$};
\draw[blue] (-3,0) node{$\bm{\circ}$}; \draw[blue] (-3,0.2) node[right]{$\ip{1}{\ws}$};
\draw[blue] (-1,0) node{$\bm{\circ}$}; \draw[blue] (-1,0.2) node[right]{$\ip{2}{\ws}$};
\draw[blue] (1,0.2) node{$\cdots$};
\draw[blue] (3,0) node{$\bm{\circ}$}; \draw[blue] (3,0.2) node[right]{$\ip{n(\tsigma)}{\ws}$};
\draw[blue] (5,0.2) node[right]{$\ip{n(\ws)+1}{\ws}$};
\draw[gradingcolor][postaction={on each segment={mid arrow=gradingcolor}}] (-2.5,0) arc(0:-90:0.5);
\draw[gradingcolor] (-2.5,-0.4) node{$\dii{1}{\ws}$};
\draw[gradingcolor][postaction={on each segment={mid arrow=gradingcolor}}] (-0.5,0) arc(0:-90:0.5);
\draw[gradingcolor] (-0.5,-0.4) node{$\dii{2}{\ws}$};
\draw[gradingcolor][postaction={on each segment={mid arrow=gradingcolor}}] (3.5,0) arc(0:-90:0.5);
\draw[gradingcolor] (3.7,-0.4) node{$\dii{n(\ws)}{\ws}$};
\end{tikzpicture}
\caption{Notations for a graded arc $\ws$ in $\SGOCM(\SURF)$}
\label{fig:arc sgments}
\end{center}
\end{figure}

Easy calculations shows the following formulas for intersection indices, which are used frequently.

\begin{lemma}\label{lem:int ind}
    Let $\ws\in\SGOCM(\SURF)$. Then for any $1\leq i<n(\ws)$, we have
    $$\sdii{i+1}{\ws}=\begin{cases}
    \sdii{i}{\ws}+1 & \text{if $\snum{i}{\ws}<\snum{i+1}{\ws}$,}\\
    \sdii{i}{\ws}-1 & \text{if $\snum{i}{\ws}>\snum{i+1}{\ws}$.}
    \end{cases}
    $$
    Let $\ws,\ws'\in\SGOCM(\SURF)$ with $\ws(0)=\ws'(0)=z$. If $z$ is an oriented intersection from $\ws$ to $\ws'$, then $$\ii_z(\ws,\ws')=\dii{1}{\ws}-\dii{1}{\ws'}.$$
\end{lemma}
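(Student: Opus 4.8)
The plan is to reduce both identities to a purely local inspection inside the two quadrilaterals into which $\dac$ cuts $\SURF$, carried out in the flat model where $\grad$ is the horizontal foliation and each graded arc is an honest curve whose grading is its winding relative to the horizontal (the model used in the right-hand pictures of Figures~\ref{exp. grading}, \ref{fig:tgamma} and \ref{fig:intind}). Nothing global is needed: each formula concerns a single segment of a graded arc that lies inside one quadrilateral, which is a disc.

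For the first identity, fix $1\leq i<n(\ws)$ and look at the segment $\ws_{i,i+1}$. Since $\ws$ is simple, in minimal position, and meets no arc of $\dac$ strictly between $\ip{i}{\ws}$ and $\ip{i+1}{\ws}$, this segment is contained in one quadrilateral $Q$; and as $Q$ is a disc carrying the three distinct arcs $\wa_1,\wa_2,\wa_3$ among its four sides, $\ws_{i,i+1}$ enters $Q$ through the side $\wa_{\snum{i}{\ws}}$ and leaves through a \emph{different} side $\wa_{\snum{i+1}{\ws}}$ (a segment of $\ws$ with both endpoints on the same side of $Q$ would cut off a disc in $Q$ together with a piece of that side, contradicting minimal position). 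In particular $\snum{i}{\ws}\neq\snum{i+1}{\ws}$, so the two cases of the lemma are exhaustive. I would then isotope $\ws$ rel $\dac$ so that $\ws_{i,i+1}$ is a straight segment in the flat picture of $Q$, read off $\sdii{i}{\ws}$ and $\sdii{i+1}{\ws}$ from the definition of the intersection index together with the fixed grading data of $\dac$ in Figure~\ref{exp. grading}, and run through the finitely many ordered pairs of sides of the two quadrilaterals; in each case the difference comes out as $+1$ when $\snum{i}{\ws}<\snum{i+1}{\ws}$ and $-1$ when $\snum{i}{\ws}>\snum{i+1}{\ws}$. (One can cut the work down by checking only the adjacent-side cases and composing moves across $Q$ for the others.)

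For the second identity, the common endpoint $z=\ws(0)=\ws'(0)\in\Marked$ lies in the interior of a boundary segment, which is a side of a unique quadrilateral $Q$; hence the initial segments $\ws_{0,1}$ and $\ws'_{0,1}$ both lie in $Q$, meeting the sides $\wa_{\snum{1}{\ws}}$ and $\wa_{\snum{1}{\ws'}}$ respectively. I would compute both sides of the claimed equality in the flat picture of $Q$ in terms of windings relative to the horizontal: on one hand, by the definition of $\ii_z$ and the ``clockwise by an angle $<\pi$'' convention for $\kappa$, the integer $\ii_z(\ws,\ws')$ records the difference of the gradings of $\ws$ and $\ws'$ at $z$; on the other hand, moving along $\ws_{0,1}$ changes the grading by the turning of this segment, which over the disc $Q$ is well defined, so $\dii{1}{\ws}$ records the grading of $\ws$ at $z$, plus the turning of $\ws_{0,1}$, minus the grading of the exit side $\wa_{\snum{1}{\ws}}$ at $\ip{1}{\ws}$ (and likewise for $\ws'$). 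Since both arcs leave $z$ straight into the same disc $Q$, the contributions coming from the turnings and the exit sides are governed by the same local picture and match up in the difference $\dii{1}{\ws}-\dii{1}{\ws'}$, leaving exactly $\ii_z(\ws,\ws')$.

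The hard part is not the geometry but the bookkeeping of conventions: the intersection index is built from the clockwise rotation $\kappa$ by an angle smaller than $\pi$ and from \eqref{eq:hkk}, and the numbers $\dii{i}{\ws}$ are those attached to the oriented intersection from $\ws$ to $\wa_{\snum{i}{\ws}}$ (as in Example~\ref{exp:gamma}); one must keep track of the correct branch of ``winding modulo $\pi$'' at each transversal crossing, which is where sign errors would creep in. Once these conventions are pinned down and the two quadrilaterals are drawn in their flat models with the gradings of $\wa_1,\wa_2,\wa_3$ as in Figure~\ref{exp. grading}, both formulas follow by direct inspection.
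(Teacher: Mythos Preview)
The paper does not actually supply a proof of this lemma: it is introduced by the sentence ``Easy calculations shows the following formulas for intersection indices, which are used frequently'' and left at that. Your proposal is a correct and entirely reasonable unpacking of what those easy calculations amount to---a local inspection inside the two quadrilaterals cut out by $\dac$, carried out in the flat model with the horizontal grading and the specific gradings on $\wa_1,\wa_2,\wa_3$ fixed in Figure~\ref{exp. grading}---so it is in the same spirit as (and more detailed than) what the paper offers.
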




For any $\ws\in\SGOCM(\SURF)$ with $\sigma(0)=\sigma(1)$, there are two possible relative positions of the starting and the ending of $\ws$:
\begin{description}
    \item[(S.L.E.)] facing away $\sigma(0)=\sigma(1)$, the starting segment $\ws_{0,1}$ of $\ws$ is to the left of the ending segment $\ws_{n(\ws),n(\ws)+1}$ of $\ws$,
    \item[(S.R.E.)] facing away $\sigma(0)=\sigma(1)$, the starting segment $\ws_{0,1}$ of $\ws$ is to the right of the ending segment $\ws_{n(\ws),n(\ws)+1}$ of $\ws$.
\end{description}

For any $\ws\in\SGOCM(\SURF)$, we call $\ws$ \emph{contains a circle} provided that there is $1\leq i\leq n(\ws)-6$ such that $\snum{i}{\tsigma}=\snum{i+6}{\tsigma}$ and when gluing $\sip{i}{\ws}$ and $\sip{i+6}{\ws}$ together along $\wa_{\snum{i}{\tsigma}}$, $\tsigma_{i,i+6}$ becomes a circle around the boundary $\partial\Surf$.

\begin{lemma}\label{lem:circle}
For any $\ws\in\SGOCM(\SURF)$, there is a $\wt\in\SGOCM(\SURF)$ which does not contain a circle and such that the following hold.
\begin{enumerate}
    \item[(1)] $\sigma(0)=\tau(0)$ and $\sigma(1)=\tau(1)$.
    \item[(2)] $\dii{1}{\ws}=\dii{1}{\wt}$ and $\dii{n(\ws)}{\ws}=\dii{n(\wt)}{\wt}$.
    \item[(3)] If $\sigma(0)=\sigma(1)$ (and then $\tau(0)=\tau(1)$ by (1)), then the relative position of the starting and the ending of $\ws$ is the same as that of $\wt$.
\end{enumerate}
\end{lemma}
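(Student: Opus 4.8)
The plan is to remove circles one at a time until none remain, keeping careful track of what a single removal does to the data $(\sigma(0),\sigma(1),\dii{1}{},\dii{n}{})$ and to the relative position of the two ends. First I would set up the elementary observation underlying a ``circle'': since $\dac$ cuts $\SURF$ into two quadrilaterals, and since $\Surf$ is a once-holed torus, a segment $\tsigma_{i,i+6}$ that closes up into a loop around $\partial\Surf$ after gluing $\sip{i}{}$ to $\sip{i+6}{}$ along $\wa_{\snum{i}{}}$ is precisely a piece of $\tsigma$ that winds once around the handle (equivalently, the six-step pattern $\snum{i}{},\snum{i+1}{},\dots,\snum{i+6}{}$ is the pattern forced by going once around the boundary and returning to the same arc $\wa_{\snum{i}{}}$). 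The key point is that such a segment is homotopic, rel its endpoints on $\wa_{\snum{i}{}}$ and as an immersed arc, to the constant path at $\sip{i}{}=\sip{i+6}{}$ — the loop around $\partial\Surf$ is contractible in $\Surf$ once we are allowed to push across the boundary component, so ``cutting out the circle'' means deleting the subsequence $\num{i+1}{},\dots,\num{i+6}{}$ and reconnecting $\tsigma_{0,i}$ directly to $\tsigma_{i+6,n+1}$.

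Next I would carry out this surgery precisely. Let $\tsigma'$ be the graded arc obtained from $\tsigma$ by replacing $\tsigma_{i,i+6}$ with a short segment crossing $\wa_{\snum{i}{}}$ once (or by splicing directly through the gluing point, in the minimal position); its grading is inherited from $\tsigma$, and one checks it is again simple — any new self-intersection would project to a self-intersection of $\tsigma$ or would force the removed circle to have been non-trivially knotted, contradiction. Since the surgery only alters $\tsigma$ strictly between the crossings $\sip{i}{}$ and $\sip{i+6}{}$ (which both lie on the \emph{same} arc $\wa_{\snum{i}{}}$ with the \emph{same} local picture before and after, by Lemma~\ref{lem:int ind} together with the six-step pattern returning to the start), the starting crossing data $(\num{1}{},\dii{1}{})$ and the ending crossing data $(\num{n}{},\dii{n}{})$ are untouched, giving~(1) and~(2); and when $\sigma(0)=\sigma(1)$, the starting segment $\tsigma_{0,1}$ and ending segment $\tsigma_{n,n+1}$ are not modified at all, so the relative position \textbf{(S.L.E.)}/\textbf{(S.R.E.)} is preserved, giving~(3). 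Finally, $\tsigma'$ has strictly fewer crossings with $\dac$, namely $n(\tsigma)-6$, so iterating this surgery terminates at some $\wt\in\SGOCM(\SURF)$ that contains no circle and still satisfies (1)--(3), since each property is preserved at every step.

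The main obstacle I anticipate is not the bookkeeping for (1)--(3) — that is essentially forced once the surgery is set up — but rather making rigorous the claim that a ``circle'' segment can be excised without creating new self-intersections and without changing the homotopy class in a way that affects the endpoints' local data. Concretely: one must verify that the six-step return pattern in the definition of ``contains a circle'' genuinely corresponds to winding exactly once around $\partial\Surf$ (so that it is the minimal such pattern on this particular surface with this particular $\dac$), and that after excision the result is still in minimal position with $\dac$, so that $n(\wt)=n(\tsigma)-6k$ really is the crossing count. I would handle this by an explicit local analysis on the universal-cover/polygon picture of Figure~\ref{exp. grading}: lift $\tsigma$ to the infinite strip, identify the circle segment as the translate realizing the deck transformation around the handle, and observe that replacing it by the direct splice yields a shorter arc whose minimal position with the (lifted) $\dac$ is immediate. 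Everything else — the grading on $\wt$, the equalities of indices — then follows from Lemma~\ref{lem:int ind} and the shift/index conventions already fixed.
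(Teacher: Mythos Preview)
Your surgery has a genuine gap in condition~(2). A circle segment $\tsigma_{i,i+6}$ traverses the six arc-crossings in the pattern $\snum{j}{}\equiv j\pmod 3$ (or its reverse), and a direct computation with the first formula of Lemma~\ref{lem:int ind} gives
\[
\dii{i+6}{\ws}-\dii{i}{\ws}=(+1)+(+1)+(-1)+(+1)+(+1)+(-1)=\pm 2,
\]
not $0$. Consequently, if you splice $\tsigma_{0,i}$ to $\tsigma_{i+6,n(\ws)+1}$ and equip the result with the grading inherited continuously from $\tsigma_{0,i}$, the grading on the second half is shifted by $\mp 2$ relative to its grading on $\tsigma$, so $\dii{n(\ws')}{\ws'}=\dii{n(\ws)}{\ws}\mp 2$. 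A global grading shift cannot fix this without breaking $\dii{1}{}$ instead. (This nonzero shift is exactly why the paper's operation needs the accompanying grading shift by $-2$.) Incidentally, your remark that the boundary loop is ``contractible in $\Surf$ once we are allowed to push across the boundary component'' is not usable here: in the once-holed torus the boundary curve represents the commutator of the handle generators and is not null-homotopic in $\Surf$, and arcs in $\SGOCM(\SURF)$ are not permitted to leave $\Surf$.

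The paper avoids both this grading obstruction and your simplicity concern by a different mechanism. Instead of excising a circle from the interior, it slides \emph{both} endpoints of $\ws$ once anticlockwise around $\partial\Surf$ and simultaneously shifts the grading by $-2$. This is essentially the effect of a boundary-parallel Dehn twist together with a compensating grading shift, so the new curve is automatically simple. Because both ends are modified symmetrically, the boundary intersection indices $\dii{0}{}$ and $\dii{n(\ws)+1}{}$ (hence $\dii{1}{}$ and $\dii{n(\ws)}{}$, via Lemma~\ref{lem:int ind}) and the (S.L.E.)/(S.R.E.) relative position are preserved, while the circle numbers $\CCC^{\circlearrowright}$ from each end drop by exactly $1$. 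Iterating brings $\CCC^{\circlearrowright}(\wt)$ into $(0,1]$, and a separate observation then bounds $\CCC^{\circlearrowright}(\underleftarrow{\wt})$ in $(-1,1]$, so no circle remains.
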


\begin{proof}
Let $\walpha_p$ (resp. $\walpha_q$) be the segment of $\partial\Surf$ between $p$ and $q$ and passing through $r$ (resp. $s$), with an arbitrary grading. Denote $\dii{0}{\ws}=\ii_{\ip{0}{\ws}}(\ws,\walpha_{\sigma(0)})$ and $\dii{n(\ws)+1}{\ws}=\ii_{\ip{n(\ws)+1}{\ws}}(\ws,\walpha_{\sigma(1)})$. Applying the second formula in Lemma~\ref{lem:int ind} to the pairs $(\ws,\walpha_{\sigma(0)})$ and $(\ws,\walpha_{\sigma(1)})$ of graded arcs, we get that condition (2) can be replaced with $\dii{0}{\ws}=\dii{0}{\wt}$ and $\dii{n(\ws)+1}{\ws}=\dii{n(\wt)+1}{\wt}$.

We define the circle number $\CCC^{\circlearrowright}(\ws)$ of $\ws$ from the starting point to be a rational number $\frac{a}{6}$, where
\begin{itemize}
    \item if $\snum{1}{\tsigma}=1$, then $a>0$ and $|a|$ is the maximal integer such that $\snum{i}{\tsigma}\equiv i\mod 3$ for any $1\leq i\leq |a|$,
    \item if $\snum{1}{\tsigma}=2$, then $a=0$, and
    \item if $\snum{1}{\tsigma}=3$, then $a<0$ and $|a|$ is the maximal integer such that $\snum{i}{\tsigma}\equiv 4-i\mod 3$ for any $1\leq i\leq |a|$.
\end{itemize}
Similarly, we define the circle number of $\ws$ from the ending point to be $\CCC^{\circlearrowright}(\underleftarrow{\ws})$, where $\underleftarrow{\ws}$ is the opposite direction of $\ws$. Then $\ws$ contains a circle if and only if at least one of $|\CCC^{\circlearrowright}(\ws)|$ and $|\CCC^{\circlearrowright}(\underleftarrow{\ws})|$ is bigger than $1$. Without loss of generality, we assume $|\CCC^{\circlearrowright}(\ws)|\geq|\CCC^{\circlearrowright}(\underleftarrow{\ws})|$ and $\CCC^{\circlearrowright}(\ws)>1$. So $\CCC^{\circlearrowright}(\ws)\geq\CCC^{\circlearrowright}(\underleftarrow{\ws})$.

By moving both endpoints of $\ws$ anti-clockwise around the boundary $\partial\Surf$ a lap and shifting the grading by $-2$, we obtain a graded arc $\ws'\in\SGOCM(\SURF)$ satisfying the following.
\begin{itemize}
    \item $\sigma(0)=\sigma'(0)$ and $\sigma(1)=\sigma'(1)$.
    \item $\dii{0}{\ws}=\dii{0}{\ws'}$ and $\dii{n(\ws)+1}{\ws}=\dii{n(\ws')+1}{\ws'}$.
    \item If $\sigma(0)=\sigma(1)$, then the relative position of the starting and the ending of $\ws$ is the same as that of $\ws'$.
    \item $\CCC^{\circlearrowright}(\ws')=\CCC^{\circlearrowright}(\ws)-1$ and $\CCC^{\circlearrowright}(\underleftarrow{\ws'})=\CCC^{\circlearrowright}(\underleftarrow{\ws})-1$.
\end{itemize}
Thus, using the induction, we can get a graded arc $\wt$ satisfying conditions (1)-(3), and $0<\CCC^{\circlearrowright}(\wt)\leq 1$ and $\CCC^{\circlearrowright}(\wt)\geq\CCC^{\circlearrowright}(\underleftarrow{\wt})$. Note that in this case, we have $\CCC^{\circlearrowright}(\wt)-\CCC^{\circlearrowright}(\underleftarrow{\wt})\leq 1$, see Figure~\ref{fig in rmk-encircling}.
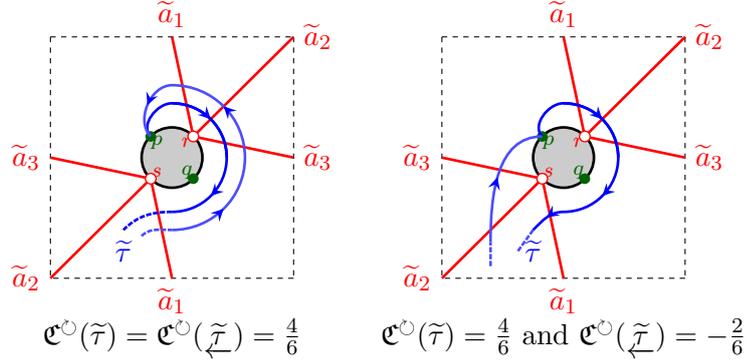
\begin{figure}[htpb]
\begin{center}
\definecolor{ffqqqq}{rgb}{1,0,0}
\definecolor{qqwuqq}{rgb}{0,0.39,0}
\begin{tikzpicture}[scale=0.8]
\filldraw [black!20] (0,0) circle (0.5cm); \draw [line width=1pt] (0, 0) circle (0.5cm);
\draw [dash pattern=on 2pt off 2pt] (-2, 2)-- ( 2, 2);
\draw [dash pattern=on 2pt off 2pt] ( 2, 2)-- ( 2,-2);
\draw [dash pattern=on 2pt off 2pt] ( 2,-2)-- (-2,-2);
\draw [dash pattern=on 2pt off 2pt] (-2, 2)-- (-2,-2);
\begin{scriptsize}
\draw [color=qqwuqq] (-0.25, 0.25) node{$p$}; \draw [color=qqwuqq] ( 0.25,-0.25) node{$q$};
\draw [color=ffqqqq] (-0.25,-0.25) node{$s$}; \draw [color=ffqqqq] ( 0.25, 0.25) node{$r$};
\end{scriptsize}
\draw [color=ffqqqq][line width=1pt] ( 0.35, 0.35)-- ( 0, 2); 
\draw [color=ffqqqq][line width=1pt] (-0.35,-0.35)-- ( 0,-2); 
\draw [color=ffqqqq][line width=1pt] ( 0.35, 0.35)-- ( 2, 2); 
\draw [color=ffqqqq][line width=1pt] (-0.35,-0.35)-- (-2,-2); 
\draw [color=ffqqqq][line width=1pt] ( 0.35, 0.35)-- ( 2, 0); 
\draw [color=ffqqqq][line width=1pt] (-0.35,-0.35)-- (-2, 0); 
\draw [ffqqqq] ( 0, 2) node[above]{$\ta_1$};
\draw [ffqqqq] ( 0,-2) node[below]{$\ta_1$};
\draw [ffqqqq] ( 2, 2) node[right]{$\ta_2$};
\draw [ffqqqq] (-2,-2) node[left]{$\ta_2$};
\draw [ffqqqq] ( 2, 0) node[right]{$\ta_3$};
\draw [ffqqqq] (-2, 0) node[left]{$\ta_3$};
\begin{scriptsize}
\fill [color=qqwuqq] (-0.35, 0.35) circle (2.5pt);
\fill [color=qqwuqq] ( 0.35,-0.35) circle (2.5pt);
\filldraw [color=white, draw=ffqqqq, line width=0.5pt] ( 0.35, 0.35) circle (2.5pt);
\filldraw [color=white, draw=ffqqqq, line width=0.5pt] (-0.35,-0.35) circle (2.5pt);
\end{scriptsize}
\draw [line width=1pt,color=blue]
  (-0.35, 0.35) to[out= 135, in= 180] ( 0.00, 0.90) to[out=   0, in=  90] ( 0.90, 0.00);
\draw [line width=1pt,color=blue] [postaction={on each segment={mid arrow=blue}}]
  ( 0.00, 0.90) to[out=   0, in=  90] ( 0.90, 0.00) to[out= -90, in=   0] ( 0.00,-0.90);
\draw [line width=1pt,color=blue] [dash pattern=on 2pt off 0.6pt]
  ( 0.00,-0.90) to[out= 180, in=  45] (-0.80,-1.20) node[below]{$\wt$};
\draw [line width=1pt,color=blue!70] [dash pattern=on 2pt off 0.6pt]
  (-0.50,-1.30) to[out=  45, in= 180] ( 0.00,-1.20);
\draw [line width=1pt,color=blue!70] [postaction={on each segment={mid arrow=blue}}]
  ( 0.00,-1.20) to[out=   0, in= -90] ( 1.20, 0.00)
                to[out=  90, in=   0] ( 0.00, 1.20) to[out= 180, in= 135] (-0.35, 0.35);
\draw (0,-3) node{$\CCC^{\circlearrowright}(\wt) =
\CCC^{\circlearrowright}(\underleftarrow{\wt}) = \frac{4}{6}$};
\end{tikzpicture}
\ \
\begin{tikzpicture}[scale=0.8]
\filldraw [black!20] (0,0) circle (0.5cm); \draw [line width=1pt] (0, 0) circle (0.5cm);
\draw [dash pattern=on 2pt off 2pt] (-2, 2)-- ( 2, 2);
\draw [dash pattern=on 2pt off 2pt] ( 2, 2)-- ( 2,-2);
\draw [dash pattern=on 2pt off 2pt] ( 2,-2)-- (-2,-2);
\draw [dash pattern=on 2pt off 2pt] (-2, 2)-- (-2,-2);
\begin{scriptsize}
\draw [color=qqwuqq] (-0.25, 0.25) node{$p$}; \draw [color=qqwuqq] ( 0.25,-0.25) node{$q$};
\draw [color=ffqqqq] (-0.25,-0.25) node{$s$}; \draw [color=ffqqqq] ( 0.25, 0.25) node{$r$};
\end{scriptsize}
\draw [color=ffqqqq][line width=1pt] ( 0.35, 0.35) -- ( 0, 2); 
\draw [color=ffqqqq][line width=1pt] (-0.35,-0.35) -- ( 0,-2); 
\draw [color=ffqqqq][line width=1pt] ( 0.35, 0.35)-- ( 2, 2); 
\draw [color=ffqqqq][line width=1pt] (-0.35,-0.35)-- (-2,-2); 
\draw [color=ffqqqq][line width=1pt] ( 0.35, 0.35)-- ( 2, 0); 
\draw [color=ffqqqq][line width=1pt] (-0.35,-0.35)-- (-2, 0); 
\draw [ffqqqq] ( 0, 2) node[above]{$\ta_1$};
\draw [ffqqqq] ( 0,-2) node[below]{$\ta_1$};
\draw [ffqqqq] ( 2, 2) node[right]{$\ta_2$};
\draw [ffqqqq] (-2,-2) node[left]{$\ta_2$};
\draw [ffqqqq] ( 2, 0) node[right]{$\ta_3$};
\draw [ffqqqq] (-2, 0) node[left]{$\ta_3$};
\begin{scriptsize}
\fill [color=qqwuqq] (-0.35, 0.35) circle (2.5pt);
\fill [color=qqwuqq] ( 0.35,-0.35) circle (2.5pt);
\filldraw [color=white, draw=ffqqqq, line width=0.5pt] ( 0.35, 0.35) circle (2.5pt);
\filldraw [color=white, draw=ffqqqq, line width=0.5pt] (-0.35,-0.35) circle (2.5pt);
\end{scriptsize}
\draw [line width=1pt,color=blue]
      (-0.35, 0.35) to[out= 135, in= 180] ( 0.00, 0.90) to[out=   0, in=  90] ( 0.90, 0.00);
\draw [line width=1pt,color=blue] [postaction={on each segment={mid arrow=blue}}]
      ( 0.00, 0.90) to[out=   0, in=  90] ( 0.90, 0.00) to[out= -90, in=   0] ( 0.00,-0.90)
                    to[out= 180, in=  45] (-0.50,-1.20) node[below]{$\wt$};
\draw [line width=1pt,color=blue!70] [dash pattern=on 2pt off 0.6pt]
      (-0.50,-1.20) -- (-0.75,-1.55);
\draw [line width=1pt,color=blue!70] 
      [postaction={on each segment={mid arrow=blue}}]
      (-1.20,-1.40) to[out=  90, in= 180] (-0.35, 0.35);
\draw [line width=1pt,color=blue!70] [dash pattern=on 2pt off 0.6pt]
      (-1.20,-1.80) -- (-1.20,-1.40); 
\draw (0,-3) node{$\CCC^{\circlearrowright}(\wt) = \frac{4}{6}$ and $
\CCC^{\circlearrowright}(\underleftarrow{\wt}) = -\frac{2}{6}$};
\end{tikzpicture}
\caption{Circle numbers}
\label{fig in rmk-encircling}
\end{center}
\end{figure}
So $-1<\CCC^{\circlearrowright}(\underleftarrow{\wt})\leq 1$. Hence $\wt$ does not contain a circle as required.
\end{proof}

A simple graded arc without a circle has the following property.

\begin{lemma}\label{lem:four}
For any $\ws\in\SGOCM(\SURF)$ which does not contain a circle, at least one type in the left/right picture of \Pic\ref{fig:four} does not appear as a segment of $\ws$.
\end{lemma}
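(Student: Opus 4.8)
The plan is to reduce the lemma to a finite check on the local pictures of $\ws$ relative to the dissection $\dac$, and then to argue by contraposition. First recall the geometry: the arcs in $\dac$ together with $\partial\Surf$ cut $\SURF$ into two quadrilaterals, and each quadrilateral has $\wa_1,\wa_2,\wa_3$ together with a single boundary segment as its four sides, in a fixed cyclic order, with $\wa_1$ and $\wa_3$ opposite and $\wa_2$ opposite to the boundary side; moreover each $\wa_i$ borders one of the two quadrilaterals on one side and the other quadrilateral on its other side. Hence every elementary segment $\tsigma_{i,i+1}$ of $\ws$ lies in a single quadrilateral and joins two of the sides $\wa_1,\wa_2,\wa_3$; these two sides are distinct, since a return segment would, together with a sub-arc of that side, bound a disk, which after an innermost-disk reduction contradicts the minimal position of $\ws$ against $\dac$. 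It follows that consecutive elementary segments of $\ws$ lie in alternating quadrilaterals, and that the ``segments'' of $\ws$ spanning a bounded number of consecutive crossings range over only finitely many combinatorial possibilities. I would begin by enumerating these and isolating among them the ``left type'' and the ``right type'' drawn in \Pic\ref{fig:four}.

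The core step is the contrapositive: \emph{if $\ws$ contains a segment of the left type and a segment of the right type, then $\ws$ contains a circle.} The two types record partial windings of $\ws$ around the boundary disk in the two opposite rotational senses. Walking along $\ws$ from the one occurrence to the other and recording, at each crossing, which side of the crossed arc $\ws$ emerges on and hence into which quadrilateral it then passes, one finds that the crossing sequence of $\ws$ is pinned down over an intermediate stretch of at least seven consecutive crossings: there $\snum{i}{\ws},\snum{i+1}{\ws},\dots,\snum{i+6}{\ws}$ is forced to realize one full revolution around $\partial\Surf$, so $\snum{i}{\ws}=\snum{i+6}{\ws}$ and, upon gluing $\sip{i}{\ws}$ to $\sip{i+6}{\ws}$ along $\wa_{\snum{i}{\ws}}$, the piece $\tsigma_{i,i+6}$ closes up to a loop around $\partial\Surf$. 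By definition this means $\ws$ contains a circle, contrary to hypothesis, so at least one of the two types is absent. The step rule $\sdii{i+1}{\ws}=\sdii{i}{\ws}+1$ when $\snum{i}{\ws}<\snum{i+1}{\ws}$ and $\sdii{i+1}{\ws}=\sdii{i}{\ws}-1$ when $\snum{i}{\ws}>\snum{i+1}{\ws}$, from Lemma~\ref{lem:int ind}, is convenient for tracking this intermediate stretch.

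The main obstacle I anticipate is exactly this forcing: one must rule out that the two types coexist ``innocently'', i.e.\ that $\ws$ could pass from turning one way to turning the other along an $S$-shaped detour that never completes a full loop. Here simplicity of $\ws$ is essential — two windings of opposite sense, joined by an arc that does not wind all the way around, would be forced to intersect one another — and the quadrilateral-alternation bookkeeping has to be carried out carefully for the handful of local configurations that can occur between the two occurrences. Once that is in place, passing to the conclusion ``$\ws$ contains a circle'' via Lemma~\ref{lem:int ind} and the definition of a circle is routine.
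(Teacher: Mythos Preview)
You have misread the statement, and this undermines the whole proposal. The left picture in \Pic\ref{fig:four} contains \emph{six} oriented segment types (i)--(vi), one for each ordered pair of sides in each of the two quadrilaterals; the right picture contains the same six segments with reversed orientation, labelled (i$'$)--(vi$'$). The lemma is really two parallel assertions: (a) not all six of (i)--(vi) occur as segments of $\ws$, and (b) not all six of (i$'$)--(vi$'$) occur. Its contrapositive (for the left picture) is therefore: \emph{if every one of the six types (i)--(vi) appears somewhere in $\ws$, then $\ws$ contains a circle}. Your proposed contrapositive --- ``if $\ws$ contains a segment of the left type and a segment of the right type, then $\ws$ contains a circle'' --- is a different statement and is in fact false: plenty of short simple arcs not containing a circle have one segment oriented as some (k) and another oriented as some (j$'$). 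So the ``opposite rotational senses forcing a full loop'' picture is not what is being claimed, and the argument you sketch does not prove the lemma.

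The paper's proof proceeds differently. Fixing the left picture and assuming all six types (i)--(vi) occur, it looks at the \emph{first} segment $\ws_{0,1}$ of $\ws$ (after normalising $\sigma(0)=p$) and uses simplicity against the six existing segments to force $\snum{2}{\ws},\snum{3}{\ws},\dots$ step by step: for instance, if $\snum{1}{\ws}=1$ then the presence of a type-(vi) segment forces $\sip{1}{\ws}$ to lie on a specific side of it on $\wa_1$, which in turn forces $\ws_{1,2}$ to be of type (vi$'$); iterating this with types (v),(iv),(iii),(ii),(i) forces $\ws_{1,2},\dots,\ws_{6,7}$ to be of types (vi$'$),(v$'$),(iv$'$),(iii$'$),(ii$'$),(i$'$), which is exactly a circle $\ws_{1,7}$. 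The remaining starting cases ($\snum{1}{\ws}=2$ or $3$, and the (S.L.E.) subcase) are handled by symmetry or a short variant of the same trace. Your intuition that simplicity pins down a stretch of seven consecutive crossings is the right mechanism, but it must be run from an endpoint of $\ws$ against the six assumed segment types, not between two occurrences of ``left'' and ``right'' types.
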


\begin{figure}[htpb]
\begin{center}
\definecolor{ffqqqq}{rgb}{1,0,0}
\definecolor{qqwuqq}{rgb}{0,0,1}
\begin{tikzpicture} [scale=0.8]
\filldraw [black!20] (-2, 2) circle (0.5cm);
\filldraw [black!20] ( 2, 2) circle (0.5cm);
\filldraw [black!20] ( 2,-2) circle (0.5cm);
\filldraw [black!20] (-2,-2) circle (0.5cm);
\draw [color=qqwuqq] (-1.8, 1.8) node{$q$}; \draw [color=qqwuqq] ( 1.8,-1.8) node{$p$};
\draw [color=ffqqqq] (-1.8,-1.8) node{$r$}; \draw [color=ffqqqq] ( 1.8, 1.8) node{$s$};
\draw [line width=1pt] (-2, 2) circle (0.5cm);
\draw [line width=1pt] ( 2, 2) circle (0.5cm);
\draw [line width=1pt] ( 2,-2) circle (0.5cm);
\draw [line width=1pt] (-2,-2) circle (0.5cm);
\draw [dash pattern=on 2pt off 2pt] (-2, 2)-- ( 2, 2);
\draw [dash pattern=on 2pt off 2pt] ( 2, 2)-- ( 2,-2);
\draw [dash pattern=on 2pt off 2pt] ( 2,-2)-- (-2,-2);
\draw [dash pattern=on 2pt off 2pt] (-2, 2)-- (-2,-2);
\draw [line width=1pt,color=ffqqqq] (-2.35,1.65)-- (-1.65,-1.65);
\draw [line width=1pt,color=ffqqqq] (-1.65,2.35)-- (1.65,1.65);
\draw [line width=1pt,color=ffqqqq] (1.65,1.65)-- (2.35,-1.65);
\draw [line width=1pt,color=ffqqqq] (1.65,-2.35)-- (-1.65,-1.65);
\draw [line width=1pt,color=ffqqqq] (-1.65,-1.65)-- (1.65,1.65);
\fill [color=qqwuqq] (-1.65, 1.65) circle (2.5pt);
\fill [color=qqwuqq] ( 1.65,-1.65) circle (2.5pt);
\filldraw [color=white, draw=ffqqqq, line width=0.5pt] (-1.65, 2.35) circle (2.5pt); \draw [color=ffqqqq] (-1.65, 2.35) node[above]{$r$};
\fill [color=qqwuqq] (-2.35, 2.35) circle (2.5pt); \draw [color=qqwuqq] (-2.35, 2.35) node[left]{$p$};
\filldraw [color=white, draw=ffqqqq, line width=0.5pt] (-1.65,-1.65) circle (2.5pt);
\filldraw [color=white, draw=ffqqqq, line width=0.5pt] (-2.35,-2.35) circle (2.5pt); \draw [color=ffqqqq] (-2.35,-2.35) node[left]{$s$};
\fill [color=qqwuqq] (-2.35,-1.65) circle (2.5pt); \draw [color=qqwuqq] (-2.35,-1.65) node[left]{$p$};
\fill [color=qqwuqq] (-1.65,-2.35) circle (2.5pt); \draw [color=qqwuqq] (-1.65,-2.35) node[right]{$q$};
\filldraw [color=white, draw=ffqqqq, line width=0.5pt] (-2.35, 1.65) circle (2.5pt); \draw [color=ffqqqq] (-2.35, 1.65) node[left]{$s$};
\fill [color=qqwuqq] ( 1.65, 2.35) circle (2.5pt); \draw [color=qqwuqq] ( 1.65, 2.35) node[left]{$p$};
\filldraw [color=white, draw=ffqqqq, line width=0.5pt] ( 1.65, 1.65) circle (2.5pt);
\filldraw [color=white, draw=ffqqqq, line width=0.5pt] ( 2.35, 2.35) circle (2.5pt); \draw [color=ffqqqq] ( 2.35, 2.35) node[right]{$r$};
\fill [color=qqwuqq] ( 2.35, 1.65) circle (2.5pt); \draw [color=qqwuqq] ( 2.35, 1.65) node[right]{$q$};
\filldraw [color=white, draw=ffqqqq, line width=0.5pt] ( 2.35,-1.65) circle (2.5pt); \draw [color=ffqqqq] ( 2.35,-1.65) node[right]{$r$};
\fill [color=qqwuqq] ( 2.35,-2.35) circle (2.5pt); \draw [color=qqwuqq] ( 2.35,-2.35) node[right]{$q$};
\filldraw [color=white, draw=ffqqqq, line width=0.5pt] ( 1.65,-2.35) circle (2.5pt); \draw [color=ffqqqq] ( 1.65,-2.35) node[below]{$s$};
%
\draw [line width=1pt,color=blue] [postaction={on each segment={mid arrow=blue}}]
      (2.1,-0.4) to[out=180,in=90] (0.4,-2.1);
\draw [blue] (0.87,-0.87) node[left]{\small (i)};
\draw [line width=1pt,color=blue] [postaction={on each segment={mid arrow=blue}}]
      (0.5,1.9) to[out=-90,in=135] (0.98,0.98);
\draw [blue] (0.63,1.38) node[left]{\small (ii)};
\draw [line width=1pt,color=blue] [postaction={on each segment={mid arrow=blue}}]
      (1.21,1.21) to[out=-45,in=180] (1.8,0.98);
\draw [blue] (1.5,0.99) node[below]{\small (iii)};
\draw [line width=1pt,color=blue] [postaction={on each segment={mid arrow=blue}}]
      (-2.1,0.4) to[out=0,in=-90] (-0.4,2.1);
\draw [blue] (-0.87,0.87) node[right]{\small (iv)};
\draw [line width=1pt,color=blue] [postaction={on each segment={mid arrow=blue}}]
      (-0.5,-1.9) to[out=90,in=-45] (-0.98,-0.98);
\draw [blue] (-0.63,-1.38) node[right]{\small (v)};
\draw [line width=1pt,color=blue] [postaction={on each segment={mid arrow=blue}}]
      (-1.21,-1.21) to[out=135,in=0] (-1.8,-0.98);
\draw [blue] (-1.49,-0.99) node[above]{\small (vi)};
\draw (0,-3) node{(i) -- (vi)};
\end{tikzpicture}
\ \ \ \
\begin{tikzpicture} [scale=0.8]
\filldraw [black!20] (-2, 2) circle (0.5cm);
\filldraw [black!20] ( 2, 2) circle (0.5cm);
\filldraw [black!20] ( 2,-2) circle (0.5cm);
\filldraw [black!20] (-2,-2) circle (0.5cm);
\draw [color=qqwuqq] (-1.8, 1.8) node{$q$}; \draw [color=qqwuqq] ( 1.8,-1.8) node{$p$};
\draw [color=ffqqqq] (-1.8,-1.8) node{$r$}; \draw [color=ffqqqq] ( 1.8, 1.8) node{$s$};
\draw [line width=1pt] (-2, 2) circle (0.5cm);
\draw [line width=1pt] ( 2, 2) circle (0.5cm);
\draw [line width=1pt] ( 2,-2) circle (0.5cm);
\draw [line width=1pt] (-2,-2) circle (0.5cm);
\draw [dash pattern=on 2pt off 2pt] (-2, 2)-- ( 2, 2);
\draw [dash pattern=on 2pt off 2pt] ( 2, 2)-- ( 2,-2);
\draw [dash pattern=on 2pt off 2pt] ( 2,-2)-- (-2,-2);
\draw [dash pattern=on 2pt off 2pt] (-2, 2)-- (-2,-2);
\draw [line width=1pt,color=ffqqqq] (-2.35,1.65)-- (-1.65,-1.65);
\draw [line width=1pt,color=ffqqqq] (-1.65,2.35)-- (1.65,1.65);
\draw [line width=1pt,color=ffqqqq] (1.65,1.65)-- (2.35,-1.65);
\draw [line width=1pt,color=ffqqqq] (1.65,-2.35)-- (-1.65,-1.65);
\draw [line width=1pt,color=ffqqqq] (-1.65,-1.65)-- (1.65,1.65);
\fill [color=qqwuqq] (-1.65, 1.65) circle (2.5pt);
\fill [color=qqwuqq] ( 1.65,-1.65) circle (2.5pt);
\filldraw [color=white, draw=ffqqqq, line width=0.5pt] (-1.65, 2.35) circle (2.5pt); \draw [color=ffqqqq] (-1.65, 2.35) node[above]{$r$};
\fill [color=qqwuqq] (-2.35, 2.35) circle (2.5pt); \draw [color=qqwuqq] (-2.35, 2.35) node[left]{$p$};
\filldraw [color=white, draw=ffqqqq, line width=0.5pt] (-1.65,-1.65) circle (2.5pt);
\filldraw [color=white, draw=ffqqqq, line width=0.5pt] (-2.35,-2.35) circle (2.5pt); \draw [color=ffqqqq] (-2.35,-2.35) node[left]{$s$};
\fill [color=qqwuqq] (-2.35,-1.65) circle (2.5pt); \draw [color=qqwuqq] (-2.35,-1.65) node[left]{$p$};
\fill [color=qqwuqq] (-1.65,-2.35) circle (2.5pt); \draw [color=qqwuqq] (-1.65,-2.35) node[right]{$q$};
\filldraw [color=white, draw=ffqqqq, line width=0.5pt] (-2.35, 1.65) circle (2.5pt); \draw [color=ffqqqq] (-2.35, 1.65) node[left]{$s$};
\fill [color=qqwuqq] ( 1.65, 2.35) circle (2.5pt); \draw [color=qqwuqq] ( 1.65, 2.35) node[left]{$p$};
\filldraw [color=white, draw=ffqqqq, line width=0.5pt] ( 1.65, 1.65) circle (2.5pt);
\filldraw [color=white, draw=ffqqqq, line width=0.5pt] ( 2.35, 2.35) circle (2.5pt); \draw [color=ffqqqq] ( 2.35, 2.35) node[right]{$r$};
\fill [color=qqwuqq] ( 2.35, 1.65) circle (2.5pt); \draw [color=qqwuqq] ( 2.35, 1.65) node[right]{$q$};
\filldraw [color=white, draw=ffqqqq, line width=0.5pt] ( 2.35,-1.65) circle (2.5pt); \draw [color=ffqqqq] ( 2.35,-1.65) node[right]{$r$};
\fill [color=qqwuqq] ( 2.35,-2.35) circle (2.5pt); \draw [color=qqwuqq] ( 2.35,-2.35) node[right]{$q$};
\filldraw [color=white, draw=ffqqqq, line width=0.5pt] ( 1.65,-2.35) circle (2.5pt); \draw [color=ffqqqq] ( 1.65,-2.35) node[below]{$s$};
\draw [line width=1pt,color=blue] [postaction={on each segment={mid arrow=blue}}]
      (0.4,-2.1) to[out=90,in=180] (2.1,-0.4);
\draw [blue] (0.87,-0.87) node[left]{\small (i')};
\draw [line width=1pt,color=blue] [postaction={on each segment={mid arrow=blue}}]
      (0.98,0.98) to[out=135,in=-90] (0.5,1.9);
\draw [blue] (0.63,1.38) node[left]{\small (ii')};
\draw [line width=1pt,color=blue] [postaction={on each segment={mid arrow=blue}}]
      (1.8,0.98) to[out=180,in=-45] (1.21,1.21);
\draw [blue] (1.42,0.99) node[below]{\small (iii')};
\draw [line width=1pt,color=blue] [postaction={on each segment={mid arrow=blue}}]
      (-0.4,2.1) to[out=-90,in=  0] (-2.1,0.4);
\draw [blue] (-0.87,0.87) node[right]{\small (iv')};
\draw [line width=1pt,color=blue] [postaction={on each segment={mid arrow=blue}}]
      (-0.98,-0.98) to[out=-45,in=90] (-0.5,-1.9);
\draw [blue] (-0.63,-1.38) node[right]{\small (v')};
\draw [line width=1pt,color=blue] [postaction={on each segment={mid arrow=blue}}]
      (-1.8,-0.98) to[out=0,in=135] (-1.21,-1.21);
\draw [blue] (-1.42,-0.99) node[above]{\small (vi')};
\draw (0,-3) node{(i') -- (vi')};
\end{tikzpicture}
\caption{Types of segments}
\label{fig:four}
\end{center}
\end{figure}
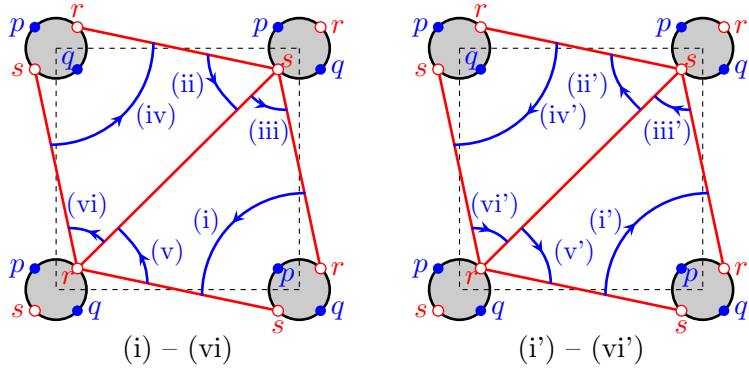

\begin{proof}
By symmetry, we only need to show the assertion for the left picture. Suppose conversely that there are $1\leq i_k\leq n(\ws)$, $1\leq k\leq 6$ such that $\ws_{i_1,i_1+1}$, $\ws_{i_2,i_2+1}$, $\ws_{i_3,i_3+1}$, $\ws_{i_4,i_4+1}$, $\ws_{i_5,i_5+1}$, $\ws_{i_6,i_6+1}$, are of types (i)-(vi), respectively.

By symmetry, we may assume $\sigma(0)=p$. We divide the proof into the following cases.

\begin{enumerate}
    \item[(1)] $\snum{1}{\tsigma}=1$, and either $\sigma(1)\neq\sigma(0)$, or $\sigma(1)=\sigma(0)$ with (S.R.E.). If $\sip{1}{\ws}$ is between $s$ and $\sip{i_6+1}{\ws}$, see the first picture of \Pic\ref{fig:four1}, then $i_6+1$ has to be $n(\ws)$, and hence $\sigma(1)=p$ and (S.L.E.), a contradiction. Hence $\sip{1}{\ws}$ is between $r$ and $\sip{i_6+1}{\ws}$, see the second picture of \Pic\ref{fig:four1}. Since $\ws_{1,2}$ does not cross $\ws_{i_6,i_6+1}$, we have $\snum{2}{\ws}=2$, i.e. $\ws_{1,2}$ is of type (vi'). Similarly, we have that $\ws_{2,3}$, $\ws_{3,4}$, $\ws_{4,5}$, $\ws_{5,6}$ and $\ws_{6,7}$ are of types (v')-(i') respectively, see the third picture of \Pic\ref{fig:four1} (where the relative position of $\sip{i_5+1}{\ws}$ and $\sip{i_6}{\ws}$ and that of $\sip{i_2+1}{\ws}$ and $\sip{i_3}{\ws}$ may change). But this implies that $\ws$ contains a circle $\ws_{1,7}$, a contradiction.

    \item[(2)] $\snum{1}{\tsigma}=1$, $\sigma(1)=\sigma(0)$ and (S.L.E.). Then $l_{n(\ws)}=1$. A similar argument shows that $\ws_{n(\ws)-1,n(\ws)}$, $\ws_{n(\ws)-2,n(\ws)-1}$, $\ws_{n(\ws)-3,n(\ws)-2}$, $\ws_{n(\ws)-4,n(\ws)-3}$, $\ws_{n(\ws)-5,n(\ws)-4}$ and $\ws_{n(\ws)-6,n(\ws)-5}$ are of types (vi)-(i) respectively (and may coincide $\ws_{i_6,i_6+1}$, $\ws_{i_5,i_5+1}$, $\ws_{i_4,i_4+1}$, $\ws_{i_3,i_3+1}$, $\ws_{i_2,i_2+1}$, $\ws_{i_1,i_1+1}$,  respectively), which form a circle $\ws_{n(\ws)-6,n(\ws)}$, see the fourth picture of \Pic\ref{fig:four1}, a contradiction.
    \item[(3)] $\snum{1}{\tsigma}=2$. Then type (i) does not appear as a segment of $\ws$. So we are done.
    \item[(4)] $\snum{1}{\tsigma}=3$. This case is similar to the case $\snum{1}{\tsigma}=1$ (i.e. (1) and (2) above), so we omit the proof.
\end{enumerate}

\begin{figure}[htpb]
\begin{center}
\definecolor{ffqqqq}{rgb}{1,0,0}
\definecolor{qqwuqq}{rgb}{0,0,1}
\begin{tikzpicture}[scale = 1.2]
\filldraw [black!20] (-2, 2) circle (0.5cm);
\filldraw [black!20] ( 2, 2) circle (0.5cm);
\filldraw [black!20] ( 2,-2) circle (0.5cm);
\filldraw [black!20] (-2,-2) circle (0.5cm);
\draw [color=qqwuqq] (-1.8, 1.8) node{$q$}; \draw [color=qqwuqq] ( 1.8,-1.8) node{$p$};
\draw [color=ffqqqq] (-1.8,-1.8) node{$r$}; \draw [color=ffqqqq] ( 1.8, 1.8) node{$s$};
\draw [line width=1pt] (-2, 2) circle (0.5cm);
\draw [line width=1pt] ( 2, 2) circle (0.5cm);
\draw [line width=1pt] ( 2,-2) circle (0.5cm);
\draw [line width=1pt] (-2,-2) circle (0.5cm);
\draw [dash pattern=on 2pt off 2pt] (-2, 2)-- ( 2, 2);
\draw [dash pattern=on 2pt off 2pt] ( 2, 2)-- ( 2,-2);
\draw [dash pattern=on 2pt off 2pt] ( 2,-2)-- (-2,-2);
\draw [dash pattern=on 2pt off 2pt] (-2, 2)-- (-2,-2);
\draw [line width=1pt,color=ffqqqq] (-2.35,1.65)-- (-1.65,-1.65);
\draw [line width=1pt,color=ffqqqq] (-1.65,2.35)-- (1.65,1.65);
\draw [line width=1pt,color=ffqqqq] (1.65,1.65)-- (2.35,-1.65);
\draw [line width=1pt,color=ffqqqq] (1.65,-2.35)-- (-1.65,-1.65);
\draw [line width=1pt,color=ffqqqq] (-1.65,-1.65)-- (1.65,1.65);
\fill [color=qqwuqq] (-1.65, 1.65) circle (2.5pt);
\fill [color=qqwuqq] ( 1.65,-1.65) circle (2.5pt);
\filldraw [color=white, draw=ffqqqq, line width=0.5pt] (-1.65, 2.35) circle (2.5pt);
\fill [color=qqwuqq] (-2.35, 2.35) circle (2.5pt);
\filldraw [color=white, draw=ffqqqq, line width=0.5pt] (-1.65,-1.65) circle (2.5pt);
\filldraw [color=white, draw=ffqqqq, line width=0.5pt] (-2.35,-2.35) circle (2.5pt);
\fill [color=qqwuqq] (-2.35,-1.65) circle (2.5pt);
\fill [color=qqwuqq] (-1.65,-2.35) circle (2.5pt);
\filldraw [color=white, draw=ffqqqq, line width=0.5pt] (-2.35, 1.65) circle (2.5pt);
\fill [color=qqwuqq] ( 1.65, 2.35) circle (2.5pt);
\filldraw [color=white, draw=ffqqqq, line width=0.5pt] ( 1.65, 1.65) circle (2.5pt);
\filldraw [color=white, draw=ffqqqq, line width=0.5pt] ( 2.35, 2.35) circle (2.5pt);
\fill [color=qqwuqq] ( 2.35, 1.65) circle (2.5pt);
\filldraw [color=white, draw=ffqqqq, line width=0.5pt] ( 2.35,-1.65) circle (2.5pt);
\fill [color=qqwuqq] ( 2.35,-2.35) circle (2.5pt);
\filldraw [color=white, draw=ffqqqq, line width=0.5pt] ( 1.65,-2.35) circle (2.5pt);
\draw [color=qqwuqq] (-1.8, 1.8) node{$q$};
\draw [color=ffqqqq] (-1.8,-1.8) node{$r$};
\draw [color=ffqqqq] (-1.65, 2.35) node[above]{$r$};
\draw [color=ffqqqq] ( 2.35,-1.65) node[right]{$r$};
\draw [color=ffqqqq] ( 1.8, 1.8) node{$s$};
\draw [color=ffqqqq] ( 1.65,-2.35) node[below]{$s$};
\draw [color=ffqqqq] (-2.35, 1.65) node[left]{$s$};
\draw [black!50][dash pattern=on 2pt off 2pt] (-2, 0) -- (2, 0);
\draw [line width=1pt,color=blue][postaction={on each segment={mid arrow=blue}}]
      (1.65,-1.65) to[out=90,in=180] (2, 0) node[right]{$\sip{1}{\ws}$};
\draw [line width=1pt,color=blue][->] (-2, 0) node[left]{$\sip{1}{\ws}$} to[out=0,in=200] (-0.2,0.2);
\draw [line width=1pt,color=cyan] [postaction={on each segment={mid arrow=cyan}}]
      (-1.21,-1.21) node[right]{$\sip{i_6}{\ws}$} to[out=135,in=0]
      (-2,-0.98) node[left]{$\sip{i_6+1}{\ws}$};
\draw [cyan][->] (-0.5,-0.79) node[right]{$\ws_{i_6, i_6\!+\!1}$}
      -- (-1.09,-0.79) -- (-1.29,-0.99);
\draw (1.8,0.4) node[right]{\small\color{red}$a_1$};
\draw (-1.8,-0.4) node[left]{\small\color{red}$a_1$};
\draw (0,0) node[right]{\small\color{red}$a_2$};
\draw (0,2.1) node[above]{\small\color{red}$a_3$};
\draw (0,-2.1) node[below]{\small\color{red}$a_3$};
\end{tikzpicture}
\ \ \ \ \
\begin{tikzpicture}[scale = 1.2]
\filldraw [black!20] (-2, 2) circle (0.5cm);
\filldraw [black!20] ( 2, 2) circle (0.5cm);
\filldraw [black!20] ( 2,-2) circle (0.5cm);
\filldraw [black!20] (-2,-2) circle (0.5cm);
\begin{scriptsize}
\draw [color=qqwuqq] (-1.8, 1.8) node{$q$}; \draw [color=qqwuqq] ( 1.8,-1.8) node{$p$};
\draw [color=ffqqqq] (-1.8,-1.8) node{$r$}; \draw [color=ffqqqq] ( 1.8, 1.8) node{$s$};
\end{scriptsize}
\draw [line width=1pt] (-2, 2) circle (0.5cm);
\draw [line width=1pt] ( 2, 2) circle (0.5cm);
\draw [line width=1pt] ( 2,-2) circle (0.5cm);
\draw [line width=1pt] (-2,-2) circle (0.5cm);
\draw [dash pattern=on 2pt off 2pt] (-2, 2)-- ( 2, 2);
\draw [dash pattern=on 2pt off 2pt] ( 2, 2)-- ( 2,-2);
\draw [dash pattern=on 2pt off 2pt] ( 2,-2)-- (-2,-2);
\draw [dash pattern=on 2pt off 2pt] (-2, 2)-- (-2,-2);
\draw [line width=1pt,color=ffqqqq] (-2.35,1.65)-- (-1.65,-1.65);
\draw [line width=1pt,color=ffqqqq] (-1.65,2.35)-- (1.65,1.65);
\draw [line width=1pt,color=ffqqqq] (1.65,1.65)-- (2.35,-1.65);
\draw [line width=1pt,color=ffqqqq] (1.65,-2.35)-- (-1.65,-1.65);
\draw [line width=1pt,color=ffqqqq] (-1.65,-1.65)-- (1.65,1.65);
\fill [color=qqwuqq] (-1.65, 1.65) circle (2.5pt);
\fill [color=qqwuqq] ( 1.65,-1.65) circle (2.5pt);
\filldraw [color=white, draw=ffqqqq, line width=0.5pt] (-1.65, 2.35) circle (2.5pt);
\fill [color=qqwuqq] (-2.35, 2.35) circle (2.5pt);
\filldraw [color=white, draw=ffqqqq, line width=0.5pt] (-1.65,-1.65) circle (2.5pt);
\filldraw [color=white, draw=ffqqqq, line width=0.5pt] (-2.35,-2.35) circle (2.5pt);
\fill [color=qqwuqq] (-2.35,-1.65) circle (2.5pt);
\fill [color=qqwuqq] (-1.65,-2.35) circle (2.5pt);
\filldraw [color=white, draw=ffqqqq, line width=0.5pt] (-2.35, 1.65) circle (2.5pt);
\fill [color=qqwuqq] ( 1.65, 2.35) circle (2.5pt);
\filldraw [color=white, draw=ffqqqq, line width=0.5pt] ( 1.65, 1.65) circle (2.5pt);
\filldraw [color=white, draw=ffqqqq, line width=0.5pt] ( 2.35, 2.35) circle (2.5pt);
\fill [color=qqwuqq] ( 2.35, 1.65) circle (2.5pt);
\filldraw [color=white, draw=ffqqqq, line width=0.5pt] ( 2.35,-1.65) circle (2.5pt);
\fill [color=qqwuqq] ( 2.35,-2.35) circle (2.5pt);
\filldraw [color=white, draw=ffqqqq, line width=0.5pt] ( 1.65,-2.35) circle (2.5pt);
\draw [color=qqwuqq] (-1.8, 1.8) node{$q$};
\draw [color=ffqqqq] (-1.8,-1.8) node{$r$};
\draw [color=ffqqqq] (-1.65, 2.35) node[above]{$r$};
\draw [color=ffqqqq] ( 2.35,-1.65) node[right]{$r$};
\draw [color=ffqqqq] ( 1.8, 1.8) node{$s$};
\draw [color=ffqqqq] ( 1.65,-2.35) node[below]{$s$};
\draw [color=ffqqqq] (-2.35, 1.65) node[left]{$s$};
\draw [black!50][dash pattern=on 2pt off 2pt] (-2,-0.4) -- (2,-0.4);
\draw [line width=1pt,color=blue][postaction={on each segment={mid arrow=blue}}]
      (1.65,-1.65) to[out=90,in=180] (2,-0.4) node[right]{$\sip{1}{\ws}$};
\draw [line width=1pt,color=blue][postaction={on each segment={mid arrow=blue}}]
      (-2,-0.4) node[left]{$\sip{1}{\ws}$} to[out=0,in=134] (-0.9,-0.9);
\draw[blue] (-0.9+0.25,-0.9-0.25) node{$\sip{2}{\ws}$};
\draw [line width=1pt,color=cyan][postaction={on each segment={mid arrow=cyan}}]
      (-0.65,-0.65) to[out=135,in=0] (-2,-0) node[left]{$\sip{i_6+1}{\ws}$};
\draw [cyan] (-0.65+0.25,-0.65-0.25) node{$\sip{i_6}{\ws}$};
\draw (1.8,0.4) node[right]{\small\color{red}$a_1$};
\draw (-2.0,0.4) node[left]{\small\color{red}$a_1$};
\draw (0,0) node[right]{\small\color{red}$a_2$};
\draw (0,2.1) node[above]{\small\color{red}$a_3$};
\draw (0,-2.1) node[below]{\small\color{red}$a_3$};
\end{tikzpicture}
\\
\begin{tikzpicture}[scale = 1.2]
\filldraw [black!20] (-2, 2) circle (0.5cm);
\filldraw [black!20] ( 2, 2) circle (0.5cm);
\filldraw [black!20] ( 2,-2) circle (0.5cm);
\filldraw [black!20] (-2,-2) circle (0.5cm);
\begin{scriptsize}
\draw [color=qqwuqq] (-1.8, 1.8) node{$q$}; \draw [color=qqwuqq] ( 1.8,-1.8) node{$p$};
\draw [color=ffqqqq] (-1.8,-1.8) node{$r$}; \draw [color=ffqqqq] ( 1.8, 1.8) node{$s$};
\end{scriptsize}
\draw [line width=1pt] (-2, 2) circle (0.5cm);
\draw [line width=1pt] ( 2, 2) circle (0.5cm);
\draw [line width=1pt] ( 2,-2) circle (0.5cm);
\draw [line width=1pt] (-2,-2) circle (0.5cm);
\draw [dash pattern=on 2pt off 2pt] (-2, 2)-- ( 2, 2);
\draw [dash pattern=on 2pt off 2pt] ( 2, 2)-- ( 2,-2);
\draw [dash pattern=on 2pt off 2pt] ( 2,-2)-- (-2,-2);
\draw [dash pattern=on 2pt off 2pt] (-2, 2)-- (-2,-2);
\draw [line width=1pt,color=ffqqqq] (-2.35,1.65)-- (-1.65,-1.65);
\draw [line width=1pt,color=ffqqqq] (-1.65,2.35)-- (1.65,1.65);
\draw [line width=1pt,color=ffqqqq] (1.65,1.65)-- (2.35,-1.65);
\draw [line width=1pt,color=ffqqqq] (1.65,-2.35)-- (-1.65,-1.65);
\draw [line width=1pt,color=ffqqqq] (-1.65,-1.65)-- (1.65,1.65);
\fill [color=qqwuqq] (-1.65, 1.65) circle (2.5pt);
\fill [color=qqwuqq] ( 1.65,-1.65) circle (2.5pt);
\filldraw [color=white, draw=ffqqqq, line width=0.5pt] (-1.65, 2.35) circle (2.5pt);
\fill [color=qqwuqq] (-2.35, 2.35) circle (2.5pt);
\filldraw [color=white, draw=ffqqqq, line width=0.5pt] (-1.65,-1.65) circle (2.5pt);
\filldraw [color=white, draw=ffqqqq, line width=0.5pt] (-2.35,-2.35) circle (2.5pt);
\fill [color=qqwuqq] (-2.35,-1.65) circle (2.5pt);
\fill [color=qqwuqq] (-1.65,-2.35) circle (2.5pt);
\filldraw [color=white, draw=ffqqqq, line width=0.5pt] (-2.35, 1.65) circle (2.5pt);
\fill [color=qqwuqq] ( 1.65, 2.35) circle (2.5pt);
\filldraw [color=white, draw=ffqqqq, line width=0.5pt] ( 1.65, 1.65) circle (2.5pt);
\filldraw [color=white, draw=ffqqqq, line width=0.5pt] ( 2.35, 2.35) circle (2.5pt);
\fill [color=qqwuqq] ( 2.35, 1.65) circle (2.5pt);
\filldraw [color=white, draw=ffqqqq, line width=0.5pt] ( 2.35,-1.65) circle (2.5pt);
\fill [color=qqwuqq] ( 2.35,-2.35) circle (2.5pt);
\filldraw [color=white, draw=ffqqqq, line width=0.5pt] ( 1.65,-2.35) circle (2.5pt);
\draw [color=qqwuqq] (-1.8, 1.8) node{$q$};
\draw [color=ffqqqq] (-1.8,-1.8) node{$r$};
\draw [color=ffqqqq] (-1.65, 2.35) node[above]{$r$};
\draw [color=ffqqqq] ( 2.35,-1.65) node[right]{$r$};
\draw [color=ffqqqq] ( 1.8, 1.8) node{$s$};
\draw [color=ffqqqq] ( 1.65,-2.35) node[below]{$s$};
\draw [color=ffqqqq] (-2.35, 1.65) node[left]{$s$};
\draw [line width=1pt,color=blue][postaction={on each segment={mid arrow=blue}}]
      (1.65,-1.65) to[out=90,in=180] (2.2,-1.0) node[right]{$\sip{1}{\ws}$};
\draw [black!50][dash pattern=on 2pt off 2pt] (2.2,-1.0) -- (-2,-1.0);
\draw [line width=1pt,color=blue][postaction={on each segment={mid arrow=blue}}]
      (-1.8,-1.0) node[left]{$\sip{1}{\ws}$} to[out=0, in=135] (-1.2, -1.2);
\draw [blue] (-1.25,-1.25) node[above]{$\sip{2}{\ws}$};
\draw [line width=1pt,color=blue][postaction={on each segment={mid arrow=blue}}]
      (-1.2, -1.2) to[out=-45, in=90] (-1.0, -1.8) node[below]{$\sip{3}{\ws}$};
\draw [black!50][dash pattern=on 2pt off 2pt] (-1.0, -2) -- (-1.0, 2);
\draw [line width=1pt,color=blue][postaction={on each segment={mid arrow=blue}}]
      (-1.0, 2.2) node[above]{$\sip{3}{\ws}$} to[out=-90,in=0]
      (-2.2, 1.0) node[left]{$\sip{4}{\ws}$};
\draw [black!50][dash pattern=on 2pt off 2pt] (-2, 1.0) -- ( 2, 1.0);
\draw [line width=1pt,color=blue][postaction={on each segment={mid arrow=blue}}]
      (1.8, 1.0) node[right]{$\sip{4}{\ws}$} to[out=180, in=-45] (1.2, 1.2);
\draw [blue] ( 1.25, 1.25) node[below]{$\sip{5}{\ws}$};
\draw [line width=1pt,color=blue][postaction={on each segment={mid arrow=blue}}]
      (1.2, 1.2) to[out=135, in=-90] (1.0, 1.8) node[above]{$\sip{6}{\ws}$};
\draw [black!50][dash pattern=on 2pt off 2pt] (1.0, 2.0) -- (1.0,-2.0);
\draw [line width=1pt,color=blue][postaction={on each segment={mid arrow=blue}}]
      (1.0,-2.2) node[below]{$\sip{6}{\ws}$} to[out=90, in=180]
      (2.1, -0.5)  node[right]{$\sip{7}{\ws}$};
\draw [cyan][line width=1pt] [postaction={on each segment={mid arrow=cyan}}]
      (2., -0.2) node[right]{$\sip{i_1}{\ws}$} to[out=180, in=90] (0.2,-2.1);
\draw [cyan][->] (0.2,-2.15) -- (0.2,-2.4); \draw[cyan] (0.2,-2.25) node[below]{$\sip{i_1\!+1}{\ws}$};
\draw [cyan][line width=1pt] [postaction={on each segment={mid arrow=cyan}}]
      (0, 2)  node[above]{$\sip{i_2}{\ws}$} to[out=-90,in=135]
      (0.4, 0.4); \draw[cyan] (0.4+0.4, 0.4-0.2) node{$\sip{i_2\!+1}{\ws}$};
\draw [cyan][line width=1pt] [postaction={on each segment={mid arrow=cyan}}]
      (0.67, 0.67) node[above]{$\sip{i_3}{\ws}$} to[out=-45,in=180]
      (1.95, 0.38) node[right]{$\sip{i_3\!+1}{\ws}$};
\draw [cyan][line width=1pt] [postaction={on each segment={mid arrow=cyan}}]
      (-2.1, 0.4) node[left]{$\sip{i_4}{\ws}$} to[out=0, in=-90] (-0.4, 2.1);
\draw [cyan][->] (-0.4, 2.15) -- (-0.4, 2.5); \draw[cyan] (-0.4, 2.4) node[above]{$\sip{i_4\!+1}{\ws}$};
\draw [cyan][line width=1pt] [postaction={on each segment={mid arrow=cyan}}]
      (-0.67,-0.67) to[out=135,in=0] (-1.95,-0.38) node[left]{$\sip{i_6\!+1}{\ws}$};
\draw [cyan] (-0.67+0.25,-0.67-0.25) node{$\sip{i_6}{\ws}$};
\draw [cyan][line width=1pt] [postaction={on each segment={mid arrow=cyan}}]
      (0,-2) to[out= 90,in=-45] (-0.4,-0.4);
\draw[cyan] (0-0.3,-2+0.3) node{$\sip{i_5}{\ws}$};
\draw[cyan] (-0.4-0.2, -0.4) node[above]{$\sip{i_5\!+1}{\ws}$};
\draw ( 2.0, 0.1) node[right]{\small\color{red}$a_1$};
\draw (-2.0, 0.0) node[left]{\small\color{red}$a_1$};
\draw (0,0) node[right]{\small\color{red}$a_2$};
\draw (0.5, 1.9) node[above]{\small\color{red}$a_3$};
\draw (-0.35,-1.9) node[below]{\small\color{red}$a_3$};
\end{tikzpicture}
\ \
\begin{tikzpicture}[scale = 1.2]
\filldraw [black!20] (-2, 2) circle (0.5cm);
\filldraw [black!20] ( 2, 2) circle (0.5cm);
\filldraw [black!20] ( 2,-2) circle (0.5cm);
\filldraw [black!20] (-2,-2) circle (0.5cm);
\begin{scriptsize}
\draw [color=qqwuqq] (-1.8, 1.8) node{$q$}; \draw [color=qqwuqq] ( 1.8,-1.8) node{$p$};
\draw [color=ffqqqq] (-1.8,-1.8) node{$r$}; \draw [color=ffqqqq] ( 1.8, 1.8) node{$s$};
\end{scriptsize}
\draw [line width=1pt] (-2, 2) circle (0.5cm);
\draw [line width=1pt] ( 2, 2) circle (0.5cm);
\draw [line width=1pt] ( 2,-2) circle (0.5cm);
\draw [line width=1pt] (-2,-2) circle (0.5cm);
\draw [dash pattern=on 2pt off 2pt] (-2, 2)-- ( 2, 2);
\draw [dash pattern=on 2pt off 2pt] ( 2, 2)-- ( 2,-2);
\draw [dash pattern=on 2pt off 2pt] ( 2,-2)-- (-2,-2);
\draw [dash pattern=on 2pt off 2pt] (-2, 2)-- (-2,-2);
\draw [line width=1pt,color=ffqqqq] (-2.35,1.65)-- (-1.65,-1.65);
\draw [line width=1pt,color=ffqqqq] (-1.65,2.35)-- (1.65,1.65);
\draw [line width=1pt,color=ffqqqq] (1.65,1.65)-- (2.35,-1.65);
\draw [line width=1pt,color=ffqqqq] (1.65,-2.35)-- (-1.65,-1.65);
\draw [line width=1pt,color=ffqqqq] (-1.65,-1.65)-- (1.65,1.65);
\fill [color=qqwuqq] (-1.65, 1.65) circle (2.5pt);
\fill [color=qqwuqq] ( 1.65,-1.65) circle (2.5pt);
\filldraw [color=white, draw=ffqqqq, line width=0.5pt] (-1.65, 2.35) circle (2.5pt);
\fill [color=qqwuqq] (-2.35, 2.35) circle (2.5pt);
\filldraw [color=white, draw=ffqqqq, line width=0.5pt] (-1.65,-1.65) circle (2.5pt);
\filldraw [color=white, draw=ffqqqq, line width=0.5pt] (-2.35,-2.35) circle (2.5pt);
\fill [color=qqwuqq] (-2.35,-1.65) circle (2.5pt);
\fill [color=qqwuqq] (-1.65,-2.35) circle (2.5pt);
\filldraw [color=white, draw=ffqqqq, line width=0.5pt] (-2.35, 1.65) circle (2.5pt);
\fill [color=qqwuqq] ( 1.65, 2.35) circle (2.5pt);
\filldraw [color=white, draw=ffqqqq, line width=0.5pt] ( 1.65, 1.65) circle (2.5pt);
\filldraw [color=white, draw=ffqqqq, line width=0.5pt] ( 2.35, 2.35) circle (2.5pt);
\fill [color=qqwuqq] ( 2.35, 1.65) circle (2.5pt);
\filldraw [color=white, draw=ffqqqq, line width=0.5pt] ( 2.35,-1.65) circle (2.5pt);
\fill [color=qqwuqq] ( 2.35,-2.35) circle (2.5pt);
\filldraw [color=white, draw=ffqqqq, line width=0.5pt] ( 1.65,-2.35) circle (2.5pt);
\draw [color=qqwuqq] (-1.8, 1.8) node{$q$};
\draw [color=ffqqqq] (-1.8,-1.8) node{$r$};
\draw [color=ffqqqq] (-1.65, 2.35) node[above]{$r$};
\draw [color=ffqqqq] ( 2.35,-1.65) node[right]{$r$};
\draw [color=ffqqqq] ( 1.8, 1.8) node{$s$};
\draw [color=ffqqqq] ( 1.65,-2.35) node[below]{$s$};
\draw [color=ffqqqq] (-2.35, 1.65) node[left]{$s$};
\draw [line width=1pt,color=blue][postaction={on each segment={mid arrow=blue}}]
      (2.2,-1.0) node[right]{$\sip{n(\ws)}{\ws}$} to[out=180,in=90] (1.65,-1.65);
\draw [black!50][dash pattern=on 2pt off 2pt] (2.2,-1.0) -- (-2,-1.0);
\draw [line width=1pt,color=blue][postaction={on each segment={mid arrow=blue}}]
      (-1.2, -1.2) to[out=135, in=0] (-1.8,-1.0) node[left]{$\sip{n(\ws)}{\ws}$};
\draw [blue] (-1.3,-1.3) node[right]{$\sip{n\!(\ws\!)\!-\!1}{\ws}$};
\draw [line width=1pt,color=blue][postaction={on each segment={mid arrow=blue}}]
      (-1.0, -1.8) node[below]{$\sip{n\!(\ws\!)\!-\!2}{\ws}$} to[out=90, in=-45] (-1.2, -1.2);
\draw [black!50][dash pattern=on 2pt off 2pt] (-1.0, -2) -- (-1.0, 2);
\draw [line width=1pt,color=blue][postaction={on each segment={mid arrow=blue}}]
      (-2.2, 1.0) node[left]{$\sip{n\!(\!\ws\!)\!-\!3}{\ws}$} to[out=0,in=-90]
      (-1.0, 2.2) node[above]{$\sip{n\!(\!\ws\!)\!-\!2}{\ws}$};
\draw [black!50][dash pattern=on 2pt off 2pt] (-2, 1.0) -- ( 2, 1.0);
\draw [line width=1pt,color=blue][postaction={on each segment={mid arrow=blue}}]
      (1.2, 1.2) to[out=-45, in=180] (1.8, 1.0) node[right]{$\sip{n\!(\!\ws\!)\!-\!3}{\ws}$};
\draw [blue] ( 1.25, 1.25) node[left]{$\sip{n\!(\!\ws\!)\!-\!4}{\ws}$};
\draw [line width=1pt,color=blue][postaction={on each segment={mid arrow=blue}}]
      (1.0, 1.8) node[above]{$\sip{n\!(\!\ws\!)\!-\!5}{\ws}$} to[out=-90, in=135] (1.2, 1.2);
\draw [black!50][dash pattern=on 2pt off 2pt] (1.0, 2.0) -- (1.0,-2.0);
\draw [line width=1pt,color=blue][postaction={on each segment={mid arrow=blue}}]
      (2.1, -0.5)  node[right]{$\sip{n\!(\!\ws\!)\!-\!6}{\ws}$} to[out=180, in=90]
      (1.0,-2.2) node[below]{$\sip{n\!(\!\ws\!)\!-\!5}{\ws}$};
\draw [cyan][line width=1pt] [postaction={on each segment={mid arrow=cyan}}]
      (2., -0.2) node[right]{$\sip{i_1}{\ws}$} to[out=180, in=90] (0.2,-2.1);
\draw [cyan][->] (0.2,-2.15) -- (0.2,-2.4); \draw[cyan] (0.2,-2.25) node[below]{$\sip{i_1\!+1}{\ws}$};
\draw [cyan][line width=1pt] [postaction={on each segment={mid arrow=cyan}}]
      (0, 2)  node[above]{$\sip{i_2}{\ws}$} to[out=-90,in=135]
      (0.4, 0.4); \draw[cyan] (0.4+0.4, 0.4-0.1) node{$\sip{i_2\!+1}{\ws}$};
\draw [cyan][line width=1pt] [postaction={on each segment={mid arrow=cyan}}]
      (0.67, 0.67) to[out=-45,in=180]
      (1.95, 0.38) node[right]{$\sip{i_3\!+1}{\ws}$}; \draw[cyan] (0.6,0.9) node{$\sip{i_3}{\ws}$};
\draw [cyan][line width=1pt] [postaction={on each segment={mid arrow=cyan}}]
      (-2.1, 0.4) node[left]{$\sip{i_4}{\ws}$} to[out=0, in=-90] (-0.4, 2.1);
\draw [cyan][->] (-0.4, 2.15) -- (-0.4, 2.5); \draw[cyan] (-0.4, 2.4) node[above]{$\sip{i_4\!+1}{\ws}$};
\draw [cyan][line width=1pt] [postaction={on each segment={mid arrow=cyan}}]
      (-0.67,-0.67) to[out=135,in=0] (-1.95,-0.38) node[left]{$\sip{i_6\!+1}{\ws}$};
\draw [cyan] (-0.67+0.25,-0.67-0.25) node{$\sip{i_6}{\ws}$};
\draw [cyan][line width=1pt] [postaction={on each segment={mid arrow=cyan}}]
      (0,-2) to[out= 90,in=-45] (-0.4,-0.4);
\draw[cyan] (0-0.3,-2+0.3) node{$\sip{i_5}{\ws}$};
\draw[cyan] (-0.4-0.2, -0.4) node[above]{$\sip{i_5\!+1}{\ws}$};
\draw[blue][line width=1pt] [postaction={on each segment={mid arrow=blue}}]
     (1.65,-1.65) to[out=100, in=180] (2.15,-0.7) node[right]{$\sip{1}{\ws}$};
\draw ( 2.0, 0.1) node[right]{\small\color{red}$a_1$};
\draw (-2.0, 0.0) node[left]{\small\color{red}$a_1$};
\draw (0,0) node[right]{\small\color{red}$a_2$};
\draw (0.4, 1.9) node[above]{\small\color{red}$a_3$};
\draw (-0.35,-1.9) node[below]{\small\color{red}$a_3$};
\end{tikzpicture}
\end{center}
\caption{Cases in the proof of Lemma~\ref{lem:four}}
\label{fig:four1}
\end{figure}

\end{proof}

For any $\ws\in\SGOCM(\SURF)$, an \emph{admissible sequence} of $\ws$ is a sequence $0=i_1<i_2<\cdots<i_t=n(\ws)$ such that for any $1\leq k\leq t-2$,
\begin{itemize}
    \item[(A1)] $\ws_{i_k,i_k+1}$ and $\ws_{i_{k+1},i_{k+1}+1}$ are in different quadrilaterals divided by $\dac$,
    \item[(A2)] $\snum{i_k+1}{\tsigma}=\snum{i_{k+1}}{\tsigma}$ (i.e. the endpoint of $\ws_{i_k,i_k+1}$ is in the same arc in $\dac$ as the starting point of $\ws_{i_{k+1},i_{k+1}+1}$), and
    \item[(A3)] $\sdii{i_k+1}{\ws}=\sdii{i_{k+1}}{\ws}$.
\end{itemize}
A \emph{simplest sequence} of $\ws$ is an admissible sequence whose any nontrivial subsequence is not admissible.

\begin{lemma}\label{lem:cancel}
Let $\ws\in\SGOCM(\SURF)$ and $0=i_1<i_2<\cdots<i_t=n(\ws)$ a simplest sequence of $\ws$. Then for any $1< k<t-1$, the three numbers $\snum{i_k}{\ws},\snum{i_{k+1}}{\ws},\snum{i_{k+2}}{\ws}$ are different from each other.
\end{lemma}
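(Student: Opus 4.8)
The plan is to argue by contradiction. Since $\snum{i_k}{\ws},\snum{i_{k+1}}{\ws},\snum{i_{k+2}}{\ws}\in\{1,2,3\}$, it suffices to rule out that two of them agree, so I would split into the three cases $\snum{i_k}{\ws}=\snum{i_{k+1}}{\ws}$, $\snum{i_{k+1}}{\ws}=\snum{i_{k+2}}{\ws}$, and $\snum{i_k}{\ws}=\snum{i_{k+2}}{\ws}\neq\snum{i_{k+1}}{\ws}$. In each case the starting point is to read off, from the admissibility conditions (A1)--(A3) linking the consecutive pairs $(i_{k-1},i_k)$, $(i_k,i_{k+1})$, $(i_{k+1},i_{k+2})$, the equalities $\snum{i_{k-1}+1}{\ws}=\snum{i_k}{\ws}$, $\snum{i_k+1}{\ws}=\snum{i_{k+1}}{\ws}$, $\snum{i_{k+1}+1}{\ws}=\snum{i_{k+2}}{\ws}$ together with the analogous equalities for the indices $\sdii{}{\ws}$.

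For the first two cases the mechanism is that the coincidence forces a segment of $\ws$ lying inside a single quadrilateral of $\SURF\setminus\dac$ to have both endpoints on one arc of $\dac$, which is impossible by minimality. For instance, if $\snum{i_k}{\ws}=\snum{i_{k+1}}{\ws}$, then combined with $\snum{i_k+1}{\ws}=\snum{i_{k+1}}{\ws}$ one gets $\snum{i_k}{\ws}=\snum{i_k+1}{\ws}$; since $1\le i_k<i_k+1\le n(\ws)$, the segment $\ws_{i_k,i_k+1}$ is a genuine interior segment, contained in one quadrilateral and with both endpoints on $\wa_{\snum{i_k}{\ws}}$, hence it cuts off a bigon with a sub-arc of $\wa_{\snum{i_k}{\ws}}$, contradicting that $\ws$ is in minimal position with $\dac$. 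The case $\snum{i_{k+1}}{\ws}=\snum{i_{k+2}}{\ws}$ is the same argument applied to $\ws_{i_{k+1},i_{k+1}+1}$.

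The remaining case $\snum{i_k}{\ws}=\snum{i_{k+2}}{\ws}\neq\snum{i_{k+1}}{\ws}$ is the heart of the matter, and I would use the grading data together with minimality of the sequence. First, feeding the sign rule of Lemma~\ref{lem:int ind} into the index equalities: along $\ws_{i_k,i_k+1}$ and $\ws_{i_{k+1},i_{k+1}+1}$, which join $\wa_{\snum{i_k}{\ws}}$ and $\wa_{\snum{i_{k+1}}{\ws}}$ in opposite senses, the index jumps are $+\varepsilon$ and $-\varepsilon$ for the common sign $\varepsilon=\mathrm{sgn}\big(\snum{i_{k+1}}{\ws}-\snum{i_k}{\ws}\big)$, so, using (A3) for $(i_k,i_{k+1})$ and for $(i_{k+1},i_{k+2})$,
\[\sdii{i_{k+2}}{\ws}=\sdii{i_{k+1}+1}{\ws}=\sdii{i_{k+1}}{\ws}-\varepsilon=\sdii{i_k+1}{\ws}-\varepsilon=\sdii{i_k}{\ws}+\varepsilon-\varepsilon=\sdii{i_k}{\ws}.\]
Then I would delete the two terms $i_k,i_{k+1}$ and claim that $0=i_1<\cdots<i_{k-1}<i_{k+2}<\cdots<i_t=n(\ws)$ is still admissible, which contradicts simplicity. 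Every consecutive pair of the shortened sequence except $(i_{k-1},i_{k+2})$ already occurs in the original sequence and retains its conditions; for $(i_{k-1},i_{k+2})$, condition (A1) holds because $\SURF\setminus\dac$ has only two quadrilaterals and the original (A1)'s force $\ws_{i_{k-1},i_{k-1}+1}$ and $\ws_{i_{k+2},i_{k+2}+1}$ into different ones, while (A2) and (A3) follow from $\snum{i_{k-1}+1}{\ws}=\snum{i_k}{\ws}=\snum{i_{k+2}}{\ws}$ and $\sdii{i_{k-1}+1}{\ws}=\sdii{i_k}{\ws}=\sdii{i_{k+2}}{\ws}$. Since $1<k<t-1$ forces $t\ge4$, the shortened sequence is a genuine nontrivial subsequence.

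The step I expect to be the main obstacle is this last case, and within it the bookkeeping: verifying that deleting $i_k,i_{k+1}$ leaves no consecutive pair of the new sequence with an unmet condition, with particular care at the endpoint situations $k=2$ and $k=t-2$ (where $i_{k-1}$ or $i_{k+2}$ equals $0$ or $n(\ws)$), and making the bigon argument of the first two cases fully rigorous, i.e.\ that a segment of a simple arc sitting inside a quadrilateral with both endpoints on one side genuinely realizes a removable intersection with that side of $\dac$.
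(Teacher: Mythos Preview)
Your proposal is correct and follows essentially the same approach as the paper. The paper's proof is more concise: it handles your first two cases in a single line by observing that (A2) gives $\snum{i_{k+1}}{\ws}=\snum{i_k+1}{\ws}$, and since each quadrilateral of $\SURF\setminus\dac$ has each arc $a_i$ on its boundary exactly once, the consecutive crossings satisfy $\snum{i_k}{\ws}\neq\snum{i_k+1}{\ws}$ automatically (so your bigon argument, while correct, is not needed); for the remaining case the paper does exactly your index computation via Lemma~\ref{lem:int ind} and (A3), then simply asserts that removing $i_k,i_{k+1}$ yields an admissible subsequence without spelling out the verification of (A1)--(A3) at the new joint $(i_{k-1},i_{k+2})$ as you do.
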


\begin{proof}
Conversely assume (at least) two of $\snum{i_k}{\ws},\snum{i_{k+1}}{\ws},\snum{i_{k+2}}{\ws}$ are the same. By (\textrm{A2}), we have $\snum{i_{k+1}}{\ws}=\snum{i_k+1}{\tsigma}\neq\snum{i_k}{\ws}$ and $\snum{i_{k+2}}{\ws}=\snum{i_{k+1}+1}{\tsigma}\neq\snum{i_{k+1}}{\ws}$. So $\snum{i_k}{\ws}=\snum{i_{k+2}}{\ws}\neq \snum{i_{k+1}}{\ws}$. Then by the first formula in Lemma~\ref{lem:int ind} and (\textrm{A3}), we have $\sdii{i_k}{\ws}=\sdii{i_{k+2}}{\ws}$. Thus, one can remove $i_k$ and $i_{k+1}$ from the simplest sequence to get an admissible subsequence, a contradiction.
\end{proof}

The following lemma rules out some possibilities of the form of a simplest sequence.

\begin{lemma}\label{lemm:self-int Type}
Let $\tsigma\in\SGOCM(\SURF)$. If $\tsigma$ does not contain a circle, then any simplest sequence of $\tsigma$ is not of any form shown in \Pic \ref{fig:self-int Type}, where the dotted lines express the order of  segments.

\begin{figure}[htpb]
\centering
\definecolor{ffqqqq}{rgb}{1,0,0}
\definecolor{qqwuqq}{rgb}{0,0,1}
\begin{tikzpicture}[scale=0.55]
\filldraw [black!20] (-2, 2) circle (0.5cm);
\filldraw [black!20] ( 2, 2) circle (0.5cm);
\filldraw [black!20] ( 2,-2) circle (0.5cm);
\filldraw [black!20] (-2,-2) circle (0.5cm);
\draw [line width=1pt] (-2, 2) circle (0.5cm);
\draw [line width=1pt] ( 2, 2) circle (0.5cm);
\draw [line width=1pt] ( 2,-2) circle (0.5cm);
\draw [line width=1pt] (-2,-2) circle (0.5cm);
\draw [dash pattern=on 2pt off 2pt] (-2, 2)-- ( 2, 2);
\draw [dash pattern=on 2pt off 2pt] ( 2, 2)-- ( 2,-2);
\draw [dash pattern=on 2pt off 2pt] ( 2,-2)-- (-2,-2);
\draw [dash pattern=on 2pt off 2pt] (-2, 2)-- (-2,-2);
\draw [line width=1pt,color=ffqqqq] (-2.35,1.65)-- (-1.65,-1.65);
\draw [line width=1pt,color=ffqqqq] (-1.65,2.35)-- (1.65,1.65);
\draw [line width=1pt,color=ffqqqq] (1.65,1.65)-- (2.35,-1.65);
\draw [line width=1pt,color=ffqqqq] (1.65,-2.35)-- (-1.65,-1.65);
\draw [line width=1pt,color=ffqqqq] (-1.65,-1.65)-- (1.65,1.65);
\fill [color=qqwuqq] (-1.65, 1.65) circle (2.5pt);
\fill [color=qqwuqq] ( 1.65,-1.65) circle (2.5pt);
\filldraw [color=white, draw=ffqqqq, line width=0.5pt] (-1.65, 2.35) circle (2.5pt);
\fill [color=qqwuqq] (-2.35, 2.35) circle (2.5pt);
\filldraw [color=white, draw=ffqqqq, line width=0.5pt] (-1.65,-1.65) circle (2.5pt);
\filldraw [color=white, draw=ffqqqq, line width=0.5pt] (-2.35,-2.35) circle (2.5pt);
\fill [color=qqwuqq] (-2.35,-1.65) circle (2.5pt);
\fill [color=qqwuqq] (-1.65,-2.35) circle (2.5pt);
\filldraw [color=white, draw=ffqqqq, line width=0.5pt] (-2.35, 1.65) circle (2.5pt);
\fill [color=qqwuqq] ( 1.65, 2.35) circle (2.5pt);
\filldraw [color=white, draw=ffqqqq, line width=0.5pt] ( 1.65, 1.65) circle (2.5pt);
\filldraw [color=white, draw=ffqqqq, line width=0.5pt] ( 2.35, 2.35) circle (2.5pt);
\fill [color=qqwuqq] ( 2.35, 1.65) circle (2.5pt);
\filldraw [color=white, draw=ffqqqq, line width=0.5pt] ( 2.35,-1.65) circle (2.5pt);
\fill [color=qqwuqq] ( 2.35,-2.35) circle (2.5pt);
\filldraw [color=white, draw=ffqqqq, line width=0.5pt] ( 1.65,-2.35) circle (2.5pt);
\draw [color=qqwuqq] ( 1.8,-1.8) node{$p$};
\draw [color=qqwuqq] (-1.8, 1.8) node{$q$};
\draw [color=ffqqqq] (-1.8,-1.8) node{$r$};
\draw [color=ffqqqq] (-1.65, 2.35) node[above]{$r$};
\draw [color=ffqqqq] ( 2.35,-1.65) node[right]{$r$};
\draw [color=ffqqqq] ( 1.8, 1.8) node{$s$};
\draw [color=ffqqqq] ( 1.65,-2.35) node[below]{$s$};
\draw [color=ffqqqq] (-2.35, 1.65) node[left]{$s$};
\draw [line width=1pt, color=cyan] (1.65,-1.65) to[out=180, in=180] (2,1);
\draw [color=cyan, line width=0.7pt, dotted] (2,1) to[out=180, in=0] (-2,-0.2);
\draw [line width=1pt, color=cyan] (-2,-0.2) to[out=0, in=135] (-0.75,-0.75);
\draw [line width=1pt, color=cyan] (-0.65,-0.65) to[out=-45, in=90] (0,-2);
\draw [color=cyan, line width=0.7pt, dotted] (0,-2) to[out=90, in=-90] (-0.2,2);
\draw [line width=1pt,color=cyan] (-0.2,2) to[out=-90, in=0] (-2,0.2);
\draw [color=cyan, line width=0.7pt, dotted] (-2,0.2) to[out=0, in=180] (2,-0.6);
\draw [line width=1pt,color=cyan] plot[smooth, tension=0.1] (2,-0.6) to[out=180, in=90] (1.65,-1.65);
\draw ( 2.0, 0.0) node[right]{\small\color{red}$a_1$};
\draw (-2.0, 0.0) node[left]{\small\color{red}$a_1$};
\draw (0,0) node[right]{\small\color{red}$a_2$};
\draw ( 0.0, 2.0) node[above]{\small\color{red}$a_3$};
\draw (-0.0,-2.0) node[below]{\small\color{red}$a_3$};
\end{tikzpicture}
\begin{tikzpicture}[scale=0.55]
\filldraw [black!20] (-2, 2) circle (0.5cm);
\filldraw [black!20] ( 2, 2) circle (0.5cm);
\filldraw [black!20] ( 2,-2) circle (0.5cm);
\filldraw [black!20] (-2,-2) circle (0.5cm);
\draw [line width=1pt] (-2, 2) circle (0.5cm);
\draw [line width=1pt] ( 2, 2) circle (0.5cm);
\draw [line width=1pt] ( 2,-2) circle (0.5cm);
\draw [line width=1pt] (-2,-2) circle (0.5cm);
\draw [dash pattern=on 2pt off 2pt] (-2, 2)-- ( 2, 2);
\draw [dash pattern=on 2pt off 2pt] ( 2, 2)-- ( 2,-2);
\draw [dash pattern=on 2pt off 2pt] ( 2,-2)-- (-2,-2);
\draw [dash pattern=on 2pt off 2pt] (-2, 2)-- (-2,-2);
\draw [line width=1pt,color=ffqqqq] (-2.35,1.65)-- (-1.65,-1.65);
\draw [line width=1pt,color=ffqqqq] (-1.65,2.35)-- (1.65,1.65);
\draw [line width=1pt,color=ffqqqq] (1.65,1.65)-- (2.35,-1.65);
\draw [line width=1pt,color=ffqqqq] (1.65,-2.35)-- (-1.65,-1.65);
\draw [line width=1pt,color=ffqqqq] (-1.65,-1.65)-- (1.65,1.65);
\fill [color=qqwuqq] (-1.65, 1.65) circle (2.5pt);
\fill [color=qqwuqq] ( 1.65,-1.65) circle (2.5pt);
\filldraw [color=white, draw=ffqqqq, line width=0.5pt] (-1.65, 2.35) circle (2.5pt);
\fill [color=qqwuqq] (-2.35, 2.35) circle (2.5pt);
\filldraw [color=white, draw=ffqqqq, line width=0.5pt] (-1.65,-1.65) circle (2.5pt);
\filldraw [color=white, draw=ffqqqq, line width=0.5pt] (-2.35,-2.35) circle (2.5pt);
\fill [color=qqwuqq] (-2.35,-1.65) circle (2.5pt);
\fill [color=qqwuqq] (-1.65,-2.35) circle (2.5pt);
\filldraw [color=white, draw=ffqqqq, line width=0.5pt] (-2.35, 1.65) circle (2.5pt);
\fill [color=qqwuqq] ( 1.65, 2.35) circle (2.5pt);
\filldraw [color=white, draw=ffqqqq, line width=0.5pt] ( 1.65, 1.65) circle (2.5pt);
\filldraw [color=white, draw=ffqqqq, line width=0.5pt] ( 2.35, 2.35) circle (2.5pt);
\fill [color=qqwuqq] ( 2.35, 1.65) circle (2.5pt);
\filldraw [color=white, draw=ffqqqq, line width=0.5pt] ( 2.35,-1.65) circle (2.5pt);
\fill [color=qqwuqq] ( 2.35,-2.35) circle (2.5pt);
\filldraw [color=white, draw=ffqqqq, line width=0.5pt] ( 1.65,-2.35) circle (2.5pt);
\draw [color=qqwuqq] ( 1.8,-1.8) node{$p$};
\draw [color=qqwuqq] (-1.8, 1.8) node{$q$};
\draw [color=ffqqqq] (-1.8,-1.8) node{$r$};
\draw [color=ffqqqq] (-1.65, 2.35) node[above]{$r$};
\draw [color=ffqqqq] ( 2.35,-1.65) node[right]{$r$};
\draw [color=ffqqqq] ( 1.8, 1.8) node{$s$};
\draw [color=ffqqqq] ( 1.65,-2.35) node[below]{$s$};
\draw [color=ffqqqq] (-2.35, 1.65) node[left]{$s$};
\draw [line width=1pt,color=cyan] (1.65,-1.65) to[out=90, in=90] (-1,-2);
\draw [color=cyan, line width=0.7pt,dotted] (-1,-2) to[out=90, in=-90] (0.2,2);
\draw [line width=1pt,color=cyan] (0.2,2) to[out=-90, in=135] (0.65,0.65);
\draw [line width=1pt,color=cyan] (0.75,0.75) to[out=-45, in=180] (2,0);
\draw [color=cyan, line width=0.7pt,dotted] (2,0) to[out=180, in=0] (-2,0.2);
\draw [line width=1pt,color=cyan] (-2,0.2) to[out=0, in=-90] (-0.2,2);
\draw [color=cyan, line width=0.7pt,dotted] (-0.2,2) to[out=-90, in=90] (0.6,-2);
\draw [line width=1pt,color=cyan] plot[smooth, tension=0.1] (0.6,-2) to[out=90, in=180] (1.65,-1.65);
\draw ( 2.0, 0.0) node[right]{\small\color{red}$a_1$};
\draw (-2.0, 0.0) node[left]{\small\color{red}$a_1$};
\draw (0,0) node[right]{\small\color{red}$a_2$};
\draw ( 0.0, 2.0) node[above]{\small\color{red}$a_3$};
\draw (-0.0,-2.0) node[below]{\small\color{red}$a_3$};
\end{tikzpicture}
\begin{tikzpicture}[scale=0.55]
\filldraw [black!20] (-2, 2) circle (0.5cm);
\filldraw [black!20] ( 2, 2) circle (0.5cm);
\filldraw [black!20] ( 2,-2) circle (0.5cm);
\filldraw [black!20] (-2,-2) circle (0.5cm);
\draw [line width=1pt] (-2, 2) circle (0.5cm);
\draw [line width=1pt] ( 2, 2) circle (0.5cm);
\draw [line width=1pt] ( 2,-2) circle (0.5cm);
\draw [line width=1pt] (-2,-2) circle (0.5cm);
\draw [dash pattern=on 2pt off 2pt] (-2, 2)-- ( 2, 2);
\draw [dash pattern=on 2pt off 2pt] ( 2, 2)-- ( 2,-2);
\draw [dash pattern=on 2pt off 2pt] ( 2,-2)-- (-2,-2);
\draw [dash pattern=on 2pt off 2pt] (-2, 2)-- (-2,-2);
\draw [line width=1pt,color=ffqqqq] (-2.35,1.65)-- (-1.65,-1.65);
\draw [line width=1pt,color=ffqqqq] (-1.65,2.35)-- (1.65,1.65);
\draw [line width=1pt,color=ffqqqq] (1.65,1.65)-- (2.35,-1.65);
\draw [line width=1pt,color=ffqqqq] (1.65,-2.35)-- (-1.65,-1.65);
\draw [line width=1pt,color=ffqqqq] (-1.65,-1.65)-- (1.65,1.65);
\fill [color=qqwuqq] (-1.65, 1.65) circle (2.5pt);
\fill [color=qqwuqq] ( 1.65,-1.65) circle (2.5pt);
\filldraw [color=white, draw=ffqqqq, line width=0.5pt] (-1.65, 2.35) circle (2.5pt);
\fill [color=qqwuqq] (-2.35, 2.35) circle (2.5pt);
\filldraw [color=white, draw=ffqqqq, line width=0.5pt] (-1.65,-1.65) circle (2.5pt);
\filldraw [color=white, draw=ffqqqq, line width=0.5pt] (-2.35,-2.35) circle (2.5pt);
\fill [color=qqwuqq] (-2.35,-1.65) circle (2.5pt);
\fill [color=qqwuqq] (-1.65,-2.35) circle (2.5pt);
\filldraw [color=white, draw=ffqqqq, line width=0.5pt] (-2.35, 1.65) circle (2.5pt);
\fill [color=qqwuqq] ( 1.65, 2.35) circle (2.5pt);
\filldraw [color=white, draw=ffqqqq, line width=0.5pt] ( 1.65, 1.65) circle (2.5pt);
\filldraw [color=white, draw=ffqqqq, line width=0.5pt] ( 2.35, 2.35) circle (2.5pt);
\fill [color=qqwuqq] ( 2.35, 1.65) circle (2.5pt);
\filldraw [color=white, draw=ffqqqq, line width=0.5pt] ( 2.35,-1.65) circle (2.5pt);
\fill [color=qqwuqq] ( 2.35,-2.35) circle (2.5pt);
\filldraw [color=white, draw=ffqqqq, line width=0.5pt] ( 1.65,-2.35) circle (2.5pt);
\draw [color=qqwuqq] ( 1.8,-1.8) node{$p$};
\draw [color=qqwuqq] (-1.8, 1.8) node{$q$};
\draw [color=ffqqqq] (-1.8,-1.8) node{$r$};
\draw [color=ffqqqq] (-1.65, 2.35) node[above]{$r$};
\draw [color=ffqqqq] ( 2.35,-1.65) node[right]{$r$};
\draw [color=ffqqqq] ( 1.8, 1.8) node{$s$};
\draw [color=ffqqqq] ( 1.65,-2.35) node[below]{$s$};
\draw [color=ffqqqq] (-2.35, 1.65) node[left]{$s$};
\draw [rotate around={180:(0,0)}][line width=1pt, color=cyan]
      (1.65,-1.65) to[out=180, in=180] (2,1);
\draw [rotate around={180:(0,0)}][color=cyan, line width=0.7pt, dotted]
      (2,1) to[out=180, in=0] (-2,-0.2);
\draw [rotate around={180:(0,0)}][line width=1pt, color=cyan]
      (-2,-0.2) to[out=0, in=135] (-0.75,-0.75);
\draw [rotate around={180:(0,0)}][line width=1pt, color=cyan]
      (-0.65,-0.65) to[out=-45, in=90] (0,-2);
\draw [rotate around={180:(0,0)}][color=cyan, line width=0.7pt, dotted]
      (0,-2) to[out=90, in=-90] (-0.2,2);
\draw [rotate around={180:(0,0)}][line width=1pt,color=cyan]
      (-0.2,2) to[out=-90, in=0] (-2,0.2);
\draw [rotate around={180:(0,0)}][color=cyan, line width=0.7pt, dotted]
      (-2,0.2) to[out=0, in=180] (2,-0.6);
\draw [rotate around={180:(0,0)}][line width=1pt,color=cyan] plot[smooth, tension=0.1]
      (2,-0.6) to[out=180, in=90] (1.65,-1.65);
\draw ( 2.0, 0.0) node[right]{\small\color{red}$a_1$};
\draw (-2.0, 0.0) node[left]{\small\color{red}$a_1$};
\draw (0,0) node[right]{\small\color{red}$a_2$};
\draw ( 0.0, 2.0) node[above]{\small\color{red}$a_3$};
\draw (-0.0,-2.0) node[below]{\small\color{red}$a_3$};
\end{tikzpicture}
\begin{tikzpicture}[scale=0.55]
\filldraw [black!20] (-2, 2) circle (0.5cm);
\filldraw [black!20] ( 2, 2) circle (0.5cm);
\filldraw [black!20] ( 2,-2) circle (0.5cm);
\filldraw [black!20] (-2,-2) circle (0.5cm);
\draw [line width=1pt] (-2, 2) circle (0.5cm);
\draw [line width=1pt] ( 2, 2) circle (0.5cm);
\draw [line width=1pt] ( 2,-2) circle (0.5cm);
\draw [line width=1pt] (-2,-2) circle (0.5cm);
\draw [dash pattern=on 2pt off 2pt] (-2, 2)-- ( 2, 2);
\draw [dash pattern=on 2pt off 2pt] ( 2, 2)-- ( 2,-2);
\draw [dash pattern=on 2pt off 2pt] ( 2,-2)-- (-2,-2);
\draw [dash pattern=on 2pt off 2pt] (-2, 2)-- (-2,-2);
\draw [line width=1pt,color=ffqqqq] (-2.35,1.65)-- (-1.65,-1.65);
\draw [line width=1pt,color=ffqqqq] (-1.65,2.35)-- (1.65,1.65);
\draw [line width=1pt,color=ffqqqq] (1.65,1.65)-- (2.35,-1.65);
\draw [line width=1pt,color=ffqqqq] (1.65,-2.35)-- (-1.65,-1.65);
\draw [line width=1pt,color=ffqqqq] (-1.65,-1.65)-- (1.65,1.65);
\fill [color=qqwuqq] (-1.65, 1.65) circle (2.5pt);
\fill [color=qqwuqq] ( 1.65,-1.65) circle (2.5pt);
\filldraw [color=white, draw=ffqqqq, line width=0.5pt] (-1.65, 2.35) circle (2.5pt);
\fill [color=qqwuqq] (-2.35, 2.35) circle (2.5pt);
\filldraw [color=white, draw=ffqqqq, line width=0.5pt] (-1.65,-1.65) circle (2.5pt);
\filldraw [color=white, draw=ffqqqq, line width=0.5pt] (-2.35,-2.35) circle (2.5pt);
\fill [color=qqwuqq] (-2.35,-1.65) circle (2.5pt);
\fill [color=qqwuqq] (-1.65,-2.35) circle (2.5pt);
\filldraw [color=white, draw=ffqqqq, line width=0.5pt] (-2.35, 1.65) circle (2.5pt);
\fill [color=qqwuqq] ( 1.65, 2.35) circle (2.5pt);
\filldraw [color=white, draw=ffqqqq, line width=0.5pt] ( 1.65, 1.65) circle (2.5pt);
\filldraw [color=white, draw=ffqqqq, line width=0.5pt] ( 2.35, 2.35) circle (2.5pt);
\fill [color=qqwuqq] ( 2.35, 1.65) circle (2.5pt);
\filldraw [color=white, draw=ffqqqq, line width=0.5pt] ( 2.35,-1.65) circle (2.5pt);
\fill [color=qqwuqq] ( 2.35,-2.35) circle (2.5pt);
\filldraw [color=white, draw=ffqqqq, line width=0.5pt] ( 1.65,-2.35) circle (2.5pt);
\draw [color=qqwuqq] ( 1.8,-1.8) node{$p$};
\draw [color=qqwuqq] (-1.8, 1.8) node{$q$};
\draw [color=ffqqqq] (-1.8,-1.8) node{$r$};
\draw [color=ffqqqq] (-1.65, 2.35) node[above]{$r$};
\draw [color=ffqqqq] ( 2.35,-1.65) node[right]{$r$};
\draw [color=ffqqqq] ( 1.8, 1.8) node{$s$};
\draw [color=ffqqqq] ( 1.65,-2.35) node[below]{$s$};
\draw [color=ffqqqq] (-2.35, 1.65) node[left]{$s$};
\draw [rotate around={180:(0,0)}][line width=1pt,color=cyan]
      (1.65,-1.65) to[out=90, in=90] (-1,-2);
\draw [rotate around={180:(0,0)}][color=cyan, line width=0.7pt,dotted]
      (-1,-2) to[out=90, in=-90] (0.2,2);
\draw [rotate around={180:(0,0)}][line width=1pt,color=cyan]
      (0.2,2) to[out=-90, in=135] (0.65,0.65);
\draw [rotate around={180:(0,0)}][line width=1pt,color=cyan]
      (0.75,0.75) to[out=-45, in=180] (2,0);
\draw [rotate around={180:(0,0)}][color=cyan, line width=0.7pt,dotted]
      (2,0) to[out=180, in=0] (-2,0.2);
\draw [rotate around={180:(0,0)}][line width=1pt,color=cyan]
      (-2,0.2) to[out=0, in=-90] (-0.2,2);
\draw [rotate around={180:(0,0)}][color=cyan, line width=0.7pt,dotted]
      (-0.2,2) to[out=-90, in=90] (0.6,-2);
\draw [rotate around={180:(0,0)}][line width=1pt,color=cyan] plot[smooth, tension=0.1]
      (0.6,-2) to[out=90, in=180] (1.65,-1.65);
\draw ( 2.0, 0.0) node[right]{\small\color{red}$a_1$};
\draw (-2.0, 0.0) node[left]{\small\color{red}$a_1$};
\draw (0,0) node[right]{\small\color{red}$a_2$};
\draw ( 0.0, 2.0) node[above]{\small\color{red}$a_3$};
\draw (-0.0,-2.0) node[below]{\small\color{red}$a_3$};
\end{tikzpicture}
\caption{Impossible forms of a simplest sequence}
\label{fig:self-int Type}
\end{figure}
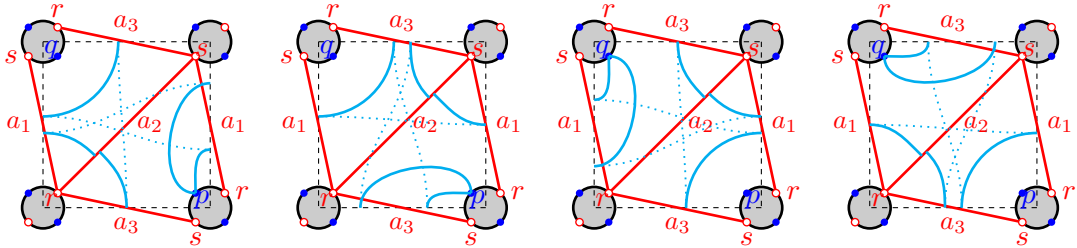

\end{lemma}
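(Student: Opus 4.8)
The plan is to argue by contradiction. Suppose $\tsigma\in\SGOCM(\SURF)$ does not contain a circle and that its simplest sequence $0=i_1<i_2<\cdots<i_t=n(\tsigma)$ is of one of the four forms in Figure~\ref{fig:self-int Type}; write $S_k:=\tsigma_{i_k,i_k+1}$ for the $k$-th distinguished segment, so that $S_1$ issues from $\sigma(0)$, $S_t$ arrives at $\sigma(1)$, the solid arcs in the figure are the $S_k$ occupying the quadrilaterals and crossing $\wa_1,\wa_2,\wa_3$ exactly as drawn, while the dotted arcs merely record the order in which the (otherwise unconstrained, but simple) intervening pieces $\tsigma_{i_k+1,\,i_{k+1}}$ connect them. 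First I would observe that the pair $(\SURF,\dac)$ admits an orientation- and grading-preserving involution $\rho$ --- rotation by $\pi$ in the square model --- which carries $\dac$ to itself, interchanges the two quadrilaterals, and hence maps $\SGOCM(\SURF)$ to itself while preserving both ``contains a circle'' and ``simplest sequence''. Since the third and fourth forms in Figure~\ref{fig:self-int Type} are the $\rho$-images of the first and second, it then suffices to rule out the first two.

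For each of the two remaining forms, the plan is as follows. From the figure one reads off the sequence $\num{i_1}{\tsigma},\dots,\num{i_t}{\tsigma}$ of distinguished crossings and the quadrilateral of each $S_k$. By Lemma~\ref{lem:cancel}, for $1<k<t-1$ the numbers $\num{i_k}{\tsigma},\num{i_{k+1}}{\tsigma},\num{i_{k+2}}{\tsigma}$ are pairwise distinct, which forces this sequence to cycle through $\{1,2,3\}$, and by Lemma~\ref{lem:four} one of the six turning types inside a quadrilateral does not occur in $\tsigma$, which fixes the direction of the cycling. I would then examine the intervening pieces $\tsigma_{i_k+1,\,i_{k+1}}$ one at a time: by (A2) and (A3) such a piece re-enters the same arc $\wa_{\num{i_k+1}{\tsigma}}$ with the same intersection index, by (A1) the segments $S_k$ and $S_{k+1}$ sit in different quadrilaterals, and the positions of $S_k,S_{k+1}$ are prescribed by the form; combining these with the index rule of Lemma~\ref{lem:int ind} and with the simplicity of $\tsigma$, one shows that each such piece is forced to run once around the boundary $\partial\Surf$ before coming back. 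Gluing a distinguished segment to such a piece then exhibits a sub-segment $\tsigma_{i,\,i+6}$ with $\num{i}{\tsigma}=\num{i+6}{\tsigma}$ which becomes a circle around $\partial\Surf$ when its two ends are glued along $\wa_{\num{i}{\tsigma}}$; that is, $\tsigma$ contains a circle, a contradiction.

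I expect the middle step to be the main obstacle: converting the local data ``same arc, same index, opposite quadrilateral, prescribed positions of $S_k$ and $S_{k+1}$'' into the global statement that the intervening piece of $\tsigma$ wraps once around $\partial\Surf$. The underlying point is that a simple arc which returns to a $\dac$-arc on the same side and with the same intersection index has almost no freedom --- any such return must, after removing inessential back-and-forth crossings (which would contradict that $(i_k)$ is \emph{simplest}, in the spirit of Lemma~\ref{lem:cancel}), close up into a loop around the hole. Making this precise will require, for each of the two forms, a short case analysis of how $\tsigma$ can pass through the two quadrilaterals --- much like the one in the proof of Lemma~\ref{lem:four}, and again dividing according to (S.L.E.)/(S.R.E.) when $\sigma(0)=\sigma(1)$ --- together with the bookkeeping of intersection indices supplied by Lemma~\ref{lem:int ind}.
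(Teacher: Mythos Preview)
Your symmetry reduction and the plan to argue by contradiction are fine, but the core mechanism you propose --- forcing each intervening piece $\tsigma_{i_k+1,\,i_{k+1}}$ to wrap once around $\partial\Surf$ and then exhibiting a circle --- is not what actually happens, and it is not clear it can be made to work. The paper's proof does \emph{not} derive a circle as the contradiction. Instead it traces the actual arc crossing by crossing from $\sigma(0)$: using the same forbidden-region argument as in Lemma~\ref{lem:four}, one pins down $\snum{1}{\ws}=1$, $\snum{2}{\ws}=2$, $\snum{3}{\ws}=3$, $\snum{4}{\ws}=1$ (not merely the $\snum{i_k}{\ws}$), and then performs a short case split on $\snum{5}{\ws}$ (and $\snum{6}{\ws},\snum{7}{\ws}$ in subcases). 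In each branch one finds that the entire remainder $\tsigma_{j,\,n(\tsigma)}$ (for $j=5,6$ or $7$) is confined to use only four of the twelve oriented segment types (i)--(vi), (i$'$)--(vi$'$); crucially, type (iv) is never among them. Since $S_2=\ws_{i_2,i_2+1}$ is required to be of type (iv), and the initial stretch $\ws_{1,j}$ consists of explicitly identified segments none of which is of type (iv) either, $S_2$ cannot occur anywhere in $\ws$ --- that is the contradiction. The hypothesis ``$\tsigma$ does not contain a circle'' is used only once, to eliminate a single sub-branch (forcing $\snum{7}{\ws}=2$ rather than $1$ in case (c)); it is not the endpoint of the argument.

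Your proposed middle step would require showing that the intervening piece between $S_1$ and $S_2$ is forced to realise the full $6$-step loop around $\partial\Surf$. But the case analysis above shows precisely the opposite: after a few crossings the arc is trapped in a four-type ``figure-eight'' pattern that bounces between $a_1\leftrightarrow a_3\leftrightarrow a_2$ without ever completing a loop and without ever producing a type (iv) segment. So there is no circle to find; the obstruction is that $S_2$ itself has nowhere to sit. If you want to salvage your outline, replace ``the intervening piece wraps around'' by ``after the forced initial crossings, the arc is confined to a set of segment types excluding the type of $S_2$'' --- which is exactly the paper's argument.
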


\begin{proof}
By symmetry, we only need to show that any simplest sequence of $\tsigma$ is not of the first form in \Pic\ref{fig:self-int Type}. Reversing the direction of $\ws$ if necessary, we may assume (S.R.E.). Assume conversely that there is a simplest sequence $0=i_1<i_2<\cdots<i_5=n(\ws)$ of $\tsigma$ such that $\snum{1}{\ws}=\snum{i_1+1}{\ws}=1$, and the segments $\ws_{i_2,i_2+1}$, $\ws_{i_3,i_3+1}$ and $\ws_{i_4,i_4+1}$ are of types (iv), (v) and (vi) in Figure~\ref{fig:four}, respectively.  
Then a similar argument as in the proof of Lemma~\ref{lem:four} shows that $\snum{2}{\tsigma}=2$, $\snum{3}{\tsigma}=3$, $\snum{4}{\tsigma}=1$, and $\ws$ does not cross the interiors of the following segments (see the shadow ones in the first picture of \Pic\ref{fig:cases}):
\begin{itemize}
    \item the segment of $\wa_1$ between $r$ and $\sip{1}{\ws}$,
    \item the segment of $\wa_2$ between $r$ and $\sip{2}{\ws}$,
    \item the segment of $\wa_3$ between $r$ and $\sip{3}{\ws}$,
    \item the segment of $\wa_1$ between $s$ and $\sip{4}{\ws}$.
\end{itemize}
We have the following three cases.
\begin{itemize}
    \item[(a)] $\snum{5}{\tsigma}=3$, see \Pic\ref{fig:cases}~(a). Then  
    $\sigma_{5, n(\tsigma)}$ is the concatenation of segments of types (i), (ii), (iv') and (v') in \Pic\ref{fig:four}.
    \item[(b)] $\snum{5}{\tsigma}=2$ and $\snum{6}{\tsigma}=1$, see \Pic\ref{fig:cases}~(b). Then $\sigma_{6, n(\tsigma)}$ is the concatenation of segments of types (i), (vi), (iii') and (iv') in \Pic\ref{fig:four}.
    \item[(c)] $\snum{5}{\tsigma}=2$ and $\snum{6}{\tsigma}=3$, see \Pic\ref{fig:cases}~(c). In this case, $\ws$ does not cross the interior of the segment of $\wa_3$ between $s$ and $\sip{6}{\ws}$. Since $\ws$ does not contain a circle, we have $\snum{7}{\ws}=2$. Then $\sigma_{7,n(\tsigma)}$ is the concatenation of segments of types (v), (vi), (ii') and (iii') in \Pic\ref{fig:four}.
\end{itemize}
In each case, $\ws_{i_2,i_2+1}$ (which is of type (iv)) is not a segment of $\ws$, a contradiction. Thus, we finish the proof.
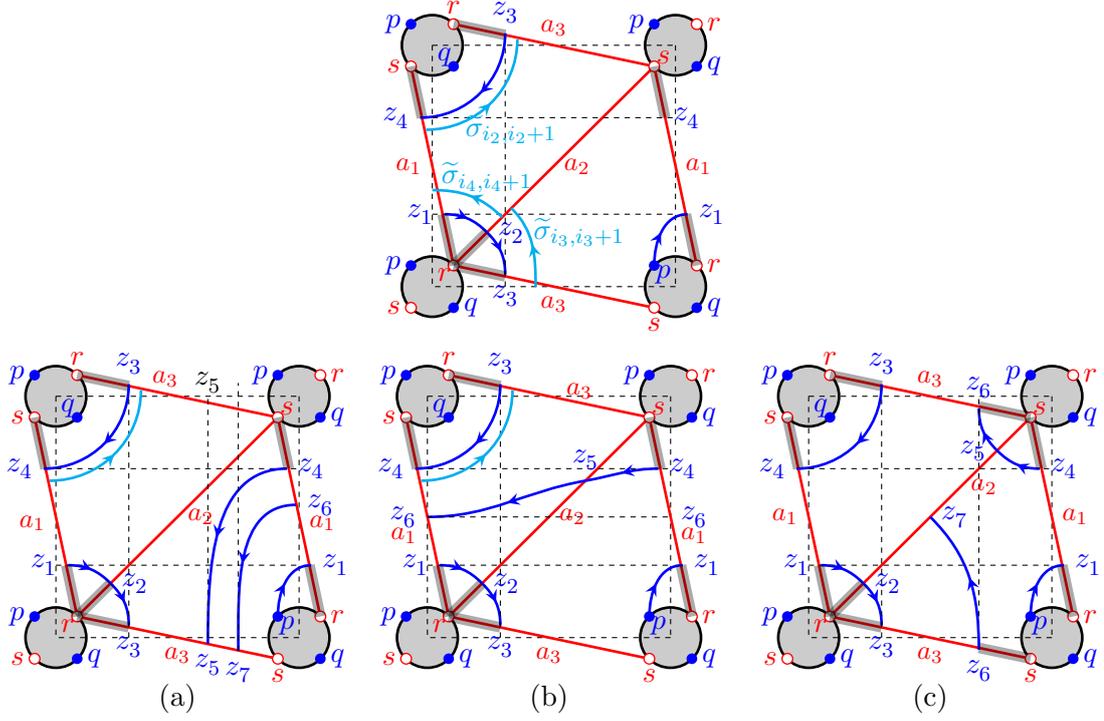
\begin{figure}[htbp]
\centering
\definecolor{ffqqqq}{rgb}{1,0,0}
\definecolor{qqwuqq}{rgb}{0,0,1}
\begin{tikzpicture}[scale=.8]
\filldraw [black!20] (-2, 2) circle (0.5cm);
\filldraw [black!20] ( 2, 2) circle (0.5cm);
\filldraw [black!20] ( 2,-2) circle (0.5cm);
\filldraw [black!20] (-2,-2) circle (0.5cm);
\draw [line width=1pt] (-2, 2) circle (0.5cm);
\draw [line width=1pt] ( 2, 2) circle (0.5cm);
\draw [line width=1pt] ( 2,-2) circle (0.5cm);
\draw [line width=1pt] (-2,-2) circle (0.5cm);
\draw [dash pattern=on 2pt off 2pt] (-2, 2)-- ( 2, 2);
\draw [dash pattern=on 2pt off 2pt] ( 2, 2)-- ( 2,-2);
\draw [dash pattern=on 2pt off 2pt] ( 2,-2)-- (-2,-2);
\draw [dash pattern=on 2pt off 2pt] (-2, 2)-- (-2,-2);
\draw [line width=1pt,color=ffqqqq] (-2.35,1.65)-- (-1.65,-1.65);
\draw [line width=1pt,color=ffqqqq] (-1.65,2.35)-- (1.65,1.65);
\draw [line width=1pt,color=ffqqqq] (1.65,1.65)-- (2.35,-1.65);
\draw [line width=1pt,color=ffqqqq] (1.65,-2.35)-- (-1.65,-1.65);
\draw [line width=1pt,color=ffqqqq] (-1.65,-1.65)-- (1.65,1.65);
\fill [color=qqwuqq] (-1.65, 1.65) circle (2.5pt);
\fill [color=qqwuqq] ( 1.65,-1.65) circle (2.5pt);
\filldraw [color=white, draw=ffqqqq, line width=0.5pt] (-1.65, 2.35) circle (2.5pt);
\fill [color=qqwuqq] (-2.35, 2.35) circle (2.5pt);
\filldraw [color=white, draw=ffqqqq, line width=0.5pt] (-1.65,-1.65) circle (2.5pt);
\filldraw [color=white, draw=ffqqqq, line width=0.5pt] (-2.35,-2.35) circle (2.5pt);
\fill [color=qqwuqq] (-2.35,-1.65) circle (2.5pt);
\fill [color=qqwuqq] (-1.65,-2.35) circle (2.5pt);
\filldraw [color=white, draw=ffqqqq, line width=0.5pt] (-2.35, 1.65) circle (2.5pt);
\fill [color=qqwuqq] ( 1.65, 2.35) circle (2.5pt);
\filldraw [color=white, draw=ffqqqq, line width=0.5pt] ( 1.65, 1.65) circle (2.5pt);
\filldraw [color=white, draw=ffqqqq, line width=0.5pt] ( 2.35, 2.35) circle (2.5pt);
\fill [color=qqwuqq] ( 2.35, 1.65) circle (2.5pt);
\filldraw [color=white, draw=ffqqqq, line width=0.5pt] ( 2.35,-1.65) circle (2.5pt);
\fill [color=qqwuqq] ( 2.35,-2.35) circle (2.5pt);
\filldraw [color=white, draw=ffqqqq, line width=0.5pt] ( 1.65,-2.35) circle (2.5pt);
\draw [color=qqwuqq] ( 1.8,-1.8) node{$p$};
\draw [color=qqwuqq] (-2.35, 2.35) node[left]{$p$};
\draw [color=qqwuqq] (-2.35,-1.65) node[left]{$p$};
\draw [color=qqwuqq] ( 1.65, 2.35) node[left]{$p$};
\draw [color=qqwuqq] (-1.8, 1.8) node{$q$};
\draw [color=qqwuqq] ( 2.35, 1.65) node[right]{$q$};
\draw [color=qqwuqq] ( 2.35,-2.35) node[right]{$q$};
\draw [color=qqwuqq] (-1.65,-2.35) node[right]{$q$};
\draw [color=ffqqqq] (-1.8,-1.8) node{$r$};
\draw [color=ffqqqq] (-1.65, 2.35) node[above]{$r$};
\draw [color=ffqqqq] ( 2.35, 2.35) node[right]{$r$};
\draw [color=ffqqqq] ( 2.35,-1.65) node[right]{$r$};
\draw [color=ffqqqq] ( 1.8, 1.8) node{$s$};
\draw [color=ffqqqq] ( 1.65,-2.35) node[below]{$s$};
\draw [color=ffqqqq] (-2.35,-2.35) node[left]{$s$};
\draw [color=ffqqqq] (-2.35, 1.65) node[left]{$s$};
\draw [blue][line width = 1pt][postaction={on each segment={mid arrow=blue}}]
      (1.65, -1.65) to[out=90, in =180] (2.2, -0.8) node[right]{$\sip{1}{\ws}$};
\draw [dash pattern=on 2pt off 2pt] (-2,-0.8)-- (2,-0.8);
\draw [blue][line width = 1pt][postaction={on each segment={mid arrow=blue}}]
      (-1.8,-0.8) node[left]{$\sip{1}{\ws}$} to[out=0, in=135]
      (-1.1,-1.1) node[right]{$\sip{2}{\ws}$} to[out=-45, in=90]
      (-0.8,-1.8) node[below]{$\sip{3}{\ws}$};
\draw [dash pattern=on 2pt off 2pt] (-0.8,-2)-- (-0.8,2);
\draw [blue][line width = 1pt][postaction={on each segment={mid arrow=blue}}]
      (-0.8, 2.2) node[above]{$\sip{3}{\ws}$} to[out=-90,in=0]
      (-2.2, 0.8) node[left]{$\sip{4}{\ws}$};
\draw[blue] ( 1.8, 0.8) node[right]{$\sip{4}{\ws}$};
\draw [dash pattern=on 2pt off 2pt] (-2, 0.8) -- ( 2, 0.8);
\draw [black][line width = 4pt][opacity=0.33] ( 1.83, 0.8 ) -- ( 1.65, 1.65);
\draw [black][line width = 4pt][opacity=0.33] ( 2.17,-0.8 ) -- ( 2.35,-1.65);
\draw [black][line width = 4pt][opacity=0.33] (-1.65,-1.65) -- (-0.8 ,-1.83);
\draw [black][line width = 4pt][opacity=0.33] (-1.65,-1.65) -- (-1.1 ,-1.1 );
\draw [black][line width = 4pt][opacity=0.33] (-1.65,-1.65) -- (-1.83,-0.8 );
\draw [black][line width = 4pt][opacity=0.33] (-2.35, 1.65) -- (-2.17, 0.8 );
\draw [black][line width = 4pt][opacity=0.33] (-1.65, 2.35) -- (-0.8 , 2.17);
\draw [cyan] [line width = 1pt] [postaction={on each segment={mid arrow=cyan}}]
      (-2.1,0.6) to[out=0,in=-90] (-0.6,2.1);
\draw [cyan] (-0.7, 0.6) node{$\ws_{i_2,i_2+1}$};
\draw [cyan] [line width = 1pt] [postaction={on each segment={mid arrow=cyan}}]
      (-0.3,-2) to[out=90,in=-45] (-0.7,-0.7);
\draw [cyan] (-0.5,-1.1) node[right]{$\ws_{i_3, i_3+1}$};
\draw [cyan] [line width = 1pt] [postaction={on each segment={mid arrow=cyan}}]
      (-0.85,-0.85) to[out=135,in=0] (-2,-0.4);
\draw [cyan] (-1.1,-0.6) node[above]{$\ws_{i_4, i_4+1}$};
\draw ( 2.0, 0.0) node[right]{\small\color{red}$a_1$};
\draw (-2.0, 0.0) node[left]{\small\color{red}$a_1$};
\draw (0,0) node[right]{\small\color{red}$a_2$};
\draw ( 0.0, 2.0) node[above]{\small\color{red}$a_3$};
\draw (-0.0,-2.0) node[below]{\small\color{red}$a_3$};
\end{tikzpicture}
\\
\begin{tikzpicture}[scale=0.8]
\filldraw [black!20] (-2, 2) circle (0.5cm);
\filldraw [black!20] ( 2, 2) circle (0.5cm);
\filldraw [black!20] ( 2,-2) circle (0.5cm);
\filldraw [black!20] (-2,-2) circle (0.5cm);
\draw [line width=1pt] (-2, 2) circle (0.5cm);
\draw [line width=1pt] ( 2, 2) circle (0.5cm);
\draw [line width=1pt] ( 2,-2) circle (0.5cm);
\draw [line width=1pt] (-2,-2) circle (0.5cm);
\draw [dash pattern=on 2pt off 2pt] (-2, 2)-- ( 2, 2);
\draw [dash pattern=on 2pt off 2pt] ( 2, 2)-- ( 2,-2);
\draw [dash pattern=on 2pt off 2pt] ( 2,-2)-- (-2,-2);
\draw [dash pattern=on 2pt off 2pt] (-2, 2)-- (-2,-2);
\draw [line width=1pt,color=ffqqqq] (-2.35,1.65)-- (-1.65,-1.65);
\draw [line width=1pt,color=ffqqqq] (-1.65,2.35)-- (1.65,1.65);
\draw [line width=1pt,color=ffqqqq] (1.65,1.65)-- (2.35,-1.65);
\draw [line width=1pt,color=ffqqqq] (1.65,-2.35)-- (-1.65,-1.65);
\draw [line width=1pt,color=ffqqqq] (-1.65,-1.65)-- (1.65,1.65);
\fill [color=qqwuqq] (-1.65, 1.65) circle (2.5pt);
\fill [color=qqwuqq] ( 1.65,-1.65) circle (2.5pt);
\filldraw [color=white, draw=ffqqqq, line width=0.5pt] (-1.65, 2.35) circle (2.5pt);
\fill [color=qqwuqq] (-2.35, 2.35) circle (2.5pt);
\filldraw [color=white, draw=ffqqqq, line width=0.5pt] (-1.65,-1.65) circle (2.5pt);
\filldraw [color=white, draw=ffqqqq, line width=0.5pt] (-2.35,-2.35) circle (2.5pt);
\fill [color=qqwuqq] (-2.35,-1.65) circle (2.5pt);
\fill [color=qqwuqq] (-1.65,-2.35) circle (2.5pt);
\filldraw [color=white, draw=ffqqqq, line width=0.5pt] (-2.35, 1.65) circle (2.5pt);
\fill [color=qqwuqq] ( 1.65, 2.35) circle (2.5pt);
\filldraw [color=white, draw=ffqqqq, line width=0.5pt] ( 1.65, 1.65) circle (2.5pt);
\filldraw [color=white, draw=ffqqqq, line width=0.5pt] ( 2.35, 2.35) circle (2.5pt);
\fill [color=qqwuqq] ( 2.35, 1.65) circle (2.5pt);
\filldraw [color=white, draw=ffqqqq, line width=0.5pt] ( 2.35,-1.65) circle (2.5pt);
\fill [color=qqwuqq] ( 2.35,-2.35) circle (2.5pt);
\filldraw [color=white, draw=ffqqqq, line width=0.5pt] ( 1.65,-2.35) circle (2.5pt);
\draw [color=qqwuqq] ( 1.8,-1.8) node{$p$};
\draw [color=qqwuqq] (-2.35, 2.35) node[left]{$p$};
\draw [color=qqwuqq] (-2.35,-1.65) node[left]{$p$};
\draw [color=qqwuqq] ( 1.65, 2.35) node[left]{$p$};
\draw [color=qqwuqq] (-1.8, 1.8) node{$q$};
\draw [color=qqwuqq] ( 2.35, 1.65) node[right]{$q$};
\draw [color=qqwuqq] ( 2.35,-2.35) node[right]{$q$};
\draw [color=qqwuqq] (-1.65,-2.35) node[right]{$q$};
\draw [color=ffqqqq] (-1.8,-1.8) node{$r$};
\draw [color=ffqqqq] (-1.65, 2.35) node[above]{$r$};
\draw [color=ffqqqq] ( 2.35, 2.35) node[right]{$r$};
\draw [color=ffqqqq] ( 2.35,-1.65) node[right]{$r$};
\draw [color=ffqqqq] ( 1.8, 1.8) node{$s$};
\draw [color=ffqqqq] ( 1.65,-2.35) node[below]{$s$};
\draw [color=ffqqqq] (-2.35,-2.35) node[left]{$s$};
\draw [color=ffqqqq] (-2.35, 1.65) node[left]{$s$};
\draw [blue][line width = 1pt][postaction={on each segment={mid arrow=blue}}]
      (1.65, -1.65) to[out=90, in =180] (2.2, -0.8) node[right]{$\sip{1}{\ws}$};
\draw [dash pattern=on 2pt off 2pt] (-2,-0.8)-- (2,-0.8);
\draw [blue][line width = 1pt][postaction={on each segment={mid arrow=blue}}]
      (-1.8,-0.8) node[left]{$\sip{1}{\ws}$} to[out=0, in=135]
      (-1.1,-1.1) node[right]{$\sip{2}{\ws}$} to[out=-45, in=90]
      (-0.8,-1.8) node[below]{$\sip{3}{\ws}$};
\draw [dash pattern=on 2pt off 2pt] (-0.8,-2)-- (-0.8,2);
\draw [blue][line width = 1pt][postaction={on each segment={mid arrow=blue}}]
      (-0.8, 2.2) node[above]{$\sip{3}{\ws}$} to[out=-90,in=0]
      (-2.2, 0.8) node[left]{$\sip{4}{\ws}$};
\draw [dash pattern=on 2pt off 2pt] (-2, 0.8) -- ( 2, 0.8);
\draw [blue][line width = 1pt][postaction={on each segment={mid arrow=blue}}]
      ( 1.8, 0.8) node[right]{$\sip{4}{\ws}$} to[out=180,in=90]
      ( 0.5,-2.1) node[below]{$\sip{5}{\ws}$};
\draw [dash pattern=on 2pt off 2pt] ( 0.5, -2)-- ( 0.5, 2);
\draw ( 0.5, 1.9) node[above]{$\sip{5}{\ws}$};
\draw [blue][line width = 1pt][postaction={on each segment={mid arrow=blue}}]
      (1.95, 0.2) node[right]{$\sip{6}{\ws}$} to[out=180,in=90]
      (1, -2.22) node[below]{$\sip{7}{\ws}$};
\draw [dash pattern=on 2pt off 2pt] ( 1,-2.22) -- ( 1, 2.22);
\draw [cyan] [line width = 1pt] [postaction={on each segment={mid arrow=cyan}}]
      (-2.1,0.6) to[out=0,in=-90] (-0.6,2.1);
\draw [black][line width = 4pt][opacity=0.33] ( 1.83, 0.8 ) -- ( 1.65, 1.65);
\draw [black][line width = 4pt][opacity=0.33] ( 2.17,-0.8 ) -- ( 2.35,-1.65);
\draw [black][line width = 4pt][opacity=0.33] (-1.65,-1.65) -- (-0.8 ,-1.83);
\draw [black][line width = 4pt][opacity=0.33] (-1.65,-1.65) -- (-1.1 ,-1.1 );
\draw [black][line width = 4pt][opacity=0.33] (-1.65,-1.65) -- (-1.83,-0.8 );
\draw [black][line width = 4pt][opacity=0.33] (-2.35, 1.65) -- (-2.17, 0.8 );
\draw [black][line width = 4pt][opacity=0.33] (-1.65, 2.35) -- (-0.8 , 2.17);
\draw ( 2.0,-0.1) node[right]{\small\color{red}$a_1$};
\draw (-2.0,-0.1) node[left]{\small\color{red}$a_1$};
\draw ( 0.0, 0.0) node[right]{\small\color{red}$a_2$};
\draw (-0.2, 2.0) node[above]{\small\color{red}$a_3$};
\draw (-0.0,-2.0) node[below]{\small\color{red}$a_3$};
\draw (0,-3) node{(a)};
\end{tikzpicture}
\begin{tikzpicture}[scale=0.8]
\filldraw [black!20] (-2, 2) circle (0.5cm);
\filldraw [black!20] ( 2, 2) circle (0.5cm);
\filldraw [black!20] ( 2,-2) circle (0.5cm);
\filldraw [black!20] (-2,-2) circle (0.5cm);
\draw [line width=1pt] (-2, 2) circle (0.5cm);
\draw [line width=1pt] ( 2, 2) circle (0.5cm);
\draw [line width=1pt] ( 2,-2) circle (0.5cm);
\draw [line width=1pt] (-2,-2) circle (0.5cm);
\draw [dash pattern=on 2pt off 2pt] (-2, 2)-- ( 2, 2);
\draw [dash pattern=on 2pt off 2pt] ( 2, 2)-- ( 2,-2);
\draw [dash pattern=on 2pt off 2pt] ( 2,-2)-- (-2,-2);
\draw [dash pattern=on 2pt off 2pt] (-2, 2)-- (-2,-2);
\draw [line width=1pt,color=ffqqqq] (-2.35,1.65)-- (-1.65,-1.65);
\draw [line width=1pt,color=ffqqqq] (-1.65,2.35)-- (1.65,1.65);
\draw [line width=1pt,color=ffqqqq] (1.65,1.65)-- (2.35,-1.65);
\draw [line width=1pt,color=ffqqqq] (1.65,-2.35)-- (-1.65,-1.65);
\draw [line width=1pt,color=ffqqqq] (-1.65,-1.65)-- (1.65,1.65);
\fill [color=qqwuqq] (-1.65, 1.65) circle (2.5pt);
\fill [color=qqwuqq] ( 1.65,-1.65) circle (2.5pt);
\filldraw [color=white, draw=ffqqqq, line width=0.5pt] (-1.65, 2.35) circle (2.5pt);
\fill [color=qqwuqq] (-2.35, 2.35) circle (2.5pt);
\filldraw [color=white, draw=ffqqqq, line width=0.5pt] (-1.65,-1.65) circle (2.5pt);
\filldraw [color=white, draw=ffqqqq, line width=0.5pt] (-2.35,-2.35) circle (2.5pt);
\fill [color=qqwuqq] (-2.35,-1.65) circle (2.5pt);
\fill [color=qqwuqq] (-1.65,-2.35) circle (2.5pt);
\filldraw [color=white, draw=ffqqqq, line width=0.5pt] (-2.35, 1.65) circle (2.5pt);
\fill [color=qqwuqq] ( 1.65, 2.35) circle (2.5pt);
\filldraw [color=white, draw=ffqqqq, line width=0.5pt] ( 1.65, 1.65) circle (2.5pt);
\filldraw [color=white, draw=ffqqqq, line width=0.5pt] ( 2.35, 2.35) circle (2.5pt);
\fill [color=qqwuqq] ( 2.35, 1.65) circle (2.5pt);
\filldraw [color=white, draw=ffqqqq, line width=0.5pt] ( 2.35,-1.65) circle (2.5pt);
\fill [color=qqwuqq] ( 2.35,-2.35) circle (2.5pt);
\filldraw [color=white, draw=ffqqqq, line width=0.5pt] ( 1.65,-2.35) circle (2.5pt);
\draw [color=qqwuqq] ( 1.8,-1.8) node{$p$};
\draw [color=qqwuqq] (-2.35, 2.35) node[left]{$p$};
\draw [color=qqwuqq] (-2.35,-1.65) node[left]{$p$};
\draw [color=qqwuqq] ( 1.65, 2.35) node[left]{$p$};
\draw [color=qqwuqq] (-1.8, 1.8) node{$q$};
\draw [color=qqwuqq] ( 2.35, 1.65) node[right]{$q$};
\draw [color=qqwuqq] ( 2.35,-2.35) node[right]{$q$};
\draw [color=qqwuqq] (-1.65,-2.35) node[right]{$q$};
\draw [color=ffqqqq] (-1.8,-1.8) node{$r$};
\draw [color=ffqqqq] (-1.65, 2.35) node[above]{$r$};
\draw [color=ffqqqq] ( 2.35, 2.35) node[right]{$r$};
\draw [color=ffqqqq] ( 2.35,-1.65) node[right]{$r$};
\draw [color=ffqqqq] ( 1.8, 1.8) node{$s$};
\draw [color=ffqqqq] ( 1.65,-2.35) node[below]{$s$};
\draw [color=ffqqqq] (-2.35,-2.35) node[left]{$s$};
\draw [color=ffqqqq] (-2.35, 1.65) node[left]{$s$};
\draw [blue][line width = 1pt][postaction={on each segment={mid arrow=blue}}]
      (1.65, -1.65) to[out=90, in =180] (2.2, -0.8) node[right]{$\sip{1}{\ws}$};
\draw [dash pattern=on 2pt off 2pt] (-2,-0.8)-- (2,-0.8);
\draw [blue][line width = 1pt][postaction={on each segment={mid arrow=blue}}]
      (-1.8,-0.8) node[left]{$\sip{1}{\ws}$} to[out=0, in=135]
      (-1.1,-1.1) node[right]{$\sip{2}{\ws}$} to[out=-45, in=90]
      (-0.8,-1.8) node[below]{$\sip{3}{\ws}$};
\draw [dash pattern=on 2pt off 2pt] (-0.8,-2)-- (-0.8,2);
\draw [blue][line width = 1pt][postaction={on each segment={mid arrow=blue}}]
      (-0.8, 2.2) node[above]{$\sip{3}{\ws}$} to[out=-90,in=0]
      (-2.2, 0.8) node[left]{$\sip{4}{\ws}$};
\draw [dash pattern=on 2pt off 2pt] (-2, 0.8) -- ( 2, 0.8);
\draw [blue][line width = 1pt][postaction={on each segment={mid arrow=blue}}]
      ( 1.8, 0.8) node[right]{$\sip{4}{\ws}$} to[out=180,in=10]
      ( 0.6, 0.6) node[above]{$\sip{5}{\ws}$} to[out=190,in= 0]
      (-2,-0.0) node[left]{$\sip{6}{\ws}$};
\draw [dash pattern=on 2pt off 2pt] (-2,-0.0) -- ( 2, 0.0);
\draw [blue] ( 2, 0.0) node[right]{$\sip{6}{\ws}$};
\draw [cyan] [line width = 1pt] [postaction={on each segment={mid arrow=cyan}}]
      (-2.1,0.6) to[out=0,in=-90] (-0.6,2.1);
\draw [black][line width = 4pt][opacity=0.33] ( 1.83, 0.8 ) -- ( 1.65, 1.65);
\draw [black][line width = 4pt][opacity=0.33] ( 2.17,-0.8 ) -- ( 2.35,-1.65);
\draw [black][line width = 4pt][opacity=0.33] (-1.65,-1.65) -- (-0.8 ,-1.83);
\draw [black][line width = 4pt][opacity=0.33] (-1.65,-1.65) -- (-1.1 ,-1.1 );
\draw [black][line width = 4pt][opacity=0.33] (-1.65,-1.65) -- (-1.83,-0.8 );
\draw [black][line width = 4pt][opacity=0.33] (-2.35, 1.65) -- (-2.17, 0.8 );
\draw [black][line width = 4pt][opacity=0.33] (-1.65, 2.35) -- (-0.8 , 2.17);
\draw ( 2.0,-0.3) node[right]{\small\color{red}$a_1$};
\draw (-2.0,-0.3) node[left]{\small\color{red}$a_1$};
\draw (0,0) node[right]{\small\color{red}$a_2$};
\draw ( 0.5, 1.8) node[above]{\small\color{red}$a_3$};
\draw (-0.0,-2.0) node[below]{\small\color{red}$a_3$};
\draw (0,-3) node{(b)};
\end{tikzpicture}
\
\begin{tikzpicture}[scale=0.8]
\filldraw [black!20] (-2, 2) circle (0.5cm);
\filldraw [black!20] ( 2, 2) circle (0.5cm);
\filldraw [black!20] ( 2,-2) circle (0.5cm);
\filldraw [black!20] (-2,-2) circle (0.5cm);
\draw [line width=1pt] (-2, 2) circle (0.5cm);
\draw [line width=1pt] ( 2, 2) circle (0.5cm);
\draw [line width=1pt] ( 2,-2) circle (0.5cm);
\draw [line width=1pt] (-2,-2) circle (0.5cm);
\draw [dash pattern=on 2pt off 2pt] (-2, 2)-- ( 2, 2);
\draw [dash pattern=on 2pt off 2pt] ( 2, 2)-- ( 2,-2);
\draw [dash pattern=on 2pt off 2pt] ( 2,-2)-- (-2,-2);
\draw [dash pattern=on 2pt off 2pt] (-2, 2)-- (-2,-2);
\draw [line width=1pt,color=ffqqqq] (-2.35,1.65)-- (-1.65,-1.65);
\draw [line width=1pt,color=ffqqqq] (-1.65,2.35)-- (1.65,1.65);
\draw [line width=1pt,color=ffqqqq] (1.65,1.65)-- (2.35,-1.65);
\draw [line width=1pt,color=ffqqqq] (1.65,-2.35)-- (-1.65,-1.65);
\draw [line width=1pt,color=ffqqqq] (-1.65,-1.65)-- (1.65,1.65);
\fill [color=qqwuqq] (-1.65, 1.65) circle (2.5pt);
\fill [color=qqwuqq] ( 1.65,-1.65) circle (2.5pt);
\filldraw [color=white, draw=ffqqqq, line width=0.5pt] (-1.65, 2.35) circle (2.5pt);
\fill [color=qqwuqq] (-2.35, 2.35) circle (2.5pt);
\filldraw [color=white, draw=ffqqqq, line width=0.5pt] (-1.65,-1.65) circle (2.5pt);
\filldraw [color=white, draw=ffqqqq, line width=0.5pt] (-2.35,-2.35) circle (2.5pt);
\fill [color=qqwuqq] (-2.35,-1.65) circle (2.5pt);
\fill [color=qqwuqq] (-1.65,-2.35) circle (2.5pt);
\filldraw [color=white, draw=ffqqqq, line width=0.5pt] (-2.35, 1.65) circle (2.5pt);
\fill [color=qqwuqq] ( 1.65, 2.35) circle (2.5pt);
\filldraw [color=white, draw=ffqqqq, line width=0.5pt] ( 1.65, 1.65) circle (2.5pt);
\filldraw [color=white, draw=ffqqqq, line width=0.5pt] ( 2.35, 2.35) circle (2.5pt);
\fill [color=qqwuqq] ( 2.35, 1.65) circle (2.5pt);
\filldraw [color=white, draw=ffqqqq, line width=0.5pt] ( 2.35,-1.65) circle (2.5pt);
\fill [color=qqwuqq] ( 2.35,-2.35) circle (2.5pt);
\filldraw [color=white, draw=ffqqqq, line width=0.5pt] ( 1.65,-2.35) circle (2.5pt);
\draw [color=qqwuqq] ( 1.8,-1.8) node{$p$};
\draw [color=qqwuqq] (-2.35, 2.35) node[left]{$p$};
\draw [color=qqwuqq] (-2.35,-1.65) node[left]{$p$};
\draw [color=qqwuqq] ( 1.65, 2.35) node[left]{$p$};
\draw [color=qqwuqq] (-1.8, 1.8) node{$q$};
\draw [color=qqwuqq] ( 2.35, 1.65) node[right]{$q$};
\draw [color=qqwuqq] ( 2.35,-2.35) node[right]{$q$};
\draw [color=qqwuqq] (-1.65,-2.35) node[right]{$q$};
\draw [color=ffqqqq] (-1.8,-1.8) node{$r$};
\draw [color=ffqqqq] (-1.65, 2.35) node[above]{$r$};
\draw [color=ffqqqq] ( 2.35, 2.35) node[right]{$r$};
\draw [color=ffqqqq] ( 2.35,-1.65) node[right]{$r$};
\draw [color=ffqqqq] ( 1.8, 1.8) node{$s$};
\draw [color=ffqqqq] ( 1.65,-2.35) node[below]{$s$};
\draw [color=ffqqqq] (-2.35,-2.35) node[left]{$s$};
\draw [color=ffqqqq] (-2.35, 1.65) node[left]{$s$};
\draw [blue][line width = 1pt][postaction={on each segment={mid arrow=blue}}]
      (1.65, -1.65) to[out=90, in =180] (2.2, -0.8) node[right]{$\sip{1}{\ws}$};
\draw [dash pattern=on 2pt off 2pt] (-2,-0.8)-- (2,-0.8);
\draw [blue][line width = 1pt][postaction={on each segment={mid arrow=blue}}]
      (-1.8,-0.8) node[left]{$\sip{1}{\ws}$} to[out=0, in=135]
      (-1.1,-1.1) node[right]{$\sip{2}{\ws}$} to[out=-45, in=90]
      (-0.8,-1.8) node[below]{$\sip{3}{\ws}$};
\draw [dash pattern=on 2pt off 2pt] (-0.8,-2)-- (-0.8,2);
\draw [blue][line width = 1pt][postaction={on each segment={mid arrow=blue}}]
      (-0.8, 2.2) node[above]{$\sip{3}{\ws}$} to[out=-90,in=0]
      (-2.2, 0.8) node[left]{$\sip{4}{\ws}$};
\draw [dash pattern=on 2pt off 2pt] (-2, 0.8) -- ( 2, 0.8);
\draw [blue][line width = 1pt][postaction={on each segment={mid arrow=blue}}]
      ( 1.8, 0.8) node[right]{$\sip{4}{\ws}$} to[out=180,in=-45]
      ( 1.1, 1.1) node[left]{$\sip{5}{\ws}$} to[out=135,in=-90]
      ( 0.8, 1.8) node[above]{$\sip{6}{\ws}$};
\draw [dash pattern=on 2pt off 2pt] ( 0.8,-2) -- ( 0.8, 2);
\draw [blue][line width = 1pt][postaction={on each segment={mid arrow=blue}}]
      ( 0.8,-2.2) node[below]{$\sip{6}{\ws}$} to[out=90, in=-45] (0,0) node[right]{$\sip{7}{\ws}$};
\draw ( 2.0, 0.0) node[right]{\small\color{red}$a_1$};
\draw (-2.0, 0.0) node[left]{\small\color{red}$a_1$};
\draw ( 0.5, 0.5) node[right]{\small\color{red}$a_2$};
\draw ( 0.0, 2.0) node[above]{\small\color{red}$a_3$};
\draw (-0.0,-2.0) node[below]{\small\color{red}$a_3$};
\draw [black][line width = 4pt][opacity=0.33] ( 0.8 , 1.83) -- ( 1.65, 1.65);
\draw [black][line width = 4pt][opacity=0.33] ( 1.83, 0.8 ) -- ( 1.65, 1.65);
\draw [black][line width = 4pt][opacity=0.33] ( 2.17,-0.8 ) -- ( 2.35,-1.65);
\draw [black][line width = 4pt][opacity=0.33] ( 1.65,-2.35) -- ( 0.8 ,-2.17);
\draw [black][line width = 4pt][opacity=0.33] (-1.65,-1.65) -- (-0.8 ,-1.83);
\draw [black][line width = 4pt][opacity=0.33] (-1.65,-1.65) -- (-1.1 ,-1.1 );
\draw [black][line width = 4pt][opacity=0.33] (-1.65,-1.65) -- (-1.83,-0.8 );
\draw [black][line width = 4pt][opacity=0.33] (-2.35, 1.65) -- (-2.17, 0.8 );
\draw [black][line width = 4pt][opacity=0.33] (-1.65, 2.35) -- (-0.8 , 2.17);
\draw (0,-3) node{(c)};
\end{tikzpicture}
\caption{Cases in the proof of Lemma~\ref{lemm:self-int Type}}
\label{fig:cases}
\end{figure}

\end{proof}

In the following case, a simplest sequence can recover the arc.

\begin{lemma}\label{lem:wg}
    Let $\ws\in\SGOCM(\SURF)$ with $\sigma(0)=p$ and $\sigma(1)=q$. If $\ws$ has a simplest sequence $0=i_1<i_2<\cdots<i_4=n(\ws)$ such that $\snum{i_2}{\ws}=3$, $\snum{i_3}{\ws}=2$ and $\snum{i_4}{\ws}=1$ (see the first picture of Figure~\ref{fig-star Type 6}), then $\ws$ is homotopic to $\tgamma$ (see Figure~\ref{fig:tgamma} for the graded arc $\tgamma$).

\begin{figure}[htbp]
\definecolor{ffqqqq}{rgb}{1,0,0}
\definecolor{qqwuqq}{rgb}{0,0,1}
\begin{center}
\begin{tikzpicture} [scale=0.7]
\filldraw [black!20] (-2, 2) circle (0.5cm);
\filldraw [black!20] ( 2, 2) circle (0.5cm);
\filldraw [black!20] ( 2,-2) circle (0.5cm);
\filldraw [black!20] (-2,-2) circle (0.5cm);
\draw [line width=1pt] (-2, 2) circle (0.5cm);
\draw [line width=1pt] ( 2, 2) circle (0.5cm);
\draw [line width=1pt] ( 2,-2) circle (0.5cm);
\draw [line width=1pt] (-2,-2) circle (0.5cm);
\draw [dash pattern=on 2pt off 2pt] (-2, 2)-- ( 2, 2);
\draw [dash pattern=on 2pt off 2pt] ( 2, 2)-- ( 2,-2);
\draw [dash pattern=on 2pt off 2pt] ( 2,-2)-- (-2,-2);
\draw [dash pattern=on 2pt off 2pt] (-2, 2)-- (-2,-2);
\draw [line width=1pt,color=ffqqqq] (-2.35,1.65)-- (-1.65,-1.65);
\draw [line width=1pt,color=ffqqqq] (-1.65,2.35)-- (1.65,1.65);
\draw [line width=1pt,color=ffqqqq] (1.65,1.65)-- (2.35,-1.65);
\draw [line width=1pt,color=ffqqqq] (1.65,-2.35)-- (-1.65,-1.65);
\draw [line width=1pt,color=ffqqqq] (-1.65,-1.65)-- (1.65,1.65);
\fill [color=qqwuqq] (-1.65, 1.65) circle (2.5pt);
\fill [color=qqwuqq] ( 1.65,-1.65) circle (2.5pt);
\filldraw [color=white, draw=ffqqqq, line width=0.5pt] (-1.65, 2.35) circle (2.5pt);
\fill [color=qqwuqq] (-2.35, 2.35) circle (2.5pt);
\filldraw [color=white, draw=ffqqqq, line width=0.5pt] (-1.65,-1.65) circle (2.5pt);
\filldraw [color=white, draw=ffqqqq, line width=0.5pt] (-2.35,-2.35) circle (2.5pt);
\fill [color=qqwuqq] (-2.35,-1.65) circle (2.5pt);
\fill [color=qqwuqq] (-1.65,-2.35) circle (2.5pt);
\filldraw [color=white, draw=ffqqqq, line width=0.5pt] (-2.35, 1.65) circle (2.5pt);
\fill [color=qqwuqq] ( 1.65, 2.35) circle (2.5pt);
\filldraw [color=white, draw=ffqqqq, line width=0.5pt] ( 1.65, 1.65) circle (2.5pt);
\filldraw [color=white, draw=ffqqqq, line width=0.5pt] ( 2.35, 2.35) circle (2.5pt);
\fill [color=qqwuqq] ( 2.35, 1.65) circle (2.5pt);
\filldraw [color=white, draw=ffqqqq, line width=0.5pt] ( 2.35,-1.65) circle (2.5pt);
\fill [color=qqwuqq] ( 2.35,-2.35) circle (2.5pt);
\filldraw [color=white, draw=ffqqqq, line width=0.5pt] ( 1.65,-2.35) circle (2.5pt);
\draw [color=qqwuqq] ( 1.8,-1.8) node{$p$};
\draw [color=qqwuqq] (-2.35, 2.35) node[left]{$p$};
\draw [color=qqwuqq] (-2.35,-1.65) node[left]{$p$};
\draw [color=qqwuqq] ( 1.65, 2.35) node[left]{$p$};
\draw [color=qqwuqq] (-1.8, 1.8) node{$q$};
\draw [color=qqwuqq] ( 2.35, 1.65) node[right]{$q$};
\draw [color=qqwuqq] ( 2.35,-2.35) node[right]{$q$};
\draw [color=qqwuqq] (-1.65,-2.35) node[right]{$q$};
\draw [color=ffqqqq] (-1.8,-1.8) node{$r$};
\draw [color=ffqqqq] (-1.65, 2.35) node[above]{$r$};
\draw [color=ffqqqq] ( 2.35, 2.35) node[right]{$r$};
\draw [color=ffqqqq] ( 2.35,-1.65) node[right]{$r$};
\draw [color=ffqqqq] ( 1.8, 1.8) node{$s$};
\draw [color=ffqqqq] ( 1.65,-2.35) node[below]{$s$};
\draw [color=ffqqqq] (-2.35,-2.35) node[left]{$s$};
\draw [color=ffqqqq] (-2.35, 1.65) node[left]{$s$};
\draw [cyan][line width=1pt][postaction={on each segment={mid arrow=cyan}}]
      (1.65,-1.65) to[out=180, in=90] (0.2, -2.05) node[below]{$\sip{i_1+1}{\ws}$};
\draw [cyan][line width=1pt][postaction={on each segment={mid arrow=cyan}}]
      (0.2, 1.95) node[above]{$\sip{i_2}{\ws}$} to[out=-90, in=135]
      (0.7, 0.7)  node[left]{$\sip{i_2+1}{\ws}$};
\draw [cyan][line width=1pt][postaction={on each segment={mid arrow=cyan}}]
      (0.5,0.5)  node[below]{$\sip{i_3}{\ws}$} to[out=-45,in=180]
      (2,0); \draw [cyan] (1.8,0.0) node[right]{$\sip{i_3\!+\!1}{\ws}$};
\draw [cyan][line width=1pt][postaction={on each segment={mid arrow=cyan}}]
      (-2.2, 1) node[left]{$\sip{i_4}{\ws}$} to[out=0,in=-90] (-1.65,1.65);
\draw ( 1.9, 0.5) node[right]{\small\color{red}$a_1$};
\draw (-2.0, 0.0) node[left]{\small\color{red}$a_1$};
\draw (-0.5,-0.5) node[right]{\small\color{red}$a_2$};
\draw (-0.5, 1.9) node[above]{\small\color{red}$a_3$};
\draw (-0.5,-1.9) node[below]{\small\color{red}$a_3$};
\draw[white] (0,-3) node{(1)};
\end{tikzpicture}
\begin{tikzpicture} [scale=0.7]
\filldraw [black!20] (-2, 2) circle (0.5cm);
\filldraw [black!20] ( 2, 2) circle (0.5cm);
\filldraw [black!20] ( 2,-2) circle (0.5cm);
\filldraw [black!20] (-2,-2) circle (0.5cm);
\draw [line width=1pt] (-2, 2) circle (0.5cm);
\draw [line width=1pt] ( 2, 2) circle (0.5cm);
\draw [line width=1pt] ( 2,-2) circle (0.5cm);
\draw [line width=1pt] (-2,-2) circle (0.5cm);
\draw [dash pattern=on 2pt off 2pt] (-2, 2)-- ( 2, 2);
\draw [dash pattern=on 2pt off 2pt] ( 2, 2)-- ( 2,-2);
\draw [dash pattern=on 2pt off 2pt] ( 2,-2)-- (-2,-2);
\draw [dash pattern=on 2pt off 2pt] (-2, 2)-- (-2,-2);
\draw [line width=1pt,color=ffqqqq] (-2.35,1.65)-- (-1.65,-1.65);
\draw [line width=1pt,color=ffqqqq] (-1.65,2.35)-- (1.65,1.65);
\draw [line width=1pt,color=ffqqqq] (1.65,1.65)-- (2.35,-1.65);
\draw [line width=1pt,color=ffqqqq] (1.65,-2.35)-- (-1.65,-1.65);
\draw [line width=1pt,color=ffqqqq] (-1.65,-1.65)-- (1.65,1.65);
\fill [color=qqwuqq] (-1.65, 1.65) circle (2.5pt);
\fill [color=qqwuqq] ( 1.65,-1.65) circle (2.5pt);
\filldraw [color=white, draw=ffqqqq, line width=0.5pt] (-1.65, 2.35) circle (2.5pt);
\fill [color=qqwuqq] (-2.35, 2.35) circle (2.5pt);
\filldraw [color=white, draw=ffqqqq, line width=0.5pt] (-1.65,-1.65) circle (2.5pt);
\filldraw [color=white, draw=ffqqqq, line width=0.5pt] (-2.35,-2.35) circle (2.5pt);
\fill [color=qqwuqq] (-2.35,-1.65) circle (2.5pt);
\fill [color=qqwuqq] (-1.65,-2.35) circle (2.5pt);
\filldraw [color=white, draw=ffqqqq, line width=0.5pt] (-2.35, 1.65) circle (2.5pt);
\fill [color=qqwuqq] ( 1.65, 2.35) circle (2.5pt);
\filldraw [color=white, draw=ffqqqq, line width=0.5pt] ( 1.65, 1.65) circle (2.5pt);
\filldraw [color=white, draw=ffqqqq, line width=0.5pt] ( 2.35, 2.35) circle (2.5pt);
\fill [color=qqwuqq] ( 2.35, 1.65) circle (2.5pt);
\filldraw [color=white, draw=ffqqqq, line width=0.5pt] ( 2.35,-1.65) circle (2.5pt);
\fill [color=qqwuqq] ( 2.35,-2.35) circle (2.5pt);
\filldraw [color=white, draw=ffqqqq, line width=0.5pt] ( 1.65,-2.35) circle (2.5pt);
\draw [color=qqwuqq] ( 1.8,-1.8) node{$p$};
\draw [color=qqwuqq] (-2.35, 2.35) node[left]{$p$};
\draw [color=qqwuqq] (-2.35,-1.65) node[left]{$p$};
\draw [color=qqwuqq] ( 1.65, 2.35) node[left]{$p$};
\draw [color=qqwuqq] (-1.8, 1.8) node{$q$};
\draw [color=qqwuqq] ( 2.35, 1.65) node[right]{$q$};
\draw [color=qqwuqq] ( 2.35,-2.35) node[right]{$q$};
\draw [color=qqwuqq] (-1.65,-2.35) node[right]{$q$};
\draw [color=ffqqqq] (-1.8,-1.8) node{$r$};
\draw [color=ffqqqq] (-1.65, 2.35) node[above]{$r$};
\draw [color=ffqqqq] ( 2.35, 2.35) node[right]{$r$};
\draw [color=ffqqqq] ( 2.35,-1.65) node[right]{$r$};
\draw [color=ffqqqq] ( 1.8, 1.8) node{$s$};
\draw [color=ffqqqq] ( 1.65,-2.35) node[below]{$s$};
\draw [color=ffqqqq] (-2.35,-2.35) node[left]{$s$};
\draw [color=ffqqqq] (-2.35, 1.65) node[left]{$s$};
\draw [blue][line width=1pt][postaction={on each segment={mid arrow=blue}}]
      (1.65,-1.65) to[out=180, in=90] (0.5, -2.1) node[below]{$\sip{1}{\ws}$};
\draw [dash pattern=on 2pt off 2pt] (0.5,-2)-- (0.5, 2);
\draw [blue][line width=1pt][postaction={on each segment={mid arrow=blue}}]
      (0.5, 1.9) node[above]{$\sip{1}{\ws}$} to[out=-90, in=135]
      (0.9, 0.9) node[above]{$\sip{2}{\ws}$} to[out=-45, in=180]
      (1.9, 0.5) node[right]{$\sip{3}{\ws}$};

\draw [dash pattern=on 2pt off 2pt] (-2, 0.5)-- (2, 0.5);
\draw [blue] (-2.1, 0.5) node[left]{$\sip{3}{\ws}$};
\draw [cyan][line width=1pt][postaction={on each segment={mid arrow=cyan}}]
      (0.05, 1.95) node[above]{$\sip{i_2}{\ws}$} to[out=-90, in=135]
      (0.65, 0.65)  node[left]{$\sip{i_2+1}{\ws}$};
\draw [cyan][line width=1pt][postaction={on each segment={mid arrow=cyan}}]
      (0.5,0.5)  node[below]{$\sip{i_3}{\ws}$} to[out=-45,in=180]
      (2,0); \draw [cyan] (1.8,0.0) node[right]{$\sip{i_3\!+\!1}{\ws}$};
\draw [black][opacity=0.4][line width=4pt] (0.5, 1.9) -- (1.65, 1.65);
\draw [black][opacity=0.4][line width=4pt] (0.5,-2.1) -- (1.65,-2.35);
\draw ( 2.1,-0.5) node[right]{\small\color{red}$a_1$};
\draw (-2.0, 0.0) node[left]{\small\color{red}$a_1$};
\draw (-0.5,-0.5) node[right]{\small\color{red}$a_2$};
\draw (-0.5, 1.9) node[above]{\small\color{red}$a_3$};
\draw (-0.5,-1.9) node[below]{\small\color{red}$a_3$};
\draw[white] (0,-3) node{(1)};
\end{tikzpicture}
\begin{tikzpicture} [scale=0.7]
\filldraw [black!20] (-2, 2) circle (0.5cm);
\filldraw [black!20] ( 2, 2) circle (0.5cm);
\filldraw [black!20] ( 2,-2) circle (0.5cm);
\filldraw [black!20] (-2,-2) circle (0.5cm);
\draw [line width=1pt] (-2, 2) circle (0.5cm);
\draw [line width=1pt] ( 2, 2) circle (0.5cm);
\draw [line width=1pt] ( 2,-2) circle (0.5cm);
\draw [line width=1pt] (-2,-2) circle (0.5cm);
\draw [dash pattern=on 2pt off 2pt] (-2, 2)-- ( 2, 2);
\draw [dash pattern=on 2pt off 2pt] ( 2, 2)-- ( 2,-2);
\draw [dash pattern=on 2pt off 2pt] ( 2,-2)-- (-2,-2);
\draw [dash pattern=on 2pt off 2pt] (-2, 2)-- (-2,-2);
\draw [line width=1pt,color=ffqqqq] (-2.35,1.65)-- (-1.65,-1.65);
\draw [line width=1pt,color=ffqqqq] (-1.65,2.35)-- (1.65,1.65);
\draw [line width=1pt,color=ffqqqq] (1.65,1.65)-- (2.35,-1.65);
\draw [line width=1pt,color=ffqqqq] (1.65,-2.35)-- (-1.65,-1.65);
\draw [line width=1pt,color=ffqqqq] (-1.65,-1.65)-- (1.65,1.65);
\fill [color=qqwuqq] (-1.65, 1.65) circle (2.5pt);
\fill [color=qqwuqq] ( 1.65,-1.65) circle (2.5pt);
\filldraw [color=white, draw=ffqqqq, line width=0.5pt] (-1.65, 2.35) circle (2.5pt);
\fill [color=qqwuqq] (-2.35, 2.35) circle (2.5pt);
\filldraw [color=white, draw=ffqqqq, line width=0.5pt] (-1.65,-1.65) circle (2.5pt);
\filldraw [color=white, draw=ffqqqq, line width=0.5pt] (-2.35,-2.35) circle (2.5pt);
\fill [color=qqwuqq] (-2.35,-1.65) circle (2.5pt);
\fill [color=qqwuqq] (-1.65,-2.35) circle (2.5pt);
\filldraw [color=white, draw=ffqqqq, line width=0.5pt] (-2.35, 1.65) circle (2.5pt);
\fill [color=qqwuqq] ( 1.65, 2.35) circle (2.5pt);
\filldraw [color=white, draw=ffqqqq, line width=0.5pt] ( 1.65, 1.65) circle (2.5pt);
\filldraw [color=white, draw=ffqqqq, line width=0.5pt] ( 2.35, 2.35) circle (2.5pt);
\fill [color=qqwuqq] ( 2.35, 1.65) circle (2.5pt);
\filldraw [color=white, draw=ffqqqq, line width=0.5pt] ( 2.35,-1.65) circle (2.5pt);
\fill [color=qqwuqq] ( 2.35,-2.35) circle (2.5pt);
\filldraw [color=white, draw=ffqqqq, line width=0.5pt] ( 1.65,-2.35) circle (2.5pt);
\draw [color=qqwuqq] ( 1.8,-1.8) node{$p$};
\draw [color=qqwuqq] (-2.35, 2.35) node[left]{$p$};
\draw [color=qqwuqq] (-2.35,-1.65) node[left]{$p$};
\draw [color=qqwuqq] ( 1.63, 2.35) node[right]{\tiny$p$};
\draw [color=qqwuqq] (-1.8, 1.8) node{$q$};
\draw [color=qqwuqq] ( 2.35, 1.65) node[right]{$q$};
\draw [color=qqwuqq] ( 2.35,-2.35) node[right]{$q$};
\draw [color=qqwuqq] (-1.65,-2.35) node[right]{$q$};
\draw [color=ffqqqq] (-1.8,-1.8) node{$r$};
\draw [color=ffqqqq] (-1.65, 2.35) node[above]{$r$};
\draw [color=ffqqqq] ( 2.35, 2.35) node[right]{$r$};
\draw [color=ffqqqq] ( 2.35,-1.65) node[right]{$r$};
\draw [color=ffqqqq] ( 1.8, 1.8) node{$s$};
\draw [color=ffqqqq] ( 1.65,-2.35) node[below]{$s$};
\draw [color=ffqqqq] (-2.35,-2.35) node[left]{$s$};
\draw [color=ffqqqq] (-2.35, 1.65) node[left]{$s$};
\draw [blue][line width=1pt][postaction={on each segment={mid arrow=blue}}]
      (1.65,-1.65) to[out=180, in=90] (0.5, -2.1) node[below]{$\sip{1}{\ws}$};
\draw [dash pattern=on 2pt off 2pt] (0.5,-2)-- (0.5, 2);
\draw [blue][line width=1pt][postaction={on each segment={mid arrow=blue}}]
      (0.5, 1.9) node[above]{$\sip{1}{\ws}$} to[out=-90, in=135]
      (0.9, 0.9) node[above]{$\sip{2}{\ws}$} to[out=-45, in=180]
      (1.9, 0.5) node[right]{$\sip{3}{\ws}$};
\draw [dash pattern=on 2pt off 2pt] (-2, 0.5)-- (2, 0.5);
\draw [blue][line width=1pt][postaction={on each segment={mid arrow=blue}}]
      (-2.1, 0.5) node[left]{$\sip{3}{\ws}$} to[out=0, in=220]
      (-1.3, 0.8);
\draw [blue][line width=1pt][postaction={on each segment={mid arrow=blue}}]
      (1.1, 1.75) to[out=-90, in=135]
      (1.3,1.3) to [out=-45, in=180]
      (1.75, 1.1) node[right]{$\sip{n(\ws)}{\ws}$};
\draw [blue][->] (1.1, 1.85) -- (1.1, 2.5); \draw[blue] (1.1,2.7) node{$\sip{n(\ws)\!-\!2}{\ws}$};
\draw [blue][->] (1.3, 1.25) -- (1.3,-0.5); \draw[blue] (1.3,-0.2) node[below]{$\sip{n(\ws)\!-\!1}{\ws}$};
\draw [blue][line width=1pt][postaction={on each segment={mid arrow=blue}}]
      (-2.2, 1)  to[out=0,in=-90] (-1.65,1.65);
\draw [blue] (-2.2, 1) node[left]{$\sip{n(\ws)}{\ws}$};
\draw [cyan][line width=1pt][postaction={on each segment={mid arrow=cyan}}]
      (0.05, 1.95) to[out=-90, in=135] (0.65, 0.65);
\draw [cyan][line width=1pt][postaction={on each segment={mid arrow=cyan}}]
      (0.5,0.5)  to[out=-45,in=180] (2,0);
\draw [black][opacity=0.4][line width=4pt] (0.5, 1.9) -- (1.65, 1.65);
\draw [black][opacity=0.4][line width=4pt] (0.5,-2.1) -- (1.65,-2.35);
\draw ( 2.1,-0.5) node[right]{\small\color{red}$a_1$};
\draw (-2.0, 0.0) node[left]{\small\color{red}$a_1$};
\draw (-0.5,-0.5) node[right]{\small\color{red}$a_2$};
\draw (-0.5, 1.9) node[above]{\small\color{red}$a_3$};
\draw (-0.5,-1.9) node[below]{\small\color{red}$a_3$};
\end{tikzpicture}
\caption{Cases for Lemma~\ref{lem:wg}}
\label{fig-star Type 6}
\end{center}
\end{figure}
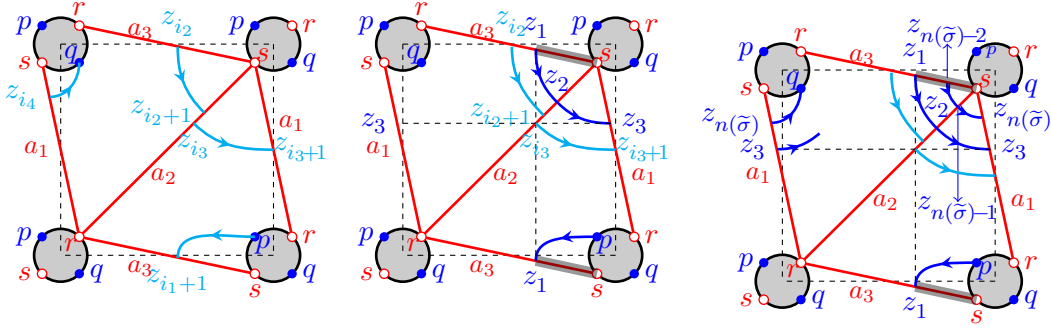

\end{lemma}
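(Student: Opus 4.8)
The plan is to reconstruct $\ws$ completely from the pivot data of the given simplest sequence, realising it as a concatenation of ``straight'' segments through the two quadrilaterals $Q_1,Q_2$ into which $\dac$ cuts $\SURF$ (say $p$ lies on the boundary side of $Q_1$ and $q$ on that of $Q_2$), and then to recognise the result as $\tgamma$ by comparison with Figure~\ref{fig:tgamma}.

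First I would read off the four pivot segments. Since $i_1=0$ and $i_4=n(\ws)$, these are $\ws_{0,1}$ (emanating from $p=\sigma(0)$), $\ws_{i_2,i_2+1}$, $\ws_{i_3,i_3+1}$, and $\ws_{n(\ws),n(\ws)+1}$ (landing at $q=\sigma(1)$). Applying (A2) to $k=1,2$ gives $\snum{1}{\ws}=\snum{i_2}{\ws}=3$ and $\snum{i_2+1}{\ws}=\snum{i_3}{\ws}=2$, which together with $\snum{n(\ws)}{\ws}=1$ shows that $\ws_{0,1}$ runs from $p$ to $a_3$, that $\ws_{i_2,i_2+1}$ runs from $a_3$ to $a_2$, that $\ws_{i_3,i_3+1}$ starts on $a_2$ (and, once the third step below is in place, goes to $a_1$), and that $\ws_{n(\ws),n(\ws)+1}$ runs from $a_1$ to $q$. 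By (A1) consecutive pivot segments lie in different quadrilaterals, hence alternately in $Q_1,Q_2,Q_1,Q_2$, which is the configuration in the first picture of Figure~\ref{fig-star Type 6}. Since each quadrilateral is a disc with the three sides $a_1,a_2,a_3$ and one boundary side, a segment of $\ws$ inside it is determined up to homotopy rel endpoints by which two of these sides it joins — and, by minimal position, it never joins a side to itself — so each such segment is rigid once those two sides are known, and its index is then computed from Lemma~\ref{lem:int ind}. Using this together with (A3), which forces the index at $\sip{i_2}{\ws}$ on $a_3$ to equal $\sdii{1}{\ws}$ (and similarly at the remaining pivots), I would localise each pivot point to a prescribed sub-arc of the relevant $a_i$, namely the shaded portions in the second and third pictures of Figure~\ref{fig-star Type 6}.

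The heart of the argument is the third step: showing that $\ws$ has no crossings besides the three forced ones, i.e. that $i_2=1$, $i_3=2$ and $i_4=3=n(\ws)$, with $\ws$ passing directly from $a_2$ to $a_1$ after $\sip{i_3}{\ws}$. Suppose not; then $\ws$ must leave the direct route $p\rightsquigarrow a_3\rightsquigarrow a_2\rightsquigarrow a_1\rightsquigarrow q$ and return to one of $a_1,a_2,a_3$ at a point which, by the localisation just described, sits essentially at an earlier crossing. I would then run through the ways this return can happen, paralleling the case analysis of Lemma~\ref{lemm:self-int Type} (its cases (a)--(c), which match the three lower pictures of Figure~\ref{fig-star Type 6}): in each case the offending sub-arc of $\ws$, together with the short sub-arc of $a_i$ between its two localised ends, either bounds a bigon with $a_i$ — contradicting minimal position — or is forced to cross one of the four pivot segments — contradicting simplicity of $\ws$; the hypotheses $\ws\in\SGOCM(\SURF)$ and $\sigma(0)=p$, $\sigma(1)=q$ determine which of the two alternatives occurs.

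Hence $\ws$ crosses exactly $a_3,a_2,a_1$ in that order, its underlying arc coincides with that of $\tgamma$, and $(\sdii{1}{\ws},\sdii{2}{\ws},\sdii{3}{\ws})$ is pinned down by $\sdii{1}{\ws}$ via Lemma~\ref{lem:int ind}; reading off the grading along this forced route and comparing with Example~\ref{exp:gamma} (equivalently, with the localisation of the pivots) shows the grading agrees with that of $\tgamma$, so $\ws$ is homotopic to $\tgamma$. I expect the third step to be the main obstacle: it is the exhaustive, position-sensitive argument ruling out every possible extra crossing, where one must carefully track the relative positions of the segments inside each quadrilateral (the analogue of the S.L.E./S.R.E. and left/right bookkeeping of Lemmas~\ref{lem:circle} and~\ref{lemm:self-int Type}) and where simplicity of $\ws$ does the real work; the reduction in the first two steps and the final identification with $\tgamma$ are comparatively routine.
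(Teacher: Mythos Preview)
Your overall target---show $i_2=1$, $i_3=2$, $i_4=n(\ws)=3$ and then identify the underlying arc with that of $\tgamma$---is exactly right, and your first step (using (A1) and (A2) to read off $\snum{1}{\ws}=3$ and the alternation of quadrilaterals) matches the paper. But the paper reaches the goal by a single geometric observation rather than a case analysis \`a la Lemma~\ref{lemm:self-int Type}: the segment $\ws_{0,1}$, together with the boundary arc from $p$ to $s$ inside the $p$-quadrilateral, cuts off the sub-arc of $a_3$ between $\sip{1}{\ws}$ and $s$; any later crossing of $\ws$ through that sub-arc would enter a region whose only exit (by simplicity of $\ws$) is the marked point $p$, contradicting $\sigma(1)=q$. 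This ``forbidden segment'' on $a_3$ immediately forces $\sip{i_2}{\ws}$ to lie between $r$ and $\sip{1}{\ws}$, hence $\snum{2}{\ws}=2$; the same mechanism then propagates to give $\snum{3}{\ws}=1$, and if $n(\ws)>3$ one traces the final segments backwards from $q$ to land $\sip{n(\ws)-2}{\ws}$ in the forbidden sub-arc of $a_3$, a contradiction. So the endpoint hypothesis $\sigma(1)=q$ does all the work, and no appeal to (A3), to index localisation, or to the (a)--(c) trichotomy of Lemma~\ref{lemm:self-int Type} is needed.

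Two smaller points. Your use of (A3) to ``localise pivots'' is unnecessary: the paper's argument is purely about the underlying arc and never touches intersection indices. And your closing claim that the grading of $\ws$ must agree with that of $\tgamma$ is not correct---nothing in the hypotheses fixes $\sdii{1}{\ws}$, so $\ws$ could be any shift $\tgamma[d]$; fortunately the lemma only asserts homotopy of the underlying arcs, which you have already obtained once $n(\ws)=3$.
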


\begin{proof}
    By (A2), we have $\snum{1}{\ws}=\snum{i_1+1}{\ws}=\snum{i_2}{\ws}=3$. Note that $\ws$ does not cross the interior of the segment of $\wa_3$ between $\sip{1}{\ws}$ and $s$, see the shadow part in the second picture of Figure~\ref{fig-star Type 6}, because otherwise $\sigma(1)=p$, a contradiction. So $\sip{i_2}{\ws}$ is between $r$ and $\sip{1}{\ws}$. Hence $\snum{2}{\ws}=2$. Similarly, $\sip{i_3}{\ws}$ is between $r$ and $\sip{2}{\ws}$. So we have $\snum{3}{\ws}=1$. If $n(\ws)>3$, then $\sip{n(\ws)}{\ws}=\sip{i_4}{\ws}$ is in the interior of the segment of $\wa_1$ between $\sip{3}{\ws}$ and $s$, see the last picture of Figure~\ref{fig-star Type 6}. This implies that $\sip{n(\ws)-2}{\ws}$ is in the interior of the segment $\wa_3$ between $\sip{1}{\ws}$ and $s$, a contradiction. Thus, $n(\ws)=3$ and hence $\ws$ is homotopic to $\tgamma$.
\end{proof}

\section{Classification of indecomposable silting complexes}\label{sect:cls}

In this section, we first provide a classification of indecomposable presilting complexes in $D^b(\modcat\alg)$. Recall that $\SGOCM(\SURF)$ is the set of simple graded arcs on $\SURF$ whose endpoints are in $\Marked$.

\begin{proposition}\label{prop:presilt.ind.obj.}
There exists a bijection from the set $\{\ws\in\SGOCM(\SURF)\mid\sigma(0)\neq\sigma(1)\}$ to the set of isoclasses of indecomposable presilting complexes in $D^b(\modcat \alg)$.
\end{proposition}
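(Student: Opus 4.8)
The plan is to take the desired bijection to be nothing but the restriction of the map $X$ of Theorem~\ref{thm:ops} to the subset $\{\ws\in\SGOCM(\SURF)\mid\sigma(0)\neq\sigma(1)\}$. The first observation is that, since the global dimension of $\alg$ equals $2$ (Remark~\ref{rmk:app}), one has $\thick\alg=K^b(\proj\alg)=D^b(\modcat\alg)$, so condition (S1) is automatic for every object; hence for an indecomposable complex $X(\wc)$ the word ``presilting'' means exactly (S2), and by Theorem~\ref{thm:ops}(3) this is equivalent to $\Int^d(\wc,\wc)=0$ for all $d>0$. Thus the whole statement is translated into a statement about oriented self-intersections of graded arcs with endpoints in $\Marked$.

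Granting this, well-definedness and injectivity are immediate. If $\ws\in\SGOCM(\SURF)$ with $\sigma(0)\neq\sigma(1)$, then $\sigma$ is a simple arc with two distinct endpoints, hence has no self-intersection at all, so $\Int^d(\ws,\ws)=0$ for every $d\neq0$ and $X(\ws)$ is an indecomposable presilting complex, just as in Example~\ref{ex:presil}. This shows the image lies in the set of indecomposable presilting complexes, and injectivity of the restricted map is inherited from the injectivity of $X$.

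For surjectivity, let $T^\bullet$ be an indecomposable presilting complex; by Theorem~\ref{thm:ops}(1) we may write $T^\bullet\cong X(\wc)$ for some $\wc\in\GOCM(\SURF)$, chosen in minimal position with itself, and it remains to show $\wc$ is simple with $\wc(0)\neq\wc(1)$. If $\wc$ were not simple it would have a transversal self-crossing $z$ in $\innerSurf$; applying \eqref{eq:hkk} to the two local branches of $\wc$ at $z$, the two oriented self-intersections of $\wc$ at $z$ have intersection indices summing to $1$, so one of them is $\ge 1$. Then $\Int^d(\wc,\wc)\neq0$ for some $d\ge1$, contradicting (S2). Hence $\wc$ is simple.

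It remains to rule out $\wc(0)=\wc(1)=:z$, and this is the heart of the matter. In that case $\wc$ is a simple loop whose only oriented self-intersection is the one at the boundary point $z$, and the proposition comes down to showing that this endpoint self-intersection has positive index. Here \eqref{eq:hkk} is no longer available --- at a boundary point only one of the two cyclic directions around $z$ can be realized inside $\innerSurf$ --- so a genuinely geometric argument is needed. First I would use Lemma~\ref{lem:circle} to replace $\wc$ by a graded arc containing no circle and having the same endpoints, the same $\dii{1}{\wc}$ and $\dii{n(\wc)}{\wc}$, and the same relative position of its starting and ending segments; the index of the endpoint self-intersection is determined by precisely these data (via the second formula of Lemma~\ref{lem:int ind} applied to $\wc$ and its reverse), so this reduction loses nothing. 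For a circle-free simple loop, Lemma~\ref{lem:four} forbids one of the six corner-turning segment types of Figure~\ref{fig:four} from occurring, and Lemmas~\ref{lem:cancel} and \ref{lemm:self-int Type} cut down the possible simplest sequences further; one then runs a case analysis --- organized by the value $\snum{1}{\wc}\in\{1,2,3\}$ and by whether (S.L.E.) or (S.R.E.) holds --- and, tracking the $\dii{i}{\wc}$ through the recursion of Lemma~\ref{lem:int ind}, checks in each surviving configuration that $\dii{1}{\wc}$ and $\dii{n(\wc)}{\wc}$ force the endpoint index to be $\ge1$, the required contradiction. The main obstacle lies entirely in this last case: matching the combinatorics of the segment types and simplest sequences with the arithmetic of the indices $\dii{i}{\wc}$; everything else in the proof is formal.
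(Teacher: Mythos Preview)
Your proposal is correct and follows essentially the same approach as the paper: restrict $X$, use \eqref{eq:hkk} for interior self-intersections, then for loops reduce via Lemma~\ref{lem:circle} to the circle-free case and run a case analysis using Lemmas~\ref{lem:int ind}, \ref{lem:four}, \ref{lem:cancel}, and \ref{lemm:self-int Type}. The only organizational difference is that the paper fixes (S.R.E.) once and for all by reversing orientation (and uses symmetry to assume $\sigma(0)=p$), then branches on $\snum{i_2}{\ws}\in\{1,2,3\}$ and subsequently on $\snum{i_3}{\ws}$, rather than treating (S.L.E.) and (S.R.E.) as separate cases.
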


\begin{proof}
By Theorem~\ref{thm:ops}, it suffices to show that for any $\ws\in\GOCM(\SURF)$, if $X(\ws)$ is presilting, then $\ws$ is simple and $\sigma(0)\neq\sigma(1)$. If $\ws$ contains a self-intersection in $\Surf\setminus\partial\Surf$, taking two representatives $\ws'$ and $\ws''$ in the homotopy class of $\ws$ which are in a minimal position, there is an intersection $\ip{}{}$ between $\ws'$ and $\ws''$ in $\Surf\setminus\partial\Surf$. By \eqref{eq:hkk}, either $\ii_{\ip{}{}}(\ws',\ws'')>0$ or $\ii_{\ip{}{}}(\ws'',\ws')>0$. In each case, by Theorem~\ref{thm:ops}~(3), we have $\Hom_{D^b(\modcat\alg)}(X(\ws),X(\ws)[d])\neq 0$ for some $d>0$. Thus, $X(\ws)$ is not presilting. Hence we only need to show that for any $\ws\in\SGOCM(\SURF)$, if $\sigma(0)=\sigma(1)$, then $X(\ws)$ is not presilting. By Theorem~\ref{thm:ops}, this is equivalent to showing $\ii_{\ws(0)}(\ws,\ws)>0$.

By symmetry, we may assume $\sigma(0)=\sigma(1)=p$, and reversing the orientation of $\ws$ if necessary, we may assume (S.R.E.). By Lemma~\ref{lem:circle} and the second formula in Lemma~\ref{lem:int ind}, we may assume that $\ws$ does not contain a circle. Let $0=i_1<i_2<\cdots<i_t=n(\ws)$ be a simplest sequence of $\ws$. Since (S.R.E.), by the second formula in Lemma~\ref{lem:int ind}, $\ii_{\ws(0)}(\ws,\ws)=\sdii{i_t}{\ws}-\sdii{i_1}{\ws}$. Note that $\gamma(0)=\gamma(1)$ implies that $t$ is odd. There are the following cases.

\begin{itemize}
    \item[Case 1] $\snum{i_2}{\ws}=1$.
    \begin{itemize}
        \item[Case 1.1] $\snum{i_3}{\ws}=2$. If $t=3$, see the first picture of \Pic\ref{fig:case 1.1} (where we label $\ws_{i_k,i_k+1}$ by $\circled{k}$, the same below), then using the first formula in Lemma~\ref{lem:int ind} (the same below), we have $\sdii{i_t}{\ws}-\sdii{i_1}{\ws}=1>0$ as required. Now we assume $t>3$. By Lemma~\ref{lem:cancel}, $\snum{i_4}{\ws}=3$ and $\snum{i_5}{\ws}=1$. If $t=5$, see the second picture of \Pic\ref{fig:case 1.1}, then $\sdii{i_t}{\ws}-\sdii{i_1}{\ws}=1>0$ as required. Now we assume $t>5$. By Lemma~\ref{lem:cancel}, $\snum{i_6}{\ws}=2$ and $\snum{i_7}{\ws}=3$. If $t=7$, see the third picture of \Pic\ref{fig:case 1.1}, then $\sdii{i_t}{\ws}-\sdii{i_1}{\ws}=3>0$ as required. If $t> 7$, by Lemma~\ref{lem:cancel}, $\snum{i_8}{\ws}=1$, see the fourth picture of \Pic\ref{fig:case 1.1}, which contradicts Lemma~\ref{lem:four}.

\begin{figure}[htbp]
\begin{center}
\definecolor{ffqqqq}{rgb}{1,0,0}
\definecolor{qqwuqq}{rgb}{0,0,1}
\begin{tikzpicture}[scale=0.55]
\filldraw [black!20] (-2, 2) circle (0.5cm);
\filldraw [black!20] ( 2, 2) circle (0.5cm);
\filldraw [black!20] ( 2,-2) circle (0.5cm);
\filldraw [black!20] (-2,-2) circle (0.5cm);
\draw [line width=1pt] (-2, 2) circle (0.5cm);
\draw [line width=1pt] ( 2, 2) circle (0.5cm);
\draw [line width=1pt] ( 2,-2) circle (0.5cm);
\draw [line width=1pt] (-2,-2) circle (0.5cm);
\draw [dash pattern=on 2pt off 2pt] (-2, 2)-- ( 2, 2);
\draw [dash pattern=on 2pt off 2pt] ( 2, 2)-- ( 2,-2);
\draw [dash pattern=on 2pt off 2pt] ( 2,-2)-- (-2,-2);
\draw [dash pattern=on 2pt off 2pt] (-2, 2)-- (-2,-2);
\draw [line width=1pt,color=ffqqqq] (-2.35,1.65)-- (-1.65,-1.65);
\draw [line width=1pt,color=ffqqqq] (-1.65,2.35)-- (1.65,1.65);
\draw [line width=1pt,color=ffqqqq] (1.65,1.65)-- (2.35,-1.65);
\draw [line width=1pt,color=ffqqqq] (1.65,-2.35)-- (-1.65,-1.65);
\draw [line width=1pt,color=ffqqqq] (-1.65,-1.65)-- (1.65,1.65);
\fill [color=qqwuqq] (-1.65, 1.65) circle (2.5pt);
\fill [color=qqwuqq] ( 1.65,-1.65) circle (2.5pt);
\filldraw [color=white, draw=ffqqqq, line width=0.5pt] (-1.65, 2.35) circle (2.5pt);
\fill [color=qqwuqq] (-2.35, 2.35) circle (2.5pt);
\filldraw [color=white, draw=ffqqqq, line width=0.5pt] (-1.65,-1.65) circle (2.5pt);
\filldraw [color=white, draw=ffqqqq, line width=0.5pt] (-2.35,-2.35) circle (2.5pt);
\fill [color=qqwuqq] (-2.35,-1.65) circle (2.5pt);
\fill [color=qqwuqq] (-1.65,-2.35) circle (2.5pt);
\filldraw [color=white, draw=ffqqqq, line width=0.5pt] (-2.35, 1.65) circle (2.5pt);
\fill [color=qqwuqq] ( 1.65, 2.35) circle (2.5pt);
\filldraw [color=white, draw=ffqqqq, line width=0.5pt] ( 1.65, 1.65) circle (2.5pt);
\filldraw [color=white, draw=ffqqqq, line width=0.5pt] ( 2.35, 2.35) circle (2.5pt);
\fill [color=qqwuqq] ( 2.35, 1.65) circle (2.5pt);
\filldraw [color=white, draw=ffqqqq, line width=0.5pt] ( 2.35,-1.65) circle (2.5pt);
\fill [color=qqwuqq] ( 2.35,-2.35) circle (2.5pt);
\filldraw [color=white, draw=ffqqqq, line width=0.5pt] ( 1.65,-2.35) circle (2.5pt);
\draw [color=qqwuqq] ( 1.8,-1.8) node{$p$};
\draw [color=qqwuqq] (-1.8, 1.8) node{$q$};
\draw [color=ffqqqq] (-1.8,-1.8) node{$r$};
\draw [color=ffqqqq] (-1.65, 2.35) node[above]{$r$};
\draw [color=ffqqqq] ( 2.35,-1.65) node[right]{$r$};
\draw [color=ffqqqq] ( 1.8, 1.8) node{$s$};
\draw [color=ffqqqq] ( 1.65,-2.35) node[below]{$s$};
\draw [color=ffqqqq] (-2.35, 1.65) node[left]{$s$};
\draw [blue][line width=1pt][postaction={on each segment={mid arrow=blue}}]
       (1.65,-1.65) to[out=90, in=180] (2,0);
\draw [blue](1.3, -0.6) node{\tiny$\circled{1}$};
\draw [blue][line width=1pt][postaction={on each segment={mid arrow=blue}}]
      (-2.1,0.5) to[out=0, in=135] (-0.3, -0.3);
\draw [blue](-1.2, 0.7) node{\tiny$\circled{2}$};
\draw [blue][line width=1pt][postaction={on each segment={mid arrow=blue}}]
     (-0.1, -0.1) to[out=-45, in=180] (1.64,-1.64);
\draw [blue](0.55, -1.5) node{\tiny$\circled{3}$};
\draw ( 2.0,-0.0) node[right]{\small\color{red}$a_1$};
\draw (-2.0, 0.0) node[left]{\small\color{red}$a_1$};
\draw (-0.0,-0.0) node[right]{\small\color{red}$a_2$};
\draw (-0.0, 2.0) node[above]{\small\color{red}$a_3$};
\draw (-0.0,-2.0) node[below]{\small\color{red}$a_3$};
\end{tikzpicture}
\begin{tikzpicture}[scale=0.55]
\filldraw [black!20] (-2, 2) circle (0.5cm);
\filldraw [black!20] ( 2, 2) circle (0.5cm);
\filldraw [black!20] ( 2,-2) circle (0.5cm);
\filldraw [black!20] (-2,-2) circle (0.5cm);
\draw [line width=1pt] (-2, 2) circle (0.5cm);
\draw [line width=1pt] ( 2, 2) circle (0.5cm);
\draw [line width=1pt] ( 2,-2) circle (0.5cm);
\draw [line width=1pt] (-2,-2) circle (0.5cm);
\draw [dash pattern=on 2pt off 2pt] (-2, 2)-- ( 2, 2);
\draw [dash pattern=on 2pt off 2pt] ( 2, 2)-- ( 2,-2);
\draw [dash pattern=on 2pt off 2pt] ( 2,-2)-- (-2,-2);
\draw [dash pattern=on 2pt off 2pt] (-2, 2)-- (-2,-2);
\draw [line width=1pt,color=ffqqqq] (-2.35,1.65)-- (-1.65,-1.65);
\draw [line width=1pt,color=ffqqqq] (-1.65,2.35)-- (1.65,1.65);
\draw [line width=1pt,color=ffqqqq] (1.65,1.65)-- (2.35,-1.65);
\draw [line width=1pt,color=ffqqqq] (1.65,-2.35)-- (-1.65,-1.65);
\draw [line width=1pt,color=ffqqqq] (-1.65,-1.65)-- (1.65,1.65);
\fill [color=qqwuqq] (-1.65, 1.65) circle (2.5pt);
\fill [color=qqwuqq] ( 1.65,-1.65) circle (2.5pt);
\filldraw [color=white, draw=ffqqqq, line width=0.5pt] (-1.65, 2.35) circle (2.5pt);
\fill [color=qqwuqq] (-2.35, 2.35) circle (2.5pt);
\filldraw [color=white, draw=ffqqqq, line width=0.5pt] (-1.65,-1.65) circle (2.5pt);
\filldraw [color=white, draw=ffqqqq, line width=0.5pt] (-2.35,-2.35) circle (2.5pt);
\fill [color=qqwuqq] (-2.35,-1.65) circle (2.5pt);
\fill [color=qqwuqq] (-1.65,-2.35) circle (2.5pt);
\filldraw [color=white, draw=ffqqqq, line width=0.5pt] (-2.35, 1.65) circle (2.5pt);
\fill [color=qqwuqq] ( 1.65, 2.35) circle (2.5pt);
\filldraw [color=white, draw=ffqqqq, line width=0.5pt] ( 1.65, 1.65) circle (2.5pt);
\filldraw [color=white, draw=ffqqqq, line width=0.5pt] ( 2.35, 2.35) circle (2.5pt);
\fill [color=qqwuqq] ( 2.35, 1.65) circle (2.5pt);
\filldraw [color=white, draw=ffqqqq, line width=0.5pt] ( 2.35,-1.65) circle (2.5pt);
\fill [color=qqwuqq] ( 2.35,-2.35) circle (2.5pt);
\filldraw [color=white, draw=ffqqqq, line width=0.5pt] ( 1.65,-2.35) circle (2.5pt);
\draw [color=qqwuqq] ( 1.8,-1.8) node{$p$};
\draw [color=qqwuqq] (-1.8, 1.8) node{$q$};
\draw [color=ffqqqq] (-1.8,-1.8) node{$r$};
\draw [color=ffqqqq] (-1.65, 2.35) node[above]{$r$};
\draw [color=ffqqqq] ( 2.35,-1.65) node[right]{$r$};
\draw [color=ffqqqq] ( 1.8, 1.8) node{$s$};
\draw [color=ffqqqq] ( 1.65,-2.35) node[below]{$s$};
\draw [color=ffqqqq] (-2.35, 1.65) node[left]{$s$};
\draw [blue][line width=1pt][postaction={on each segment={mid arrow=blue}}]
      (1.65,-1.65) to[out=90, in=180] (2.1,-0.5);
\draw [blue](1.7, -0.3) node{\tiny$\circled{1}$};
\draw [blue][line width=1pt][postaction={on each segment={mid arrow=blue}}]
      (-1.9,-0.5) to[out=0, in=135] (-0.95,-0.95);
\draw [blue](-1.2, -0.5) node{\tiny$\circled{2}$};
\draw [blue][line width=1pt][postaction={on each segment={mid arrow=blue}}]
      (-0.7, -0.7) to[out=-45, in=90] (-0.2,-2);
\draw [blue](-0,-1.4) node{\tiny$\circled{3}$};
\draw [blue][line width=1pt][postaction={on each segment={mid arrow=blue}}]
      (0,2) to[out=-90, in=0] (-2,0);
\draw [blue](-0.4, 0.4) node{\tiny$\circled{4}$};
\draw [blue][line width=1pt][postaction={on each segment={mid arrow=blue}}]
      (1.8,1) to[out=180, in=135] (1.65,-1.65);
\draw [blue](0.7,-0.4) node{\tiny$\circled{5}$};
\draw ( 2.0,-0.0) node[right]{\small\color{red}$a_1$};
\draw (-2.0, 0.0) node[left]{\small\color{red}$a_1$};
\draw (-0.0,-0.0) node[right]{\small\color{red}$a_2$};
\draw (-0.0, 2.0) node[above]{\small\color{red}$a_3$};
\draw (-0.0,-2.0) node[below]{\small\color{red}$a_3$};
\end{tikzpicture}
\begin{tikzpicture}[scale=0.55]
\filldraw [black!20] (-2, 2) circle (0.5cm);
\filldraw [black!20] ( 2, 2) circle (0.5cm);
\filldraw [black!20] ( 2,-2) circle (0.5cm);
\filldraw [black!20] (-2,-2) circle (0.5cm);
\draw [line width=1pt] (-2, 2) circle (0.5cm);
\draw [line width=1pt] ( 2, 2) circle (0.5cm);
\draw [line width=1pt] ( 2,-2) circle (0.5cm);
\draw [line width=1pt] (-2,-2) circle (0.5cm);
\draw [dash pattern=on 2pt off 2pt] (-2, 2)-- ( 2, 2);
\draw [dash pattern=on 2pt off 2pt] ( 2, 2)-- ( 2,-2);
\draw [dash pattern=on 2pt off 2pt] ( 2,-2)-- (-2,-2);
\draw [dash pattern=on 2pt off 2pt] (-2, 2)-- (-2,-2);
\draw [line width=1pt,color=ffqqqq] (-2.35,1.65)-- (-1.65,-1.65);
\draw [line width=1pt,color=ffqqqq] (-1.65,2.35)-- (1.65,1.65);
\draw [line width=1pt,color=ffqqqq] (1.65,1.65)-- (2.35,-1.65);
\draw [line width=1pt,color=ffqqqq] (1.65,-2.35)-- (-1.65,-1.65);
\draw [line width=1pt,color=ffqqqq] (-1.65,-1.65)-- (1.65,1.65);
\fill [color=qqwuqq] (-1.65, 1.65) circle (2.5pt);
\fill [color=qqwuqq] ( 1.65,-1.65) circle (2.5pt);
\filldraw [color=white, draw=ffqqqq, line width=0.5pt] (-1.65, 2.35) circle (2.5pt);
\fill [color=qqwuqq] (-2.35, 2.35) circle (2.5pt);
\filldraw [color=white, draw=ffqqqq, line width=0.5pt] (-1.65,-1.65) circle (2.5pt);
\filldraw [color=white, draw=ffqqqq, line width=0.5pt] (-2.35,-2.35) circle (2.5pt);
\fill [color=qqwuqq] (-2.35,-1.65) circle (2.5pt);
\fill [color=qqwuqq] (-1.65,-2.35) circle (2.5pt);
\filldraw [color=white, draw=ffqqqq, line width=0.5pt] (-2.35, 1.65) circle (2.5pt);
\fill [color=qqwuqq] ( 1.65, 2.35) circle (2.5pt);
\filldraw [color=white, draw=ffqqqq, line width=0.5pt] ( 1.65, 1.65) circle (2.5pt);
\filldraw [color=white, draw=ffqqqq, line width=0.5pt] ( 2.35, 2.35) circle (2.5pt);
\fill [color=qqwuqq] ( 2.35, 1.65) circle (2.5pt);
\filldraw [color=white, draw=ffqqqq, line width=0.5pt] ( 2.35,-1.65) circle (2.5pt);
\fill [color=qqwuqq] ( 2.35,-2.35) circle (2.5pt);
\filldraw [color=white, draw=ffqqqq, line width=0.5pt] ( 1.65,-2.35) circle (2.5pt);
\draw [color=qqwuqq] ( 1.8,-1.8) node{$p$};
\draw [color=qqwuqq] (-1.8, 1.8) node{$q$};
\draw [color=ffqqqq] (-1.8,-1.8) node{$r$};
\draw [color=ffqqqq] (-1.65, 2.35) node[above]{$r$};
\draw [color=ffqqqq] ( 2.35,-1.65) node[right]{$r$};
\draw [color=ffqqqq] ( 1.8, 1.8) node{$s$};
\draw [color=ffqqqq] ( 1.65,-2.35) node[below]{$s$};
\draw [color=ffqqqq] (-2.35, 1.65) node[left]{$s$};
\draw [blue][line width=1pt][postaction={on each segment={mid arrow=blue}}]
      (1.65,-1.65) to[out=90, in=180] (2.2,-0.8);
\draw [blue](1.8, -0.6) node{\tiny$\circled{1}$};
\draw [blue][line width=1pt][postaction={on each segment={mid arrow=blue}}]
      (-1.9,-0.4) to[out=0, in=135] (-0.9, -0.9);
\draw [blue](-1.2, -0.4) node{\tiny$\circled{2}$};
\draw [blue][line width=1pt][postaction={on each segment={mid arrow=blue}}]
      (-0.7, -0.7) to[out=-45, in=90] (-0.2,-2);
\draw [blue](-0,-1.4) node{\tiny$\circled{3}$};
\draw [blue][line width=1pt][postaction={on each segment={mid arrow=blue}}]
      (-0.6,2.1) to[out=-90, in=0] (-2.1,0.6);
\draw [blue](-0.8, 0.8) node{\tiny$\circled{4}$};
\draw [blue][line width=1pt][postaction={on each segment={mid arrow=blue}}]
      (1.9, 0.4) to[out=180, in=-45] (0.9, 0.9);
\draw [blue](1.2, 0.2) node{\tiny$\circled{5}$};
\draw [blue][line width=1pt][postaction={on each segment={mid arrow=blue}}]
      (0.7, 0.7) to[out=135, in=-90] (0.2, 2);
\draw [blue]( 0, 1.3) node{\tiny$\circled{6}$};
\draw [blue][line width=1pt][postaction={on each segment={mid arrow=blue}}]
      (0.8,-2.2) to[out=90, in=180] (1.65,-1.65);
\draw [blue](0.6, -1.8) node{\tiny$\circled{7}$};
\draw ( 2.0,-0.0) node[right]{\small\color{red}$a_1$};
\draw (-2.0, 0.0) node[left]{\small\color{red}$a_1$};
\draw (-0.0,-0.0) node[right]{\small\color{red}$a_2$};
\draw (-0.0, 2.0) node[above]{\small\color{red}$a_3$};
\draw (-0.0,-2.0) node[below]{\small\color{red}$a_3$};
\end{tikzpicture}
\begin{tikzpicture}[scale=0.55]
\filldraw [black!20] (-2, 2) circle (0.5cm);
\filldraw [black!20] ( 2, 2) circle (0.5cm);
\filldraw [black!20] ( 2,-2) circle (0.5cm);
\filldraw [black!20] (-2,-2) circle (0.5cm);
\draw [line width=1pt] (-2, 2) circle (0.5cm);
\draw [line width=1pt] ( 2, 2) circle (0.5cm);
\draw [line width=1pt] ( 2,-2) circle (0.5cm);
\draw [line width=1pt] (-2,-2) circle (0.5cm);
\draw [dash pattern=on 2pt off 2pt] (-2, 2)-- ( 2, 2);
\draw [dash pattern=on 2pt off 2pt] ( 2, 2)-- ( 2,-2);
\draw [dash pattern=on 2pt off 2pt] ( 2,-2)-- (-2,-2);
\draw [dash pattern=on 2pt off 2pt] (-2, 2)-- (-2,-2);
\draw [line width=1pt,color=ffqqqq] (-2.35,1.65)-- (-1.65,-1.65);
\draw [line width=1pt,color=ffqqqq] (-1.65,2.35)-- (1.65,1.65);
\draw [line width=1pt,color=ffqqqq] (1.65,1.65)-- (2.35,-1.65);
\draw [line width=1pt,color=ffqqqq] (1.65,-2.35)-- (-1.65,-1.65);
\draw [line width=1pt,color=ffqqqq] (-1.65,-1.65)-- (1.65,1.65);
\fill [color=qqwuqq] (-1.65, 1.65) circle (2.5pt);
\fill [color=qqwuqq] ( 1.65,-1.65) circle (2.5pt);
\filldraw [color=white, draw=ffqqqq, line width=0.5pt] (-1.65, 2.35) circle (2.5pt);
\fill [color=qqwuqq] (-2.35, 2.35) circle (2.5pt);
\filldraw [color=white, draw=ffqqqq, line width=0.5pt] (-1.65,-1.65) circle (2.5pt);
\filldraw [color=white, draw=ffqqqq, line width=0.5pt] (-2.35,-2.35) circle (2.5pt);
\fill [color=qqwuqq] (-2.35,-1.65) circle (2.5pt);
\fill [color=qqwuqq] (-1.65,-2.35) circle (2.5pt);
\filldraw [color=white, draw=ffqqqq, line width=0.5pt] (-2.35, 1.65) circle (2.5pt);
\fill [color=qqwuqq] ( 1.65, 2.35) circle (2.5pt);
\filldraw [color=white, draw=ffqqqq, line width=0.5pt] ( 1.65, 1.65) circle (2.5pt);
\filldraw [color=white, draw=ffqqqq, line width=0.5pt] ( 2.35, 2.35) circle (2.5pt);
\fill [color=qqwuqq] ( 2.35, 1.65) circle (2.5pt);
\filldraw [color=white, draw=ffqqqq, line width=0.5pt] ( 2.35,-1.65) circle (2.5pt);
\fill [color=qqwuqq] ( 2.35,-2.35) circle (2.5pt);
\filldraw [color=white, draw=ffqqqq, line width=0.5pt] ( 1.65,-2.35) circle (2.5pt);
\draw [color=qqwuqq] ( 1.8,-1.8) node{$p$};
\draw [color=qqwuqq] (-1.8, 1.8) node{$q$};
\draw [color=ffqqqq] (-1.8,-1.8) node{$r$};
\draw [color=ffqqqq] (-1.65, 2.35) node[above]{$r$};
\draw [color=ffqqqq] ( 2.35,-1.65) node[right]{$r$};
\draw [color=ffqqqq] ( 1.8, 1.8) node{$s$};
\draw [color=ffqqqq] ( 1.65,-2.35) node[below]{$s$};
\draw [color=ffqqqq] (-2.35, 1.65) node[left]{$s$};
\draw [blue][line width=1pt][postaction={on each segment={mid arrow=blue}}]
      (1.65,-1.65) to[out=90, in=180] (2.2,-0.8);
\draw [blue](1.8, -0.7) node{\tiny$\circled{1}$};
\draw [blue][line width=1pt][postaction={on each segment={mid arrow=blue}}]
      (-1.9,-0.4) to[out=0, in=135] (-0.9, -0.9);
\draw [blue](-1.2, -0.3) node{\tiny$\circled{2}$};
\draw [blue][line width=1pt][postaction={on each segment={mid arrow=blue}}]
      (-0.7, -0.7) to[out=-45, in=90] (-0.2,-2);
\draw [blue](-0,-1.4) node{\tiny$\circled{3}$};
\draw [blue][line width=1pt][postaction={on each segment={mid arrow=blue}}]
      (-0.6,2.1) to[out=-90, in=0] (-2.1,0.6);
\draw [blue](-0.8, 0.7) node{\tiny$\circled{3}$};
\draw [blue][line width=1pt][postaction={on each segment={mid arrow=blue}}]
      (1.9, 0.4) to[out=180, in=-45] (0.9, 0.9);
\draw [blue](1.2, 0.2) node{\tiny$\circled{4}$};
\draw [blue][line width=1pt][postaction={on each segment={mid arrow=blue}}]
      (0.7, 0.7) to[out=135, in=-90] (0.2, 2);
\draw [blue]( 0, 1.4) node{\tiny$\circled{5}$};
\draw [blue][line width=1pt][postaction={on each segment={mid arrow=blue}}]
      (0.8,-2.2) to[out=90, in=180] (2,0);
\draw [blue](0.8, -0.5) node{\tiny$\circled{7}$};
\draw ( 2.0,-0.0) node[right]{\small\color{red}$a_1$};
\draw (-2.0, 0.0) node[left]{\small\color{red}$a_1$};
\draw (-0.0,-0.0) node[right]{\small\color{red}$a_2$};
\draw (-0.0, 2.0) node[above]{\small\color{red}$a_3$};
\draw (-0.0,-2.0) node[below]{\small\color{red}$a_3$};
\end{tikzpicture}
\caption{Case 1.1}
\label{fig:case 1.1}
\end{center}
\end{figure}

        \item[Case 1.2] $\snum{i_3}{\ws}=3$. If $t=3$, see the first picture of \Pic\ref{fig:case 1.2}, then $\sdii{i_t}{\ws}-\sdii{i_1}{\ws}=1>0$ as required. Now we assume $t>3$. By Lemma~\ref{lem:cancel}, $\snum{i_4}{\ws}=2$ and $\snum{i_5}{\ws}=1$. If $t=5$, since (S.R.E.), we are in the situation shown in the second picture of \Pic\ref{fig:case 1.2}, which contradicts Lemma~\ref{lemm:self-int Type}. So $t>5$. By Lemma~\ref{lem:cancel}, $\snum{i_6}{\ws}=3$ and $\snum{i_7}{\ws}=2$. If $t=7$, then $\ws_{i_7,i_7+1}$ crosses $\ws_{i_5,i_5+1}$, see the third picture of \Pic\ref{fig:case 1.2}, a contradiction. So $t>7$. By Lemma~\ref{lem:cancel}, $\snum{i_8}{\ws}=1$, see the fourth picture of \Pic\ref{fig:case 1.2}, which contradicts Lemma~\ref{lem:four}.

\begin{figure}[htbp]
\begin{center}
\definecolor{ffqqqq}{rgb}{1,0,0}
\definecolor{qqwuqq}{rgb}{0,0,1}
\begin{tikzpicture}[scale=0.55]
\filldraw [black!20] (-2, 2) circle (0.5cm);
\filldraw [black!20] ( 2, 2) circle (0.5cm);
\filldraw [black!20] ( 2,-2) circle (0.5cm);
\filldraw [black!20] (-2,-2) circle (0.5cm);
\draw [line width=1pt] (-2, 2) circle (0.5cm);
\draw [line width=1pt] ( 2, 2) circle (0.5cm);
\draw [line width=1pt] ( 2,-2) circle (0.5cm);
\draw [line width=1pt] (-2,-2) circle (0.5cm);
\draw [dash pattern=on 2pt off 2pt] (-2, 2)-- ( 2, 2);
\draw [dash pattern=on 2pt off 2pt] ( 2, 2)-- ( 2,-2);
\draw [dash pattern=on 2pt off 2pt] ( 2,-2)-- (-2,-2);
\draw [dash pattern=on 2pt off 2pt] (-2, 2)-- (-2,-2);
\draw [line width=1pt,color=ffqqqq] (-2.35,1.65)-- (-1.65,-1.65);
\draw [line width=1pt,color=ffqqqq] (-1.65,2.35)-- (1.65,1.65);
\draw [line width=1pt,color=ffqqqq] (1.65,1.65)-- (2.35,-1.65);
\draw [line width=1pt,color=ffqqqq] (1.65,-2.35)-- (-1.65,-1.65);
\draw [line width=1pt,color=ffqqqq] (-1.65,-1.65)-- (1.65,1.65);
\fill [color=qqwuqq] (-1.65, 1.65) circle (2.5pt);
\fill [color=qqwuqq] ( 1.65,-1.65) circle (2.5pt);
\filldraw [color=white, draw=ffqqqq, line width=0.5pt] (-1.65, 2.35) circle (2.5pt);
\fill [color=qqwuqq] (-2.35, 2.35) circle (2.5pt);
\filldraw [color=white, draw=ffqqqq, line width=0.5pt] (-1.65,-1.65) circle (2.5pt);
\filldraw [color=white, draw=ffqqqq, line width=0.5pt] (-2.35,-2.35) circle (2.5pt);
\fill [color=qqwuqq] (-2.35,-1.65) circle (2.5pt);
\fill [color=qqwuqq] (-1.65,-2.35) circle (2.5pt);
\filldraw [color=white, draw=ffqqqq, line width=0.5pt] (-2.35, 1.65) circle (2.5pt);
\fill [color=qqwuqq] ( 1.65, 2.35) circle (2.5pt);
\filldraw [color=white, draw=ffqqqq, line width=0.5pt] ( 1.65, 1.65) circle (2.5pt);
\filldraw [color=white, draw=ffqqqq, line width=0.5pt] ( 2.35, 2.35) circle (2.5pt);
\fill [color=qqwuqq] ( 2.35, 1.65) circle (2.5pt);
\filldraw [color=white, draw=ffqqqq, line width=0.5pt] ( 2.35,-1.65) circle (2.5pt);
\fill [color=qqwuqq] ( 2.35,-2.35) circle (2.5pt);
\filldraw [color=white, draw=ffqqqq, line width=0.5pt] ( 1.65,-2.35) circle (2.5pt);
\draw [color=qqwuqq] ( 1.8,-1.8) node{$p$};
\draw [color=qqwuqq] (-1.8, 1.8) node{$q$};
\draw [color=ffqqqq] (-1.8,-1.8) node{$r$};
\draw [color=ffqqqq] (-1.65, 2.35) node[above]{$r$};
\draw [color=ffqqqq] ( 2.35,-1.65) node[right]{$r$};
\draw [color=ffqqqq] ( 1.8, 1.8) node{$s$};
\draw [color=ffqqqq] ( 1.65,-2.35) node[below]{$s$};
\draw [color=ffqqqq] (-2.35, 1.65) node[left]{$s$};
\draw [blue][line width=1pt][postaction={on each segment={mid arrow=blue}}]
      (1.65,-1.65) to[out=90,in=180] (2.05,-0.2);
\draw [blue](1.5,-0.2) node{\tiny$\circled{1}$};
\draw [blue][line width=1pt][postaction={on each segment={mid arrow=blue}}]
      (-2.1, 0.4) to[out=0,in=-90] (-0.4, 2.1);
\draw [blue](-0.65, 0.567) node{\tiny$\circled{2}$};
\draw [blue][line width=1pt][postaction={on each segment={mid arrow=blue}}]
      (0.2, -2.05) to[out=90,in=180] (1.65,-1.65);
\draw [blue](0.2, -1.4) node{\tiny$\circled{3}$};
\draw ( 2.0,-0.0) node[right]{\small\color{red}$a_1$};
\draw (-2.0, 0.0) node[left]{\small\color{red}$a_1$};
\draw (-0.0,-0.0) node[right]{\small\color{red}$a_2$};
\draw (-0.0, 2.0) node[above]{\small\color{red}$a_3$};
\draw (-0.0,-2.0) node[below]{\small\color{red}$a_3$};
\end{tikzpicture}
\begin{tikzpicture}[scale=0.55]
\filldraw [black!20] (-2, 2) circle (0.5cm);
\filldraw [black!20] ( 2, 2) circle (0.5cm);
\filldraw [black!20] ( 2,-2) circle (0.5cm);
\filldraw [black!20] (-2,-2) circle (0.5cm);
\draw [line width=1pt] (-2, 2) circle (0.5cm);
\draw [line width=1pt] ( 2, 2) circle (0.5cm);
\draw [line width=1pt] ( 2,-2) circle (0.5cm);
\draw [line width=1pt] (-2,-2) circle (0.5cm);
\draw [dash pattern=on 2pt off 2pt] (-2, 2)-- ( 2, 2);
\draw [dash pattern=on 2pt off 2pt] ( 2, 2)-- ( 2,-2);
\draw [dash pattern=on 2pt off 2pt] ( 2,-2)-- (-2,-2);
\draw [dash pattern=on 2pt off 2pt] (-2, 2)-- (-2,-2);
\draw [line width=1pt,color=ffqqqq] (-2.35,1.65)-- (-1.65,-1.65);
\draw [line width=1pt,color=ffqqqq] (-1.65,2.35)-- (1.65,1.65);
\draw [line width=1pt,color=ffqqqq] (1.65,1.65)-- (2.35,-1.65);
\draw [line width=1pt,color=ffqqqq] (1.65,-2.35)-- (-1.65,-1.65);
\draw [line width=1pt,color=ffqqqq] (-1.65,-1.65)-- (1.65,1.65);
\fill [color=qqwuqq] (-1.65, 1.65) circle (2.5pt);
\fill [color=qqwuqq] ( 1.65,-1.65) circle (2.5pt);
\filldraw [color=white, draw=ffqqqq, line width=0.5pt] (-1.65, 2.35) circle (2.5pt);
\fill [color=qqwuqq] (-2.35, 2.35) circle (2.5pt);
\filldraw [color=white, draw=ffqqqq, line width=0.5pt] (-1.65,-1.65) circle (2.5pt);
\filldraw [color=white, draw=ffqqqq, line width=0.5pt] (-2.35,-2.35) circle (2.5pt);
\fill [color=qqwuqq] (-2.35,-1.65) circle (2.5pt);
\fill [color=qqwuqq] (-1.65,-2.35) circle (2.5pt);
\filldraw [color=white, draw=ffqqqq, line width=0.5pt] (-2.35, 1.65) circle (2.5pt);
\fill [color=qqwuqq] ( 1.65, 2.35) circle (2.5pt);
\filldraw [color=white, draw=ffqqqq, line width=0.5pt] ( 1.65, 1.65) circle (2.5pt);
\filldraw [color=white, draw=ffqqqq, line width=0.5pt] ( 2.35, 2.35) circle (2.5pt);
\fill [color=qqwuqq] ( 2.35, 1.65) circle (2.5pt);
\filldraw [color=white, draw=ffqqqq, line width=0.5pt] ( 2.35,-1.65) circle (2.5pt);
\fill [color=qqwuqq] ( 2.35,-2.35) circle (2.5pt);
\filldraw [color=white, draw=ffqqqq, line width=0.5pt] ( 1.65,-2.35) circle (2.5pt);
\draw [color=qqwuqq] ( 1.8,-1.8) node{$p$};
\draw [color=qqwuqq] (-1.8, 1.8) node{$q$};
\draw [color=ffqqqq] (-1.8,-1.8) node{$r$};
\draw [color=ffqqqq] (-1.65, 2.35) node[above]{$r$};
\draw [color=ffqqqq] ( 2.35,-1.65) node[right]{$r$};
\draw [color=ffqqqq] ( 1.8, 1.8) node{$s$};
\draw [color=ffqqqq] ( 1.65,-2.35) node[below]{$s$};
\draw [color=ffqqqq] (-2.35, 1.65) node[left]{$s$};
\draw [blue][line width=1pt][postaction={on each segment={mid arrow=blue}}]
      (1.65,-1.65) to[out=90,in=180] (2.05,-0.2);
\draw [blue](1.7,-0.1) node{\tiny$\circled{1}$};
\draw [blue][line width=1pt][postaction={on each segment={mid arrow=blue}}]
      (-2.1, 0.4) to[out=0,in=-90] (-0.4, 2.1);
\draw [blue](-0.6, 0.6) node{\tiny$\circled{2}$};
\draw [blue][line width=1pt][postaction={on each segment={mid arrow=blue}}]
      (-0.4, -1.9) to[out=90,in=-45] (-0.8,-0.8);
\draw [blue](-0.1, -1.5) node{\tiny$\circled{3}$};
\draw [blue][line width=1pt][postaction={on each segment={mid arrow=blue}}]
      (-0.6,-0.6) to[out=135,in= 0] (-2,-0.2);
\draw [blue](-1, 0) node{\tiny$\circled{4}$};
\draw [blue][line width=1pt][postaction={on each segment={mid arrow=blue}}]
      ( 2, 0.6) to[out=180,in= 135] (1.65,-1.65);
\draw [blue](0.9,-0.5) node{\tiny$\circled{5}$};
\draw ( 2.0,-0.0) node[right]{\small\color{red}$a_1$};
\draw (-2.0, 0.0) node[left]{\small\color{red}$a_1$};
\draw (-0.0,-0.0) node[right]{\small\color{red}$a_2$};
\draw (-0.0, 2.0) node[above]{\small\color{red}$a_3$};
\draw (-0.0,-2.0) node[below]{\small\color{red}$a_3$};
\end{tikzpicture}
\begin{tikzpicture}[scale=0.55]
\filldraw [black!20] (-2, 2) circle (0.5cm);
\filldraw [black!20] ( 2, 2) circle (0.5cm);
\filldraw [black!20] ( 2,-2) circle (0.5cm);
\filldraw [black!20] (-2,-2) circle (0.5cm);
\draw [line width=1pt] (-2, 2) circle (0.5cm);
\draw [line width=1pt] ( 2, 2) circle (0.5cm);
\draw [line width=1pt] ( 2,-2) circle (0.5cm);
\draw [line width=1pt] (-2,-2) circle (0.5cm);
\draw [dash pattern=on 2pt off 2pt] (-2, 2)-- ( 2, 2);
\draw [dash pattern=on 2pt off 2pt] ( 2, 2)-- ( 2,-2);
\draw [dash pattern=on 2pt off 2pt] ( 2,-2)-- (-2,-2);
\draw [dash pattern=on 2pt off 2pt] (-2, 2)-- (-2,-2);
\draw [line width=1pt,color=ffqqqq] (-2.35,1.65)-- (-1.65,-1.65);
\draw [line width=1pt,color=ffqqqq] (-1.65,2.35)-- (1.65,1.65);
\draw [line width=1pt,color=ffqqqq] (1.65,1.65)-- (2.35,-1.65);
\draw [line width=1pt,color=ffqqqq] (1.65,-2.35)-- (-1.65,-1.65);
\draw [line width=1pt,color=ffqqqq] (-1.65,-1.65)-- (1.65,1.65);
\fill [color=qqwuqq] (-1.65, 1.65) circle (2.5pt);
\fill [color=qqwuqq] ( 1.65,-1.65) circle (2.5pt);
\filldraw [color=white, draw=ffqqqq, line width=0.5pt] (-1.65, 2.35) circle (2.5pt);
\fill [color=qqwuqq] (-2.35, 2.35) circle (2.5pt);
\filldraw [color=white, draw=ffqqqq, line width=0.5pt] (-1.65,-1.65) circle (2.5pt);
\filldraw [color=white, draw=ffqqqq, line width=0.5pt] (-2.35,-2.35) circle (2.5pt);
\fill [color=qqwuqq] (-2.35,-1.65) circle (2.5pt);
\fill [color=qqwuqq] (-1.65,-2.35) circle (2.5pt);
\filldraw [color=white, draw=ffqqqq, line width=0.5pt] (-2.35, 1.65) circle (2.5pt);
\fill [color=qqwuqq] ( 1.65, 2.35) circle (2.5pt);
\filldraw [color=white, draw=ffqqqq, line width=0.5pt] ( 1.65, 1.65) circle (2.5pt);
\filldraw [color=white, draw=ffqqqq, line width=0.5pt] ( 2.35, 2.35) circle (2.5pt);
\fill [color=qqwuqq] ( 2.35, 1.65) circle (2.5pt);
\filldraw [color=white, draw=ffqqqq, line width=0.5pt] ( 2.35,-1.65) circle (2.5pt);
\fill [color=qqwuqq] ( 2.35,-2.35) circle (2.5pt);
\filldraw [color=white, draw=ffqqqq, line width=0.5pt] ( 1.65,-2.35) circle (2.5pt);
\draw [color=qqwuqq] ( 1.8,-1.8) node{$p$};
\draw [color=qqwuqq] (-1.8, 1.8) node{$q$};
\draw [color=ffqqqq] (-1.8,-1.8) node{$r$};
\draw [color=ffqqqq] (-1.65, 2.35) node[above]{$r$};
\draw [color=ffqqqq] ( 2.35,-1.65) node[right]{$r$};
\draw [color=ffqqqq] ( 1.8, 1.8) node{$s$};
\draw [color=ffqqqq] ( 1.65,-2.35) node[below]{$s$};
\draw [color=ffqqqq] (-2.35, 1.65) node[left]{$s$};
\draw [blue][line width=1pt][postaction={on each segment={mid arrow=blue}}]
      (1.65,-1.65) to[out=90,in=180] (2.05,-0.2);
\draw [blue](1.7,-0.1) node{\tiny$\circled{1}$};
\draw [blue][line width=1pt][postaction={on each segment={mid arrow=blue}}]
      (-2.1, 0.4) to[out=0,in=-90] (-0.4, 2.1);
\draw [blue](-0.6, 0.6) node{\tiny$\circled{2}$};
\draw [blue][line width=1pt][postaction={on each segment={mid arrow=blue}}]
      (-0.4, -1.9) to[out=90,in=-45] (-0.8,-0.8);
\draw [blue](-0.1, -1.5) node{\tiny$\circled{3}$};
\draw [blue][line width=1pt][postaction={on each segment={mid arrow=blue}}]
      (-0.6,-0.6) to[out=135,in= 0] (-2,-0.2);
\draw [blue](-1, 0) node{\tiny$\circled{4}$};
\draw [blue][line width=1pt][postaction={on each segment={mid arrow=blue}}]
      ( 2, 0.4) to[out=180,in= 90] (0.5,-2.1);
\draw [blue](0.3,-1) node{\tiny$\circled{5}$};
\draw [blue][line width=1pt][postaction={on each segment={mid arrow=blue}}]
      (0.7, 1.85) to[out=-90,in= 135] (1.1, 1.1);
\draw [blue] (0.5, 1.2) node{\tiny$\circled{6}$};
\draw [blue][line width=1pt][postaction={on each segment={mid arrow=blue}}]
      (0.9, 0.9) -- (1.65,-1.65);
\draw [blue] (1.2,-1.35) node{\tiny$\circled{7}$};
\draw ( 2.0,-0.0) node[right]{\small\color{red}$a_1$};
\draw (-2.0, 0.0) node[left]{\small\color{red}$a_1$};
\draw (-0.0,-0.0) node[right]{\small\color{red}$a_2$};
\draw (-0.0, 2.0) node[above]{\small\color{red}$a_3$};
\draw (-0.0,-2.0) node[below]{\small\color{red}$a_3$};
\end{tikzpicture}
\begin{tikzpicture}[scale=0.55]
\filldraw [black!20] (-2, 2) circle (0.5cm);
\filldraw [black!20] ( 2, 2) circle (0.5cm);
\filldraw [black!20] ( 2,-2) circle (0.5cm);
\filldraw [black!20] (-2,-2) circle (0.5cm);
\draw [line width=1pt] (-2, 2) circle (0.5cm);
\draw [line width=1pt] ( 2, 2) circle (0.5cm);
\draw [line width=1pt] ( 2,-2) circle (0.5cm);
\draw [line width=1pt] (-2,-2) circle (0.5cm);
\draw [dash pattern=on 2pt off 2pt] (-2, 2)-- ( 2, 2);
\draw [dash pattern=on 2pt off 2pt] ( 2, 2)-- ( 2,-2);
\draw [dash pattern=on 2pt off 2pt] ( 2,-2)-- (-2,-2);
\draw [dash pattern=on 2pt off 2pt] (-2, 2)-- (-2,-2);
\draw [line width=1pt,color=ffqqqq] (-2.35,1.65)-- (-1.65,-1.65);
\draw [line width=1pt,color=ffqqqq] (-1.65,2.35)-- (1.65,1.65);
\draw [line width=1pt,color=ffqqqq] (1.65,1.65)-- (2.35,-1.65);
\draw [line width=1pt,color=ffqqqq] (1.65,-2.35)-- (-1.65,-1.65);
\draw [line width=1pt,color=ffqqqq] (-1.65,-1.65)-- (1.65,1.65);
\fill [color=qqwuqq] (-1.65, 1.65) circle (2.5pt);
\fill [color=qqwuqq] ( 1.65,-1.65) circle (2.5pt);
\filldraw [color=white, draw=ffqqqq, line width=0.5pt] (-1.65, 2.35) circle (2.5pt);
\fill [color=qqwuqq] (-2.35, 2.35) circle (2.5pt);
\filldraw [color=white, draw=ffqqqq, line width=0.5pt] (-1.65,-1.65) circle (2.5pt);
\filldraw [color=white, draw=ffqqqq, line width=0.5pt] (-2.35,-2.35) circle (2.5pt);
\fill [color=qqwuqq] (-2.35,-1.65) circle (2.5pt);
\fill [color=qqwuqq] (-1.65,-2.35) circle (2.5pt);
\filldraw [color=white, draw=ffqqqq, line width=0.5pt] (-2.35, 1.65) circle (2.5pt);
\fill [color=qqwuqq] ( 1.65, 2.35) circle (2.5pt);
\filldraw [color=white, draw=ffqqqq, line width=0.5pt] ( 1.65, 1.65) circle (2.5pt);
\filldraw [color=white, draw=ffqqqq, line width=0.5pt] ( 2.35, 2.35) circle (2.5pt);
\fill [color=qqwuqq] ( 2.35, 1.65) circle (2.5pt);
\filldraw [color=white, draw=ffqqqq, line width=0.5pt] ( 2.35,-1.65) circle (2.5pt);
\fill [color=qqwuqq] ( 2.35,-2.35) circle (2.5pt);
\filldraw [color=white, draw=ffqqqq, line width=0.5pt] ( 1.65,-2.35) circle (2.5pt);
\draw [color=qqwuqq] ( 1.8,-1.8) node{$p$};
\draw [color=qqwuqq] (-1.8, 1.8) node{$q$};
\draw [color=ffqqqq] (-1.8,-1.8) node{$r$};
\draw [color=ffqqqq] (-1.65, 2.35) node[above]{$r$};
\draw [color=ffqqqq] ( 2.35,-1.65) node[right]{$r$};
\draw [color=ffqqqq] ( 1.8, 1.8) node{$s$};
\draw [color=ffqqqq] ( 1.65,-2.35) node[below]{$s$};
\draw [color=ffqqqq] (-2.35, 1.65) node[left]{$s$};
\draw [blue][line width=1pt][postaction={on each segment={mid arrow=blue}}]
      (1.65,-1.65) to[out=90,in=180] (2.05,-0.2);
\draw [blue](1.7,-0.1) node{\tiny$\circled{1}$};
\draw [blue][line width=1pt][postaction={on each segment={mid arrow=blue}}]
      (-2.1, 0.4) to[out=0,in=-90] (-0.4, 2.1);
\draw [blue](-0.6, 0.6) node{\tiny$\circled{2}$};
\draw [blue][line width=1pt][postaction={on each segment={mid arrow=blue}}]
      (-0.4, -1.9) to[out=90,in=-45] (-0.8,-0.8);
\draw [blue](-0.1, -1.5) node{\tiny$\circled{3}$};
\draw [blue][line width=1pt][postaction={on each segment={mid arrow=blue}}]
      (-0.6,-0.6) to[out=135,in= 0] (-2,-0.2);
\draw [blue](-1, 0) node{\tiny$\circled{4}$};
\draw [blue][line width=1pt][postaction={on each segment={mid arrow=blue}}]
      ( 2, 0.4) to[out=180,in= 90] (0.5,-2.1);
\draw [blue](0.3,-1) node{\tiny$\circled{5}$};
\draw [blue][line width=1pt][postaction={on each segment={mid arrow=blue}}]
      (0.7, 1.85) to[out=-90,in= 135] (1.1, 1.1);
\draw [blue] (0.5, 1.3) node{\tiny$\circled{6}$};
\draw [blue][line width=1pt][postaction={on each segment={mid arrow=blue}}]
      (0.9, 0.9) to[out=-45,in= 180] (1.85, 0.7);
\draw [blue] (1.5, 0.9) node{\tiny$\circled{7}$};
\draw ( 2.0,-0.0) node[right]{\small\color{red}$a_1$};
\draw (-2.0, 0.0) node[left]{\small\color{red}$a_1$};
\draw (-0.0,-0.0) node[right]{\small\color{red}$a_2$};
\draw (-0.0, 2.0) node[above]{\small\color{red}$a_3$};
\draw (-0.0,-2.0) node[below]{\small\color{red}$a_3$};
\end{tikzpicture}
\caption{Case 1.2}
\label{fig:case 1.2}
\end{center}
\end{figure}

    \end{itemize}
    \item[Case 2] $\snum{i_2}{\ws}=2$.
    \begin{itemize}
        \item[Case 2.1] $\snum{i_3}{\ws}=1$. If $t=3$, see the first picture of \Pic\ref{fig:case 2}, then (S.L.E), a contradiction. So $t>3$. Then by Lemma~\ref{lem:cancel},  $\snum{i_4}{\ws}=3$. However, now $\ws_{i_3,i_3+1}$ crosses $\ws_{i_1,i_1+1}$, see the second picture of Figure~\ref{fig:case 2}, a contradiction.
        \item[Case 2.1] $\snum{i_3}{\ws}=3$. If $t=3$, see the third picture of Figure~\ref{fig:case 2}, then $\sdii{i_t}{\ws}-\sdii{i_1}{\ws}=1>0$ as required. Now we assume $t>3$. By Lemma~\ref{lem:cancel},  $\snum{i_4}{\ws}=1$. However, now $\ws_{i_3,i_3+1}$ crosses $\ws_{i_1,i_1+1}$, see the fourth picture of Figure~\ref{fig:case 2}, a contradiction.
    \end{itemize}

\begin{figure}[htbp]
\begin{center}
\definecolor{ffqqqq}{rgb}{1,0,0}
\definecolor{qqwuqq}{rgb}{0,0,1}
\begin{tikzpicture}[scale=0.55]
\filldraw [black!20] (-2, 2) circle (0.5cm);
\filldraw [black!20] ( 2, 2) circle (0.5cm);
\filldraw [black!20] ( 2,-2) circle (0.5cm);
\filldraw [black!20] (-2,-2) circle (0.5cm);
\draw [line width=1pt] (-2, 2) circle (0.5cm);
\draw [line width=1pt] ( 2, 2) circle (0.5cm);
\draw [line width=1pt] ( 2,-2) circle (0.5cm);
\draw [line width=1pt] (-2,-2) circle (0.5cm);
\draw [dash pattern=on 2pt off 2pt] (-2, 2)-- ( 2, 2);
\draw [dash pattern=on 2pt off 2pt] ( 2, 2)-- ( 2,-2);
\draw [dash pattern=on 2pt off 2pt] ( 2,-2)-- (-2,-2);
\draw [dash pattern=on 2pt off 2pt] (-2, 2)-- (-2,-2);
\draw [line width=1pt,color=ffqqqq] (-2.35,1.65)-- (-1.65,-1.65);
\draw [line width=1pt,color=ffqqqq] (-1.65,2.35)-- (1.65,1.65);
\draw [line width=1pt,color=ffqqqq] (1.65,1.65)-- (2.35,-1.65);
\draw [line width=1pt,color=ffqqqq] (1.65,-2.35)-- (-1.65,-1.65);
\draw [line width=1pt,color=ffqqqq] (-1.65,-1.65)-- (1.65,1.65);
\fill [color=qqwuqq] (-1.65, 1.65) circle (2.5pt);
\fill [color=qqwuqq] ( 1.65,-1.65) circle (2.5pt);
\filldraw [color=white, draw=ffqqqq, line width=0.5pt] (-1.65, 2.35) circle (2.5pt);
\fill [color=qqwuqq] (-2.35, 2.35) circle (2.5pt);
\filldraw [color=white, draw=ffqqqq, line width=0.5pt] (-1.65,-1.65) circle (2.5pt);
\filldraw [color=white, draw=ffqqqq, line width=0.5pt] (-2.35,-2.35) circle (2.5pt);
\fill [color=qqwuqq] (-2.35,-1.65) circle (2.5pt);
\fill [color=qqwuqq] (-1.65,-2.35) circle (2.5pt);
\filldraw [color=white, draw=ffqqqq, line width=0.5pt] (-2.35, 1.65) circle (2.5pt);
\fill [color=qqwuqq] ( 1.65, 2.35) circle (2.5pt);
\filldraw [color=white, draw=ffqqqq, line width=0.5pt] ( 1.65, 1.65) circle (2.5pt);
\filldraw [color=white, draw=ffqqqq, line width=0.5pt] ( 2.35, 2.35) circle (2.5pt);
\fill [color=qqwuqq] ( 2.35, 1.65) circle (2.5pt);
\filldraw [color=white, draw=ffqqqq, line width=0.5pt] ( 2.35,-1.65) circle (2.5pt);
\fill [color=qqwuqq] ( 2.35,-2.35) circle (2.5pt);
\filldraw [color=white, draw=ffqqqq, line width=0.5pt] ( 1.65,-2.35) circle (2.5pt);
\draw [color=qqwuqq] ( 1.8,-1.8) node{$p$};
\draw [color=qqwuqq] (-1.8, 1.8) node{$q$};
\draw [color=ffqqqq] (-1.8,-1.8) node{$r$};
\draw [color=ffqqqq] (-1.65, 2.35) node[above]{$r$};
\draw [color=ffqqqq] ( 2.35,-1.65) node[right]{$r$};
\draw [color=ffqqqq] ( 1.8, 1.8) node{$s$};
\draw [color=ffqqqq] ( 1.65,-2.35) node[below]{$s$};
\draw [color=ffqqqq] (-2.35, 1.65) node[left]{$s$};
\draw [blue][line width=1pt][postaction={on each segment={mid arrow=blue}}]
      (1.65,-1.65) to[out=135,in=-45] (0,0);
\draw [blue](1, -1) node[above]{\tiny$\circled{1}$};
\draw [blue][line width=1pt][postaction={on each segment={mid arrow=blue}}]
      (-0.5,-0.5) to[out=135,in= 0] (-2,-0);
\draw [blue](-1.5, 0.2) node{\tiny$\circled{2}$};
\draw [blue][line width=1pt][postaction={on each segment={mid arrow=blue}}]
      (2.05, -0.2) to[out=180,in=90] (1.65,-1.65);
\draw [blue](1.7, 0) node{\tiny$\circled{3}$};
\draw ( 2.0,-0.0) node[right]{\small\color{red}$a_1$};
\draw (-2.0, 0.0) node[left]{\small\color{red}$a_1$};
\draw (-0.0,-0.0) node[right]{\small\color{red}$a_2$};
\draw (-0.0, 2.0) node[above]{\small\color{red}$a_3$};
\draw (-0.0,-2.0) node[below]{\small\color{red}$a_3$};
\end{tikzpicture}
\begin{tikzpicture}[scale=0.55]
\filldraw [black!20] (-2, 2) circle (0.5cm);
\filldraw [black!20] ( 2, 2) circle (0.5cm);
\filldraw [black!20] ( 2,-2) circle (0.5cm);
\filldraw [black!20] (-2,-2) circle (0.5cm);
\draw [line width=1pt] (-2, 2) circle (0.5cm);
\draw [line width=1pt] ( 2, 2) circle (0.5cm);
\draw [line width=1pt] ( 2,-2) circle (0.5cm);
\draw [line width=1pt] (-2,-2) circle (0.5cm);
\draw [dash pattern=on 2pt off 2pt] (-2, 2)-- ( 2, 2);
\draw [dash pattern=on 2pt off 2pt] ( 2, 2)-- ( 2,-2);
\draw [dash pattern=on 2pt off 2pt] ( 2,-2)-- (-2,-2);
\draw [dash pattern=on 2pt off 2pt] (-2, 2)-- (-2,-2);
\draw [line width=1pt,color=ffqqqq] (-2.35,1.65)-- (-1.65,-1.65);
\draw [line width=1pt,color=ffqqqq] (-1.65,2.35)-- (1.65,1.65);
\draw [line width=1pt,color=ffqqqq] (1.65,1.65)-- (2.35,-1.65);
\draw [line width=1pt,color=ffqqqq] (1.65,-2.35)-- (-1.65,-1.65);
\draw [line width=1pt,color=ffqqqq] (-1.65,-1.65)-- (1.65,1.65);
\fill [color=qqwuqq] (-1.65, 1.65) circle (2.5pt);
\fill [color=qqwuqq] ( 1.65,-1.65) circle (2.5pt);
\filldraw [color=white, draw=ffqqqq, line width=0.5pt] (-1.65, 2.35) circle (2.5pt);
\fill [color=qqwuqq] (-2.35, 2.35) circle (2.5pt);
\filldraw [color=white, draw=ffqqqq, line width=0.5pt] (-1.65,-1.65) circle (2.5pt);
\filldraw [color=white, draw=ffqqqq, line width=0.5pt] (-2.35,-2.35) circle (2.5pt);
\fill [color=qqwuqq] (-2.35,-1.65) circle (2.5pt);
\fill [color=qqwuqq] (-1.65,-2.35) circle (2.5pt);
\filldraw [color=white, draw=ffqqqq, line width=0.5pt] (-2.35, 1.65) circle (2.5pt);
\fill [color=qqwuqq] ( 1.65, 2.35) circle (2.5pt);
\filldraw [color=white, draw=ffqqqq, line width=0.5pt] ( 1.65, 1.65) circle (2.5pt);
\filldraw [color=white, draw=ffqqqq, line width=0.5pt] ( 2.35, 2.35) circle (2.5pt);
\fill [color=qqwuqq] ( 2.35, 1.65) circle (2.5pt);
\filldraw [color=white, draw=ffqqqq, line width=0.5pt] ( 2.35,-1.65) circle (2.5pt);
\fill [color=qqwuqq] ( 2.35,-2.35) circle (2.5pt);
\filldraw [color=white, draw=ffqqqq, line width=0.5pt] ( 1.65,-2.35) circle (2.5pt);
\draw [color=qqwuqq] ( 1.8,-1.8) node{$p$};
\draw [color=qqwuqq] (-1.8, 1.8) node{$q$};
\draw [color=ffqqqq] (-1.8,-1.8) node{$r$};
\draw [color=ffqqqq] (-1.65, 2.35) node[above]{$r$};
\draw [color=ffqqqq] ( 2.35,-1.65) node[right]{$r$};
\draw [color=ffqqqq] ( 1.8, 1.8) node{$s$};
\draw [color=ffqqqq] ( 1.65,-2.35) node[below]{$s$};
\draw [color=ffqqqq] (-2.35, 1.65) node[left]{$s$};
\draw [blue][line width=1pt][postaction={on each segment={mid arrow=blue}}]
      (1.65,-1.65) to[out=135,in=-45] (0,0);
\draw [blue](1.5, -1.5) node[above]{\tiny$\circled{1}$};
\draw [blue][line width=1pt][postaction={on each segment={mid arrow=blue}}]
      (-0.5,-0.5) to[out=135,in= 0] (-2,-0);
\draw [blue](-1.5, 0.2) node{\tiny$\circled{2}$};
\draw [blue][line width=1pt][postaction={on each segment={mid arrow=blue}}]
      (2.05, -0.2) to[out=180,in=90] (0.2,-2.05);
\draw [blue](1.35, 0) node{\tiny$\circled{3}$};
\draw ( 2.0,-0.0) node[right]{\small\color{red}$a_1$};
\draw (-2.0, 0.0) node[left]{\small\color{red}$a_1$};
\draw (-0.0,-0.0) node[right]{\small\color{red}$a_2$};
\draw (-0.0, 2.0) node[above]{\small\color{red}$a_3$};
\draw (-0.0,-2.0) node[below]{\small\color{red}$a_3$};
\end{tikzpicture}
\begin{tikzpicture}[scale=0.55]
\filldraw [black!20] (-2, 2) circle (0.5cm);
\filldraw [black!20] ( 2, 2) circle (0.5cm);
\filldraw [black!20] ( 2,-2) circle (0.5cm);
\filldraw [black!20] (-2,-2) circle (0.5cm);
\draw [line width=1pt] (-2, 2) circle (0.5cm);
\draw [line width=1pt] ( 2, 2) circle (0.5cm);
\draw [line width=1pt] ( 2,-2) circle (0.5cm);
\draw [line width=1pt] (-2,-2) circle (0.5cm);
\draw [dash pattern=on 2pt off 2pt] (-2, 2)-- ( 2, 2);
\draw [dash pattern=on 2pt off 2pt] ( 2, 2)-- ( 2,-2);
\draw [dash pattern=on 2pt off 2pt] ( 2,-2)-- (-2,-2);
\draw [dash pattern=on 2pt off 2pt] (-2, 2)-- (-2,-2);
\draw [line width=1pt,color=ffqqqq] (-2.35,1.65)-- (-1.65,-1.65);
\draw [line width=1pt,color=ffqqqq] (-1.65,2.35)-- (1.65,1.65);
\draw [line width=1pt,color=ffqqqq] (1.65,1.65)-- (2.35,-1.65);
\draw [line width=1pt,color=ffqqqq] (1.65,-2.35)-- (-1.65,-1.65);
\draw [line width=1pt,color=ffqqqq] (-1.65,-1.65)-- (1.65,1.65);
\fill [color=qqwuqq] (-1.65, 1.65) circle (2.5pt);
\fill [color=qqwuqq] ( 1.65,-1.65) circle (2.5pt);
\filldraw [color=white, draw=ffqqqq, line width=0.5pt] (-1.65, 2.35) circle (2.5pt);
\fill [color=qqwuqq] (-2.35, 2.35) circle (2.5pt);
\filldraw [color=white, draw=ffqqqq, line width=0.5pt] (-1.65,-1.65) circle (2.5pt);
\filldraw [color=white, draw=ffqqqq, line width=0.5pt] (-2.35,-2.35) circle (2.5pt);
\fill [color=qqwuqq] (-2.35,-1.65) circle (2.5pt);
\fill [color=qqwuqq] (-1.65,-2.35) circle (2.5pt);
\filldraw [color=white, draw=ffqqqq, line width=0.5pt] (-2.35, 1.65) circle (2.5pt);
\fill [color=qqwuqq] ( 1.65, 2.35) circle (2.5pt);
\filldraw [color=white, draw=ffqqqq, line width=0.5pt] ( 1.65, 1.65) circle (2.5pt);
\filldraw [color=white, draw=ffqqqq, line width=0.5pt] ( 2.35, 2.35) circle (2.5pt);
\fill [color=qqwuqq] ( 2.35, 1.65) circle (2.5pt);
\filldraw [color=white, draw=ffqqqq, line width=0.5pt] ( 2.35,-1.65) circle (2.5pt);
\fill [color=qqwuqq] ( 2.35,-2.35) circle (2.5pt);
\filldraw [color=white, draw=ffqqqq, line width=0.5pt] ( 1.65,-2.35) circle (2.5pt);
\draw [color=qqwuqq] ( 1.8,-1.8) node{$p$};
\draw [color=qqwuqq] (-1.8, 1.8) node{$q$};
\draw [color=ffqqqq] (-1.8,-1.8) node{$r$};
\draw [color=ffqqqq] (-1.65, 2.35) node[above]{$r$};
\draw [color=ffqqqq] ( 2.35,-1.65) node[right]{$r$};
\draw [color=ffqqqq] ( 1.8, 1.8) node{$s$};
\draw [color=ffqqqq] ( 1.65,-2.35) node[below]{$s$};
\draw [color=ffqqqq] (-2.35, 1.65) node[left]{$s$};
\draw [blue][line width=1pt][postaction={on each segment={mid arrow=blue}}]
      (0.5,-2.1) to[out= 90,in=180] (1.65,-1.65);
\draw [blue](0.5, -1.55) node{\tiny$\circled{3}$};
\draw [blue][line width=1pt][postaction={on each segment={mid arrow=blue}}]
      (0.5, 0.5) to[out=135, in=-90] (0,2);
\draw [blue] (-0.2,0.8) node{\tiny$\circled{2}$};
\draw [blue][line width=1pt][postaction={on each segment={mid arrow=blue}}]
      (1.65,-1.65) -- (0,0);
\draw [blue] (1, -1) node[above]{\tiny$\circled{1}$};
\draw ( 2.0,-0.0) node[right]{\small\color{red}$a_1$};
\draw (-2.0, 0.0) node[left]{\small\color{red}$a_1$};
\draw (-0.0,-0.0) node[right]{\small\color{red}$a_2$};
\draw (-0.0, 2.0) node[above]{\small\color{red}$a_3$};
\draw (-0.0,-2.0) node[below]{\small\color{red}$a_3$};
\end{tikzpicture}
\begin{tikzpicture}[scale=0.55]
\filldraw [black!20] (-2, 2) circle (0.5cm);
\filldraw [black!20] ( 2, 2) circle (0.5cm);
\filldraw [black!20] ( 2,-2) circle (0.5cm);
\filldraw [black!20] (-2,-2) circle (0.5cm);
\draw [line width=1pt] (-2, 2) circle (0.5cm);
\draw [line width=1pt] ( 2, 2) circle (0.5cm);
\draw [line width=1pt] ( 2,-2) circle (0.5cm);
\draw [line width=1pt] (-2,-2) circle (0.5cm);
\draw [dash pattern=on 2pt off 2pt] (-2, 2)-- ( 2, 2);
\draw [dash pattern=on 2pt off 2pt] ( 2, 2)-- ( 2,-2);
\draw [dash pattern=on 2pt off 2pt] ( 2,-2)-- (-2,-2);
\draw [dash pattern=on 2pt off 2pt] (-2, 2)-- (-2,-2);
\draw [line width=1pt,color=ffqqqq] (-2.35,1.65)-- (-1.65,-1.65);
\draw [line width=1pt,color=ffqqqq] (-1.65,2.35)-- (1.65,1.65);
\draw [line width=1pt,color=ffqqqq] (1.65,1.65)-- (2.35,-1.65);
\draw [line width=1pt,color=ffqqqq] (1.65,-2.35)-- (-1.65,-1.65);
\draw [line width=1pt,color=ffqqqq] (-1.65,-1.65)-- (1.65,1.65);
\fill [color=qqwuqq] (-1.65, 1.65) circle (2.5pt);
\fill [color=qqwuqq] ( 1.65,-1.65) circle (2.5pt);
\filldraw [color=white, draw=ffqqqq, line width=0.5pt] (-1.65, 2.35) circle (2.5pt);
\fill [color=qqwuqq] (-2.35, 2.35) circle (2.5pt);
\filldraw [color=white, draw=ffqqqq, line width=0.5pt] (-1.65,-1.65) circle (2.5pt);
\filldraw [color=white, draw=ffqqqq, line width=0.5pt] (-2.35,-2.35) circle (2.5pt);
\fill [color=qqwuqq] (-2.35,-1.65) circle (2.5pt);
\fill [color=qqwuqq] (-1.65,-2.35) circle (2.5pt);
\filldraw [color=white, draw=ffqqqq, line width=0.5pt] (-2.35, 1.65) circle (2.5pt);
\fill [color=qqwuqq] ( 1.65, 2.35) circle (2.5pt);
\filldraw [color=white, draw=ffqqqq, line width=0.5pt] ( 1.65, 1.65) circle (2.5pt);
\filldraw [color=white, draw=ffqqqq, line width=0.5pt] ( 2.35, 2.35) circle (2.5pt);
\fill [color=qqwuqq] ( 2.35, 1.65) circle (2.5pt);
\filldraw [color=white, draw=ffqqqq, line width=0.5pt] ( 2.35,-1.65) circle (2.5pt);
\fill [color=qqwuqq] ( 2.35,-2.35) circle (2.5pt);
\filldraw [color=white, draw=ffqqqq, line width=0.5pt] ( 1.65,-2.35) circle (2.5pt);
\draw [color=qqwuqq] ( 1.8,-1.8) node{$p$};
\draw [color=qqwuqq] (-1.8, 1.8) node{$q$};
\draw [color=ffqqqq] (-1.8,-1.8) node{$r$};
\draw [color=ffqqqq] (-1.65, 2.35) node[above]{$r$};
\draw [color=ffqqqq] ( 2.35,-1.65) node[right]{$r$};
\draw [color=ffqqqq] ( 1.8, 1.8) node{$s$};
\draw [color=ffqqqq] ( 1.65,-2.35) node[below]{$s$};
\draw [color=ffqqqq] (-2.35, 1.65) node[left]{$s$};
\draw [blue][line width=1pt][postaction={on each segment={mid arrow=blue}}]
      (0.,-2) to[out= 90,in=180] (2,-0.);
\draw [blue](-0.3, -1.55) node{\tiny$\circled{3}$};
\draw [blue][line width=1pt][postaction={on each segment={mid arrow=blue}}]
      (0.5, 0.5) to[out=135, in=-90] (0,2);
\draw [blue] (-0.2,0.8) node{\tiny$\circled{2}$};
\draw [blue][line width=1pt][postaction={on each segment={mid arrow=blue}}]
      (1.65,-1.65) -- (0,0);
\draw [blue] (1.5, -1.5) node[above]{\tiny$\circled{1}$};
\draw ( 2.0,-0.0) node[right]{\small\color{red}$a_1$};
\draw (-2.0, 0.0) node[left]{\small\color{red}$a_1$};
\draw (-0.0,-0.0) node[right]{\small\color{red}$a_2$};
\draw (-0.0, 2.0) node[above]{\small\color{red}$a_3$};
\draw (-0.0,-2.0) node[below]{\small\color{red}$a_3$};
\end{tikzpicture}
\caption{Case 2}
\label{fig:case 2}
\end{center}
\end{figure}

    \item[Case 3] $\snum{i_2}{\ws}=3$.
    \begin{itemize}
        \item[Case 3.1] $\snum{i_3}{\ws}=1$. If $t=3$, see the first picture of \Pic\ref{fig:case 3.1}, then (S.L.E.), a contradiction. So $t>3$. 
            By Lemma~\ref{lem:cancel}, $\snum{i_4}{\ws}=2$ and $\snum{i_5}{\ws}=3$. If $t=5$, see the second picture of \Pic\ref{fig:case 3.1}, then $\sdii{i_t}{\ws}-\sdii{i_1}{\ws}=1>0$ as required. Now we assume $t>5$.
            By Lemma~\ref{lem:cancel}, $\snum{i_6}{\ws}=1$ and $\snum{i_7}{\ws}=2$. If $t=7$, see the third picture of Figure~\ref{fig:case 3.1},
\begin{figure}[htbp]
\begin{center}
\definecolor{ffqqqq}{rgb}{1,0,0}
\definecolor{qqwuqq}{rgb}{0,0,1}
\begin{tikzpicture}[scale=0.55]
\filldraw [black!20] (-2, 2) circle (0.5cm);
\filldraw [black!20] ( 2, 2) circle (0.5cm);
\filldraw [black!20] ( 2,-2) circle (0.5cm);
\filldraw [black!20] (-2,-2) circle (0.5cm);
\draw [line width=1pt] (-2, 2) circle (0.5cm);
\draw [line width=1pt] ( 2, 2) circle (0.5cm);
\draw [line width=1pt] ( 2,-2) circle (0.5cm);
\draw [line width=1pt] (-2,-2) circle (0.5cm);
\draw [dash pattern=on 2pt off 2pt] (-2, 2)-- ( 2, 2);
\draw [dash pattern=on 2pt off 2pt] ( 2, 2)-- ( 2,-2);
\draw [dash pattern=on 2pt off 2pt] ( 2,-2)-- (-2,-2);
\draw [dash pattern=on 2pt off 2pt] (-2, 2)-- (-2,-2);
\draw [line width=1pt,color=ffqqqq] (-2.35,1.65)-- (-1.65,-1.65);
\draw [line width=1pt,color=ffqqqq] (-1.65,2.35)-- (1.65,1.65);
\draw [line width=1pt,color=ffqqqq] (1.65,1.65)-- (2.35,-1.65);
\draw [line width=1pt,color=ffqqqq] (1.65,-2.35)-- (-1.65,-1.65);
\draw [line width=1pt,color=ffqqqq] (-1.65,-1.65)-- (1.65,1.65);
\fill [color=qqwuqq] (-1.65, 1.65) circle (2.5pt);
\fill [color=qqwuqq] ( 1.65,-1.65) circle (2.5pt);
\filldraw [color=white, draw=ffqqqq, line width=0.5pt] (-1.65, 2.35) circle (2.5pt);
\fill [color=qqwuqq] (-2.35, 2.35) circle (2.5pt);
\filldraw [color=white, draw=ffqqqq, line width=0.5pt] (-1.65,-1.65) circle (2.5pt);
\filldraw [color=white, draw=ffqqqq, line width=0.5pt] (-2.35,-2.35) circle (2.5pt);
\fill [color=qqwuqq] (-2.35,-1.65) circle (2.5pt);
\fill [color=qqwuqq] (-1.65,-2.35) circle (2.5pt);
\filldraw [color=white, draw=ffqqqq, line width=0.5pt] (-2.35, 1.65) circle (2.5pt);
\fill [color=qqwuqq] ( 1.65, 2.35) circle (2.5pt);
\filldraw [color=white, draw=ffqqqq, line width=0.5pt] ( 1.65, 1.65) circle (2.5pt);
\filldraw [color=white, draw=ffqqqq, line width=0.5pt] ( 2.35, 2.35) circle (2.5pt);
\fill [color=qqwuqq] ( 2.35, 1.65) circle (2.5pt);
\filldraw [color=white, draw=ffqqqq, line width=0.5pt] ( 2.35,-1.65) circle (2.5pt);
\fill [color=qqwuqq] ( 2.35,-2.35) circle (2.5pt);
\filldraw [color=white, draw=ffqqqq, line width=0.5pt] ( 1.65,-2.35) circle (2.5pt);
\draw [color=qqwuqq] ( 1.8,-1.8) node{$p$};
\draw [color=qqwuqq] (-1.8, 1.8) node{$q$};
\draw [color=ffqqqq] (-1.8,-1.8) node{$r$};
\draw [color=ffqqqq] (-1.65, 2.35) node[above]{$r$};
\draw [color=ffqqqq] ( 2.35,-1.65) node[right]{$r$};
\draw [color=ffqqqq] ( 1.8, 1.8) node{$s$};
\draw [color=ffqqqq] ( 1.65,-2.35) node[below]{$s$};
\draw [color=ffqqqq] (-2.35, 1.65) node[left]{$s$};
\draw [blue][line width=1pt][postaction={on each segment={mid arrow=blue}}]
      (1.65, -1.65) to[out=180, in=90] (0.5,-2.1);
\draw [blue] (0.5,-1.5) node{\tiny$\circled{1}$};
\draw [blue][line width=1pt][postaction={on each segment={mid arrow=blue}}]
      (-0.5, 2.1) to[out=-90,in= 0] (-2.1,0.5);
\draw [blue] (-0.7, 0.7) node{\tiny$\circled{2}$};
\draw [blue][line width=1pt][postaction={on each segment={mid arrow=blue}}]
      (2.1, -0.5) to[out=180,in= 90] (1.65, -1.65);
\draw [blue] (1.5, -0.5) node{\tiny$\circled{3}$};
\draw ( 2.0,-0.0) node[right]{\small\color{red}$a_1$};
\draw (-2.0, 0.0) node[left]{\small\color{red}$a_1$};
\draw (-0.0,-0.0) node[right]{\small\color{red}$a_2$};
\draw (-0.0, 2.0) node[above]{\small\color{red}$a_3$};
\draw (-0.0,-2.0) node[below]{\small\color{red}$a_3$};
\end{tikzpicture}
\begin{tikzpicture}[scale=0.55]
\filldraw [black!20] (-2, 2) circle (0.5cm);
\filldraw [black!20] ( 2, 2) circle (0.5cm);
\filldraw [black!20] ( 2,-2) circle (0.5cm);
\filldraw [black!20] (-2,-2) circle (0.5cm);
\draw [line width=1pt] (-2, 2) circle (0.5cm);
\draw [line width=1pt] ( 2, 2) circle (0.5cm);
\draw [line width=1pt] ( 2,-2) circle (0.5cm);
\draw [line width=1pt] (-2,-2) circle (0.5cm);
\draw [dash pattern=on 2pt off 2pt] (-2, 2)-- ( 2, 2);
\draw [dash pattern=on 2pt off 2pt] ( 2, 2)-- ( 2,-2);
\draw [dash pattern=on 2pt off 2pt] ( 2,-2)-- (-2,-2);
\draw [dash pattern=on 2pt off 2pt] (-2, 2)-- (-2,-2);
\draw [line width=1pt,color=ffqqqq] (-2.35,1.65)-- (-1.65,-1.65);
\draw [line width=1pt,color=ffqqqq] (-1.65,2.35)-- (1.65,1.65);
\draw [line width=1pt,color=ffqqqq] (1.65,1.65)-- (2.35,-1.65);
\draw [line width=1pt,color=ffqqqq] (1.65,-2.35)-- (-1.65,-1.65);
\draw [line width=1pt,color=ffqqqq] (-1.65,-1.65)-- (1.65,1.65);
\fill [color=qqwuqq] (-1.65, 1.65) circle (2.5pt);
\fill [color=qqwuqq] ( 1.65,-1.65) circle (2.5pt);
\filldraw [color=white, draw=ffqqqq, line width=0.5pt] (-1.65, 2.35) circle (2.5pt);
\fill [color=qqwuqq] (-2.35, 2.35) circle (2.5pt);
\filldraw [color=white, draw=ffqqqq, line width=0.5pt] (-1.65,-1.65) circle (2.5pt);
\filldraw [color=white, draw=ffqqqq, line width=0.5pt] (-2.35,-2.35) circle (2.5pt);
\fill [color=qqwuqq] (-2.35,-1.65) circle (2.5pt);
\fill [color=qqwuqq] (-1.65,-2.35) circle (2.5pt);
\filldraw [color=white, draw=ffqqqq, line width=0.5pt] (-2.35, 1.65) circle (2.5pt);
\fill [color=qqwuqq] ( 1.65, 2.35) circle (2.5pt);
\filldraw [color=white, draw=ffqqqq, line width=0.5pt] ( 1.65, 1.65) circle (2.5pt);
\filldraw [color=white, draw=ffqqqq, line width=0.5pt] ( 2.35, 2.35) circle (2.5pt);
\fill [color=qqwuqq] ( 2.35, 1.65) circle (2.5pt);
\filldraw [color=white, draw=ffqqqq, line width=0.5pt] ( 2.35,-1.65) circle (2.5pt);
\fill [color=qqwuqq] ( 2.35,-2.35) circle (2.5pt);
\filldraw [color=white, draw=ffqqqq, line width=0.5pt] ( 1.65,-2.35) circle (2.5pt);
\draw [color=qqwuqq] ( 1.8,-1.8) node{$p$};
\draw [color=qqwuqq] (-1.8, 1.8) node{$q$};
\draw [color=ffqqqq] (-1.8,-1.8) node{$r$};
\draw [color=ffqqqq] (-1.65, 2.35) node[above]{$r$};
\draw [color=ffqqqq] ( 2.35,-1.65) node[right]{$r$};
\draw [color=ffqqqq] ( 1.8, 1.8) node{$s$};
\draw [color=ffqqqq] ( 1.65,-2.35) node[below]{$s$};
\draw [color=ffqqqq] (-2.35, 1.65) node[left]{$s$};
\draw [blue][line width=1pt][postaction={on each segment={mid arrow=blue}}]
      (1.65, -1.65) to[out=135, in=90] (-0.2,-1.9);
\draw [blue] (-0.3,-1.4) node{\tiny$\circled{1}$};
\draw [blue][line width=1pt][postaction={on each segment={mid arrow=blue}}]
      (-0.5, 2.1) to[out=-90,in= 0] (-2.1,0.5);
\draw [blue] (-0.7, 0.7) node{\tiny$\circled{2}$};
\draw [blue][line width=1pt][postaction={on each segment={mid arrow=blue}}]
      (2, 0) to[out=180,in=-45] (0.6, 0.6);
\draw [blue] (1.4, 0.4) node{\tiny$\circled{3}$};
\draw [blue][line width=1pt][postaction={on each segment={mid arrow=blue}}]
      (0.7, 0.7) to[out=135,in=-90] (0.4, 1.95);
\draw [blue] (0., 1.3) node{\tiny$\circled{4}$};
\draw [blue][line width=1pt][postaction={on each segment={mid arrow=blue}}]
      (0.8,-2.15) to[out=90,in=225] (1.65, -1.65);
\draw [blue] (0.55, -1.8) node{\tiny$\circled{5}$};
\draw ( 2.0,-0.0) node[right]{\small\color{red}$a_1$};
\draw (-2.0, 0.0) node[left]{\small\color{red}$a_1$};
\draw (-0.0,-0.0) node[right]{\small\color{red}$a_2$};
\draw (-0.0, 2.0) node[above]{\small\color{red}$a_3$};
\draw (-0.0,-2.0) node[below]{\small\color{red}$a_3$};
\end{tikzpicture}
\begin{tikzpicture}[scale=0.55]
\filldraw [black!20] (-2, 2) circle (0.5cm);
\filldraw [black!20] ( 2, 2) circle (0.5cm);
\filldraw [black!20] ( 2,-2) circle (0.5cm);
\filldraw [black!20] (-2,-2) circle (0.5cm);
\draw [line width=1pt] (-2, 2) circle (0.5cm);
\draw [line width=1pt] ( 2, 2) circle (0.5cm);
\draw [line width=1pt] ( 2,-2) circle (0.5cm);
\draw [line width=1pt] (-2,-2) circle (0.5cm);
\draw [dash pattern=on 2pt off 2pt] (-2, 2)-- ( 2, 2);
\draw [dash pattern=on 2pt off 2pt] ( 2, 2)-- ( 2,-2);
\draw [dash pattern=on 2pt off 2pt] ( 2,-2)-- (-2,-2);
\draw [dash pattern=on 2pt off 2pt] (-2, 2)-- (-2,-2);
\draw [line width=1pt,color=ffqqqq] (-2.35,1.65)-- (-1.65,-1.65);
\draw [line width=1pt,color=ffqqqq] (-1.65,2.35)-- (1.65,1.65);
\draw [line width=1pt,color=ffqqqq] (1.65,1.65)-- (2.35,-1.65);
\draw [line width=1pt,color=ffqqqq] (1.65,-2.35)-- (-1.65,-1.65);
\draw [line width=1pt,color=ffqqqq] (-1.65,-1.65)-- (1.65,1.65);
\fill [color=qqwuqq] (-1.65, 1.65) circle (2.5pt);
\fill [color=qqwuqq] ( 1.65,-1.65) circle (2.5pt);
\filldraw [color=white, draw=ffqqqq, line width=0.5pt] (-1.65, 2.35) circle (2.5pt);
\fill [color=qqwuqq] (-2.35, 2.35) circle (2.5pt);
\filldraw [color=white, draw=ffqqqq, line width=0.5pt] (-1.65,-1.65) circle (2.5pt);
\filldraw [color=white, draw=ffqqqq, line width=0.5pt] (-2.35,-2.35) circle (2.5pt);
\fill [color=qqwuqq] (-2.35,-1.65) circle (2.5pt);
\fill [color=qqwuqq] (-1.65,-2.35) circle (2.5pt);
\filldraw [color=white, draw=ffqqqq, line width=0.5pt] (-2.35, 1.65) circle (2.5pt);
\fill [color=qqwuqq] ( 1.65, 2.35) circle (2.5pt);
\filldraw [color=white, draw=ffqqqq, line width=0.5pt] ( 1.65, 1.65) circle (2.5pt);
\filldraw [color=white, draw=ffqqqq, line width=0.5pt] ( 2.35, 2.35) circle (2.5pt);
\fill [color=qqwuqq] ( 2.35, 1.65) circle (2.5pt);
\filldraw [color=white, draw=ffqqqq, line width=0.5pt] ( 2.35,-1.65) circle (2.5pt);
\fill [color=qqwuqq] ( 2.35,-2.35) circle (2.5pt);
\filldraw [color=white, draw=ffqqqq, line width=0.5pt] ( 1.65,-2.35) circle (2.5pt);
\draw [color=qqwuqq] ( 1.8,-1.8) node{$p$};
\draw [color=qqwuqq] (-1.8, 1.8) node{$q$};
\draw [color=ffqqqq] (-1.8,-1.8) node{$r$};
\draw [color=ffqqqq] (-1.65, 2.35) node[above]{$r$};
\draw [color=ffqqqq] ( 2.35,-1.65) node[right]{$r$};
\draw [color=ffqqqq] ( 1.8, 1.8) node{$s$};
\draw [color=ffqqqq] ( 1.65,-2.35) node[below]{$s$};
\draw [color=ffqqqq] (-2.35, 1.65) node[left]{$s$};
\draw [blue][line width=1pt][postaction={on each segment={mid arrow=blue}}]
      (1.65, -1.65) to[out=180, in=90] (0.5,-2.1);
\draw [blue] (0.5,-1.65) node{\tiny$\circled{1}$};
\draw [blue][line width=1pt][postaction={on each segment={mid arrow=blue}}]
      (-0.5, 2.1) to[out=-90,in= 0] (-2.1,0.5);
\draw [blue] (-0.7, 0.7) node{\tiny$\circled{2}$};
\draw [blue][line width=1pt][postaction={on each segment={mid arrow=blue}}]
      (2, 0) to[out=180,in=-45] (0.6, 0.6);
\draw [blue] (1.4, 0.4) node{\tiny$\circled{3}$};
\draw [blue][line width=1pt][postaction={on each segment={mid arrow=blue}}]
      (0.7, 0.7) to[out=135,in=-90] (0.4, 1.95);
\draw [blue] (0.0, 1.3) node{\tiny$\circled{4}$};
\draw [blue][line width=1pt][postaction={on each segment={mid arrow=blue}}]
      (-0.3,-1.95) to[out=90,in=180] (2.2, -1);
\draw [blue] (-0.55, -1.5) node{\tiny$\circled{5}$};
\draw [blue][line width=1pt][postaction={on each segment={mid arrow=blue}}]
      (-2, 0) to[out=0,in=135] (-0.5, -0.5);
\draw [blue] (-0.75, 0.) node{\tiny$\circled{6}$};
\draw [blue][line width=1pt][postaction={on each segment={mid arrow=blue}}]
      (-0.4, -0.4) to[out=0,in=90] (1.65, -1.65);
\draw [blue] (1.10,-0.4) node{\tiny$\circled{7}$};
\draw ( 2.0,-0.0) node[right]{\small\color{red}$a_1$};
\draw (-2.0, 0.0) node[left]{\small\color{red}$a_1$};
\draw (-0.0,-0.0) node[right]{\small\color{red}$a_2$};
\draw (-0.0, 2.0) node[above]{\small\color{red}$a_3$};
\draw (-0.0,-2.0) node[below]{\small\color{red}$a_3$};
\end{tikzpicture}
\begin{tikzpicture}[scale=0.55]
\filldraw [black!20] (-2, 2) circle (0.5cm);
\filldraw [black!20] ( 2, 2) circle (0.5cm);
\filldraw [black!20] ( 2,-2) circle (0.5cm);
\filldraw [black!20] (-2,-2) circle (0.5cm);
\draw [line width=1pt] (-2, 2) circle (0.5cm);
\draw [line width=1pt] ( 2, 2) circle (0.5cm);
\draw [line width=1pt] ( 2,-2) circle (0.5cm);
\draw [line width=1pt] (-2,-2) circle (0.5cm);
\draw [dash pattern=on 2pt off 2pt] (-2, 2)-- ( 2, 2);
\draw [dash pattern=on 2pt off 2pt] ( 2, 2)-- ( 2,-2);
\draw [dash pattern=on 2pt off 2pt] ( 2,-2)-- (-2,-2);
\draw [dash pattern=on 2pt off 2pt] (-2, 2)-- (-2,-2);
\draw [line width=1pt,color=ffqqqq] (-2.35,1.65)-- (-1.65,-1.65);
\draw [line width=1pt,color=ffqqqq] (-1.65,2.35)-- (1.65,1.65);
\draw [line width=1pt,color=ffqqqq] (1.65,1.65)-- (2.35,-1.65);
\draw [line width=1pt,color=ffqqqq] (1.65,-2.35)-- (-1.65,-1.65);
\draw [line width=1pt,color=ffqqqq] (-1.65,-1.65)-- (1.65,1.65);
\fill [color=qqwuqq] (-1.65, 1.65) circle (2.5pt);
\fill [color=qqwuqq] ( 1.65,-1.65) circle (2.5pt);
\filldraw [color=white, draw=ffqqqq, line width=0.5pt] (-1.65, 2.35) circle (2.5pt);
\fill [color=qqwuqq] (-2.35, 2.35) circle (2.5pt);
\filldraw [color=white, draw=ffqqqq, line width=0.5pt] (-1.65,-1.65) circle (2.5pt);
\filldraw [color=white, draw=ffqqqq, line width=0.5pt] (-2.35,-2.35) circle (2.5pt);
\fill [color=qqwuqq] (-2.35,-1.65) circle (2.5pt);
\fill [color=qqwuqq] (-1.65,-2.35) circle (2.5pt);
\filldraw [color=white, draw=ffqqqq, line width=0.5pt] (-2.35, 1.65) circle (2.5pt);
\fill [color=qqwuqq] ( 1.65, 2.35) circle (2.5pt);
\filldraw [color=white, draw=ffqqqq, line width=0.5pt] ( 1.65, 1.65) circle (2.5pt);
\filldraw [color=white, draw=ffqqqq, line width=0.5pt] ( 2.35, 2.35) circle (2.5pt);
\fill [color=qqwuqq] ( 2.35, 1.65) circle (2.5pt);
\filldraw [color=white, draw=ffqqqq, line width=0.5pt] ( 2.35,-1.65) circle (2.5pt);
\fill [color=qqwuqq] ( 2.35,-2.35) circle (2.5pt);
\filldraw [color=white, draw=ffqqqq, line width=0.5pt] ( 1.65,-2.35) circle (2.5pt);
\draw [color=qqwuqq] ( 1.8,-1.8) node{$p$};
\draw [color=qqwuqq] (-1.8, 1.8) node{$q$};
\draw [color=ffqqqq] (-1.8,-1.8) node{$r$};
\draw [color=ffqqqq] (-1.65, 2.35) node[above]{$r$};
\draw [color=ffqqqq] ( 2.35,-1.65) node[right]{$r$};
\draw [color=ffqqqq] ( 1.8, 1.8) node{$s$};
\draw [color=ffqqqq] ( 1.65,-2.35) node[below]{$s$};
\draw [color=ffqqqq] (-2.35, 1.65) node[left]{$s$};
\draw [blue][line width=1pt][postaction={on each segment={mid arrow=blue}}]
      (1.65, -1.65) to[out=180, in=90] (0.5,-2.1);
\draw [blue] (0.5,-1.65) node{\tiny$\circled{1}$};
\draw [blue][line width=1pt][postaction={on each segment={mid arrow=blue}}]
      (-0.5, 2.1) to[out=-90,in= 0] (-2.1,0.5);
\draw [blue] (-0.7, 0.7) node{\tiny$\circled{2}$};
\draw [blue][line width=1pt][postaction={on each segment={mid arrow=blue}}]
      (2, 0) to[out=180,in=-45] (0.6, 0.6);
\draw [blue] (1.4, 0.4) node{\tiny$\circled{3}$};
\draw [blue][line width=1pt][postaction={on each segment={mid arrow=blue}}]
      (0.7, 0.7) to[out=135,in=-90] (0.4, 1.95);
\draw [blue] (0.0, 1.3) node{\tiny$\circled{4}$};
\draw [blue][line width=1pt][postaction={on each segment={mid arrow=blue}}]
      (-0.3,-1.95) to[out=90,in=180] (2.2, -1);
\draw [blue] (0.2, -0.85) node{\tiny$\circled{5}$};
\draw [blue][line width=1pt][postaction={on each segment={mid arrow=blue}}]
      (-1.9,-0.5) to[out=0,in=135] (-0.9, -0.9);
\draw [blue] (-1.4,-0.2) node{\tiny$\circled{6}$};
\draw [blue][line width=1pt][postaction={on each segment={mid arrow=blue}}]
      (-0.8, -0.8) to[out=-45,in=45] (-0.9, -1.8);
\draw [blue] (-1.1, -1.6) node{\tiny$\circled{7}$};
\draw ( 2.0,-0.0) node[right]{\small\color{red}$a_1$};
\draw (-2.0, 0.0) node[left]{\small\color{red}$a_1$};
\draw (-0.0,-0.0) node[right]{\small\color{red}$a_2$};
\draw (-0.0, 2.0) node[above]{\small\color{red}$a_3$};
\draw (-0.0,-2.0) node[below]{\small\color{red}$a_3$};
\end{tikzpicture}
\end{center}
\caption{Case 3.1}
\label{fig:case 3.1}
\end{figure}
        then (S.L.E.), a contradiction. So $t>7$. By Lemma~\ref{lem:cancel}, $\snum{i_8}{\ws}=3$, see the fourth picture of Figure~\ref{fig:case 3.1}. However, this contradicts Lemma~\ref{lem:four}.

        \item[Case 3.2] $\snum{i_3}{\ws}=2$. If $t=3$, see the first picture of \Pic\ref{fig:case 3.2}, then (S.L.E.), a contradiction. So $t>3$. By Lemma~\ref{lem:cancel}, $\snum{i_4}{\ws}=1$ and $\snum{i_5}{\ws}=3$. If $t=5$, since (S.R.E.), we are in the situation shown in the second picture of \Pic\ref{fig:case 3.2}, which contradicts Lemma~\ref{lemm:self-int Type}. So $t>5$. By Lemma~\ref{lem:cancel}, $\snum{i_6}{\ws}=2$ and $\snum{i_7}{\ws}=1$. If $t=7$, see the third picture of \Pic\ref{fig:case 3.2},
\begin{figure}[htbp]
\begin{center}
\definecolor{ffqqqq}{rgb}{1,0,0}
\definecolor{qqwuqq}{rgb}{0,0,1}
\begin{tikzpicture}[scale=0.55]
\filldraw [black!20] (-2, 2) circle (0.5cm);
\filldraw [black!20] ( 2, 2) circle (0.5cm);
\filldraw [black!20] ( 2,-2) circle (0.5cm);
\filldraw [black!20] (-2,-2) circle (0.5cm);
\draw [line width=1pt] (-2, 2) circle (0.5cm);
\draw [line width=1pt] ( 2, 2) circle (0.5cm);
\draw [line width=1pt] ( 2,-2) circle (0.5cm);
\draw [line width=1pt] (-2,-2) circle (0.5cm);
\draw [dash pattern=on 2pt off 2pt] (-2, 2)-- ( 2, 2);
\draw [dash pattern=on 2pt off 2pt] ( 2, 2)-- ( 2,-2);
\draw [dash pattern=on 2pt off 2pt] ( 2,-2)-- (-2,-2);
\draw [dash pattern=on 2pt off 2pt] (-2, 2)-- (-2,-2);
\draw [line width=1pt,color=ffqqqq] (-2.35,1.65)-- (-1.65,-1.65);
\draw [line width=1pt,color=ffqqqq] (-1.65,2.35)-- (1.65,1.65);
\draw [line width=1pt,color=ffqqqq] (1.65,1.65)-- (2.35,-1.65);
\draw [line width=1pt,color=ffqqqq] (1.65,-2.35)-- (-1.65,-1.65);
\draw [line width=1pt,color=ffqqqq] (-1.65,-1.65)-- (1.65,1.65);
\fill [color=qqwuqq] (-1.65, 1.65) circle (2.5pt);
\fill [color=qqwuqq] ( 1.65,-1.65) circle (2.5pt);
\filldraw [color=white, draw=ffqqqq, line width=0.5pt] (-1.65, 2.35) circle (2.5pt);
\fill [color=qqwuqq] (-2.35, 2.35) circle (2.5pt);
\filldraw [color=white, draw=ffqqqq, line width=0.5pt] (-1.65,-1.65) circle (2.5pt);
\filldraw [color=white, draw=ffqqqq, line width=0.5pt] (-2.35,-2.35) circle (2.5pt);
\fill [color=qqwuqq] (-2.35,-1.65) circle (2.5pt);
\fill [color=qqwuqq] (-1.65,-2.35) circle (2.5pt);
\filldraw [color=white, draw=ffqqqq, line width=0.5pt] (-2.35, 1.65) circle (2.5pt);
\fill [color=qqwuqq] ( 1.65, 2.35) circle (2.5pt);
\filldraw [color=white, draw=ffqqqq, line width=0.5pt] ( 1.65, 1.65) circle (2.5pt);
\filldraw [color=white, draw=ffqqqq, line width=0.5pt] ( 2.35, 2.35) circle (2.5pt);
\fill [color=qqwuqq] ( 2.35, 1.65) circle (2.5pt);
\filldraw [color=white, draw=ffqqqq, line width=0.5pt] ( 2.35,-1.65) circle (2.5pt);
\fill [color=qqwuqq] ( 2.35,-2.35) circle (2.5pt);
\filldraw [color=white, draw=ffqqqq, line width=0.5pt] ( 1.65,-2.35) circle (2.5pt);
\draw [color=qqwuqq] ( 1.8,-1.8) node{$p$};
\draw [color=qqwuqq] (-1.8, 1.8) node{$q$};
\draw [color=ffqqqq] (-1.8,-1.8) node{$r$};
\draw [color=ffqqqq] (-1.65, 2.35) node[above]{$r$};
\draw [color=ffqqqq] ( 2.35,-1.65) node[right]{$r$};
\draw [color=ffqqqq] ( 1.8, 1.8) node{$s$};
\draw [color=ffqqqq] ( 1.65,-2.35) node[below]{$s$};
\draw [color=ffqqqq] (-2.35, 1.65) node[left]{$s$};
\draw [blue][line width=1pt][postaction={on each segment={mid arrow=blue}}]
      (1.65, -1.65) to[out=180, in=90] (0.5,-2.1);
\draw [blue] (0.5,-1.5) node{\tiny$\circled{1}$};
\draw [blue][line width=1pt][postaction={on each segment={mid arrow=blue}}]
      ( 0, 2) to[out=-90,in=135] (0.35, 0.35);
\draw [blue] ( 0.4, 1.2) node{\tiny$\circled{2}$};
\draw [blue][line width=1pt][postaction={on each segment={mid arrow=blue}}]
      (0.0, 0.0) to[out=-45,in= 135] (1.65, -1.65);
\draw [blue] (1, -1) node[right]{\tiny$\circled{3}$};
\draw ( 2.0,-0.0) node[right]{\small\color{red}$a_1$};
\draw (-2.0, 0.0) node[left]{\small\color{red}$a_1$};
\draw (-0.0,-0.0) node[right]{\small\color{red}$a_2$};
\draw (-0.0, 2.0) node[above]{\small\color{red}$a_3$};
\draw (-0.0,-2.0) node[below]{\small\color{red}$a_3$};
\end{tikzpicture}
\begin{tikzpicture}[scale=0.55]
\filldraw [black!20] (-2, 2) circle (0.5cm);
\filldraw [black!20] ( 2, 2) circle (0.5cm);
\filldraw [black!20] ( 2,-2) circle (0.5cm);
\filldraw [black!20] (-2,-2) circle (0.5cm);
\draw [line width=1pt] (-2, 2) circle (0.5cm);
\draw [line width=1pt] ( 2, 2) circle (0.5cm);
\draw [line width=1pt] ( 2,-2) circle (0.5cm);
\draw [line width=1pt] (-2,-2) circle (0.5cm);
\draw [dash pattern=on 2pt off 2pt] (-2, 2)-- ( 2, 2);
\draw [dash pattern=on 2pt off 2pt] ( 2, 2)-- ( 2,-2);
\draw [dash pattern=on 2pt off 2pt] ( 2,-2)-- (-2,-2);
\draw [dash pattern=on 2pt off 2pt] (-2, 2)-- (-2,-2);
\draw [line width=1pt,color=ffqqqq] (-2.35,1.65)-- (-1.65,-1.65);
\draw [line width=1pt,color=ffqqqq] (-1.65,2.35)-- (1.65,1.65);
\draw [line width=1pt,color=ffqqqq] (1.65,1.65)-- (2.35,-1.65);
\draw [line width=1pt,color=ffqqqq] (1.65,-2.35)-- (-1.65,-1.65);
\draw [line width=1pt,color=ffqqqq] (-1.65,-1.65)-- (1.65,1.65);
\fill [color=qqwuqq] (-1.65, 1.65) circle (2.5pt);
\fill [color=qqwuqq] ( 1.65,-1.65) circle (2.5pt);
\filldraw [color=white, draw=ffqqqq, line width=0.5pt] (-1.65, 2.35) circle (2.5pt);
\fill [color=qqwuqq] (-2.35, 2.35) circle (2.5pt);
\filldraw [color=white, draw=ffqqqq, line width=0.5pt] (-1.65,-1.65) circle (2.5pt);
\filldraw [color=white, draw=ffqqqq, line width=0.5pt] (-2.35,-2.35) circle (2.5pt);
\fill [color=qqwuqq] (-2.35,-1.65) circle (2.5pt);
\fill [color=qqwuqq] (-1.65,-2.35) circle (2.5pt);
\filldraw [color=white, draw=ffqqqq, line width=0.5pt] (-2.35, 1.65) circle (2.5pt);
\fill [color=qqwuqq] ( 1.65, 2.35) circle (2.5pt);
\filldraw [color=white, draw=ffqqqq, line width=0.5pt] ( 1.65, 1.65) circle (2.5pt);
\filldraw [color=white, draw=ffqqqq, line width=0.5pt] ( 2.35, 2.35) circle (2.5pt);
\fill [color=qqwuqq] ( 2.35, 1.65) circle (2.5pt);
\filldraw [color=white, draw=ffqqqq, line width=0.5pt] ( 2.35,-1.65) circle (2.5pt);
\fill [color=qqwuqq] ( 2.35,-2.35) circle (2.5pt);
\filldraw [color=white, draw=ffqqqq, line width=0.5pt] ( 1.65,-2.35) circle (2.5pt);
\draw [color=qqwuqq] ( 1.8,-1.8) node{$p$};
\draw [color=qqwuqq] (-1.8, 1.8) node{$q$};
\draw [color=ffqqqq] (-1.8,-1.8) node{$r$};
\draw [color=ffqqqq] (-1.65, 2.35) node[above]{$r$};
\draw [color=ffqqqq] ( 2.35,-1.65) node[right]{$r$};
\draw [color=ffqqqq] ( 1.8, 1.8) node{$s$};
\draw [color=ffqqqq] ( 1.65,-2.35) node[below]{$s$};
\draw [color=ffqqqq] (-2.35, 1.65) node[left]{$s$};
\draw [blue][line width=1pt][postaction={on each segment={mid arrow=blue}}]
      (1.65, -1.65) to[out=135, in=90] (-0.2,-1.9);
\draw [blue] (-0.1,-1.2) node{\tiny$\circled{1}$};
\draw [blue][line width=1pt][postaction={on each segment={mid arrow=blue}}]
      ( 0.5, 1.9) to[out=-90,in=135] (0.9, 0.9);
\draw [blue] ( 0.2, 1.2) node{\tiny$\circled{2}$};
\draw [blue][line width=1pt][postaction={on each segment={mid arrow=blue}}]
      (0.5, 0.5) to[out=-45,in= 180] (2, 0);
\draw [blue] (1.4, 0.4) node{\tiny$\circled{3}$};
\draw [blue][line width=1pt][postaction={on each segment={mid arrow=blue}}]
      (-2.1, 0.4) to[out=0,in= -90] (-0.4, 2.1);
\draw [blue] (-0.6, 0.6) node{\tiny$\circled{4}$};
\draw [blue][line width=1pt][postaction={on each segment={mid arrow=blue}}]
      (0.4,-2.1) to[out=90,in= 160] (1.65,-1.65);
\draw [blue] (1,-2) node{\tiny$\circled{5}$};
\draw ( 2.0,-0.0) node[right]{\small\color{red}$a_1$};
\draw (-2.0, 0.0) node[left]{\small\color{red}$a_1$};
\draw (-0.0,-0.0) node[right]{\small\color{red}$a_2$};
\draw (-0.0, 2.0) node[above]{\small\color{red}$a_3$};
\draw (-0.0,-2.0) node[below]{\small\color{red}$a_3$};
\end{tikzpicture}
\begin{tikzpicture}[scale=0.55]
\filldraw [black!20] (-2, 2) circle (0.5cm);
\filldraw [black!20] ( 2, 2) circle (0.5cm);
\filldraw [black!20] ( 2,-2) circle (0.5cm);
\filldraw [black!20] (-2,-2) circle (0.5cm);
\draw [line width=1pt] (-2, 2) circle (0.5cm);
\draw [line width=1pt] ( 2, 2) circle (0.5cm);
\draw [line width=1pt] ( 2,-2) circle (0.5cm);
\draw [line width=1pt] (-2,-2) circle (0.5cm);
\draw [dash pattern=on 2pt off 2pt] (-2, 2)-- ( 2, 2);
\draw [dash pattern=on 2pt off 2pt] ( 2, 2)-- ( 2,-2);
\draw [dash pattern=on 2pt off 2pt] ( 2,-2)-- (-2,-2);
\draw [dash pattern=on 2pt off 2pt] (-2, 2)-- (-2,-2);
\draw [line width=1pt,color=ffqqqq] (-2.35,1.65)-- (-1.65,-1.65);
\draw [line width=1pt,color=ffqqqq] (-1.65,2.35)-- (1.65,1.65);
\draw [line width=1pt,color=ffqqqq] (1.65,1.65)-- (2.35,-1.65);
\draw [line width=1pt,color=ffqqqq] (1.65,-2.35)-- (-1.65,-1.65);
\draw [line width=1pt,color=ffqqqq] (-1.65,-1.65)-- (1.65,1.65);
\fill [color=qqwuqq] (-1.65, 1.65) circle (2.5pt);
\fill [color=qqwuqq] ( 1.65,-1.65) circle (2.5pt);
\filldraw [color=white, draw=ffqqqq, line width=0.5pt] (-1.65, 2.35) circle (2.5pt);
\fill [color=qqwuqq] (-2.35, 2.35) circle (2.5pt);
\filldraw [color=white, draw=ffqqqq, line width=0.5pt] (-1.65,-1.65) circle (2.5pt);
\filldraw [color=white, draw=ffqqqq, line width=0.5pt] (-2.35,-2.35) circle (2.5pt);
\fill [color=qqwuqq] (-2.35,-1.65) circle (2.5pt);
\fill [color=qqwuqq] (-1.65,-2.35) circle (2.5pt);
\filldraw [color=white, draw=ffqqqq, line width=0.5pt] (-2.35, 1.65) circle (2.5pt);
\fill [color=qqwuqq] ( 1.65, 2.35) circle (2.5pt);
\filldraw [color=white, draw=ffqqqq, line width=0.5pt] ( 1.65, 1.65) circle (2.5pt);
\filldraw [color=white, draw=ffqqqq, line width=0.5pt] ( 2.35, 2.35) circle (2.5pt);
\fill [color=qqwuqq] ( 2.35, 1.65) circle (2.5pt);
\filldraw [color=white, draw=ffqqqq, line width=0.5pt] ( 2.35,-1.65) circle (2.5pt);
\fill [color=qqwuqq] ( 2.35,-2.35) circle (2.5pt);
\filldraw [color=white, draw=ffqqqq, line width=0.5pt] ( 1.65,-2.35) circle (2.5pt);
\draw [color=qqwuqq] ( 1.8,-1.8) node{$p$};
\draw [color=qqwuqq] (-1.8, 1.8) node{$q$};
\draw [color=ffqqqq] (-1.8,-1.8) node{$r$};
\draw [color=ffqqqq] (-1.65, 2.35) node[above]{$r$};
\draw [color=ffqqqq] ( 2.35,-1.65) node[right]{$r$};
\draw [color=ffqqqq] ( 1.8, 1.8) node{$s$};
\draw [color=ffqqqq] ( 1.65,-2.35) node[below]{$s$};
\draw [color=ffqqqq] (-2.35, 1.65) node[left]{$s$};
\draw [blue][line width=1pt][postaction={on each segment={mid arrow=blue}}]
      (1.65, -1.65) to[out=180, in=90] (0.2,-2.1);
\draw [blue] (0.5,-1.4) node{\tiny$\circled{1}$};
\draw [blue][line width=1pt][postaction={on each segment={mid arrow=blue}}]
      ( 0.5, 1.9) to[out=-90,in=135] (0.9, 0.9);
\draw [blue] ( 0.2, 1.2) node{\tiny$\circled{2}$};
\draw [blue][line width=1pt][postaction={on each segment={mid arrow=blue}}]
      (0.5, 0.5) to[out=-45,in= 180] (2, 0);
\draw [blue] (1.4, 0.4) node{\tiny$\circled{3}$};
\draw [blue][line width=1pt][postaction={on each segment={mid arrow=blue}}]
      (-2.1, 0.4) to[out=0,in= -90] (-0.4, 2.1);
\draw [blue] (-0.6, 0.6) node{\tiny$\circled{4}$};
\draw [blue][line width=1pt][postaction={on each segment={mid arrow=blue}}]
      (-0.5,-1.9) to[out=90,in= -45] (-0.9,-0.9);
\draw [blue] (-0.3,-1.2) node{\tiny$\circled{5}$};
\draw [blue][line width=1pt][postaction={on each segment={mid arrow=blue}}]
      (-0.8,-0.8) to[out=135,in= 0] (-1.9,-0.5);
\draw [blue] (-1.1,-0.2) node{\tiny$\circled{6}$};
\draw [blue][line width=1pt][postaction={on each segment={mid arrow=blue}}]
      (2.1,-0.3) to[out=180,in= 90] (1.65,-1.65);
\draw [blue] (1.2, -0.7) node{\tiny$\circled{7}$};
\draw ( 2.0,-0.0) node[right]{\small\color{red}$a_1$};
\draw (-2.0, 0.0) node[left]{\small\color{red}$a_1$};
\draw (-0.0,-0.0) node[right]{\small\color{red}$a_2$};
\draw (-0.0, 2.0) node[above]{\small\color{red}$a_3$};
\draw (-0.0,-2.0) node[below]{\small\color{red}$a_3$};
\end{tikzpicture}
\begin{tikzpicture}[scale=0.55]
\filldraw [black!20] (-2, 2) circle (0.5cm);
\filldraw [black!20] ( 2, 2) circle (0.5cm);
\filldraw [black!20] ( 2,-2) circle (0.5cm);
\filldraw [black!20] (-2,-2) circle (0.5cm);
\draw [line width=1pt] (-2, 2) circle (0.5cm);
\draw [line width=1pt] ( 2, 2) circle (0.5cm);
\draw [line width=1pt] ( 2,-2) circle (0.5cm);
\draw [line width=1pt] (-2,-2) circle (0.5cm);
\draw [dash pattern=on 2pt off 2pt] (-2, 2)-- ( 2, 2);
\draw [dash pattern=on 2pt off 2pt] ( 2, 2)-- ( 2,-2);
\draw [dash pattern=on 2pt off 2pt] ( 2,-2)-- (-2,-2);
\draw [dash pattern=on 2pt off 2pt] (-2, 2)-- (-2,-2);
\draw [line width=1pt,color=ffqqqq] (-2.35,1.65)-- (-1.65,-1.65);
\draw [line width=1pt,color=ffqqqq] (-1.65,2.35)-- (1.65,1.65);
\draw [line width=1pt,color=ffqqqq] (1.65,1.65)-- (2.35,-1.65);
\draw [line width=1pt,color=ffqqqq] (1.65,-2.35)-- (-1.65,-1.65);
\draw [line width=1pt,color=ffqqqq] (-1.65,-1.65)-- (1.65,1.65);
\fill [color=qqwuqq] (-1.65, 1.65) circle (2.5pt);
\fill [color=qqwuqq] ( 1.65,-1.65) circle (2.5pt);
\filldraw [color=white, draw=ffqqqq, line width=0.5pt] (-1.65, 2.35) circle (2.5pt);
\fill [color=qqwuqq] (-2.35, 2.35) circle (2.5pt);
\filldraw [color=white, draw=ffqqqq, line width=0.5pt] (-1.65,-1.65) circle (2.5pt);
\filldraw [color=white, draw=ffqqqq, line width=0.5pt] (-2.35,-2.35) circle (2.5pt);
\fill [color=qqwuqq] (-2.35,-1.65) circle (2.5pt);
\fill [color=qqwuqq] (-1.65,-2.35) circle (2.5pt);
\filldraw [color=white, draw=ffqqqq, line width=0.5pt] (-2.35, 1.65) circle (2.5pt);
\fill [color=qqwuqq] ( 1.65, 2.35) circle (2.5pt);
\filldraw [color=white, draw=ffqqqq, line width=0.5pt] ( 1.65, 1.65) circle (2.5pt);
\filldraw [color=white, draw=ffqqqq, line width=0.5pt] ( 2.35, 2.35) circle (2.5pt);
\fill [color=qqwuqq] ( 2.35, 1.65) circle (2.5pt);
\filldraw [color=white, draw=ffqqqq, line width=0.5pt] ( 2.35,-1.65) circle (2.5pt);
\fill [color=qqwuqq] ( 2.35,-2.35) circle (2.5pt);
\filldraw [color=white, draw=ffqqqq, line width=0.5pt] ( 1.65,-2.35) circle (2.5pt);
\draw [color=qqwuqq] ( 1.8,-1.8) node{$p$};
\draw [color=qqwuqq] (-1.8, 1.8) node{$q$};
\draw [color=ffqqqq] (-1.8,-1.8) node{$r$};
\draw [color=ffqqqq] (-1.65, 2.35) node[above]{$r$};
\draw [color=ffqqqq] ( 2.35,-1.65) node[right]{$r$};
\draw [color=ffqqqq] ( 1.8, 1.8) node{$s$};
\draw [color=ffqqqq] ( 1.65,-2.35) node[below]{$s$};
\draw [color=ffqqqq] (-2.35, 1.65) node[left]{$s$};
\draw [blue][line width=1pt][postaction={on each segment={mid arrow=blue}}]
      (1.65, -1.65) to[out=180, in=90] (0.3,-2.1);
\draw [blue] (1.25,-1.3) node{\tiny$\circled{1}$};
\draw [blue][line width=1pt][postaction={on each segment={mid arrow=blue}}]
      ( 0.5, 1.9) to[out=-90,in=135] (0.9, 0.9);
\draw [blue] ( 0.2, 1.2) node{\tiny$\circled{2}$};
\draw [blue][line width=1pt][postaction={on each segment={mid arrow=blue}}]
      (0.5, 0.5) to[out=-45,in= 180] (2, 0);
\draw [blue] (1.4, 0.4) node{\tiny$\circled{3}$};
\draw [blue][line width=1pt][postaction={on each segment={mid arrow=blue}}]
      (-2.1, 0.4) to[out=0,in= -90] (-0.4, 2.1);
\draw [blue] (-0.6, 0.6) node{\tiny$\circled{4}$};
\draw [blue][line width=1pt][postaction={on each segment={mid arrow=blue}}]
      (-0.5,-1.9) to[out=90,in= -45] (-0.9,-0.9);
\draw [blue] (-0.3,-1.2) node{\tiny$\circled{5}$};
\draw [blue][line width=1pt][postaction={on each segment={mid arrow=blue}}]
      (-0.8,-0.8) to[out=135,in= 0] (-1.9,-0.5);
\draw [blue] (-1.1,-0.2) node{\tiny$\circled{6}$};
\draw [blue][line width=1pt][postaction={on each segment={mid arrow=blue}}]
      (2.1,-0.3) to[out=180,in= 90] (0,-2);
\draw [blue] (0.3, -0.6) node{\tiny$\circled{7}$};
\end{tikzpicture}
\end{center}
\caption{Case 3.2}
\label{fig:case 3.2}
\end{figure}
        we have (S.L.E.), a contradiction. So $t>7$. By Lemma~\ref{lem:cancel}, $\snum{i_8}{\ws}=3$, see the fourth picture of Figure~\ref{fig:case 3.2}. However, this contradicts Lemma~\ref{lem:four}.
    \end{itemize}
\end{itemize}
\end{proof}

Now we are ready to show the main result.

\begin{proof}[Proof of Proposition~\ref{prop:main}]
By Proposition~\ref{prop:presilt.ind.obj.}, this is equivalent to showing that for any $\ws\in\SGOCM(\SURF)$, if $\ws(0)\neq \ws(1)$ and $\ws$ is not homotopic to $\tgamma$, then $X(\ws)\oplus X(\tgamma)$ is not presilting. Note that $p$ is an orientated intersection from $\tgamma$ to $\ws$, and $q$ is an orientation intersection from $\ws$ to $\tgamma$. By Theorem~\ref{thm:ops}~(3), it suffices to show that either $\ii_p(\tgamma,\tsigma)> 0$ or $\ii_q(\tsigma,\tgamma)>0$. Reversing the direction of $\ws$ if necessary, we may assume $\sigma(0)=p$ and $\sigma(1)=q$. By Lemma~\ref{lem:circle} and the second formula in Lemma~\ref{lem:int ind}, we may assume that $\ws$ does not contain a circle. Assume $\ii_p(\tgamma,\tsigma)\leq 0$. Then $\sdii{1}{\ws}:=\dii{1}{\ws}\geq 2$. To show $\ii_q(\tsigma,\tgamma)>0$, by the second formula in Lemma~\ref{lem:int ind}, we only need to show $\sdii{n(\ws)}{\ws}:=\dii{n(\ws)}{\ws}> 0$. Let $0=i_1<i_2<\cdots<i_t=n(\ws)$ be a simplest sequence of $\ws$. Note that $\sigma(0)\neq \sigma(1)$ implies that $t$ is even. If $t=2$, then  $\sdii{n(\ws)}{\ws}=\sdii{i_2}{\ws}=\sdii{i_1+1}{\ws}=\sdii{1}{\ws}\geq 2>0$ as required. Now we assume $t>2$. There are the following cases.

\begin{itemize}
    \item[Case A] $\snum{i_2}{\ws}=1$.
    \begin{itemize}
        \item [Case A.1] $\snum{i_3}{\ws}=2$. By Lemma~\ref{lem:cancel}, $\snum{i_4}{\ws}=3$. If $t=4$, see the first picture of \Pic\ref{fig:case A.1}, then by the first formula in Lemma~\ref{lem:int ind} (the same below), we have $\sdii{n(\ws)}{\ws}=\sdii{1}{\ws}+2\geq 4>0$ as required. Now we assume $t>4$. By Lemma~\ref{lem:cancel}, $\snum{i_5}{\ws}=1$ and $\snum{i_6}{\ws}=2$. If $t=6$, see the second picture of \Pic\ref{fig:case A.1}, then $\ws_{i_6,i_6+1}$ crosses $\ws_{i_4,i_4+1}$, a contradiction. So $t>6$. By Lemma~\ref{lem:cancel}, $\snum{i_7}{\ws}=3$ and $\snum{i_8}{\ws}=1$, see the third picture of \Pic\ref{fig:case A.1}. However, this contradicts Lemma~\ref{lem:four}.

\begin{figure}[htbp]
\begin{center}
\definecolor{ffqqqq}{rgb}{1,0,0}
\definecolor{qqwuqq}{rgb}{0,0,1}
\begin{tikzpicture}[scale=0.55]
\filldraw [black!20] (-2, 2) circle (0.5cm);
\filldraw [black!20] ( 2, 2) circle (0.5cm);
\filldraw [black!20] ( 2,-2) circle (0.5cm);
\filldraw [black!20] (-2,-2) circle (0.5cm);
\draw [line width=1pt] (-2, 2) circle (0.5cm);
\draw [line width=1pt] ( 2, 2) circle (0.5cm);
\draw [line width=1pt] ( 2,-2) circle (0.5cm);
\draw [line width=1pt] (-2,-2) circle (0.5cm);
\draw [dash pattern=on 2pt off 2pt] (-2, 2)-- ( 2, 2);
\draw [dash pattern=on 2pt off 2pt] ( 2, 2)-- ( 2,-2);
\draw [dash pattern=on 2pt off 2pt] ( 2,-2)-- (-2,-2);
\draw [dash pattern=on 2pt off 2pt] (-2, 2)-- (-2,-2);
\draw [line width=1pt,color=ffqqqq] (-2.35,1.65)-- (-1.65,-1.65);
\draw [line width=1pt,color=ffqqqq] (-1.65,2.35)-- (1.65,1.65);
\draw [line width=1pt,color=ffqqqq] (1.65,1.65)-- (2.35,-1.65);
\draw [line width=1pt,color=ffqqqq] (1.65,-2.35)-- (-1.65,-1.65);
\draw [line width=1pt,color=ffqqqq] (-1.65,-1.65)-- (1.65,1.65);
\fill [color=qqwuqq] (-1.65, 1.65) circle (2.5pt);
\fill [color=qqwuqq] ( 1.65,-1.65) circle (2.5pt);
\filldraw [color=white, draw=ffqqqq, line width=0.5pt] (-1.65, 2.35) circle (2.5pt);
\fill [color=qqwuqq] (-2.35, 2.35) circle (2.5pt);
\filldraw [color=white, draw=ffqqqq, line width=0.5pt] (-1.65,-1.65) circle (2.5pt);
\filldraw [color=white, draw=ffqqqq, line width=0.5pt] (-2.35,-2.35) circle (2.5pt);
\fill [color=qqwuqq] (-2.35,-1.65) circle (2.5pt);
\fill [color=qqwuqq] (-1.65,-2.35) circle (2.5pt);
\filldraw [color=white, draw=ffqqqq, line width=0.5pt] (-2.35, 1.65) circle (2.5pt);
\fill [color=qqwuqq] ( 1.65, 2.35) circle (2.5pt);
\filldraw [color=white, draw=ffqqqq, line width=0.5pt] ( 1.65, 1.65) circle (2.5pt);
\filldraw [color=white, draw=ffqqqq, line width=0.5pt] ( 2.35, 2.35) circle (2.5pt);
\fill [color=qqwuqq] ( 2.35, 1.65) circle (2.5pt);
\filldraw [color=white, draw=ffqqqq, line width=0.5pt] ( 2.35,-1.65) circle (2.5pt);
\fill [color=qqwuqq] ( 2.35,-2.35) circle (2.5pt);
\filldraw [color=white, draw=ffqqqq, line width=0.5pt] ( 1.65,-2.35) circle (2.5pt);
\draw [color=qqwuqq] ( 1.8,-1.8) node{$p$};
\draw [color=qqwuqq] (-1.8, 1.8) node{$q$};
\draw [color=ffqqqq] (-1.8,-1.8) node{$r$};
\draw [color=ffqqqq] (-1.65, 2.35) node[above]{$r$};
\draw [color=ffqqqq] ( 2.35,-1.65) node[right]{$r$};
\draw [color=ffqqqq] ( 1.8, 1.8) node{$s$};
\draw [color=ffqqqq] ( 1.65,-2.35) node[below]{$s$};
\draw [color=ffqqqq] (-2.35, 1.65) node[left]{$s$};
\draw [blue][line width=1pt][postaction={on each segment={mid arrow=blue}}]
      (1.65, -1.65) to[out=90, in=180] (2.1, -0.5);
\draw [blue] (1.5, -0.5) node{\tiny$\circled{1}$};
\draw [blue][line width=1pt][postaction={on each segment={mid arrow=blue}}]
      (-1.9, -0.2) to[out= 0,in=135] (-0.6,-0.6);
\draw [blue] (-1.2, 0.1) node{\tiny$\circled{2}$};
\draw [blue][line width=1pt][postaction={on each segment={mid arrow=blue}}]
      (-0.8,-0.8) to[out=-45,in=90] (-0.3, -1.9);
\draw [blue] (-0. ,-1.5) node{\tiny$\circled{3}$};
\draw [blue][line width=1pt][postaction={on each segment={mid arrow=blue}}]
      (-0.7, 2.2) to[out=-90,in=0] (-1.65, 1.65);
\draw [blue] (-0.6, 1.5) node{\tiny$\circled{4}$};
\draw ( 2.0,-0.0) node[right]{\small\color{red}$a_1$};
\draw (-2.0, 0.0) node[left]{\small\color{red}$a_1$};
\draw (-0.0,-0.0) node[right]{\small\color{red}$a_2$};
\draw (-0.0, 2.0) node[above]{\small\color{red}$a_3$};
\draw (-0.0,-2.0) node[below]{\small\color{red}$a_3$};
\end{tikzpicture}
\
\begin{tikzpicture}[scale=0.55]
\filldraw [black!20] (-2, 2) circle (0.5cm);
\filldraw [black!20] ( 2, 2) circle (0.5cm);
\filldraw [black!20] ( 2,-2) circle (0.5cm);
\filldraw [black!20] (-2,-2) circle (0.5cm);
\draw [line width=1pt] (-2, 2) circle (0.5cm);
\draw [line width=1pt] ( 2, 2) circle (0.5cm);
\draw [line width=1pt] ( 2,-2) circle (0.5cm);
\draw [line width=1pt] (-2,-2) circle (0.5cm);
\draw [dash pattern=on 2pt off 2pt] (-2, 2)-- ( 2, 2);
\draw [dash pattern=on 2pt off 2pt] ( 2, 2)-- ( 2,-2);
\draw [dash pattern=on 2pt off 2pt] ( 2,-2)-- (-2,-2);
\draw [dash pattern=on 2pt off 2pt] (-2, 2)-- (-2,-2);
\draw [line width=1pt,color=ffqqqq] (-2.35,1.65)-- (-1.65,-1.65);
\draw [line width=1pt,color=ffqqqq] (-1.65,2.35)-- (1.65,1.65);
\draw [line width=1pt,color=ffqqqq] (1.65,1.65)-- (2.35,-1.65);
\draw [line width=1pt,color=ffqqqq] (1.65,-2.35)-- (-1.65,-1.65);
\draw [line width=1pt,color=ffqqqq] (-1.65,-1.65)-- (1.65,1.65);
\fill [color=qqwuqq] (-1.65, 1.65) circle (2.5pt);
\fill [color=qqwuqq] ( 1.65,-1.65) circle (2.5pt);
\filldraw [color=white, draw=ffqqqq, line width=0.5pt] (-1.65, 2.35) circle (2.5pt);
\fill [color=qqwuqq] (-2.35, 2.35) circle (2.5pt);
\filldraw [color=white, draw=ffqqqq, line width=0.5pt] (-1.65,-1.65) circle (2.5pt);
\filldraw [color=white, draw=ffqqqq, line width=0.5pt] (-2.35,-2.35) circle (2.5pt);
\fill [color=qqwuqq] (-2.35,-1.65) circle (2.5pt);
\fill [color=qqwuqq] (-1.65,-2.35) circle (2.5pt);
\filldraw [color=white, draw=ffqqqq, line width=0.5pt] (-2.35, 1.65) circle (2.5pt);
\fill [color=qqwuqq] ( 1.65, 2.35) circle (2.5pt);
\filldraw [color=white, draw=ffqqqq, line width=0.5pt] ( 1.65, 1.65) circle (2.5pt);
\filldraw [color=white, draw=ffqqqq, line width=0.5pt] ( 2.35, 2.35) circle (2.5pt);
\fill [color=qqwuqq] ( 2.35, 1.65) circle (2.5pt);
\filldraw [color=white, draw=ffqqqq, line width=0.5pt] ( 2.35,-1.65) circle (2.5pt);
\fill [color=qqwuqq] ( 2.35,-2.35) circle (2.5pt);
\filldraw [color=white, draw=ffqqqq, line width=0.5pt] ( 1.65,-2.35) circle (2.5pt);
\draw [color=qqwuqq] ( 1.8,-1.8) node{$p$};
\draw [color=qqwuqq] (-1.8, 1.8) node{$q$};
\draw [color=ffqqqq] (-1.8,-1.8) node{$r$};
\draw [color=ffqqqq] (-1.65, 2.35) node[above]{$r$};
\draw [color=ffqqqq] ( 2.35,-1.65) node[right]{$r$};
\draw [color=ffqqqq] ( 1.8, 1.8) node{$s$};
\draw [color=ffqqqq] ( 1.65,-2.35) node[below]{$s$};
\draw [color=ffqqqq] (-2.35, 1.65) node[left]{$s$};
\draw [blue][line width=1pt][postaction={on each segment={mid arrow=blue}}]
      (1.65, -1.65) to[out=90, in=180] (2.1, -0.5);
\draw [blue] (1.5, -0.5) node{\tiny$\circled{1}$};
\draw [blue][line width=1pt][postaction={on each segment={mid arrow=blue}}]
      (-1.9, -0.2) to[out= 0,in=135] (-0.6,-0.6);
\draw [blue] (-1.2, 0.1) node{\tiny$\circled{2}$};
\draw [blue][line width=1pt][postaction={on each segment={mid arrow=blue}}]
      (-0.8,-0.8) to[out=-45,in=90] (-0.3, -1.9);
\draw [blue] (-0. ,-1.5) node{\tiny$\circled{3}$};
\draw [blue][line width=1pt][postaction={on each segment={mid arrow=blue}}]
      (-0.7, 2.2) to[out=-90,in=0] (-2.2, 0.7);
\draw [blue] (-0.5, 1.6) node{\tiny$\circled{4}$};
\draw [blue][line width=1pt][postaction={on each segment={mid arrow=blue}}]
      (1.8, 0.8) to[out=180, in=-45] (1.1, 1.1);
\draw [blue] (1.5, 0.4) node{\tiny$\circled{5}$};
\draw [blue][line width=1pt][postaction={on each segment={mid arrow=blue}}]
      (0.8, 0.8) to[out=180, in=-45] (-1.65, 1.65);
\draw [blue] (0, 0.5) node{\tiny$\circled{6}$};
\draw ( 2.0,-0.0) node[right]{\small\color{red}$a_1$};
\draw (-2.0, 0.0) node[left]{\small\color{red}$a_1$};
\draw (-0.0,-0.0) node[right]{\small\color{red}$a_2$};
\draw (-0.0, 2.0) node[above]{\small\color{red}$a_3$};
\draw (-0.0,-2.0) node[below]{\small\color{red}$a_3$};
\end{tikzpicture}
\
\begin{tikzpicture}[scale=0.55]
\filldraw [black!20] (-2, 2) circle (0.5cm);
\filldraw [black!20] ( 2, 2) circle (0.5cm);
\filldraw [black!20] ( 2,-2) circle (0.5cm);
\filldraw [black!20] (-2,-2) circle (0.5cm);
\draw [line width=1pt] (-2, 2) circle (0.5cm);
\draw [line width=1pt] ( 2, 2) circle (0.5cm);
\draw [line width=1pt] ( 2,-2) circle (0.5cm);
\draw [line width=1pt] (-2,-2) circle (0.5cm);
\draw [dash pattern=on 2pt off 2pt] (-2, 2)-- ( 2, 2);
\draw [dash pattern=on 2pt off 2pt] ( 2, 2)-- ( 2,-2);
\draw [dash pattern=on 2pt off 2pt] ( 2,-2)-- (-2,-2);
\draw [dash pattern=on 2pt off 2pt] (-2, 2)-- (-2,-2);
\draw [line width=1pt,color=ffqqqq] (-2.35,1.65)-- (-1.65,-1.65);
\draw [line width=1pt,color=ffqqqq] (-1.65,2.35)-- (1.65,1.65);
\draw [line width=1pt,color=ffqqqq] (1.65,1.65)-- (2.35,-1.65);
\draw [line width=1pt,color=ffqqqq] (1.65,-2.35)-- (-1.65,-1.65);
\draw [line width=1pt,color=ffqqqq] (-1.65,-1.65)-- (1.65,1.65);
\fill [color=qqwuqq] (-1.65, 1.65) circle (2.5pt);
\fill [color=qqwuqq] ( 1.65,-1.65) circle (2.5pt);
\filldraw [color=white, draw=ffqqqq, line width=0.5pt] (-1.65, 2.35) circle (2.5pt);
\fill [color=qqwuqq] (-2.35, 2.35) circle (2.5pt);
\filldraw [color=white, draw=ffqqqq, line width=0.5pt] (-1.65,-1.65) circle (2.5pt);
\filldraw [color=white, draw=ffqqqq, line width=0.5pt] (-2.35,-2.35) circle (2.5pt);
\fill [color=qqwuqq] (-2.35,-1.65) circle (2.5pt);
\fill [color=qqwuqq] (-1.65,-2.35) circle (2.5pt);
\filldraw [color=white, draw=ffqqqq, line width=0.5pt] (-2.35, 1.65) circle (2.5pt);
\fill [color=qqwuqq] ( 1.65, 2.35) circle (2.5pt);
\filldraw [color=white, draw=ffqqqq, line width=0.5pt] ( 1.65, 1.65) circle (2.5pt);
\filldraw [color=white, draw=ffqqqq, line width=0.5pt] ( 2.35, 2.35) circle (2.5pt);
\fill [color=qqwuqq] ( 2.35, 1.65) circle (2.5pt);
\filldraw [color=white, draw=ffqqqq, line width=0.5pt] ( 2.35,-1.65) circle (2.5pt);
\fill [color=qqwuqq] ( 2.35,-2.35) circle (2.5pt);
\filldraw [color=white, draw=ffqqqq, line width=0.5pt] ( 1.65,-2.35) circle (2.5pt);
\draw [color=qqwuqq] ( 1.8,-1.8) node{$p$};
\draw [color=qqwuqq] (-1.8, 1.8) node{$q$};
\draw [color=ffqqqq] (-1.8,-1.8) node{$r$};
\draw [color=ffqqqq] (-1.65, 2.35) node[above]{$r$};
\draw [color=ffqqqq] ( 2.35,-1.65) node[right]{$r$};
\draw [color=ffqqqq] ( 1.8, 1.8) node{$s$};
\draw [color=ffqqqq] ( 1.65,-2.35) node[below]{$s$};
\draw [color=ffqqqq] (-2.35, 1.65) node[left]{$s$};
\draw [blue][line width=1pt][postaction={on each segment={mid arrow=blue}}]
      (1.65, -1.65) to[out=90, in=180] (2.1, -0.5);
\draw [blue] (1.4, -0.8) node{\tiny$\circled{1}$};
\draw [blue][line width=1pt][postaction={on each segment={mid arrow=blue}}]
      (-1.9, -0.2) to[out= 0,in=135] (-0.6,-0.6);
\draw [blue] (-1.2, 0.1) node{\tiny$\circled{2}$};
\draw [blue][line width=1pt][postaction={on each segment={mid arrow=blue}}]
      (-0.8,-0.8) to[out=-45,in=90] (-0.3, -1.9);
\draw [blue] (-0.8,-1.4) node{\tiny$\circled{3}$};
\draw [blue][line width=1pt][postaction={on each segment={mid arrow=blue}}]
      (-0.7, 2.2) to[out=-90,in=0] (-2.2, 0.7);
\draw [blue] (-0.5, 1.6) node{\tiny$\circled{4}$};
\draw [blue][line width=1pt][postaction={on each segment={mid arrow=blue}}]
      (1.8, 0.8) to[out=180, in=-45] (1.1, 1.1);
\draw [blue] (1.5, 0.4) node{\tiny$\circled{5}$};
\draw [blue][line width=1pt][postaction={on each segment={mid arrow=blue}}]
      (0.8, 0.8) to[out=135, in=-90] (0.3, 1.9);
\draw [blue] (0.1, 1) node{\tiny$\circled{6}$};
\draw [blue][line width=1pt][postaction={on each segment={mid arrow=blue}}]
      (0.2, -2) to[out=90, in=180] (2,-0.2);
\draw [blue] (0.8, -1.3) node{\tiny$\circled{7}$};
\draw ( 2.0,-0.0) node[right]{\small\color{red}$a_1$};
\draw (-2.0, 0.0) node[left]{\small\color{red}$a_1$};
\draw (-0.0,-0.0) node[right]{\small\color{red}$a_2$};
\draw (-0.0, 2.0) node[above]{\small\color{red}$a_3$};
\draw (-0.0,-2.0) node[below]{\small\color{red}$a_3$};
\end{tikzpicture}
\end{center}
\caption{Case A.1}
\label{fig:case A.1}
\end{figure}

        \item [Case A.2] $\snum{i_3}{\ws}=3$. By Lemma~\ref{lem:cancel}, $\snum{i_4}{\ws}=2$. If $t=4$, then $\ws_{i_4,i_4+1}$ crosses $\ws_{i_2,i_2+1}$, see the first picture of \Pic\ref{fig:case A.2}, a contradiction. So $t>4$. By Lemma~\ref{lem:cancel}, $\snum{i_5}{\ws}=1$ and $\snum{i_6}{\ws}=3$. If $t=6$, see the second picture of \Pic\ref{fig:case A.2}, then $\sdii{n(\ws)}{\ws}=\sdii{1}{\ws}\geq 2>0$ as required. Now assume $t>6$. By Lemma~\ref{lem:cancel}, $\snum{i_7}{\ws}=2$ and $\snum{i_8}{\ws}=1$, see the third picture of \Pic\ref{fig:case A.2}. However, this contradicts Lemma~\ref{lem:four}.
\begin{figure}[htbp]
\begin{center}
\definecolor{ffqqqq}{rgb}{1,0,0}
\definecolor{qqwuqq}{rgb}{0,0,1}
\begin{tikzpicture}[scale=0.55]
\filldraw [black!20] (-2, 2) circle (0.5cm);
\filldraw [black!20] ( 2, 2) circle (0.5cm);
\filldraw [black!20] ( 2,-2) circle (0.5cm);
\filldraw [black!20] (-2,-2) circle (0.5cm);
\draw [line width=1pt] (-2, 2) circle (0.5cm);
\draw [line width=1pt] ( 2, 2) circle (0.5cm);
\draw [line width=1pt] ( 2,-2) circle (0.5cm);
\draw [line width=1pt] (-2,-2) circle (0.5cm);
\draw [dash pattern=on 2pt off 2pt] (-2, 2)-- ( 2, 2);
\draw [dash pattern=on 2pt off 2pt] ( 2, 2)-- ( 2,-2);
\draw [dash pattern=on 2pt off 2pt] ( 2,-2)-- (-2,-2);
\draw [dash pattern=on 2pt off 2pt] (-2, 2)-- (-2,-2);
\draw [line width=1pt,color=ffqqqq] (-2.35,1.65)-- (-1.65,-1.65);
\draw [line width=1pt,color=ffqqqq] (-1.65,2.35)-- (1.65,1.65);
\draw [line width=1pt,color=ffqqqq] (1.65,1.65)-- (2.35,-1.65);
\draw [line width=1pt,color=ffqqqq] (1.65,-2.35)-- (-1.65,-1.65);
\draw [line width=1pt,color=ffqqqq] (-1.65,-1.65)-- (1.65,1.65);
\fill [color=qqwuqq] (-1.65, 1.65) circle (2.5pt);
\fill [color=qqwuqq] ( 1.65,-1.65) circle (2.5pt);
\filldraw [color=white, draw=ffqqqq, line width=0.5pt] (-1.65, 2.35) circle (2.5pt);
\fill [color=qqwuqq] (-2.35, 2.35) circle (2.5pt);
\filldraw [color=white, draw=ffqqqq, line width=0.5pt] (-1.65,-1.65) circle (2.5pt);
\filldraw [color=white, draw=ffqqqq, line width=0.5pt] (-2.35,-2.35) circle (2.5pt);
\fill [color=qqwuqq] (-2.35,-1.65) circle (2.5pt);
\fill [color=qqwuqq] (-1.65,-2.35) circle (2.5pt);
\filldraw [color=white, draw=ffqqqq, line width=0.5pt] (-2.35, 1.65) circle (2.5pt);
\fill [color=qqwuqq] ( 1.65, 2.35) circle (2.5pt);
\filldraw [color=white, draw=ffqqqq, line width=0.5pt] ( 1.65, 1.65) circle (2.5pt);
\filldraw [color=white, draw=ffqqqq, line width=0.5pt] ( 2.35, 2.35) circle (2.5pt);
\fill [color=qqwuqq] ( 2.35, 1.65) circle (2.5pt);
\filldraw [color=white, draw=ffqqqq, line width=0.5pt] ( 2.35,-1.65) circle (2.5pt);
\fill [color=qqwuqq] ( 2.35,-2.35) circle (2.5pt);
\filldraw [color=white, draw=ffqqqq, line width=0.5pt] ( 1.65,-2.35) circle (2.5pt);
\draw [color=qqwuqq] ( 1.8,-1.8) node{$p$};
\draw [color=qqwuqq] (-1.8, 1.8) node{$q$};
\draw [color=ffqqqq] (-1.8,-1.8) node{$r$};
\draw [color=ffqqqq] (-1.65, 2.35) node[above]{$r$};
\draw [color=ffqqqq] ( 2.35,-1.65) node[right]{$r$};
\draw [color=ffqqqq] ( 1.8, 1.8) node{$s$};
\draw [color=ffqqqq] ( 1.65,-2.35) node[below]{$s$};
\draw [color=ffqqqq] (-2.35, 1.65) node[left]{$s$};
\draw [blue][line width=1pt][postaction={on each segment={mid arrow=blue}}]
      (1.65, -1.65) to[out=90, in=180] (2.1, -0.5);
\draw [blue] (1.5, -0.6) node{\tiny$\circled{1}$};
\draw [blue][line width=1pt][postaction={on each segment={mid arrow=blue}}]
      (-2, 0.5) to[out= 0,in=-90] (-0.2, 2);
\draw [blue] (-0.1, 1.1) node{\tiny$\circled{2}$};
\draw [blue][line width=1pt][postaction={on each segment={mid arrow=blue}}]
      (-0.4,-1.9) to[out=90, in=-45] (-0.8,-0.8);
\draw [blue] (-0.3,-1.1) node{\tiny$\circled{3}$};
\draw [blue][line width=1pt][postaction={on each segment={mid arrow=blue}}]
      (-0.2,-0.2) to[out=90, in=-45] (-1.65, 1.65);
\draw [blue] (-0.6,-0. ) node{\tiny$\circled{4}$};
\draw ( 2.0,-0.0) node[right]{\small\color{red}$a_1$};
\draw (-2.0, 0.0) node[left]{\small\color{red}$a_1$};
\draw (-0.0,-0.0) node[right]{\small\color{red}$a_2$};
\draw (-0.0, 2.0) node[above]{\small\color{red}$a_3$};
\draw (-0.0,-2.0) node[below]{\small\color{red}$a_3$};
\end{tikzpicture}
\
\begin{tikzpicture}[scale=0.55]
\filldraw [black!20] (-2, 2) circle (0.5cm);
\filldraw [black!20] ( 2, 2) circle (0.5cm);
\filldraw [black!20] ( 2,-2) circle (0.5cm);
\filldraw [black!20] (-2,-2) circle (0.5cm);
\draw [line width=1pt] (-2, 2) circle (0.5cm);
\draw [line width=1pt] ( 2, 2) circle (0.5cm);
\draw [line width=1pt] ( 2,-2) circle (0.5cm);
\draw [line width=1pt] (-2,-2) circle (0.5cm);
\draw [dash pattern=on 2pt off 2pt] (-2, 2)-- ( 2, 2);
\draw [dash pattern=on 2pt off 2pt] ( 2, 2)-- ( 2,-2);
\draw [dash pattern=on 2pt off 2pt] ( 2,-2)-- (-2,-2);
\draw [dash pattern=on 2pt off 2pt] (-2, 2)-- (-2,-2);
\draw [line width=1pt,color=ffqqqq] (-2.35,1.65)-- (-1.65,-1.65);
\draw [line width=1pt,color=ffqqqq] (-1.65,2.35)-- (1.65,1.65);
\draw [line width=1pt,color=ffqqqq] (1.65,1.65)-- (2.35,-1.65);
\draw [line width=1pt,color=ffqqqq] (1.65,-2.35)-- (-1.65,-1.65);
\draw [line width=1pt,color=ffqqqq] (-1.65,-1.65)-- (1.65,1.65);
\fill [color=qqwuqq] (-1.65, 1.65) circle (2.5pt);
\fill [color=qqwuqq] ( 1.65,-1.65) circle (2.5pt);
\filldraw [color=white, draw=ffqqqq, line width=0.5pt] (-1.65, 2.35) circle (2.5pt);
\fill [color=qqwuqq] (-2.35, 2.35) circle (2.5pt);
\filldraw [color=white, draw=ffqqqq, line width=0.5pt] (-1.65,-1.65) circle (2.5pt);
\filldraw [color=white, draw=ffqqqq, line width=0.5pt] (-2.35,-2.35) circle (2.5pt);
\fill [color=qqwuqq] (-2.35,-1.65) circle (2.5pt);
\fill [color=qqwuqq] (-1.65,-2.35) circle (2.5pt);
\filldraw [color=white, draw=ffqqqq, line width=0.5pt] (-2.35, 1.65) circle (2.5pt);
\fill [color=qqwuqq] ( 1.65, 2.35) circle (2.5pt);
\filldraw [color=white, draw=ffqqqq, line width=0.5pt] ( 1.65, 1.65) circle (2.5pt);
\filldraw [color=white, draw=ffqqqq, line width=0.5pt] ( 2.35, 2.35) circle (2.5pt);
\fill [color=qqwuqq] ( 2.35, 1.65) circle (2.5pt);
\filldraw [color=white, draw=ffqqqq, line width=0.5pt] ( 2.35,-1.65) circle (2.5pt);
\fill [color=qqwuqq] ( 2.35,-2.35) circle (2.5pt);
\filldraw [color=white, draw=ffqqqq, line width=0.5pt] ( 1.65,-2.35) circle (2.5pt);
\draw [color=qqwuqq] ( 1.8,-1.8) node{$p$};
\draw [color=qqwuqq] (-1.8, 1.8) node{$q$};
\draw [color=ffqqqq] (-1.8,-1.8) node{$r$};
\draw [color=ffqqqq] (-1.65, 2.35) node[above]{$r$};
\draw [color=ffqqqq] ( 2.35,-1.65) node[right]{$r$};
\draw [color=ffqqqq] ( 1.8, 1.8) node{$s$};
\draw [color=ffqqqq] ( 1.65,-2.35) node[below]{$s$};
\draw [color=ffqqqq] (-2.35, 1.65) node[left]{$s$};
\draw [blue][line width=1pt][postaction={on each segment={mid arrow=blue}}]
      (1.65, -1.65) to[out=90, in=180] (2.1, -0.5);
\draw [blue] (1.25, -1.25) node{\tiny$\circled{1}$};
\draw [blue][line width=1pt][postaction={on each segment={mid arrow=blue}}]
      (-2, 0.5) to[out= 0,in=-90] (-0.2, 2);
\draw [blue] (-0.1, 1.1) node{\tiny$\circled{2}$};
\draw [blue][line width=1pt][postaction={on each segment={mid arrow=blue}}]
      (-0.4,-1.9) to[out=90, in=-45] (-0.8,-0.8);
\draw [blue] (-0.3,-1.1) node{\tiny$\circled{3}$};
\draw [blue][line width=1pt][postaction={on each segment={mid arrow=blue}}]
      (-0.5,-0.5) to[out=135, in=0] (-2,-0.1);
\draw [blue] (-1,0.2) node{\tiny$\circled{4}$};
\draw [blue][line width=1pt][postaction={on each segment={mid arrow=blue}}]
      ( 2, 0) to[out=180, in=90] (0,-2);
\draw [blue] (0.85,-0.85) node{\tiny$\circled{5}$};
\draw [blue][line width=1pt][postaction={on each segment={mid arrow=blue}}]
      (-0.85, 2.2) to[out=-90, in=0] (-1.65, 1.65);
\draw [blue] (-1.3, 1.3) node{\tiny$\circled{6}$};
\draw ( 2.0,-0.0) node[right]{\small\color{red}$a_1$};
\draw (-2.0, 0.0) node[left]{\small\color{red}$a_1$};
\draw (-0.0,-0.0) node[right]{\small\color{red}$a_2$};
\draw (-0.0, 2.0) node[above]{\small\color{red}$a_3$};
\draw (-0.0,-2.0) node[below]{\small\color{red}$a_3$};
\end{tikzpicture}
\
\begin{tikzpicture}[scale=0.55]
\filldraw [black!20] (-2, 2) circle (0.5cm);
\filldraw [black!20] ( 2, 2) circle (0.5cm);
\filldraw [black!20] ( 2,-2) circle (0.5cm);
\filldraw [black!20] (-2,-2) circle (0.5cm);
\draw [line width=1pt] (-2, 2) circle (0.5cm);
\draw [line width=1pt] ( 2, 2) circle (0.5cm);
\draw [line width=1pt] ( 2,-2) circle (0.5cm);
\draw [line width=1pt] (-2,-2) circle (0.5cm);
\draw [dash pattern=on 2pt off 2pt] (-2, 2)-- ( 2, 2);
\draw [dash pattern=on 2pt off 2pt] ( 2, 2)-- ( 2,-2);
\draw [dash pattern=on 2pt off 2pt] ( 2,-2)-- (-2,-2);
\draw [dash pattern=on 2pt off 2pt] (-2, 2)-- (-2,-2);
\draw [line width=1pt,color=ffqqqq] (-2.35,1.65)-- (-1.65,-1.65);
\draw [line width=1pt,color=ffqqqq] (-1.65,2.35)-- (1.65,1.65);
\draw [line width=1pt,color=ffqqqq] (1.65,1.65)-- (2.35,-1.65);
\draw [line width=1pt,color=ffqqqq] (1.65,-2.35)-- (-1.65,-1.65);
\draw [line width=1pt,color=ffqqqq] (-1.65,-1.65)-- (1.65,1.65);
\fill [color=qqwuqq] (-1.65, 1.65) circle (2.5pt);
\fill [color=qqwuqq] ( 1.65,-1.65) circle (2.5pt);
\filldraw [color=white, draw=ffqqqq, line width=0.5pt] (-1.65, 2.35) circle (2.5pt);
\fill [color=qqwuqq] (-2.35, 2.35) circle (2.5pt);
\filldraw [color=white, draw=ffqqqq, line width=0.5pt] (-1.65,-1.65) circle (2.5pt);
\filldraw [color=white, draw=ffqqqq, line width=0.5pt] (-2.35,-2.35) circle (2.5pt);
\fill [color=qqwuqq] (-2.35,-1.65) circle (2.5pt);
\fill [color=qqwuqq] (-1.65,-2.35) circle (2.5pt);
\filldraw [color=white, draw=ffqqqq, line width=0.5pt] (-2.35, 1.65) circle (2.5pt);
\fill [color=qqwuqq] ( 1.65, 2.35) circle (2.5pt);
\filldraw [color=white, draw=ffqqqq, line width=0.5pt] ( 1.65, 1.65) circle (2.5pt);
\filldraw [color=white, draw=ffqqqq, line width=0.5pt] ( 2.35, 2.35) circle (2.5pt);
\fill [color=qqwuqq] ( 2.35, 1.65) circle (2.5pt);
\filldraw [color=white, draw=ffqqqq, line width=0.5pt] ( 2.35,-1.65) circle (2.5pt);
\fill [color=qqwuqq] ( 2.35,-2.35) circle (2.5pt);
\filldraw [color=white, draw=ffqqqq, line width=0.5pt] ( 1.65,-2.35) circle (2.5pt);
\draw [color=qqwuqq] ( 1.8,-1.8) node{$p$};
\draw [color=qqwuqq] (-1.8, 1.8) node{$q$};
\draw [color=ffqqqq] (-1.8,-1.8) node{$r$};
\draw [color=ffqqqq] (-1.65, 2.35) node[above]{$r$};
\draw [color=ffqqqq] ( 2.35,-1.65) node[right]{$r$};
\draw [color=ffqqqq] ( 1.8, 1.8) node{$s$};
\draw [color=ffqqqq] ( 1.65,-2.35) node[below]{$s$};
\draw [color=ffqqqq] (-2.35, 1.65) node[left]{$s$};
\draw [blue][line width=1pt][postaction={on each segment={mid arrow=blue}}]
      (1.65, -1.65) to[out=90, in=180] (2.1, -0.5);
\draw [blue] (1.25, -1.25) node{\tiny$\circled{1}$};
\draw [blue][line width=1pt][postaction={on each segment={mid arrow=blue}}]
      (-2, 0.5) to[out= 0,in=-90] (-0.2, 2);
\draw [blue] (-0.2, 1.1) node{\tiny$\circled{2}$};
\draw [blue][line width=1pt][postaction={on each segment={mid arrow=blue}}]
      (-0.4,-1.9) to[out=90, in=-45] (-0.8,-0.8);
\draw [blue] (-0.3,-1.1) node{\tiny$\circled{3}$};
\draw [blue][line width=1pt][postaction={on each segment={mid arrow=blue}}]
      (-0.5,-0.5) to[out=135, in=0] (-2,-0.1);
\draw [blue] (-1,0.2) node{\tiny$\circled{4}$};
\draw [blue][line width=1pt][postaction={on each segment={mid arrow=blue}}]
      ( 2, 0) to[out=180, in=90] (0,-2);
\draw [blue] (0.85,-0.85) node{\tiny$\circled{5}$};
\draw [blue][line width=1pt][postaction={on each segment={mid arrow=blue}}]
      ( 0.3, 1.95) to[out=-90, in=135] ( 0.8, 0.8);
\draw [blue] ( 0.75, 1.35) node{\tiny$\circled{6}$};
\draw [blue][line width=1pt][postaction={on each segment={mid arrow=blue}}]
      ( 1., 1.) to[out=-45, in=180] (1.9, 0.6);
\draw [blue] (1.1, 0.4) node{\tiny$\circled{7}$};
\draw ( 2.0,-0.0) node[right]{\small\color{red}$a_1$};
\draw (-2.0, 0.0) node[left]{\small\color{red}$a_1$};
\draw (-0.0,-0.0) node[right]{\small\color{red}$a_2$};
\draw (-0.0, 2.0) node[above]{\small\color{red}$a_3$};
\draw (-0.0,-2.0) node[below]{\small\color{red}$a_3$};
\end{tikzpicture}
\end{center}
\caption{Case A.2}
\label{fig:case A.2}
\end{figure}
    \end{itemize}

    \item[Case B] $\snum{i_2}{\ws}=2$. If $\snum{i_3}{\ws}=1$, by Lemma~\ref{lem:cancel}, $\snum{i_4}{\ws}=3$. Then $\ws_{i_3,i_3+1}$ crosses $\ws_{i_1,i_1+1}$, see the first picture of \Pic\ref{fig:case B}, a contradiction. Similarly, if $\snum{i_3}{\ws}=3$, we also have that $\ws_{i_3,i_3+1}$ crosses $\ws_{i_1,i_1+1}$, see the second picture of \Pic\ref{fig:case B}, a contradiction.
\begin{figure}[htbp]
\begin{center}
\definecolor{ffqqqq}{rgb}{1,0,0}
\definecolor{qqwuqq}{rgb}{0,0,1}
\begin{tikzpicture}[scale=0.55]
\filldraw [black!20] (-2, 2) circle (0.5cm);
\filldraw [black!20] ( 2, 2) circle (0.5cm);
\filldraw [black!20] ( 2,-2) circle (0.5cm);
\filldraw [black!20] (-2,-2) circle (0.5cm);
\draw [line width=1pt] (-2, 2) circle (0.5cm);
\draw [line width=1pt] ( 2, 2) circle (0.5cm);
\draw [line width=1pt] ( 2,-2) circle (0.5cm);
\draw [line width=1pt] (-2,-2) circle (0.5cm);
\draw [dash pattern=on 2pt off 2pt] (-2, 2)-- ( 2, 2);
\draw [dash pattern=on 2pt off 2pt] ( 2, 2)-- ( 2,-2);
\draw [dash pattern=on 2pt off 2pt] ( 2,-2)-- (-2,-2);
\draw [dash pattern=on 2pt off 2pt] (-2, 2)-- (-2,-2);
\draw [line width=1pt,color=ffqqqq] (-2.35,1.65)-- (-1.65,-1.65);
\draw [line width=1pt,color=ffqqqq] (-1.65,2.35)-- (1.65,1.65);
\draw [line width=1pt,color=ffqqqq] (1.65,1.65)-- (2.35,-1.65);
\draw [line width=1pt,color=ffqqqq] (1.65,-2.35)-- (-1.65,-1.65);
\draw [line width=1pt,color=ffqqqq] (-1.65,-1.65)-- (1.65,1.65);
\fill [color=qqwuqq] (-1.65, 1.65) circle (2.5pt);
\fill [color=qqwuqq] ( 1.65,-1.65) circle (2.5pt);
\filldraw [color=white, draw=ffqqqq, line width=0.5pt] (-1.65, 2.35) circle (2.5pt);
\fill [color=qqwuqq] (-2.35, 2.35) circle (2.5pt);
\filldraw [color=white, draw=ffqqqq, line width=0.5pt] (-1.65,-1.65) circle (2.5pt);
\filldraw [color=white, draw=ffqqqq, line width=0.5pt] (-2.35,-2.35) circle (2.5pt);
\fill [color=qqwuqq] (-2.35,-1.65) circle (2.5pt);
\fill [color=qqwuqq] (-1.65,-2.35) circle (2.5pt);
\filldraw [color=white, draw=ffqqqq, line width=0.5pt] (-2.35, 1.65) circle (2.5pt);
\fill [color=qqwuqq] ( 1.65, 2.35) circle (2.5pt);
\filldraw [color=white, draw=ffqqqq, line width=0.5pt] ( 1.65, 1.65) circle (2.5pt);
\filldraw [color=white, draw=ffqqqq, line width=0.5pt] ( 2.35, 2.35) circle (2.5pt);
\fill [color=qqwuqq] ( 2.35, 1.65) circle (2.5pt);
\filldraw [color=white, draw=ffqqqq, line width=0.5pt] ( 2.35,-1.65) circle (2.5pt);
\fill [color=qqwuqq] ( 2.35,-2.35) circle (2.5pt);
\filldraw [color=white, draw=ffqqqq, line width=0.5pt] ( 1.65,-2.35) circle (2.5pt);
\draw [color=qqwuqq] ( 1.8,-1.8) node{$p$};
\draw [color=qqwuqq] (-1.8, 1.8) node{$q$};
\draw [color=ffqqqq] (-1.8,-1.8) node{$r$};
\draw [color=ffqqqq] (-1.65, 2.35) node[above]{$r$};
\draw [color=ffqqqq] ( 2.35,-1.65) node[right]{$r$};
\draw [color=ffqqqq] ( 1.8, 1.8) node{$s$};
\draw [color=ffqqqq] ( 1.65,-2.35) node[below]{$s$};
\draw [color=ffqqqq] (-2.35, 1.65) node[left]{$s$};
\draw [blue][line width=1pt][postaction={on each segment={mid arrow=blue}}]
      (1.65, -1.65) -- (0, 0);
\draw [blue] (1.2,-0.8) node{\tiny$\circled{1}$};
\draw [blue][line width=1pt][postaction={on each segment={mid arrow=blue}}]
      (-0.5,-0.5) to[out=135,in=0] (-2,-0.);
\draw [blue] (-1, 0.2) node{\tiny$\circled{2}$};
\draw [blue][line width=1pt][postaction={on each segment={mid arrow=blue}}]
      (2,-0.) to[out=180,in=90] (0,-2);
\draw [blue] (1,-0.)node{\tiny$\circled{3}$};
\draw ( 2.0,-0.0) node[right]{\small\color{red}$a_1$};
\draw (-2.0, 0.0) node[left]{\small\color{red}$a_1$};
\draw ( 0.5, 0.5) node[right]{\small\color{red}$a_2$};
\draw (-0.0, 2.0) node[above]{\small\color{red}$a_3$};
\draw (-0.0,-2.0) node[below]{\small\color{red}$a_3$};
\end{tikzpicture}
\
\begin{tikzpicture}[scale=0.55]
\filldraw [black!20] (-2, 2) circle (0.5cm);
\filldraw [black!20] ( 2, 2) circle (0.5cm);
\filldraw [black!20] ( 2,-2) circle (0.5cm);
\filldraw [black!20] (-2,-2) circle (0.5cm);
\draw [line width=1pt] (-2, 2) circle (0.5cm);
\draw [line width=1pt] ( 2, 2) circle (0.5cm);
\draw [line width=1pt] ( 2,-2) circle (0.5cm);
\draw [line width=1pt] (-2,-2) circle (0.5cm);
\draw [dash pattern=on 2pt off 2pt] (-2, 2)-- ( 2, 2);
\draw [dash pattern=on 2pt off 2pt] ( 2, 2)-- ( 2,-2);
\draw [dash pattern=on 2pt off 2pt] ( 2,-2)-- (-2,-2);
\draw [dash pattern=on 2pt off 2pt] (-2, 2)-- (-2,-2);
\draw [line width=1pt,color=ffqqqq] (-2.35,1.65)-- (-1.65,-1.65);
\draw [line width=1pt,color=ffqqqq] (-1.65,2.35)-- (1.65,1.65);
\draw [line width=1pt,color=ffqqqq] (1.65,1.65)-- (2.35,-1.65);
\draw [line width=1pt,color=ffqqqq] (1.65,-2.35)-- (-1.65,-1.65);
\draw [line width=1pt,color=ffqqqq] (-1.65,-1.65)-- (1.65,1.65);
\fill [color=qqwuqq] (-1.65, 1.65) circle (2.5pt);
\fill [color=qqwuqq] ( 1.65,-1.65) circle (2.5pt);
\filldraw [color=white, draw=ffqqqq, line width=0.5pt] (-1.65, 2.35) circle (2.5pt);
\fill [color=qqwuqq] (-2.35, 2.35) circle (2.5pt);
\filldraw [color=white, draw=ffqqqq, line width=0.5pt] (-1.65,-1.65) circle (2.5pt);
\filldraw [color=white, draw=ffqqqq, line width=0.5pt] (-2.35,-2.35) circle (2.5pt);
\fill [color=qqwuqq] (-2.35,-1.65) circle (2.5pt);
\fill [color=qqwuqq] (-1.65,-2.35) circle (2.5pt);
\filldraw [color=white, draw=ffqqqq, line width=0.5pt] (-2.35, 1.65) circle (2.5pt);
\fill [color=qqwuqq] ( 1.65, 2.35) circle (2.5pt);
\filldraw [color=white, draw=ffqqqq, line width=0.5pt] ( 1.65, 1.65) circle (2.5pt);
\filldraw [color=white, draw=ffqqqq, line width=0.5pt] ( 2.35, 2.35) circle (2.5pt);
\fill [color=qqwuqq] ( 2.35, 1.65) circle (2.5pt);
\filldraw [color=white, draw=ffqqqq, line width=0.5pt] ( 2.35,-1.65) circle (2.5pt);
\fill [color=qqwuqq] ( 2.35,-2.35) circle (2.5pt);
\filldraw [color=white, draw=ffqqqq, line width=0.5pt] ( 1.65,-2.35) circle (2.5pt);
\draw [color=qqwuqq] ( 1.8,-1.8) node{$p$};
\draw [color=qqwuqq] (-1.8, 1.8) node{$q$};
\draw [color=ffqqqq] (-1.8,-1.8) node{$r$};
\draw [color=ffqqqq] (-1.65, 2.35) node[above]{$r$};
\draw [color=ffqqqq] ( 2.35,-1.65) node[right]{$r$};
\draw [color=ffqqqq] ( 1.8, 1.8) node{$s$};
\draw [color=ffqqqq] ( 1.65,-2.35) node[below]{$s$};
\draw [color=ffqqqq] (-2.35, 1.65) node[left]{$s$};
\draw [blue][line width=1pt][postaction={on each segment={mid arrow=blue}}]
      (1.65, -1.65) -- (0, 0);
\draw [blue] (1.2,-0.8) node{\tiny$\circled{1}$};
\draw [blue][line width=1pt][postaction={on each segment={mid arrow=blue}}]
      ( 0.5, 0.5) to[out=135,in=-90] (0, 2);
\draw [blue] (-0.2, 1) node{\tiny$\circled{2}$};
\draw [blue][line width=1pt][postaction={on each segment={mid arrow=blue}}]
      (0,-2) to[out=90,in=180] (2,-0);
\draw [blue] (1,-0.)node{\tiny$\circled{3}$};
\draw ( 2.0,-0.0) node[right]{\small\color{red}$a_1$};
\draw (-2.0, 0.0) node[left]{\small\color{red}$a_1$};
\draw (-0.0,-0.0) node[right]{\small\color{red}$a_2$};
\draw (-0.0, 2.0) node[above]{\small\color{red}$a_3$};
\draw (-0.0,-2.0) node[below]{\small\color{red}$a_3$};
\end{tikzpicture}
\end{center}
\caption{Case B}
\label{fig:case B}
\end{figure}
    \item[Case C] $\snum{i_2}{\ws}=3$. This case is dual to Case A. Hence the possible simplest sequences are those shown in \Pic\ref{fig:case C},
\begin{figure}[htbp]
\begin{center}
\definecolor{ffqqqq}{rgb}{1,0,0}
\definecolor{qqwuqq}{rgb}{0,0,1}
\begin{tikzpicture}[scale=0.55]
\filldraw [black!20] (-2, 2) circle (0.5cm);
\filldraw [black!20] ( 2, 2) circle (0.5cm);
\filldraw [black!20] ( 2,-2) circle (0.5cm);
\filldraw [black!20] (-2,-2) circle (0.5cm);
\draw [line width=1pt] (-2, 2) circle (0.5cm);
\draw [line width=1pt] ( 2, 2) circle (0.5cm);
\draw [line width=1pt] ( 2,-2) circle (0.5cm);
\draw [line width=1pt] (-2,-2) circle (0.5cm);
\draw [dash pattern=on 2pt off 2pt] (-2, 2)-- ( 2, 2);
\draw [dash pattern=on 2pt off 2pt] ( 2, 2)-- ( 2,-2);
\draw [dash pattern=on 2pt off 2pt] ( 2,-2)-- (-2,-2);
\draw [dash pattern=on 2pt off 2pt] (-2, 2)-- (-2,-2);
\draw [line width=1pt,color=ffqqqq] (-2.35,1.65)-- (-1.65,-1.65);
\draw [line width=1pt,color=ffqqqq] (-1.65,2.35)-- (1.65,1.65);
\draw [line width=1pt,color=ffqqqq] (1.65,1.65)-- (2.35,-1.65);
\draw [line width=1pt,color=ffqqqq] (1.65,-2.35)-- (-1.65,-1.65);
\draw [line width=1pt,color=ffqqqq] (-1.65,-1.65)-- (1.65,1.65);
\fill [color=qqwuqq] (-1.65, 1.65) circle (2.5pt);
\fill [color=qqwuqq] ( 1.65,-1.65) circle (2.5pt);
\filldraw [color=white, draw=ffqqqq, line width=0.5pt] (-1.65, 2.35) circle (2.5pt);
\fill [color=qqwuqq] (-2.35, 2.35) circle (2.5pt);
\filldraw [color=white, draw=ffqqqq, line width=0.5pt] (-1.65,-1.65) circle (2.5pt);
\filldraw [color=white, draw=ffqqqq, line width=0.5pt] (-2.35,-2.35) circle (2.5pt);
\fill [color=qqwuqq] (-2.35,-1.65) circle (2.5pt);
\fill [color=qqwuqq] (-1.65,-2.35) circle (2.5pt);
\filldraw [color=white, draw=ffqqqq, line width=0.5pt] (-2.35, 1.65) circle (2.5pt);
\fill [color=qqwuqq] ( 1.65, 2.35) circle (2.5pt);
\filldraw [color=white, draw=ffqqqq, line width=0.5pt] ( 1.65, 1.65) circle (2.5pt);
\filldraw [color=white, draw=ffqqqq, line width=0.5pt] ( 2.35, 2.35) circle (2.5pt);
\fill [color=qqwuqq] ( 2.35, 1.65) circle (2.5pt);
\filldraw [color=white, draw=ffqqqq, line width=0.5pt] ( 2.35,-1.65) circle (2.5pt);
\fill [color=qqwuqq] ( 2.35,-2.35) circle (2.5pt);
\filldraw [color=white, draw=ffqqqq, line width=0.5pt] ( 1.65,-2.35) circle (2.5pt);
\draw [color=qqwuqq] ( 1.8,-1.8) node{$p$};
\draw [color=qqwuqq] (-1.8, 1.8) node{$q$};
\draw [color=ffqqqq] (-1.8,-1.8) node{$r$};
\draw [color=ffqqqq] (-1.65, 2.35) node[above]{$r$};
\draw [color=ffqqqq] ( 2.35,-1.65) node[right]{$r$};
\draw [color=ffqqqq] ( 1.8, 1.8) node{$s$};
\draw [color=ffqqqq] ( 1.65,-2.35) node[below]{$s$};
\draw [color=ffqqqq] (-2.35, 1.65) node[left]{$s$};
\draw [blue][line width=1pt][postaction={on each segment={mid arrow=blue}}]
      (1.65,-1.65) to[out=180,in=90] (0.5,-2);
\draw [blue] (1,-1.35) node{\tiny$\circled{1}$};
\draw [blue][line width=1pt][postaction={on each segment={mid arrow=blue}}]
      (0.3,1.9) to[out=-90,in=135] (0.8,0.8);
\draw [blue] (-0.1,1.3) node{\tiny$\circled{2}$};
\draw [blue][line width=1pt][postaction={on each segment={mid arrow=blue}}]
      (0.6,0.6) to[out=-45,in=180] (2,0);
\draw [blue] (1.5,-0.3) node{\tiny$\circled{3}$};
\draw [blue][line width=1pt][postaction={on each segment={mid arrow=blue}}]
      (-2,0.5) to[out=0,in=-90] (-1.65,1.65);
\draw [blue] (-1.3, 1) node{\tiny$\circled{4}$};
\draw ( 2.0,-0.0) node[right]{\small\color{red}$a_1$};
\draw (-2.0, 0.0) node[left]{\small\color{red}$a_1$};
\draw ( 0.0, 0.0) node[right]{\small\color{red}$a_2$};
\draw (-0.0, 2.0) node[above]{\small\color{red}$a_3$};
\draw (-0.0,-2.0) node[below]{\small\color{red}$a_3$};
\end{tikzpicture}
\
\begin{tikzpicture}[scale=0.5]
\filldraw [black!20] (-2, 2) circle (0.5cm);
\filldraw [black!20] ( 2, 2) circle (0.5cm);
\filldraw [black!20] ( 2,-2) circle (0.5cm);
\filldraw [black!20] (-2,-2) circle (0.5cm);
\draw [line width=1pt] (-2, 2) circle (0.5cm);
\draw [line width=1pt] ( 2, 2) circle (0.5cm);
\draw [line width=1pt] ( 2,-2) circle (0.5cm);
\draw [line width=1pt] (-2,-2) circle (0.5cm);
\draw [dash pattern=on 2pt off 2pt] (-2, 2)-- ( 2, 2);
\draw [dash pattern=on 2pt off 2pt] ( 2, 2)-- ( 2,-2);
\draw [dash pattern=on 2pt off 2pt] ( 2,-2)-- (-2,-2);
\draw [dash pattern=on 2pt off 2pt] (-2, 2)-- (-2,-2);
\draw [line width=1pt,color=ffqqqq] (-2.35,1.65)-- (-1.65,-1.65);
\draw [line width=1pt,color=ffqqqq] (-1.65,2.35)-- (1.65,1.65);
\draw [line width=1pt,color=ffqqqq] (1.65,1.65)-- (2.35,-1.65);
\draw [line width=1pt,color=ffqqqq] (1.65,-2.35)-- (-1.65,-1.65);
\draw [line width=1pt,color=ffqqqq] (-1.65,-1.65)-- (1.65,1.65);
\fill [color=qqwuqq] (-1.65, 1.65) circle (2.5pt);
\fill [color=qqwuqq] ( 1.65,-1.65) circle (2.5pt);
\filldraw [color=white, draw=ffqqqq, line width=0.5pt] (-1.65, 2.35) circle (2.5pt);
\fill [color=qqwuqq] (-2.35, 2.35) circle (2.5pt);
\filldraw [color=white, draw=ffqqqq, line width=0.5pt] (-1.65,-1.65) circle (2.5pt);
\filldraw [color=white, draw=ffqqqq, line width=0.5pt] (-2.35,-2.35) circle (2.5pt);
\fill [color=qqwuqq] (-2.35,-1.65) circle (2.5pt);
\fill [color=qqwuqq] (-1.65,-2.35) circle (2.5pt);
\filldraw [color=white, draw=ffqqqq, line width=0.5pt] (-2.35, 1.65) circle (2.5pt);
\fill [color=qqwuqq] ( 1.65, 2.35) circle (2.5pt);
\filldraw [color=white, draw=ffqqqq, line width=0.5pt] ( 1.65, 1.65) circle (2.5pt);
\filldraw [color=white, draw=ffqqqq, line width=0.5pt] ( 2.35, 2.35) circle (2.5pt);
\fill [color=qqwuqq] ( 2.35, 1.65) circle (2.5pt);
\filldraw [color=white, draw=ffqqqq, line width=0.5pt] ( 2.35,-1.65) circle (2.5pt);
\fill [color=qqwuqq] ( 2.35,-2.35) circle (2.5pt);
\filldraw [color=white, draw=ffqqqq, line width=0.5pt] ( 1.65,-2.35) circle (2.5pt);
\draw [color=qqwuqq] ( 1.8,-1.8) node{$p$};
\draw [color=qqwuqq] (-1.8, 1.8) node{$q$};
\draw [color=ffqqqq] (-1.8,-1.8) node{$r$};
\draw [color=ffqqqq] (-1.65, 2.35) node[above]{$r$};
\draw [color=ffqqqq] ( 2.35,-1.65) node[right]{$r$};
\draw [color=ffqqqq] ( 1.8, 1.8) node{$s$};
\draw [color=ffqqqq] ( 1.65,-2.35) node[below]{$s$};
\draw [color=ffqqqq] (-2.35, 1.65) node[left]{$s$};
\draw [blue][line width=1pt][postaction={on each segment={mid arrow=blue}}]
      (1.65,-1.65) to[out=180,in=90] (0.5,-2);
\draw [blue] ( 1.0,-1.35) node{\tiny$\circled{1}$};
\draw [blue][line width=1pt][postaction={on each segment={mid arrow=blue}}]
      (0,2) to[out=-90,in= 0] (-2,-0.3);
\draw [blue] (-0.75, 0.75) node{\tiny$\circled{2}$};
\draw [blue][line width=1pt][postaction={on each segment={mid arrow=blue}}]
      (2,0.5) to[out=180,in=-45] (0.9, 0.9);
\draw [blue] ( 1.45, 0.85) node{\tiny$\circled{3}$};
\draw [blue][line width=1pt][postaction={on each segment={mid arrow=blue}}]
      (0.8, 0.8) to[out=135,in=-90] (0.35, 1.9);
\draw [blue] ( 0.9, 1.4) node{\tiny$\circled{4}$};
\draw [blue][line width=1pt][postaction={on each segment={mid arrow=blue}}]
      (0,-2) to[out=90,in=180] (2,0);
\draw [blue] ( 1.5,-0.55 ) node{\tiny$\circled{5}$};
\draw [blue][line width=1pt][postaction={on each segment={mid arrow=blue}}]
      (-2,0.3) to[out=0,in=-90] (-1.65,1.65);
\draw [blue] (-1.30, 1.25) node{\tiny$\circled{6}$};
\draw ( 2.0,-0.0) node[right]{\small\color{red}$a_1$};
\draw (-2.0, 0.0) node[left]{\small\color{red}$a_1$};
\draw (-0.0,-0.0) node[right]{\small\color{red}$a_2$};
\draw (-0.0, 2.0) node[above]{\small\color{red}$a_3$};
\draw (-0.0,-2.0) node[below]{\small\color{red}$a_3$};
\end{tikzpicture}
\end{center}
\caption{Case C}
\label{fig:case C}
\end{figure}
    which are dual to the first picture of \Pic\ref{fig:case A.1} and the second picture of 
    \Pic\ref{fig:case A.2}, respectively. In the first picture, by Lemma~\ref{lem:wg}, $\ws$ is homotopic to $\tgamma$, a contradiction; 
    in the second picture, we have $\sdii{n(\ws)}{\ws}=\sdii{1}{\ws}\geq 2>0$ as required.
\end{itemize}
Thus, the proof is complete.
\end{proof}




\begin{thebibliography}{99}

\bibitem[AIR14]{AIR2014}
T.~Adachi, O.~Iyama and I.~Reiten.
\newblock $\tau$-tilting theory.
\newblock {\em Compos. Math.}, 150(3):415--452, 2014.

\bibitem[AI12]{AI2012}
T.~Aihara and O.~Iyama.
\newblock Silting mutation in triangulated categories.
\newblock {\em J. Lond. Math. Soc. (2)}, 85(3):633--668,
  2012.

\bibitem[APS23]{APS2023}
C.~Amiot, P-G.~Plamondon and S.~Schroll.
\newblock A complete derived invariant for gentle algebras via winding numbers and Arf invariants.
\newblock {\em Selecta Math.}, 29(2): Paper No. 30, 36 pp. 2023.

\bibitem[APR79]{APR1979}
M.~Auslander, M.~Platzeck and I.~Reiten.
\newblock Coxeter functors without diagrams.
\newblock {\em Trans. Amer. Math. Soc.},
  250:1--46, 1979.

\bibitem[Bon82]{Bong1982}
K.~Bongartz.
\newblock Tilted algebras.
\newblock {\em Representations of algebras (Puebla, 1980)}, pp. 26--38, Lecture Notes in Math., 903, {\em Springer, Berlin-New York,} 1981.

\bibitem[BB06]{BB1979}
S.~Brenner and M.~C.~R.~Butler.
\newblock Generalizations of the Bernstein-Gel'fand-Ponomarev reflection functors.
\newblock {\em Representation theory, II (Proc. Second Internat. Conf., Carleton Univ., Ottawa, Ont., 1979),} pp. 103--169, Lecture Notes in Math., 832, {\em Springer, Berlin,} 1980.

\bibitem[BY13]{BY2013}
T.~Br\"{u}stle and D.~Yang.
\newblock Ordered exchange graphs.
\newblock {\em Advances in representation theory of algebras}, 135--193, EMS Ser. Congr. Rep., {\em Eur. Math. Soc., Z\"{u}rich}, 2013.


\bibitem[CJS22]{CJS2022}
W.~Chang, H.~Jin and S.~Schroll.
\newblock Recollements of derived categories of graded gentle algebras and surface cuts.
\newblock \href{http://arxiv.org/abs/2206.11196}{arXiv:2206.11196}, 2022.

\bibitem[DF22]{DF2022}
W.~Dai and C.~Fu.
\newblock A reduction approach to silting objects for derived categories of hereditary categories.
\newblock {\em Colloq. Math.}, 170(2):239--252, 2022.

\bibitem[HKK17]{HKK2017}
F.~Haiden, L.~Katzarkov and M.~Kontsevich.
\newblock Flat surfaces and stability structures.
\newblock {\em Publ. Math. Inst. Hautes \'{E}tudes Sci.}, 126(1):247--318, 2017.

\bibitem[HR82]{HR1982}
D.~Happel and C.~M.~Ringel.
\newblock Tilted algebras.
\newblock {\em Trans. Amer. Math. Soc.}, 274(2):399--443, 1982.

\bibitem[JSW23]{JSW2023}
H.~Jin, S.~Schroll and W.~Fang.
\newblock A complete derived invariant and silting theory for graded gentle algebras.
\newblock \href{http://arxiv.org/abs/2303.17474}{arXiv:2303.17474}, 2023.

\bibitem[K17]{Kal2016}
M.~Kalck.
\newblock Derived categories of quasi-hereditary algebras and their derived composition series.
\newblock {\em Representation theory-current trends and perspectives}, 269--308,
EMS Ser. Congr. Rep., {\em Eur. Math. Soc., Z\"{u}rich}, 2017.

\bibitem[KV88]{KV1988}
B.~Keller and D.~Vossieck.
\newblock Aisles in derived categories.
\newblock {\em Bull. Soc. Math. Belg. S\'{e}r. A}, 40(2):239--253, 1988.

\bibitem[KY14]{KY2014}
S.~Koenig and D.~Yang.
\newblock Silting objects, simple-minded collections, $t$-structures and co-$t$-structures for finite-dimensional algebras.
\newblock {\em Doc. Math.}, 19:403--438, 2014.

\bibitem[LL19]{LL2019}
H.~Liu and L.~Liu.
\newblock Complements of partial silting objects in triangulated categories.
\newblock {\em Journal of Shandong University (Natural Science)}, 54(4):67--71, 2019.

\bibitem[LVY14]{LVY2014}
Q.~Liu, J.~Vit\'{o}ria and D.~Yang.
\newblock Gluing silting objects.
\newblock {\em Nagoya Math. J.}, 216:117--151, 2014.
\newblock

\bibitem[M86]{Miy1986}
Y.~Miyashita.
\newblock Tilting modules of finite projective dimension.
\newblock {\em Math. Z.}, 193(1):113--146, 1986.

\bibitem[OPS18]{OPS2018}
S.~Opper, P-G.~Plamondon and S.~Schroll.
\newblock A geometric model for the derived category of gentle algebras.
\newblock \href{http://arxiv.org/abs/1801.09659}{arXiv:1801.09659}, 2018.

\bibitem[QW18]{QW2018}
Y.~Qiu and J.~Woolf.
\newblock Contractible stability spaces and faithful braid group actions.
\newblock {\em Geom. Topol.} 22(6):3701--3760, 2018.

\bibitem[R89]{Ri1989}
J.~Rickard.
\newblock Morita theory for derived categories.
\newblock {\em J. London Math. Soc. (2)}, 39(3):436--456, 1989.
\newblock

\bibitem[RS89]{RiSch1989}
J.~Rickard and A.~Schofield.
\newblock Cocovers and tilting modules.
\newblock {\em Math. Proc. Cambridge Philos. Soc.}, 106(1):1--5, 1989.
\newblock

\bibitem[W13]{Wei2013}
J.~Wei.
\newblock Semi-tilting complexes.
\newblock {\em Israel J. Math.}, 194(2):871--893, 2013.
\newblock

\bibitem[W20]{Wei2020}
J.~Wei.
\newblock Silting complexes and partial tilting modules.
\newblock {\em Mathematics}, 8(10):1736--1742, 2020.

\bibitem[XY20]{XY2020}
J.~Xu and Y.~Yang.
\newblock A Bongartz-type lemma for silting complexes over a hereditary
  algebra.
\newblock {\em Arch. Math. (Basel)}, 114(4):383--389, 2020.

\end{thebibliography}

\def\cprime{$'$}

\end{document}